\begin{document}

\title{Steady nearly incompressible vector fields in 2D: chain rule and renormalization}

\author{S. Bianchini, N.A. Gusev}

\maketitle

\begin{abstract}

Given bounded vector field $b\colon \RR^d \to \RR^d$, scalar field $u\colon \RR^d \to \RR$ and a smooth function $\beta \colon \RR \to \RR$ we study the characterization of the distribution $\div(\beta(u)b)$ in terms of $\div b$ and $\div(u b)$. In the case of $BV$ vector fields $b$ (and under some further assumptions) such characterization was obtained by L. Ambrosio, C. De Lellis and J. Mal\'y, up to an error term which is a measure concentrated on so-called \emph{tangential set} of $b$. We answer some questions posed in their paper concerning the properties of this term. In particular we construct a nearly incompressible $BV$ vector field $b$ and a bounded function $u$ for which this term is nonzero.

For steady nearly incompressible vector fields $b$ (and under some further assumptions) in case when $d=2$ we provide complete characterization of $\div(\beta(u) b)$ in terms of $\div b$ and $\div(u b)$. Our approach relies on the structure of level sets of Lipschitz functions on $\RR^2$ obtained by G. Alberti, S. Bianchini and G. Crippa. 

Extending our technique we obtain new sufficient conditions when any bounded weak solution $u$ of $\d_t u + b \cdot \nabla u=0$ is \emph{renormalized}, i.e. also solves $\d_t \beta(u) + b \cdot \nabla \beta(u)=0$ for any smooth function $\beta \colon\RR \to \RR$. As a consequence we obtain new uniqueness result for this equation.

\end{abstract}

\medskip

{\centerline{Preprint SISSA  43/2014/MATE}}

\section{Introduction}

\subsection{Transport equation and renormalization property}
The motivation of this paper comes from the problem of characterization of non-smooth vector fields $b\colon (0,T)\times \RR^n \to \RR^n$, $T>0$, for which the initial value problem for the transport equation
\begin{equation} \label{transport}
\d_t u + b \cdot \nabla u = 0
\end{equation}
has a unique bounded weak solution $u\colon (0,T)\times \RR^n \to \RR$ for any bounded initial data $u^\circ \colon \RR^n \to \RR$.

In \cite{dPL} this problem was studied in the class of vector fields $b$ which belong to Sobolev spaces. In particular it was proved that if $b \in L^1(0,T;W^{1,p})$, $p\ge 1$ and $\div b \in L^1(0,T;\Linf)$ then for any $u^\circ \in \Linf$ there exists unique weak solution $u\in L^\infty(0,T;\Linf)$ of \eqref{transport} such that $u|_{t=0}=u^\circ$.

We recall that $u\in L^\infty(0,T;\Linf)$ is called a \emph{weak solution} of \eqref{transport} if it satisfies $\d_t u + \div (ub) - u \div b = 0$ in sense of distributions $\ss D'((0,T)\times \RR^n)$. It is well-known (see e.g. \cite{dL_Notes}) that for any weak solution $u=u(t,x)$ of \eqref{transport} there exists unique function $\bar u \colon [0,T]\times \RR^n \to \RR$ such that
\begin{itemize}
\item $\bar u(t,\cdot) = u (t, \cdot)$ for a.e. $t\in (0,T)$ (consequently $\bar u$ also solves \eqref{transport})
\item the function $t\mapsto \bar u(t,\cdot)$ is weakly continuous on $[0,T]$ in the weak* topology of $\Linf$.
\end{itemize}
For brevity we will call such function $\bar u$ \emph{the weakly continuous version} of $u$.
In view of this definition the initial condition $u|_{t=0} = u^\circ$ for a weak solution of \eqref{transport} is understood in the following sense: we say that $u|_{t=0} = u^\circ$ if $\bar u(0, \cdot) = u^\circ(\cdot)$ a.e. in $\RR^n$.

Note that existence of weak solutions of \eqref{transport} is obtained by mollification of $b$, construction of approximate solutions using classical method of characteristics and passage to the limit using weak* compactness of $\Linf$. Uniqueness of weak solutions obtained as a consequence of the renormalization property:
\begin{definition}\label{def:renorm}
The vector field $b$ has the \emph{renormalization property} if for any bounded weak solution $u$ of \eqref{transport} and any smooth function $\beta\colon \RR \to \RR$ the function $\beta(u)$ is a weak solution of 
\begin{equation} \label{renorm}
\d_t \beta(u) + b \cdot \nabla \beta(u) = 0.
\end{equation}
\end{definition}
In the smooth setting the renormalization property simply follows from the classical chain rule.
However in the weak setting it is obtained in \cite{dPL} by a considerably more complicated argument based on the so-called \emph{commutator estimate}. We refer to \cite{CdL_ODE} for more details.

Later in \cite{AmbrosioBV} this theory was generalized for the class of vector fields $b$ with bounded variation. More precisely, existence and uniqueness of bounded weak solutions to the initial value problem for \eqref{transport} was proved when $b\in L^1(0,T;BV)$ and $\div b \in L^1(0,T; \Linf)$. The general strategy used in \cite{AmbrosioBV} is similar to the one used in \cite{dPL}, however the proof of the renormalization property is more difficult than in \cite{dPL} and involves convolutions with anisotropic kernels and Alberti's rank-one theorem \cite{RankOne}.

Note that both in \cite{dPL} and \cite{AmbrosioBV} the assumption $\div b \in L^1(0,T; \Linf)$ is used only in the proof of existence of weak solutions of \eqref{transport}, while the assumption $\div b \in L^1(0,T; L^1)$ is sufficient for uniqueness of weak solutions and for the renormalization property and \cite{AmbrosioBV}. Therefore one of the possible directions for developing further the theory of \eqref{transport} is to go beyond the assumption of absolute continuity of $\div b$ with respect to Lebesgue measure $\Le^d$.

The assumption $\div b \ll \Le^d$ is used in the definition of weak solution of \eqref{transport}. 
Therefore if $\div b$ is not absolutely continuous, we still have to impose some additional restriction on the divergence of $b$, which would allow us to give a meaning to \eqref{transport}. One of such restrictions is the assumption that $b$ is nearly incompressible.

\subsection{Nearly incompressible vector fields}
\begin{definition}\label{def:ni}
Let $I\subset \RR$ be an open interval and $\Omega \subset \RR^n$ be an open set.
A bounded vector field $b\colon I\times \Omega \to \RR^n$ is called \emph{nearly incompressible} with \emph{density} $\rho\colon I\times \Omega \to \RR$ if there exist real constants $C_2>C_1>0$ such that
\begin{equation*}
C_1 \le \rho \le C_2 \qquad \text{$\Le^{d+1}$-a.e. on $I\times \Omega$}
\end{equation*}
and
\begin{equation}\label{rho-continuity}
\d_t \rho + \div (\rho b) = 0 \qquad \text{in} \quad \ss D'(I\times \Omega).
\end{equation}
\end{definition}
Clearly divergence-free vector fields are nearly incompressible.
Moreover, vector fields with bounded divergence also belong to this class when $I$ is bounded.

For nearly incompressible vector fields we understand the transport equation \eqref{transport} in the following sense:
\begin{definition} \label{def:transport-ni}
Suppose that $b\colon I\times \Omega \to \RR^n$ is nearly incompressible with density $\rho\colon I\times \Omega \to \RR$. We say that a bounded function $u\colon I\times \Omega$ solves \eqref{transport} if it solves
\begin{equation} \label{transport-ni}
\d_t (\rho u) + \div (\rho u b) = 0 \qquad \text{in} \quad \ss D'(I\times \Omega).
\end{equation}
\end{definition}
We remark that in the smooth setting \eqref{transport-ni} follows from \eqref{transport} and \eqref{rho-continuity}.

Now the notion of \emph{renormalized solutions} of \eqref{transport} can be extended to the case of nearly incompressible vector fields: we only need to understand \eqref{renorm} in Definition~\ref{def:renorm} as in Definition~\ref{def:transport-ni}. Moreover, the initial condition $u|_{t=0}=u^\circ$ can be prescribed in the following sense: $\overline{\rho u}(0,\cdot) = \bar \rho(0,\cdot) \cdot u^\circ(\cdot)$.

Note that in general the notion of weak solution of \eqref{transport} for nearly incompressible vector field $b$ depends on its density $\rho$. However it was proved in \cite{dL_Notes} that if $b$ has renormalization property (with some fixed density $\rho$) then the notion of weak solution of \eqref{transport} is independent of the choice of $\rho$.

Nearly incompressible vector fields were introduced in connection with Keyfitz and Kranzer system \cite{dL_Notes}. They are also closely related to the compactness conjecture of Bressan \cite{BressanCauchyIllPosed}. In particular, it has been shown in \cite{ABdL} that this conjecture would follow from the following one:

\begin{conjecture}\label{conj:Bressan-renorm}
Any nearly incompressible vector field $b\in \Linf \cap BV_\rr{loc}(\RR \times \RR^n)$ with density $\rho \in \Linf (\RR \times \RR^n)$ has the renormalization property.
\end{conjecture}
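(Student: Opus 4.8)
We outline a strategy for attacking Conjecture~\ref{conj:Bressan-renorm}, proceeding in three steps, and we indicate which one is the genuine obstacle.

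\emph{Step 1: reduction to the steady case.} Given a nearly incompressible $b\in\Linf\cap BV_\rr{loc}(\RR\times\RR^n)$ with density $\rho$, pass to the autonomous field $\tilde b(t,x):=(1,b(t,x))$ on $\RR^{n+1}$. Then $\div(\rho\tilde b)=\d_t\rho+\div(\rho b)=0$ in $\ss D'(\RR^{n+1})$, so $m:=\rho\tilde b$ is a bounded, divergence-free, $BV_\rr{loc}$ field on $\RR^{n+1}$; moreover $C_1\le\rho\le C_2$, so $\tilde b$ is a \emph{steady} nearly incompressible field with density $\rho$, whose first component is bounded away from $0$. By Definition~\ref{def:transport-ni}, a bounded $u$ solves \eqref{transport-ni} for $b$ if and only if $\div(\rho u\tilde b)=0$ in $\ss D'(\RR^{n+1})$, i.e.\ $u$ is a steady solution of the transport equation driven by $\tilde b$; consequently $b$ has the renormalization property if and only if $\tilde b$ does in the steady sense. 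Hence it suffices to prove that \emph{every steady nearly incompressible $b\in\Linf\cap BV_\rr{loc}(\RR^d)$ with density $\rho\in\Linf$ has the renormalization property}, for every $d\ge 2$. The case $d=2$ is treated in this paper (albeit under further hypotheses, whose removal is part of the program), so the first genuinely open case is $d=3$, corresponding to $n=2$, which is precisely Bressan's original setting \cite{BressanCauchyIllPosed,ABdL}.

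\emph{Step 2: dimensional induction by slicing along integral surfaces.} We propose to prove the steady statement by induction on $d$, the case $d=1$ being classical. For $d=2$ the mechanism of the present paper is: the divergence-free field $m=\rho b$ has a Lipschitz Hamiltonian $H$ with $m=\nabla^\perp H$, the level sets $\{H=h\}$ decompose, by the Alberti--Bianchini--Crippa structure theorem, into essentially countably many curves, the transport equation forces $\rho u$ to be constant along $\Le^1$-a.e.\ such curve, and renormalization follows because $\beta$ preserves the property of being constant along a curve; near incompressibility ($\rho$ bounded away from $0$) is what prevents the level sets from being thick and makes the disintegration of $\Le^2$ along them well behaved. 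For the inductive step in dimension $d$ one wants the analogue: a foliation of $\RR^d$, away from an $\Le^d$-null set, by $(d-1)$-dimensional integral surfaces of $m$ on each of which $m$ restricts to a bounded, divergence-free, $BV$ field, together with a coarea/Sard-type property guaranteeing that the disintegrated measures carry no concentration. Granting such a structure the scheme closes: on $\Le^{d-1}$-a.e.\ slice, $\rho u$ is a steady solution of the lower-dimensional transport equation and hence, by the inductive hypothesis, is renormalized there; the chain rule on the slices then lifts, through the disintegration, to the chain rule for $\beta(u)$ in $\RR^d$.

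\emph{Step 3: killing the tangential error term.} Even with the correct slicing, one must rule out the measure of Ambrosio--De Lellis--Mal\'y, concentrated on the tangential set of $b$, in the identity for $\div(\beta(u)b)$. The present paper shows that this measure can be nonzero for a general nearly incompressible $BV$ field, so it must be defeated by structure rather than circumvented: the tangential set is, up to $\Le^d$-negligible sets, contained in the union of the integral surfaces of the foliation, near incompressibility descends to those surfaces, and hence the one-dimensional (in the base) / inductive theory forces $\rho u$ to be renormalized \emph{along} them — which is exactly what annihilates the error term. Making this precise should require a commutator estimate in the spirit of \cite{AmbrosioBV}, but with mollification performed anisotropically, tangentially to the integral surfaces, so as to exploit the slicewise regularity. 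The main obstacle of the whole program is the structural input of Step~2 for $d\ge 3$: there is at present no substitute for the Alberti--Bianchini--Crippa theorem beyond the plane; equivalently, it is not known that the Lagrangian flow of a bounded divergence-free $BV$ field on $\RR^d$, $d\ge 3$, induces a well-behaved disintegration of Lebesgue measure along its trajectories — the trajectories may ``bundle'' on sets of positive measure and no Sard-type statement is available. This obstruction is already present in the case $d=3$, and a complete proof of the conjecture must supply a genuinely new higher-dimensional structure theorem; once it is available, Steps~1 and~3 are expected to be comparatively routine extensions of the arguments of this paper.
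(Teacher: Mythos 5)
There is no complete argument here, and indeed there cannot be one by this route as written: the statement you are addressing is an \emph{open conjecture}, and the paper itself only gives a partial answer (the steady, planar, $BV$ case with bounded support on a simply connected domain, via the Hamiltonian $H$ with $\rho B=\nabla^\perp H$, the Alberti--Bianchini--Crippa level-set structure theorem, and the weak Sard property of Theorem~\ref{th:WSP-ni}). Your Step~1 is essentially the paper's own space--time reduction $B:=(1,b)$, $d=n+1$, and is fine modulo minor technicalities (e.g.\ $B=(1,b)$ never has bounded support, so one must localize, and the genuinely open case already starts at $n=2$, i.e.\ $d=3$). But Step~2 is not a proof step; it is precisely the missing mathematics, and you acknowledge as much. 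Concretely: in $d\ge 3$ a bounded divergence-free field is \emph{not} of the form $\nabla^\perp H$ for a scalar Lipschitz $H$, so there is no scalar whose level sets foliate the domain; the trajectories of $\rho B$ are one-dimensional curves, not $(d-1)$-dimensional surfaces, so even the geometric object you propose to slice along ("integral surfaces of $m$ on which $m$ restricts to a divergence-free $BV$ field") is not the natural analogue and is not known to exist; and there is no substitute for Theorem~\ref{th:ABC2} or for the weak Sard property beyond the plane, which is exactly what makes the disintegration in Theorem~\ref{th:div-disint} and the passage from \eqref{divuB} to the one-dimensional equations \eqref{divub-on-curve} work. Your Step~3 then presupposes the output of Step~2 (that the tangential set is swept by well-behaved slices on which near incompressibility descends), so it inherits the same gap; note also that the paper's counterexample (Theorem~\ref{th:counterexample}) shows the error term $\sigma$ can be nonzero even for a \emph{nearly incompressible} $BV$ field, so "near incompressibility prevents thick level sets" is not by itself the mechanism — in 2D it is the weak Sard property of $H$, proved from $BV$ regularity via approximate differentiability and Whitney extension, that kills the singular part, and no such statement is available in higher dimension.

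In short, what you have written is a reasonable research program that correctly identifies the known reduction and the open structural obstruction, but it does not prove the conjecture, and it should not be presented as a proof: the inductive step of Step~2 is asserted ("granting such a structure the scheme closes") rather than established, and the claimed lifting of the slicewise chain rule through the disintegration is exactly the point where a higher-dimensional analogue of the paper's Theorems~\ref{th:div-disint}, \ref{th:eq-along-curve} and \ref{th:WSP-ni} would be needed and is currently unavailable.
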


As it was shown in \cite{AdLM}, the analysis of this conjecture can be reduced to the problem of characterization of so-called \emph{chain rule for the divergence operator}.

\subsection{Chain rule for the divergence operator}
Observe that if we let $d=n+1$ and introduce a vector field $B:=(1,b)$ then \eqref{rho-continuity} and \eqref{transport-ni} can be written respectively as
\begin{equation} \label{divrhob}
\div (\rho B) = 0
\end{equation}
and
\[
\div (\rho u B) = 0,
\]
where $\div$ denotes the divergence with respect to $(t,x)\in \RR^d$. If we introduce the function $h(\rho, w):=\beta(w/\rho) \cdot \rho$ then formally \eqref{renorm} (in sense of Definition~\ref{def:transport-ni}) can be written as
\[
\div(h(\rho, \rho u) B) = 0.
\]

The problem of computation of the distribution $\div(h(\rho, \rho u) B)$ can be reduced to the so-called \emph{chain rule problem for the divergence operator} \cite{AdLM}.
The latter can be formulated in an abstract way as follows: 
for given bounded vector field $B\colon \RR^d \to \RR^d$ and bounded scalar field $u \colon \RR^d \to \RR$ one has to characterize the relation between the distributions $\div B$, $\div (uB)$ and $\div(\beta(u) B)$ for any smooth function $\beta \colon \RR\to \RR$. In the smooth setting one can compute
\begin{equation} \label{chain-rule-smooth}
\div (\beta(u) B) = (\beta(u) - u \cdot \beta'(u) ) \div B + \beta'(u) \div (uB).
\end{equation}

In view of applications for nearly incompressible BV vector fields in this paper we consider the following
concrete setting of 
the chain rule problem:
given an open set $\Omega \subset \RR^d$ and Radon measures $\lambda$ and $\mu$ on $\Omega$ and bounded functions $B\colon \Omega \to \RR^d$ and $u \colon \Omega \to \RR$ such that
\begin{gather}
\div B = \lambda
\qquad
\text{in }\ss D'(\Omega), \label{divB} 
\\
\quad
\div (uB) = \mu
\qquad
\text{in }\ss D'(\Omega) \label{divuB}
\end{gather}
one has to characterize the distribution $\div (\beta(u) B)$.

For $B\in BV$ this problem was studied in \cite{AdLM}, where it was proved that there exists a Radon measure $\nu$ such that
\begin{equation}
\div(\beta(u) B) = \nu
\qquad
\text{in }\ss D'(\Omega). \label{divbeta}
\end{equation}
Moreover it was proved that $\nu \ll |\lambda| + |\mu|$. The measure $\nu$ (and the measures $\mu$ and $\lambda$) was decomposed into absolutely continuous part $\nu^a$, Cantor part $\nu^c$ and jump part $\nu^j$ (in a similar way as the derivatives of $BV$ functions, see \cite{AdLM}) and characterizations of these measures were studied. The following results were obtained in \cite{AdLM}:
\begin{itemize}
\item the chain rule for the absolutely continuous part is similar to \eqref{chain-rule-smooth}:
\begin{equation} \label{chain-rule-a}
\nu^a = (\beta(u) - u \cdot \beta'(u) ) \lambda^a + \beta'(u) \mu^a.
\end{equation}
\item the chain rule for the jump part is given by
\begin{equation} \label{chain-rule-j}
\nu^j = \sb{(\Tr^+ B) \beta\rb{\frac{\Tr^+ (uB)}{\Tr^+(B)}} - (\Tr^-B) \beta\rb{\frac{\Tr^- (uB)}{\Tr^-(B)}}} \Ha^{d-1} \rest \Sigma
\end{equation}
where $\Tr^\pm(U) = \Tr^\pm_\Sigma(U)$ denotes the normal trace operator \cite{ambcriman04} on $\Sigma$ for a bounded vector field $U$ whose divergence is represented by Radon measure, and $\Sigma$ denotes the countably rectifiable set on which $\lambda^j$ and $\mu^j$ are concentrated.
\item the chain rule for the Cantor part is given by
\begin{equation} \label{chain-rule-c}
\nu^c = (\beta(\wave u) - \wave u \cdot \beta'( \wave u) ) \lambda^c \rest (\Omega \setminus S_u) + \beta'(\wave u) \mu^c \rest (\Omega \setminus S_u) + \sigma.
\end{equation}
where $\wave u(x)$ denotes the $L^1$ approximate limit of $u$ at point $x$, $S_u$ denotes the set of points where $u$ does not have $L^1$ approximate limit and $\sigma$ is some Radon measure concentrated on $S_u$ (i.e. $|\sigma|(\Omega \setminus S_u)=0$) and absolutely continuous with respect to $\lambda^c$ and $\mu^c$.
\end{itemize}

Therefore in order to have a complete solution of the chain rule problem one has to characterize the ``error term'' $\sigma$. In \cite{AdLM} (see also \cite{dL_Notes}) it was shown that the inclusion 
\begin{equation} \label{SuTB}
S_u \subset T_B
\end{equation}
holds up to $|D^c B|$-negligible set, where $D^c B$ is the Cantor part of the derivative of $B$ and $T_B$ is so-called \emph{tangential set of $B$}:

\begin{definition}[see \cite{AdLM}]\label{def:ts}
Suppose that $b\colon \Omega \to \RR^d$ is a bounded vector field with bounded variation.
Consider the Borel set $E$ of all points $x\in \Omega$ such that
\begin{enumerate}
\item there exists finite
\begin{equation*}
M(x):=\lim_{r\to 0} \frac{Db(B_r(x))}{|Db|(B_r(x))}
\end{equation*}
(in our notation $Db$ is a matrix with the measure-valued components $(Db)_{ij}=D_j b_i$);
\item the approximate $L^1$-limit $\tilde b(x)$ of $b$ at $x$ exists.
\end{enumerate}
Then we call \emph{tangential set} of $b$ (in $\Omega$) the set
\begin{equation*}
T_B := \setof{x\in E}{M(x) \cdot \tilde b(x) = 0}.
\end{equation*}
\end{definition}
\begin{remark} \label{rem:ts}
From the definition of the tangential set one can see that
\begin{enumerate}
\item If $b$ is constant on $\RR^2$ then $T_B = \emptyset$.
\item If $b$ is smooth, then (up to a $|Db|$-negligible set)
\begin{equation*}
T_B = \setof{x\in \Omega}{\nabla \otimes b(x) \ne 0, \;\; \ab{b(x), \nabla} b(x) = 0},
\end{equation*}
i.e. the tangential set
is the set of all points where the derivative of $b$ does not vanish and the derivative of $b$ in the direction $b(x)$ is zero. (Here $\nabla \otimes b$ is the matrix with the real-valued components $(\nabla \otimes b)_{ij} = \d_j b_i$.)
\end{enumerate}
\end{remark}

Having in mind applications for nearly incompressible vector fields, one is particularly interested in the case when
$\mu =0$ (or $\mu$ is absolutely continuous).
In view of this and \eqref{SuTB} the following question was posed in \cite{AdLM}:
\begin{question} \label{q1}
Does the Cantor part $|\lambda^c|$ vanish on the tangential set $T_B$ for any $B\in \Linf \cap BV$?
\end{question}
In \cite{AdLM} the authors constructed a counterexample of a vector field $B\in (\Linf \cap BV)(\RR^2)$ for which $|\lambda^c|(T_B)>0$,
so the answer to Question~\ref{q1} is negative.
In connection with this the following questions were posed in \cite{AdLM}:
\begin{question} \label{q2}
Let $B\in (\Linf \cap BV)(\Omega)$. Under which conditions the Cantor part of the divergence $|\lambda^c|$ vanishes on the tangential set $T_B$?
\end{question}
\begin{question} \label{q3}
Let $B\in (\Linf \cap BV)(\Omega)$ and let $\rho \in \Linf(\Omega)$ be such that $\rho \ge C >0$ a.e. in $\Omega$ and
\eqref{divrhob} holds.
Is it true that $|\lambda^c|(T_B) = 0$?
\end{question}
In this paper we prove existence of a vector field $B$ which provides a negative answer to Question~\ref{q3}.
Moreover, for this $B$ we construct a scalar field $u$ such that the term $\sigma$ in \eqref{chain-rule-c} is nonzero.

In dimension two we provide a solution to the chain rule problem for the class of \emph{steady nearly incompressible} vector fields,
which is strictly bigger than the one considered in Question~\ref{q3}.
In particular our results are applicable to the vector field $B$ which answers Question~\ref{q3} and allow us to characterize the term $\sigma$.

\subsection{Steady nearly incompressible vector fields}
\begin{definition}\label{def:nis}
Suppose that $B\colon \Omega \to \RR^d$ is a bounded vector field and $\rho \colon \Omega \to \RR$ is a scalar field
such that $C_1 \le \rho \le C_2$ a.e. in $\Omega$ for some strictly positive constants $C_1$ and $C_2$.
The vector field $B$ is called \emph{steady nearly incompressible with steady density $\rho$}
if \eqref{divrhob} holds in $\ss D'(\Omega)$.
\end{definition}
Clearly steady nearly incompressible vector fields are a subclass of nearly incompressible ones.
However not every nearly incompressible vector field $b=b(t,x)$ which does not depend on time $t$ is \emph{steady nearly incompressible} in sense of Definition~\ref{def:nis} (for instance consider $b(x)=-x$, even in one-dimensional case). Nevertheless the following holds:
\begin{remark}
If $b\colon \Omega \to \RR^d$ is nearly incompressible with density $\rho\colon \RR\times\Omega \to \RR$, then $b$ is steady nearly incompressible.
\end{remark}
Indeed, in this case the functions $\rho_T(x):=\frac{1}{2T}\int_{-T}^{T} \rho(\tau,x) \, d\tau$ for a.e. $T>0$ solve $\div(\rho_T b) = -\frac{1}{2T}\rho(T,\cdot) + \frac{1}{2T}\rho(-T,\cdot)$ in $\ss D'(\Omega)$. Moreover, $C_1 \le \rho_T \le C_2$ a.e. for any $T>0$. Consider an appropriate sequence $\{T_n\}_{n\in \NN}$ converging to $+\infty$ as $n\to \infty$. Using sequential weak* compactness of $\Linf$ we extract a subsequence (which we do not relabel) such that $\rho_{T_n} \weakstarto r$ in $\Linf$. Passing to the limit as $n\to \infty$ we conclude that $C_1 \le r \le C_2$ a.e. and $\div({rb})=0$ in $\ss D'(\Omega)$, i.e. $r$ is the desired steady density.

In this paper we provide a solution to the chain problem stated above, assuming in addition that $d=2$, $\Omega$ is simply connected and $B$ is steady nearly incompressible and has bounded support. Namely we prove that there exists a Radon measure $\nu$ on $\Omega$
such that \eqref{divbeta} holds and $\nu \ll |\lambda| + |\mu|$.
More precisely we show that there exist bounded Borel functions $u^+$ and $u^-$ (depending on $u$) such that
\eqref{divbeta} holds with $\nu$ given by

\begin{equation} \label{nu-char}
\nu = f_0(u^+,u^-) \lambda + f_1(u^+,u^-) \mu
\end{equation}
where
\begin{subequations}
 \begin{equation} \label{f0}
 f_0(u^+,u^-) := 
  \begin{cases}
   \displaystyle
   \frac{u^+ \beta(u^-)  -  u^- \beta(u^+)}{u^+ - u^-} & \quad \text{if} \quad u^+ \ne u^-, \\
   \beta(\wave u) - \wave u \cdot \beta'( \wave u) & \quad \text{if} \quad u^+ = u^- = :\wave u,
  \end{cases}
 \end{equation}
 
 \begin{equation} \label{f1}
 f_1(u^+,u^-) := 
  \begin{cases}
   \displaystyle
   \frac{\beta(u^+) - \beta(u^-)}{u^+ - u^-} & \quad \text{if} \quad u^+ \ne u^-, \\
   \beta'(\wave u) & \quad \text{if} \quad u^+ = u^- = :\wave u,
  \end{cases}
 \end{equation}
\end{subequations}

We construct the functions $u^\pm$ as the traces of $u$ along the trajectories of $\rho B$. 
In order to formulate this statement more precisely let us recall the following standard definition:
a simple (possibly closed) Lipschitz curve $C \subset \Omega$ is called a \emph{trajectory} (or an \emph{integral curve}) of a bounded Borel vector field $B\colon \Omega \to \RR^d$ if there exists a connected subset $I\subset \RR$
and a Lipschitz parametrization $\gamma \colon I \to \Omega$ of $C$ such that
\begin{equation} \label{ode}
\d_t \gamma(t) = B(\gamma(t)) \qquad \text{for a.e.} \quad  t\in I.
\end{equation}
Redefining, if necessary, the functions $\rho$ and $B$ on a negligible subset, we can assume that these functions are Borel.
We prove that there exists a \emph{disjoint family $\gg F$ of trajectories} of $\rho B$ such that
\begin{itemize}
\item the set $F:= \cup_{C \in \gg F} C$ is Borel and coincides with $\Omega$,
 up to a $(|B|\Le^2+|\lambda|+|\mu|)$-negligible subset;
\item $u$ has bounded variation along the elements of $\gg F$,
i.e. $u\circ \gamma$ belongs to $BV(I)$ for any $C \in \gg F$ with Lipschitz parametrization $\gamma\colon I \to \Omega$.
Let $(u\circ \gamma)^+$ and $(u\circ \gamma)^-$ denote the right-continuous and left-continuous representations of $u\circ \gamma$. Then for any $x\in F$ we can define the 
\emph{traces $u^\pm(x)$ of $u$ along trajectories of $\rho B$}:
\begin{equation*}
u^\pm(\gamma(t)):= (u\circ \gamma)^\pm (t)
\end{equation*}
where $\gamma\colon I\to \Omega$ is the Lipschitz parametrization of the unique element of $C\in \gg F$ such $x\in C$,
and $t$ is the unique point of $I$ such that $\gamma(t)=x$.
\end{itemize}

Our methods are based on the observation that if $\Omega$ is a simply connected then in view of \eqref{divrhob} there exists a Lipschitz function $H\colon \Omega \to \RR$ such that 
\begin{equation}\label{rhob=nablaperpH}
\rho B = \nabla^\perp H
\quad\text{a.e. in } \Omega.
\end{equation}
Then we use results of \cite{ABC1} and \cite{ABC2} on the structure of the level sets of Lipschitz functions on $\RR^2$ to identify the trajectories of $B$ with the connected components of the level sets of $H$.

Finally, we prove that if $B$ is steady nearly incompressible and has bounded variation, then the function $H$ from \eqref{rhob=nablaperpH} has the \emph{weak Sard property}, introduced in \cite{ABC1}, i.e. $H_\# \Le^2 \rest \{\nabla H = 0\} \perp \Le^1$. As a consequence we prove that steady nearly incompressible vector fields have the renormalization property. This is a partial answer to Conjecture~\ref{conj:Bressan-renorm}.

\section{Notation} \label{sec:Notation}
Throughout the paper functions and sets are tacitly assumed Borel measurable.
The measures are assumed to be defined on the appropriate Borel $\sigma$-algebras.
We will use the following notation:

\begin{tabular}[l]{p{50pt} p{300pt}}
   $\1_E$ & characteristic function of set $E$;
\\ $\Le^d$ & Lebesgue measure on $\RR^d$;
\\ $\Ha^k$ & $k$-dimensional Hausdorff measure;
\\ $|\mu|$ & nonnegative measure associated to a real- or vector-valued measure $\mu$ (total variation);
\\ $\mu \rest E$ & restriction of the measure $\mu$ on the set $E$;
\\ $a^\perp$ & the vector with the components $(-a_2, a_1)$, assuming $a=(a_1,a_2)$;
\\ $\ss{D}(M)$ & the space of test functions a smooth manifold $M$;
\\ $\ss{D}'(M)$ & the space of distributions on $M$;
\\ $\overline{A}$ & the closure of $A\subset \RR^d$;
\\ $\inte A$ & interior of $A \subset \RR^d$;
\\ $\ff{Conn}(E)$ & the set of all connected components of $E$;
\\ $\ff{Conn^*}(E)$ & the set of all connected components of $E$ which are not single points (hence they have positive $\Ha^1$ measure, see e.g. \cite{Falconer})
\\ $E^*$ & the union of all elements of $\ff{Conn^*}(E)$;
\\ $d_{\cc H}$ & Hausdorff metric;
\end{tabular}

We recall that a set $E\subset \RR^d$ is called \emph{connected} if it cannot be written as a union of two disjoint sets which are closed in the induced topology of $E$.
If $E\subset \RR^d$ and $x\in E$ then by \emph{connected component of $x$ in $E$} we mean the union of all connected subsets of $E$ which contain $x$.
Connected components of $E$ are connected and closed in $E$ (i.e. closed in the induced topology of $E$).

A measure $\mu$ on $X$ is said to be \emph{concentrated} on a set $E\subset X$ if $\mu(X \setminus E) =0$. For measures $\mu$ and $\nu$ we write $\mu \ll \nu$ if $\mu$ is absolutely continuous with respect to $\nu$ and we write $\mu \perp \nu$ if $\mu$ and $\nu$ are mutually singular.

Given metric spaces $X$, $Y$, a function $f\colon X\to Y$ and a measure $\mu$ on $X$ we denote by $f_\# \mu$ the pushforward (or image) of $\mu$ under the map $f$, i.e. $f_\#\mu$ is a measure on $Y$ such that $(f_\# \mu)(E) = \mu(f^{-1}(E))$ for any Borel set $E\subset Y$.
By $\int f \,d\mu$ or $\int f(x) \,d\mu(x)$ we denote the integral of a function $f\colon X \to Y$ with respect to measure $\mu$ on $X$.

Let $\Omega\subset \RR^d$ be an open set.
We say that $C\subset \Omega$ is a \emph{simple curve} if $C$ is an image of a nonempty connected subset $I\subset \RR$ under a continuous map $\gamma \colon \overline{I}\to \overline{\Omega}$ which is injective on $I$. Any such map $\gamma$ we will call a \emph{parametrization of $C$}.
If for some parametrization we have $\overline{I}=[0,\ell]$ and $\gamma(0)=\gamma(\ell) \in C$ then we say that $C$ is \emph{closed}. 

We say that a simple curve is \emph{Lipschitz} if it has finite $\Ha^1$ measure. 
By \emph{length} of a Lipschitz curve $C$ we mean $\Ha^1(C)$.
It is known that for any simple Lipschitz curve there exists a parametrization $\gamma$ which is a Lipschitz function (see e.g. \cite{Falconer}).

We will say that a curve $C$ is \emph{$\Omega$-closed} if either $C$ is closed or $\gamma(0)\in \d\Omega$ and $\gamma(\ell)\in \d \Omega$. For $\Omega$-closed curves we introduce the \emph{domain of $\gamma$} by
\begin{equation} \label{param-dom}
I_\gamma := 
\begin{cases}
(0,\ell) & \text{if } \gamma(0),\gamma(\ell)\in \d \Omega, \\
\RR/(\ell \ZZ) & \text{if } \gamma(0)=\gamma(\ell)\in C.
\end{cases}
\end{equation}
Clearly $I_\gamma$ is a metric space with distance given by $d(t,t') = |t-t'| \; \rr{mod\,} \ell$.
A Lipschitz parametrization $\gamma\colon [0,\ell]\to \overline{\Omega}$ of an $\Omega$-closed simple curve can always be viewed as an injective Lipschitz function from $I_\gamma$ to $\Omega$.
By $\Le$ we will always denote the Lebesgue measure on $I_\gamma$.

Unless we specify the measure explicitly, by ``almost everywhere'' we mean a.e. with respect to Lebesgue measure.

Given a function $f\colon \Omega \to \RR$ we denote by $1/f$ the function $1/(\1_{\{f=0\}} + f)$.

By \emph{Radon measure} we will for brevity mean \emph{finite} Radon measure.

Suppose that $\Omega \subset \RR^d$ is an open set, $\mu$ is a Radon measure and a vector field $V\in [\Linf(\Omega)]^d$ satisfies $\div V = \mu$ in $\ss D'(\Omega)$ (i.e. in sense of distributions). Then $\mu$ vanishes on $\Ha^{d-1}$-negligible sets (see Proposition~6 from \cite{AdLM}) and therefore can be decomposed as 
\begin{equation*}
\mu = \mu^a + \mu^j + \mu^c
\end{equation*}
where $\mu^a \ll \Le^d$, $\mu^j + \mu^c \perp \Le^d$, $\mu^c(E)=0$ for any set $E$ with $\Ha^{d-1}(E)<\infty$
and $\mu^j \ll \Ha^{d-1}$ is concentrated on some $\sigma$-finite (with respect to $\Ha^{d-1}$) set.
Such decomposition exists and is unique (see Proposition~5 from \cite{AdLM}). 
The measures $\mu^a$, $\mu^j$ and $\mu^c$ are called respectively
the \emph{absolutely continuous}, \emph{jump} and \emph{Cantor} parts of $\mu$.
Note that this decomposition is similar to the decomposition of the derivatives of BV functions \cite{AFP}.
\section{New jump-type discontinuities along trajectories}
In this section we prove existence of a vector field $B$ which provides a (negative) answer to Question~\ref{q3}.
For this vector field $B$ we also construct a scalar field $u$ such that the term $\sigma$ in \eqref{chain-rule-c} is a nontrivial measure concentrated on some set $S$. We study the discontinuities of $u$ on the set $S$ and it turns out that $u$ has jump-type discontinuities along the trajectories of the vector field $B$.

Our construction of the vector field $B$ is inspired by the counterexample from \cite{AdLM} (see Proposition~2), which is the negative answer to Question~\ref{q1}.
While the original presentation of that counterexample is completely analytic, we will construct $B$ in a substantially more geometrical way.

\subsection{Compressible and incompressible cells}
Before constructing the desired vector field $B$, let us introduce some notation. First we fix an orthonormal basis $\{e_1, e_2\}$ in $\RR^2$ and the origin $O$.

\begin{definition} \label{def:cell}
A trapezium $ABCD$ (see Fig.~\ref{fig:cells}) is called a \emph{cell} if 
\begin{itemize}
\item the bases $AB \parallel CD$ and are parallel to $e_1$;
\item $AD$ and $CB$ are the sides, and $(AD, e_2) > 0$;
\item $|CD|\le |AB|$.
\end{itemize}
\end{definition}
(The points $A$, $B$, $C$ and $D$ are always assumed to be different.)

Let us denote $\conv (ABCD):= \conv\{A,B,C,D\}$ and $\inte(ABCD) := \inte \conv (ABCD)$, where $\conv$ is the convex hull and $\inte$ is the topological interior.

\begin{figure}[h]
\centering
\includegraphics{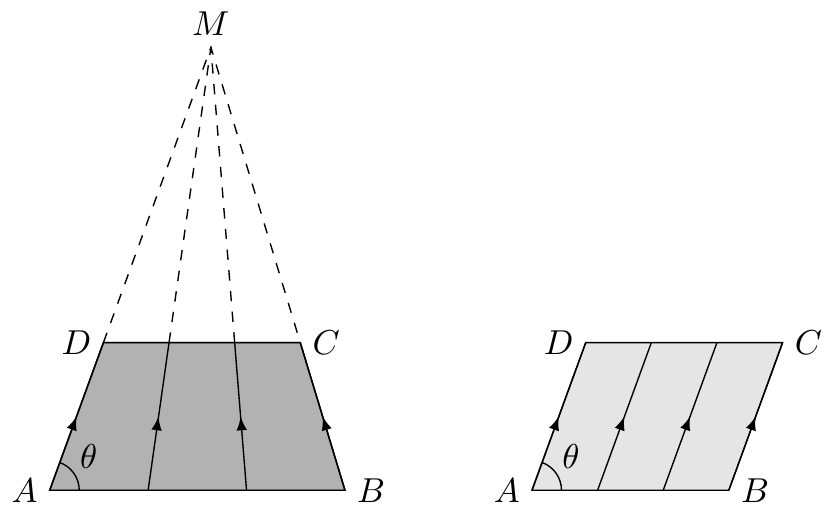}
\caption{Compressible (on the left) and incompressible (on the right) cells.}\label{fig:cells}
\end{figure}

From the Definition~\ref{def:cell} one can see that there are only the following two types of cells:
\begin{definition} \label{def:inc-cell}
A cell $ABCD$ is called
\begin{enumerate}
\item \emph{incompressible} if $|CD|=|AB|$;
\item \emph{compressible} if $|CD|<|AB|$.
\end{enumerate}
\end{definition}

\subsection{Patches and iterative construction}
In this section 
we iteratively construct the approximations $B_n$ of the desired vector field $B$, starting from a fixed compressible cell $\cc C_0$ and associated with it vector field $B_0$ (see Definition~\ref{def:cell2vec}).
We introduce a special partition of $\cc C_0$ into a union of smaller compressible and incompressible cells (which we call \emph{patches}). Next we replace each of these cells with the corresponding associated vector field. Every following step we refine the partition in a self-similar way.

In order to define the partition of a compressible cell $\mathcal C = ABCD$ we need to introduce some auxiliary points.

Let $E,F$ and $G$ denote the points on $CD$ such that $|DE|=|EF|=|FG|=|GC|$.
Similarly, let $L,M$ and $N$ denote the points on $AB$ such that $|AL|=|LM|=|MN|=|NB|$.
Let $U$ and $V$ denote the midpoints of $AD$ and $CB$ respectively.
Finally, we introduce the points $X,Y$ and $Z$ on $UV$ such that
$|UX|=|AL|$, $|XY|=|EF|$, $|YZ|=|MN|$ and $|ZV|=|GC|$.
This way we have divided the compressible cell $ABCD$ into:
\begin{itemize}
\item 4 incompressible cells: $ALXU$, $XYFE$, $MNZY$ and $ZVCG$;
\item 4 compressible cells: $UXED$, $LMYX$, $YZGF$ and $NBVZ$;
\end{itemize}
\begin{figure}[h]
\centering
\includegraphics{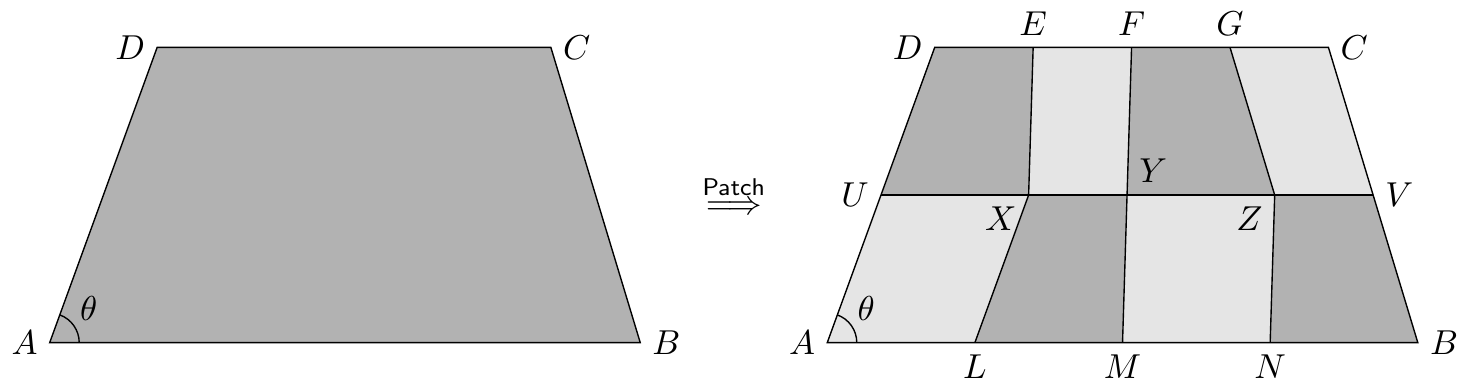}
\caption{A compressible cell before (on the left) and after (on the right) the $\mathsf{Patch}$.}\label{fig:patch}
\end{figure}

\begin{definition}\label{def:patches}
Let $\cc C=ABCD$ be a cell. We define the map $\ff{Patch}$ as follows:
\begin{itemize}
\item if $\cc C$ is incompressible, then $\ff{Patch}(\cc C):= \cc C$;
\item if $\cc C$ is compressible, then
\begin{equation*}
\ff{Patch}(\cc C):= \{ALXU, \; XYFE, \; MNZY , \; ZVCG , \; UXED, \; LMYX, \; YZGF , \; NBVZ \};
\end{equation*}
\item if $\{\cc C_i \}_{i=1}^N$ is a finite union of cells then
\begin{equation*}
\ff{Patch}\rb{\cup_{i=1}^N \cc C_i}:= \cup_{i=1}^N \ff{Patch}(\cc C_i).
\end{equation*}
\end{itemize}
\end{definition}

For any compressible cell $\cc C$ we define
\begin{equation}
\label{P_n}
\cc P_n(\cc C) := \underbrace{\ff{Patch} \circ \dots \circ \ff{Patch}}_{\text{$n$ times}} (\cc C).
\end{equation}
One can compute that $\cc P_n(\cc C)$ is a union of $4^{n}$ compressible cells and $\frac{4}{3}(4^{n}-1)$ incompressible cells. ($\cc P_0(\cc C)$ is consists only of the initial cell $\cc C$.)

Now we identify cells and ``patched'' cells with vector fields:

\begin{definition} \label{def:cell2vec}
Let $\mathcal C = ABCD$ be a cell. We define the vector field $V_{\cc C}\colon \RR^2 \to \RR^2$ \emph{associated with the cell} $\mathcal C$ as follows (see Figure~\ref{fig:cells}):
\begin{enumerate}
\item if $\mathcal C$ is incompressible, we set 
$V_{\cc C}(x):=\1_{\conv \cc C} \cdot \displaystyle\frac{AD}{\langle AD ,\, e_2 \rangle}$;
\item if $\mathcal C$ is compressible, we set 
$V_{\cc C}(x):=\1_{\conv \cc C} \cdot \displaystyle\frac{x-M}{\langle x-M ,\, e_2 \rangle}$, where $M$ is the intersection of lines AD and CB.
\end{enumerate}
Moreover if $\cup_{i=1}^N \cc C_i$ is a finite union of cells (with disjoint interiors) then we define the \emph{associated vector field} as
\begin{equation*}
V_{\cup_{i=1}^N \cc C_i} := \sum_{i=1}^N \hat \1_{\cc C_i} \cdot V_{\cc C_i}
\end{equation*}
where $\hat \1_{\cc C_i}:=\1_{\conv \cc C}/\max\rb{1,\sum_{i=1}^N \1_{\conv \cc C_i}}$.
\end{definition}

Observe that inside any cell $\cc C$ the associated vector field $V_{\cc C}$ is constant along the straight lines parallel to $V_{\cc C}$, therefore
\begin{equation} \label{cell-ts}
\ab{V_{\cc C}, \nabla} V_{\cc C} = 0
\end{equation}
inside $ABCD$. In view of Remark~\ref{rem:ts} this means that if a vector field $B\in BV(\RR^2)$ coincides with $V_{\cc C}$ inside the cell $\cc C$ then 
\begin{equation*}
\inte \cc C\subset T_B
\end{equation*}
where $T_B$ is the tangential set of $B$ (see Definition~\ref{def:ts}).
In other words, the interior of any compressible cell is the tangential set of the associated vector field.
A vector field associated with an incompressible cell has empty tangential set.

For brevity we introduce the \emph{range} of a cell $\cc C$ by
\begin{equation*}
\ff{Ran}(\cc C) := \osc_{\conv \cc C} V_{\cc C}
\end{equation*}
where $\osc_E f:=\sup_{x,y\in E} |f(x) - f(y)|$ is the standard oscillation of a function $f\colon E\to \RR^n$.

It is easy to compute that
\begin{equation}\label{cell-range}
\ff{Ran}(\cc C) = |V_{\cc C}(D) - V_{\cc C}(C)| = \frac{|AD-BC|}{\dist(AB,CD)} = 
\frac{|AB|-|CD|}{\dist(AB,CD)}.
\end{equation}

Let us now consider the cell after the $\ff{Patch}$ operation.
Let $V:=V_{\cc C}$ and $W:=V_{\ff{Patch}(\cc C)}$ denote the vector fields associated with $\cc C$ and $\ff{Patch}(\cc C)$ respectively.
For any fixed $x_2$ the maps
\begin{equation*}
x_1 \mapsto V_1(x_1,x_2)
\quad\text{and}\quad
x_1 \mapsto W_1(x_1,x_2)
\end{equation*}
are (non-strictly) decreasing functions of~$x_1$ (inside $\cc C$). They also take the same values on the sides $AD$ and $BC$.
Therefore in view of \eqref{cell-range} we obtain
\begin{equation}\label{changes}
\sup_{x\in \conv \cc C}|V_{\cc C}(x) - V_{\ff{Patch}(\cc C)}(x)| \le \ff{Ran}(\cc C).
\end{equation}

For any $a\in \RR^2$ with $a_2 \ne 0$ let us denote 
\begin{equation*}
\arg a:= \pi/2 - \arctan(a_1/a_2).
\end{equation*}
By Definition~\ref{def:cell} we have 
\begin{equation*}
\angle DAB \le \arg V_{\cc C}(x) \le \pi - \angle ABC
\end{equation*}
for any $x\in \conv \cc C$.
From Definition~\ref{def:patches} it is easy to see that for any fixed $x_2$ the map
\begin{equation*}
x_1 \mapsto \arg V_{\ff{Patch}(\cc C)}(x_1,x_2)
\end{equation*}
is (nonstrictly) increasing function of $x_1$  which takes values $\angle DAB$ and $\pi - \angle ABC$ on the edges $AD$ and $BC$ respectively. Therefore
\begin{equation} \label{angle-changes}
\angle DAB \le \arg V_{\ff{Patch}(\cc C)}(x) \le \pi - \angle ABC
\end{equation}
for any $x\in \conv \cc C$.

Let us fix some initial compressible cell $\cc C=ABCD$.
Let $\Omega:= \inte \cc C$ denote its interior and let $I$ denote the projection of $\Omega$ on the axis $Oe_2$.
We define the $n$-th approximation of the desired vector field as follows:
\begin{equation} \label{B_n}
B_n := V_{\cc P_n(\cc C)}
\end{equation}
where $\cc P_n$ and $V$ were introduced in \eqref{P_n} and Definition~\ref{def:cell2vec} respectively.

\subsection{Passage to the limit}
In this section we consider the approximate vector fields $B_n$ defined by \eqref{B_n} and pass to the limit as $n\to \infty$. Our proofs will be direct and elementary, nevertheless we will present all the details for the sake of completeness.

\begin{lemma}\label{lem:B_n-ests}
There exist a constant $C$ and a bounded vector field $B\colon \Omega \to \RR^2$ such that
\begin{enumerate}
\labitem{(i)}{uniform-bounds}
$\sup_{\Omega}|B_n| \le C$ for all $n\in \NN$;
\labitem{(ii)}{uniform-convergence}
$B_n \to B$ uniformly on $\Omega$ as $n\to \infty$;
\labitem{(iii)}{BV-upper}
$\|B_n\|_{BV(\Omega)}\le C$ for all $n\in \NN$.
\end{enumerate}
(The constant $C$ depends only on the geometric properties of the initial cell $\cc C$, namely $|AB|$, $\ff{Ran}(ABCD)$ and $\dist(AB,CD)$.)
\end{lemma}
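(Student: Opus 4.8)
The plan is to exploit the self-similar structure of the construction: passing from $B_n$ to $B_{n+1}$ amounts to applying the $\ff{Patch}$ operation independently inside each of the $4^n$ compressible cells of $\cc P_n(\cc C)$, while leaving the associated field unchanged on the incompressible cells. The three assertions will follow from three separate bookkeeping estimates, all ultimately controlled by the geometry of $\cc C$.

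First I would prove \ref{uniform-bounds}. Inside each cell $\cc C'$ of $\cc P_n(\cc C)$ the field $B_n$ equals $V_{\cc C'}$, so $|B_n| \le \sup_{\conv \cc C'} |V_{\cc C'}|$. Since every cell produced in the construction is a sub-trapezium of $\cc C$ with bases parallel to $e_1$ and lateral directions whose argument stays in $[\angle DAB, \pi - \angle ABC]$ by \eqref{angle-changes}, the value of $V_{\cc C'}$ is a vector with second component exactly $1/\langle \cdot, e_2\rangle$-normalised... more precisely, from Definition~\ref{def:cell2vec} one reads off that the second component of $V_{\cc C'}$ is identically $1$ and the first component is bounded by $\cot(\angle DAB) \vee \cot(\angle ABC)$ wait—better: by Definition~\ref{def:cell2vec} the vector $V_{\cc C'}$ points from $M'$ (apex of $\cc C'$) through $x$ normalised so that its $e_2$-component is $1$, hence $|V_{\cc C'}(x)|^2 = 1 + (\text{slope})^2$ and the slope is controlled by $\angle DAB$ and $\angle ABC$ via \eqref{angle-changes}. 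The normalisation factor $\hat\1$ only decreases magnitudes on overlaps, so $\sup_\Omega |B_n| \le C$ with $C = C(\angle DAB, \angle ABC)$, uniformly in $n$.

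Next, \ref{uniform-convergence}. The key estimate is \eqref{changes}: applied inside a single cell $\cc C'$ it gives $\sup_{\conv \cc C'}|V_{\cc C'} - V_{\ff{Patch}(\cc C')}| \le \ff{Ran}(\cc C')$. Summing over the $4^n$ compressible cells of $\cc P_n(\cc C)$ (the incompressible ones contribute nothing, since $\ff{Patch}$ fixes them), and noting these cells have pairwise disjoint interiors, I get $\sup_\Omega |B_{n+1} - B_n| \le \max_{\cc C' \in \cc P_n(\cc C)} \ff{Ran}(\cc C')$. It remains to show this maximum decays geometrically. By \eqref{cell-range}, $\ff{Ran}(\cc C') = (|AB'| - |CD'|)/\dist(AB', CD')$. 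A direct inspection of the eight sub-cells produced by $\ff{Patch}$ in the auxiliary-point construction (the points $E,F,G$ on $CD$, $L,M,N$ on $AB$, and $X,Y,Z$ on the midline $UV$) shows that each compressible sub-cell has top and bottom bases of length $\tfrac14$ those of the parent, and half the height, so its range is halved: $\ff{Ran}(\cc C'') = \tfrac12 \ff{Ran}(\cc C')$ for every compressible child $\cc C''$. Hence $\max_{\cc P_n} \ff{Ran} \le 2^{-n}\ff{Ran}(\cc C)$, the series $\sum_n 2^{-n}\ff{Ran}(\cc C)$ converges, $\{B_n\}$ is uniformly Cauchy, and the uniform limit $B$ exists and is bounded.

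Finally, \ref{BV-upper}. I would estimate $\|B_n\|_{BV(\Omega)} = \|B_n\|_{L^1} + |DB_n|(\Omega)$; the $L^1$ part is bounded by $C\Le^2(\Omega)$ via \ref{uniform-bounds}. For the total variation, $B_n$ is piecewise of the form $V_{\cc C'}$ (each smooth, indeed Lipschitz with controlled constant, on $\conv \cc C'$) glued along the cell boundaries, so $DB_n$ consists of an interior absolutely continuous part plus a jump part on the union of cell edges. The absolutely continuous part: on a compressible cell $\cc C'$, $\nabla V_{\cc C'}$ has size $O(1/\dist(AB',CD'))$ but area $O(\dist(AB',CD') \cdot |AB'|)$, and a short computation (the slope of $V_{\cc C'}$ varies linearly across width $|AB'|$ with total variation $\ff{Ran}(\cc C')$) gives $\int_{\conv\cc C'}|\nabla V_{\cc C'}| \lesssim \ff{Ran}(\cc C')\cdot|AB'| + |AB'|$ — the cross-section argument: for a.e. $x_2$, $x_1 \mapsto V_{\cc C'}(x_1,x_2)$ is monotone with oscillation $\le \ff{Ran}(\cc C')$, so its variation in $x_1$ is $\le \ff{Ran}(\cc C')$, and integrating in $x_2$ over the height gives control; the $\d_{x_2}$ component is handled similarly. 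The jump part on the edges: $\Ha^1$ of all cell edges at stage $n$ is comparable to $n$ times a fixed length (each $\ff{Patch}$ roughly doubles total edge length relative to area but the cells shrink), and the jump amplitude across each edge is at most the local range; one checks these contributions are summable. I would organise this as: $|DB_{n+1}|(\Omega) \le |DB_n|(\Omega) + (\text{extra variation created by one round of } \ff{Patch})$, and bound the increment by $C \sum_{\cc C' \in \cc P_n} \ff{Ran}(\cc C') \cdot |AB'| \le C \cdot 2^{-n} \cdot (\text{total base length}) \le C' 2^{-n}\cdot\text{(something summable)}$, using again that ranges halve while base lengths and heights are controlled.

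The main obstacle I expect is assertion \ref{BV-upper}, specifically getting a clean uniform bound on the jump part of $DB_n$ along the ever-refining grid of cell edges: one must verify that although the total length of cell boundaries grows with $n$, the jump amplitudes decay fast enough (the fields on two adjacent cells already agree on shared lateral edges by the remark before \eqref{changes}, and on horizontal edges the jumps are controlled by the ranges, which are $O(2^{-n})$), so that the per-step increment of total variation is summable in $n$. The monotonicity-in-$x_1$ structure noted after \eqref{changes} is the right tool for the absolutely continuous part and I would lean on it heavily; for the jump part the observation that $B_{n+1}$ and $B_n$ agree on the lateral boundary of every cell of $\cc P_n(\cc C)$ is what prevents the variation from blowing up.
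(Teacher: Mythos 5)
Your proposal is correct and follows essentially the same route as the paper: assertion \ref{uniform-convergence} via the per-cell estimate \eqref{changes} combined with the halving of ranges (bases scale by $4^{-n}$, heights by $2^{-n}$), and assertion \ref{BV-upper} by splitting $DB_n$ into an absolutely continuous part controlled through the monotone cross-sections $x_1\mapsto (V_{\cc C'})_1(x_1,x_2)$ and a jump part supported on the horizontal bases and midlines with amplitudes $O(2^{-n})$ and bounded total length, exactly as in the paper. The only (harmless) deviation is that you obtain \ref{uniform-bounds} directly from the angle bound \eqref{angle-changes}, whereas the paper deduces it from the convergence of the telescoping series $\sum_n\sup_\Omega|B_{n+1}-B_n|$; both are valid.
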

\begin{proof}
Let $\cc C':=A'B'C'D'$ be any compressible cell in $\cc P_n(\cc C)$.

Let us compare $\cc C'$ with the initial cell $\cc C$. By construction (see Definition~\ref{def:patches}) we have
\begin{itemize}
  \item $|A'B'| = 4^{-n} |AB|$, 
  \item $|C'D'| = 4^{-n} |CD|$,
  \item $\dist(A'B',C'D') = 2^{-n} \dist(AB,CD)$
\end{itemize}
and by \eqref{cell-range}
\begin{equation*}
\ff{Ran}(\cc C') = 2^{-n} \ff{Ran}(\cc C).
\end{equation*}
Together with \eqref{changes} this implies that
\begin{equation}\label{deltaB}
|B_{n+1}-B_{n}| \le 2^{-n} \ff{Ran}(\cc C)
\end{equation}
everywhere on $\Omega$. Hence the series 
\begin{equation*}
\sum_{n=1}^\infty \sup_{\Omega} |B_{n+1}-B_{n}|
\end{equation*}
converge and the sequence $\{B_n\}_{n\in \RR}$ is uniformly bounded.
Therefore \ref{uniform-convergence} and \ref{uniform-bounds} are proved.

It remains to prove the estimate of total variation \ref{BV-upper}.

First we are going to estimate the absolutely continuous part of $D B_n$.
Let $M$ denote the intersection point of straight lines passing through $A'D'$ and $B'C'$.
Consider for a moment the local coordinates $(x_1,x_2)$ with origin in $M$.
Let $v = V_{\cc C'}$ be the associated with $\cc C'$ vector field.
Then by Definition~\ref{def:cell2vec} we have $v(x_1,x_2) = (\frac{x_1}{x_2}, 1)$ when $(x_1,x_2)\in \conv \cc C'$.
Hence
\begin{equation*}
D^a v = \begin{pmatrix} \frac{1}{x_2} & -\frac{x_1}{x_2^2} \\ 0 & 0\end{pmatrix} \Le^2 \rest \inte \cc C'
\end{equation*}

In view of \eqref{angle-changes} we also have
\begin{equation*}
\angle DAB \le \arg B_n(x) \le \pi - \angle ABC
\end{equation*}
for any $x\in \Omega$ and any $n\in \NN$.
Hence $\angle DAB \le \angle D'A'B' \le \pi - \angle ABC$, so the cell $\cc C'$ is contained in the half-cone
\begin{equation*}
\setof{(x_1,x_2)}{|x_1|\le c |x_2|, \; x_2 \le 0},
\end{equation*}
where $c:= \max(\cot \angle DAB, \cot \angle ABC)$ is independent of $n$.
Hence 
\begin{equation*} 
\abs{\partial_2 v_1} \le c \abs{\partial_1 v_1}
\end{equation*}
inside $\cc C'$ and consequently
\begin{equation}
\label{ac-cell-est-above}
|D^a v|(\Omega) \le (1+c) \int_{\inte \mathcal C'} |\partial_1 v| \mathcal L^2 = (1+c) \ff{Ran(\cc C')} \cdot \dist(A'B',C'D').
\end{equation}

Since $\cc P_n(\cc C)$ consists of $4^n$ compressible cells with ranges $2^{-n} \ff{Ran}(\cc C)$, by \eqref{ac-cell-est-above} we have
\begin{align}
\label{ac-est-upper}
|D^a B_n|(\Omega) =&~ \sum_{\cc C' \in \cc P_n} |D^a V_{\cc C'}|(\Omega) \le  4^n \cdot 2^{-n} (1+c)\ff{Ran}(\cc C) \cdot 2^{-n} \dist(AB,CD) \\
\le&~ (1+c)\ff{Ran}(\cc C) \cdot \dist(AB,CD)
\end{align}
is bounded for all $n\in \NN$.

Finally, let us estimate the jump part of $B_n$. The jumps of $B_{n+1} - B_{n}$ are concentrated on the union of upper and lower bases and midlines of the compressible cells in $\cc P_n(\cc C)$. The $\Ha^1$-measure of this union is not greater than $4^n \cdot 3 \cdot 4^{-n} |AB|$. But from \eqref{deltaB} it follows that the values of jumps are not greater than $2\cdot 2^{-n} \ff{Ran}(\cc C)$. Therefore
\begin{equation}\label{delta-jump-est}
|D^j (B_{n+1} - B_{n})|(\inte \cc C) \le 6 \cdot 2^{-n} |AB| \cdot \ff{Ran}(\cc C)
\end{equation}
and (since $D^j B_0(\Omega)=0$)
\begin{equation}\label{jp-est}
|D^j B_{n}|(\inte \cc C) \le \sum_{k=1}^n |D^j (B_{k}-B_{k-1})|(\Omega) \le \sum_{k=1}^n 12 \cdot 2^{-k} |AB| \cdot \ff{Ran}(\cc C) \le 12|AB| \cdot \ff{Ran}(\cc C)
\end{equation}
is bounded for all $n\in \NN$. 

The estimate \ref{BV-upper} is a consequence of \eqref{ac-est-upper} and \eqref{jp-est}.
\end{proof}

In order to study the properties of the limit vector field $B$ we need to introduce some auxiliary sets.

Assuming that the origin coincides with point $A$ of the initial cell, let
\begin{equation} \label{def:L}
L:=\setof{\rb{x_1, \frac{m}{2^n}\dist(AB,CD)}}{x_1 \in \RR, \; n\in \NN, \; m\in \ZZ \cap[0, 2^n]}
\end{equation}
denote the union of the horizontal lines containing bases and midlines of all the cells obtained by iterations of $\ff{Patch}$.

Let $\cc P^\rr{comp}_n(\cc C)$ denote the union of all compressible cells in $\cc P_n(\cc C)$. Let
\begin{equation} \label{def:S}
S_n:=\bigcup_{\cc C' \in \cc P^\rr{comp}_n(\cc C)} \conv C',
\qquad
S:=\bigcap_{n\in \NN} S_n.
\end{equation}

The following lemma characterizes some fine properties of the limit vector field $B$ and the sequence $B_n$:

\begin{lemma}\label{lem:B}
Let $B$ denote the limit of $\{B_n\}_{n\in\NN}$ given by Lemma~\ref{lem:B_n-ests}. Then $B\in BV(\Omega)$ and
\begin{itemize}
\item the absolutely continuous part $D^a B$ is zero;
\item the jump part $D^j B$ is concentrated on $L$ and
\begin{equation*}
D^j B_n \weakstarto D^j B
\end{equation*}
locally in $\Omega$ as $n\to \infty$;
\item $\div^a B = \div^j B = 0$;
\item the Cantor part $D^c B$ is concentrated on the set $S$ (defined in \eqref{def:S}) and
\begin{equation*}
D^a B_n \weakstarto D^c B
\end{equation*}
locally in $\Omega$ as $n\to \infty$;
\item there exists a constant $\alpha>0$ such that $\div^c B = -\alpha \Ha^{3/2} \rest S$.
\end{itemize}
\end{lemma}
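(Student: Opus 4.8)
The plan is to establish the five assertions essentially in the order listed, exploiting the self-similar structure of the construction and the estimates already proved in Lemma~\ref{lem:B_n-ests}. From that lemma we already know $B\in BV(\Omega)$ and $\|B_n\|_{BV}\le C$, so by compactness (up to a subsequence, but the uniform convergence $B_n\to B$ pins the limit) $DB_n \weakstarto DB$ locally in $\Omega$. First I would separate the contributions: write $DB_n = D^aB_n + D^jB_n$ (there is no Cantor part at finite $n$ since each $B_n$ is piecewise smooth), and analyze the weak* limits of the two pieces separately. For the absolutely continuous part, the estimate \eqref{ac-est-upper} together with the fact that $|D^aB_n|$ is supported on $S_n$ and $|S_n|\to 0$ (each $S_n$ is a union of $4^n$ cells of area $\sim 4^{-n}\cdot 2^{-n}$, so $|S_n|\lesssim 2^{-n}$) shows $D^aB_n \weakstarto 0$ in the sense of absolutely continuous measures; hence any weak* limit of $D^aB_n$ is singular and concentrated on $S=\bigcap_n S_n$. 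Meanwhile $|D^jB_n|$ is supported on $L$ (the bases and midlines), which is closed under the iteration, and from the telescoping estimate \eqref{delta-jump-est} the sequence $D^jB_n$ is Cauchy in the (local) total-variation norm, so it converges strongly to some measure concentrated on $\overline L$; this limit, being locally a strong $BV$-limit of jump parts on a fixed $1$-rectifiable set, is itself the jump part $D^jB$, while the weak* limit of $D^aB_n$, being singular and vanishing on sets of finite $\Ha^1$ measure (this needs to be checked — see below), is the Cantor part $D^cB$. Uniqueness of the a/j/c decomposition then gives $D^aB=0$ and the concentration statements.

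Next, $\div^aB=\div^jB=0$. For $\div^jB$: along each horizontal segment of $L$ the jump of $B$ between two adjacent cells is purely horizontal in its first component's variation but the relevant quantity for the divergence is the jump of the \emph{normal} (vertical) component $B_2$; by the observation in the construction, adjacent cells sharing a horizontal edge have the same value of $B_2$ there (this is built into $\ff{Patch}$ — the vector fields match the normal component across bases and midlines), so the jump of $B_2$ across $L$ vanishes, whence $\div^jB = [B_2]\,\Ha^1\rest L = 0$. Actually one must be slightly careful: $B$ restricted to a compressible cell is $(x_1/x_2,1)$ in local coordinates, so $B_2\equiv 1$ in the natural normalization and the matching is immediate; on vertical interfaces $\div$ picks up $[B_1]$ but those interfaces carry no jump by the monotone-matching property \eqref{changes}. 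For $\div^aB$: since $D^aB=0$ a fortiori $\div^aB=0$.

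Finally, the sharp statement $\div^cB = -\alpha\,\Ha^{3/2}\rest S$. Here the strategy is: (a) identify that $\div^cB$ is the weak* limit of $\div^aB_n$ (the jump divergences being zero for every $n$ by the same matching argument, and $\div B_n \weakstarto \div B$); (b) compute $\div^a B_n$ cell by cell. On a single compressible cell $\cc C'=A'B'C'D'$, with $v=(x_1/x_2,1)$ in coordinates centered at its apex $M$, one has $\div^a v = \partial_1 v_1 = 1/x_2$, and integrating over $\conv\cc C'$ gives a negative number proportional to $\ff{Ran}(\cc C')\cdot\dist(A'B',C'D') \cdot(\text{geometry})$; by self-similarity each of the $4^n$ compressible cells at stage $n$ contributes the same amount, scaled by $2^{-n}$ (one factor from $\ff{Ran}$) times $2^{-n}$ (from $\dist$), i.e. total mass $|\div^aB_n|(\Omega) = 4^n\cdot c\cdot 4^{-n} = c$ is constant in $n$, and it is distributed uniformly (in the appropriate self-similar sense) over the $4^n$ cells approximating $S$. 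Thus the limiting measure $-\div^cB$ is a self-similar measure on $S$ with equal weights on the $4^n$ generation-$n$ pieces, while $\Ha^{3/2}$ restricted to $S$ is, by the standard theory of self-similar sets (the cells shrink by factor $2^{-n}$ in one direction and $4^{-n}$ in another — one computes the $\Ha$-dimension of $S$), also such a measure; hence the two are proportional, with $\alpha>0$ the ratio. The key computation is to verify the dimension of $S$ is exactly $3/2$: with $4^n$ pieces each of diameter comparable to $2^{-n}$, one gets $s$ from $4^n (2^{-n})^s \asymp 1$, i.e. $2^{2n} = 2^{ns}$, $s = 2$? — this needs care: the relevant covering number versus diameter scaling must be recomputed using that a compressible cell at stage $n$ has width $\sim 4^{-n}$ and height $\sim 2^{-n}$, so its diameter is $\sim 2^{-n}$ but $4^n$ of them have total "$\Ha^{3/2}$-content" $4^n(2^{-n})^{3/2} = 4^n 2^{-3n/2} = 2^{n/2}\to\infty$, whereas one needs finiteness — so in fact $S$ should be covered more efficiently, and the honest statement is that $\Ha^{3/2}(S)\in(0,\infty)$ after the correct counting (grouping cells that are vertically aligned), which is exactly the content to be checked.

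\textbf{Main obstacle.} I expect the genuinely delicate point to be the last bullet: proving that the weak* limit of $\div^a B_n$ is \emph{exactly} $\Ha^{3/2}\rest S$ up to a constant, which requires (i) showing $S$ has finite, positive $\Ha^{3/2}$-measure — a non-trivial geometric measure theory computation given the anisotropic ($2^{-n}$ by $4^{-n}$) scaling of the cells, so that covers must be chosen adapted to the elongated shape — and (ii) showing the limit measure is the \emph{normalized} self-similar measure, i.e. that $\div^aB_n$ does not concentrate on a smaller subset in the limit. Point (ii) follows from the uniformity of the per-cell mass and the fact that $|S_n|\to 0$ forces no absolutely continuous residue, but making the identification with $\Ha^{3/2}$ precise will need a density argument: one shows that for $\Ha^{3/2}$-a.e.\ $x\in S$ the limit $\lim_{r\to 0} |\div^cB|(B_r(x))/r^{3/2}$ exists and is a positive constant independent of $x$, using self-similarity, and then invokes the differentiation theorem for measures. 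The secondary obstacle is the bookkeeping in the claim $D^jB_n \weakstarto D^jB$ and $D^aB_n \weakstarto D^cB$ \emph{separately} — i.e. that the a/j split of $B_n$ converges to the a/j/c split of $B$ with the a-parts going to the c-part; this requires knowing that the strong $BV$-limit of the jump parts stays a jump-type measure (true since it lives on the fixed $\sigma$-finite set $\overline L$ and the densities converge in $L^1(\Ha^1\rest L)$) and that the weak* limit of $D^aB_n$ charges no set of finite $\Ha^1$-measure (true because each $D^aB_n$ is dominated by $\Le^2\rest S_n$ with $|S_n|\to0$, and a short argument shows any such limit is $\Ha^1$-diffuse).
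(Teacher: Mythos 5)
Your overall strategy coincides with the paper's: pass to the weak* limit in $DB_n=D^aB_n+D^jB_n$, identify the limit of the jump parts (concentrated on $L$) with $D^jB$ and the limit of the absolutely continuous parts (concentrated on $S$) with $D^cB$, and compute $\div^cB$ cell by cell as the limit of $\div^aB_n$. Two steps, however, are not justified by what you wrote.

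First, the identification $\lim D^aB_n = D^cB$ hinges on showing that the weak* limit $\nu$ of $D^aB_n$ has no jump part, i.e.\ $\nu\rest L=0$ (since every point of $\Omega\setminus L$ is a Lebesgue point of $B$, the jump set lies in $L$). Your justification --- that ``each $D^aB_n$ is dominated by $\Le^2\rest S_n$ with $|S_n|\to0$, and a short argument shows any such limit is $\Ha^1$-diffuse'' --- is false as a general principle: weak* limits of absolutely continuous measures with shrinking supports can perfectly well be atomic (think of $|B_\eps(x)|^{-1}\Le^2\rest B_\eps(x)\weakstarto\delta_x$). What is actually needed, and what the paper uses, is the \emph{uniform} stripe estimate \eqref{ac-est-stripe-upper}, $|D^aB_n|(\Omega\cap\RR\times(a,b))\le 2\,\ff{Ran}(\cc C)\,(b-a)$, which survives the weak* limit and shows $|\nu|$ gives zero mass to every horizontal line, hence to $L$. (A related slip: the TV-limit of $D^jB_n$ is concentrated on $L$ itself, not on $\overline L$ --- the dyadic heights are dense, so $\overline L$ is the whole stripe and that statement would be vacuous.)

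Second, the last bullet. Your dimension computation is incorrect and you acknowledge leaving it open: covering $S_n$ by $4^n$ balls of diameter $\sim2^{-n}$ gives $s=2$ from the similarity count and $4^n(2^{-n})^{3/2}\to\infty$, which tells you nothing. The resolution is not to ``group vertically aligned cells'' but to subdivide each elongated trapezium (width $\sim4^{-n}$, height $\sim2^{-n}$) into $\sim2^n$ pieces of diameter $\sim4^{-n}$: then $S_n$ is covered by $4^n\cdot2^n=8^n$ balls of radius $4^{-n}$ and $\sum(\mathrm{diam})^{3/2}\asymp 8^n\cdot4^{-3n/2}=1$, which gives $\Ha^{3/2}(S)<\infty$ and dimension $\le3/2$; combined with the fact that the per-cell mass $|\div^aB_n|(\cc C')=4^{-n}\ff{Ran}(\cc C)$ is equidistributed over the $4^n$ generation-$n$ pieces, this yields the proportionality $\div^cB=-\alpha\Ha^{3/2}\rest S$. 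Your proposed density/differentiation argument would also work but is not carried out; as written, the claim $\div^cB=-\alpha\Ha^{3/2}\rest S$ remains unproved in your proposal.
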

\begin{proof}
For convenience of the reader we divide the proof in several steps.

\textit{Step 1.}
First we apply Lemma~\ref{lem:B_n-ests}: \ref{uniform-convergence}, \ref{BV-upper} and lower semicontinuity of total variation imply that $B\in BV(\Omega)$.

Let us consider the sets introduced in \eqref{def:L} and \eqref{def:S}.
By our construction the area of all compressible cells in $\cc P_n(\cc C)$ equals to $\Le^2(S_n) = 2^{-n} \Le^2(\Omega)$, so the intersection $S$ of all $\{S_n\}$ is $\Le^2$-negligible.
Clearly the union of the boundaries of all cells in $\cc P_n(\cc C)$ is $\Le^2$-negligible (for all $n\in \NN$).
Hence a.e. point $x\in \Omega$ 
belongs to $\inte \conv \cc I$ for some incompressible cell $\cc I \in \cc P_n(\cc C)$ for some $n\in \NN$. Hence inside this incompressible cell $B$ coincides with the vector field $v$ associated with $\cc I$. But $v$ is constant inside $\cc I$ by Definition~\ref{def:cell2vec}. Hence the approximate differential of $B$ is zero a.e. in $\Omega$ and so $D^a B=0$
(e.g. by Theorem 3.83 from \cite{AFP}).

\textit{Step 2.}
Let us prove that $D^j B$ is concentrated on $L$. 

Observe that any $x\in \Omega \setminus L$ is a Lebesgue point of $B$. Indeed, any such $x$ is a Lebesgue point of $B_n$ for all $n\in \NN$. Since 
\begin{multline*}
\int_{B_r(x)} |B(y)-B(x)| \, dy \le {} \\ {} \le
\int_{B_r(x)} |B(y)-B_n(y)| \, dy +
\int_{B_r(x)} |B_n(y)-B_n(x)| \, dy + \int_{B_r(x)} |B_n(x)-B(x)| \, dy
\end{multline*}
using uniform convergence \ref{uniform-convergence} one can directly show that $B(x)$ is the Lebesgue value of $B$ at $x$.

Since all the points in $\Omega \setminus L$ are Lebesgue points of $B$, the jump part $D^j B$ is concentrated on $L$ (by definition of the jump part, see e.g. \cite{AFP}).

\textit{Step 3.}
By \eqref{delta-jump-est} the jump part $D^j B_n$ converges to some measure $\mu$ concentrated on $L$:
\begin{equation*}
D^j B_n \weakstarto \mu
\end{equation*}
locally in $\Omega$ as $n\to \infty$.
Since $B_n \to B$ uniformly in $\Omega$, we have $D B_n \weakstarto DB$ locally in $\Omega$ as $n\to \infty$.
Therefore 
\begin{equation*}
D^a B_n \weakstarto \nu
\end{equation*} 
locally in $\Omega$ as $n\to \infty$, where $\nu := DB- \mu$.

Since $\mu+\nu = D^c B + D^j B$, and both $D^j B$ and $\mu$ are concentrated on $L$, we have $D^j B = \mu + \nu\rest L$. Therefore to prove that $D^j B = \mu$ it is sufficient to show that $\nu \rest L =0$.

Using the same computations as in derivation of \eqref{ac-est-upper} one can derive the estimate
\begin{equation}
\label{ac-est-stripe-upper}
|D^a B_n|(\Omega\cap \RR\times(a,b)) \le
2 \ff{Ran}(\cc C) \cdot (b-a)
\end{equation}
for any $a,b\in I$ and $n\in \NN$. Covering $L$ by horizontal stripes with arbitrary small total projection on $Oe_2$ it is possible to show that $|\nu|(L)<\eps$ for any $\eps>0$. Hence indeed $\nu \rest L =0$.

Now we can conclude that $\mu = D^j B$ and $\nu = D^c B$.

By construction of $B_n$ for any $n\in \NN$ we have $\div^j B_n = 0$.
But $\div^j B_n \weakstarto \div^j B$ as $n\to \infty$, since $D^j B_n \weakstarto D^j B$ as $n\to \infty$.
Therefore $\div^j B = 0$.

\textit{Step 4.}
The set $S$ is closed, so for any compact $K\subset \Omega \setminus S$ we have $\dist(K,S)>0$. Hence $|D^a B_n|(K)=0$ for sufficiently large $n\in \NN$. Therefore $D^c B$ is concentrated on $S$ (by inner regularity).

Observe that $S_n$ consists of $4^n$ trapeziums whit height $2^{-n} \dist(AB,CD)$ and bases $4^{-n}|AB|$ and $4^{-n}|CD|$. Therefore $S_n$ can be covered by $4^n \cdot 2^n$ balls of radius $r_n:= 4^{-n}|AB|$
(assuming for simplicity that $\dist(AB,CD)=1$).
Hence the Hausdorff dimension of $S$ is $d=3/2$.

From the previous step we know that $\div^c B$ is the *-weak limit of the negative measures $\div^a B_n = D^a_1 (B_n)_1$. For any fixed $n\in \NN$ for any compressible cell $\cc C'=A'B'C'D' \in \cc P_n(\cc C)$
using Lemma~\ref{lem:B_n-ests} we obtain
\begin{equation*}
(\div^a B)(\inte \cc C') = -\ff{Ran} (\cc C') \cdot \dist(A'B',C'D') = -4^{-n} \ff{Ran}(\cc C)
\end{equation*}
Hence there exists a constant $\alpha>0$ (which depends only on the range of the initial cell $\cc C$) such that
$\div^c B = - \alpha \Ha^{3/2} \rest S$.
\end{proof}

\begin{remark}
In fact since $D^j B_n \weakstarto D^j B$ from \eqref{delta-jump-est} it follows that $|D^j B_n - D^j B|(\Omega) \to 0$ as $n \to \infty$.
\end{remark}

Now we prove that the vector fields $B_n$ are \emph{uniformly} nearly incompressible in the following sense:
\begin{lemma}\label{lem:uniform-NI}
There exists a constant $C>0$ (depending only on the ratio between the bases of the initial cell $\cc C$) such that for any $n\in \NN$ there exists $\rho_n \in \Linf(\Omega)$ which satisfies
\begin{equation*}
\div (\rho_n B_n) = 0 \quad\text{in} \; \;\ss D'(\Omega)
\end{equation*}
and $\frac{1}{C} \le \rho_n \le C$ a.e. in $\Omega$.
\end{lemma}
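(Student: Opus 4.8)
The plan is to construct the densities $\rho_n$ explicitly, cell by cell, exploiting the fact that $B_n = V_{\cc P_n(\cc C)}$ is piecewise of the two elementary types described in Definition~\ref{def:cell2vec}. First I would treat a single cell $\cc C' = A'B'C'D'$. If $\cc C'$ is incompressible, then $V_{\cc C'}$ is a constant vector (parallel to $A'D'$) on $\conv \cc C'$, so $\div V_{\cc C'}=0$ there and the density can be taken identically $1$ on $\conv \cc C'$. If $\cc C'$ is compressible, then in the local coordinates centered at the apex $M$ of the lines $A'D'$, $B'C'$ one has $V_{\cc C'}(x_1,x_2)=(x_1/x_2,\,1)$; a direct computation gives $\div V_{\cc C'} = 1/x_2$, and one checks that $\rho(x_1,x_2) := |x_2|/\dist(A'B',C'D')$ (suitably normalized, i.e. the distance from $x$ to the apex measured along $e_2$, divided by the height of the cell) satisfies $\div(\rho V_{\cc C'})=0$ inside $\conv \cc C'$, since $\rho V_{\cc C'} = \tfrac{1}{\dist(A'B',C'D')}(x_1, x_2)^{\text{appropriately signed}}$ and one can verify this field is divergence-free. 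The key point is that, because all compressible cells in $\cc P_n(\cc C)$ are similar to the initial cell $\cc C$ (same ratio of bases, rescaled height), the values of this local density lie in a fixed interval $[1/C, C]$ with $C$ depending only on the ratio $|CD|/|AB|$ of the initial cell, independently of $n$.

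Next I would glue these local densities into a global $\rho_n$ on $\Omega$. The subtlety is that the normalizing factor $\hat\1_{\cc C_i}$ in Definition~\ref{def:cell2vec} rescales $V_{\cc C_i}$ where convex hulls of cells overlap (on the shared edges, which are $\Le^2$-negligible), so in fact on the full-measure open set $\bigcup \inte\conv\cc C_i$ the field $B_n$ agrees with the unmodified $V_{\cc C_i}$ on each cell. I would therefore define $\rho_n$ on each $\inte\conv\cc C_i$ by the local formula above and check the distributional identity $\div(\rho_n B_n)=0$ across cell interfaces. This is where the main work lies: one must verify that the normal traces of $\rho_n B_n$ from the two cells sharing an edge of $L$ agree, so that no singular divergence is created along these interfaces. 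By construction the fields $B_n$ were designed precisely so that the first component $x_1\mapsto (B_n)_1$ is continuous in the $x_1$-variable across vertical edges and takes matching boundary values on the slanted sides $AD$, $BC$ (this is the content of the monotonicity observations preceding \eqref{changes} and \eqref{angle-changes}); and the density is continuous across horizontal edges because $\rho_n$ depends only on the $x_2$-coordinate relative to the apex, which is consistent between a compressible cell and the cells of $\ff{Patch}$ of it. I would make this trace-matching precise using the normal trace theory of \cite{ambcriman04}, or more elementarily by testing against $\varphi \in \ss D(\Omega)$ and integrating by parts on each cell, collecting the boundary terms and showing they cancel in pairs.

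The main obstacle I anticipate is precisely the verification that the piecewise-defined $\rho_n B_n$ has no spurious singular divergence along the grid $L$ and along the slanted cell-sides — i.e. controlling the flux balance at the interfaces. On horizontal interfaces this follows from continuity of $\rho_n$ (same $x_2$) and of the vertical component $(B_n)_2 \equiv 1$; on vertical interfaces inside a row of cells it follows from continuity of $(B_n)_1$ in $x_1$ together with continuity of $\rho_n$; and on the slanted outer sides $A'D'$, $B'C'$ the field $\rho_n B_n$ is tangent (both $V_{\cc C'}$ and $(x-M)$ are tangent to these lines), so there is no normal flux at all. Once the interface cancellations are checked, the bound $1/C \le \rho_n \le C$ is immediate from the similarity of all compressible cells to $\cc C$ (incompressible cells contributing $\rho_n = 1$), and the lemma follows.
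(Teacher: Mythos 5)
Your overall strategy — build $\rho_n$ cell by cell from the explicit form of $V_{\cc C'}$ and then verify that the normal traces of $\rho_n B_n$ cancel across the interfaces — is exactly the paper's strategy. However, the proposal contains two concrete errors, and the second one is a missing idea without which the gluing fails.

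First, the density you propose on a compressible cell is wrong. In the local coordinates at the apex $M$ one has $V_{\cc C'}=(x_1/x_2,1)$ and $\div V_{\cc C'}=1/x_2$, so the equation $\div(\rho V_{\cc C'})=0$ with $\rho=\rho(x_2)$ forces $\rho'=-\rho/x_2$, i.e. $\rho \propto 1/x_2$, not $\rho\propto |x_2|$. Your candidate gives $\rho V_{\cc C'}=\pm h^{-1}(x_1,x_2)$, which is the radial field with divergence $\pm 2/h\ne 0$. (Equivalently: the flux of $\rho B_n$ through the horizontal cross-section at height $x_2$ is $\rho(x_2)$ times the width of the cell, and the width is proportional to $|x_2|$, so constancy of the flux requires $\rho\propto 1/|x_2|$.) The paper takes $\rho_n(x)=\bar y/x_2$ with $\bar y$ the ordinate of the lower base; this equals $1$ on the (wide) lower base and $r:=|AB|/|CD|$ on the (narrow) upper base, which gives the uniform bounds.

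Second, you cannot take $\rho_n\equiv 1$ on all incompressible cells. Look at the midline $UV$ of a patched compressible cell: across the segment $XY$ the cell below is the compressible cell $LMYX$, whose density (with the corrected formula) tends to $r>1$ at its upper base $XY$, while the cell above is the incompressible cell $XYFE$. If you set $\rho_n=1$ there, the normal traces of $\rho_n B_n$ (which on a horizontal edge are just the values of $\rho_n$, since $(B_n)_2\equiv 1$) jump from $r$ to $1$, producing a nonzero singular divergence concentrated on $XY$; the same happens on $ZV$. This is precisely why the paper splits the incompressible cells of $\cc P_n(\cc C)$ into \emph{lower} and \emph{upper} ones (recursively through the patching) and sets $\rho_n=1$ on the lower ones and $\rho_n=r$ on the upper ones. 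With that dichotomy all horizontal traces match ($1$ against $1$ on $UX$, $YZ$ and on the lower bases; $r$ against $r$ on $XY$, $ZV$ and on the upper bases), the slanted sides carry no normal flux as you correctly observed, and the bound $1\le\rho_n\le r$ follows. So the skeleton of your argument is right, but as written the distributional identity $\div(\rho_n B_n)=0$ fails both inside the compressible cells and along part of the grid $L$.
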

\begin{proof}
Fix $n\in \NN$.
Let $\cc C':=A'B'C'D'$ be any compressible cell in $\cc P_n(\cc C)$ and let $\Omega':=\inte \cc C'$. Consider for a moment the local coordinates $(x_1,x_2)$ with origin in $M=A'D'\cap B'C'$. Let $\bar y$ denote the second coordinate of the lower base $A'B'$. For any $x\in \Omega'$ we define
\begin{equation}\label{rho-comp}
\rho_n(x):=\frac{\bar y}{x_2} \qquad \text{if} \quad x\in \Omega'.
\end{equation}
Since $B_n(x)=(\frac{x_1}{x_2},1)$ for $x\in \Omega'$, we have
\begin{equation*}
\div(\rho_n B_n) = \bar y \rb{ \d_1 \frac{x_1}{x_2^2} + \d_2 \frac{1}{x_2} } = 0,
\end{equation*}
i.e. $\rho_n$ solves $\div(\rho_n B_n)$ in $\Omega'$.

Observe that for any $x\in \Omega'$ we have $1 \ge \frac{x_2}{\bar y} \ge \frac{|C'D'|}{|A'B'|}$, and by construction $\frac{|C'D'|}{|A'B'|}=\frac{|CD|}{|AB|}$, hence
\begin{equation}\label{rho_n-bounds}
1 \le \rho_n(x) \le \frac{|AB|}{|CD|}
\end{equation}
for all $x\in \Omega'$.

Now we will define $\rho_n$ inside of the incompressible cells. In order to do so we divide the set of all incompressible cells in $\cc P_n (\cc C)$ into two classes: \emph{lower} and \emph{upper}.
Namely, if $n=1$ then the cells $ALXU$ and $MNZY$ are called \emph{lower} and the cells $XYFE$ and $ZVCG$ are called \emph{upper}. If $n>1$ then an incompressible cell $\cc I \in \cc P_n(\cc C)$ is called \emph{lower} (\emph{upper}) if it is one of the \emph{lower} (\emph{upper}) cells of $\cc P_{n-1}(\cc C)$ or if $\cc I$ is one of the \emph{lower} (\emph{upper}) cells in $\ff{Patch}(\Sigma)$ for some compressible cell $\Sigma \in \cc P_{n-1}(\cc C)$.
Then we define (see Figure~\ref{fig:singularSet} for the case when $|AB|=2|CD|$)
\begin{equation}\label{rho-inc-lower}
\rho_n(x):=1 \qquad \text{if $x$ is inside some \emph{lower} incompressible cell $\cc I\in \cc P_n(\cc C)$}
\end{equation}
and
\begin{equation}\label{rho-inc-upper}
\rho_n(x):=\frac{|AB|}{|CD|} \qquad \text{if $x$ is inside some \emph{upper} incompressible cell $\cc I\in \cc P_n(\cc C)$.}
\end{equation}
By formulas \eqref{rho-comp}, \eqref{rho-inc-lower} and \eqref{rho-inc-upper} the function $\rho_n$ is defined almost everywhere in $\Omega$. Moreover, by construction it satisfies \eqref{rho_n-bounds}.
Finally, the normal traces of $\rho_n B_n$ on the interfaces between adjoint cells cancel, therefore one can directly check that $\div(\rho_n B_n) = 0$ in $\ss D'(\Omega)$.
\end{proof}

\begin{theorem} \label{th:counterexample}
There exist a bounded vector field $B\in BV(\Omega;\RR^2)$ and a bounded function $\rho\colon \Omega \to \RR$ such that
\begin{itemize}
\labitem{(i)}{c-i} $\div^c B$ is a nontrivial nonpositive measure concentrated on the set $S$ (see \eqref{def:S})
\labitem{(ii)}{c-ii} $|\div^c B|$-a.e. $x\in S$ belongs to the tangential set $T_B$ of $B$
\labitem{(iii)}{c-iii} $\rho(\Omega \setminus S)=\{1\}\cup\left\{r\right\}$ where $r:=\frac{|AB|}{|CD|}$
\labitem{(iv)}{c-iv} $\div(\rho B)=0$
\labitem{(v)}{c-v} $|\div^c B|$-a.e. $x\in S$ is not $L^1$ approximate continuity point of $\rho$.
\end{itemize}
\end{theorem}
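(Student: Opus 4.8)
The plan is to read off \ref{c-i}--\ref{c-v} from Lemmas~\ref{lem:B_n-ests}, \ref{lem:B} and \ref{lem:uniform-NI}, passing to the limit both in $B_n$ and in the densities $\rho_n$.

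\emph{Definition of $\rho$, and items \ref{c-i}, \ref{c-iii}, \ref{c-iv}.} For $\Le^2$-a.e.\ $x\in\Omega\setminus S$ there is an $n$ with $x\in\inte\conv\cc I$ for some incompressible $\cc I\in\cc P_n(\cc C)$; since incompressible cells are unchanged by $\ff{Patch}$ and retain their \emph{lower}/\emph{upper} label, \eqref{rho-inc-lower}--\eqref{rho-inc-upper} give $\rho_m(x)=\rho_n(x)\in\{1,r\}$ for all $m\ge n$, where $r=|AB|/|CD|$. Hence $\rho(x):=\lim_m\rho_m(x)$ exists $\Le^2$-a.e., takes only the values $1$ and $r$ on $\Omega\setminus S$, and both are attained (on a \emph{lower} and on an \emph{upper} incompressible cell of $\cc P_1(\cc C)$); extending $\rho$ by $1$ on the remaining $\Le^2$-null set yields \ref{c-iii}. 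Since $|\rho_m|\le r$ uniformly, dominated convergence gives $\rho_m\to\rho$ in $L^1_{\rm loc}(\Omega)$, and together with $B_m\to B$ uniformly (Lemma~\ref{lem:B_n-ests}) also $\rho_mB_m\to\rho B$ in $L^1_{\rm loc}(\Omega;\RR^2)$; letting $m\to\infty$ in $\div(\rho_mB_m)=0$ (Lemma~\ref{lem:uniform-NI}) proves \ref{c-iv}. Item \ref{c-i} is exactly Lemma~\ref{lem:B}, whose proof further computes that $\div^cB$ has nonzero total mass, so the measure is nontrivial.

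\emph{Item \ref{c-ii}.} The structural input is \eqref{cell-ts}: on every compressible cell $\cc D\in\cc P_n(\cc C)$ one has $\langle V_{\cc D},\nabla\rangle V_{\cc D}=0$, i.e.\ $D^aV_{\cc D}$ is a (locally) rank-one matrix measure annihilating $V_{\cc D}$ on the right; since $D^aB_n=\sum_{\cc D}D^aV_{\cc D}$ over the compressible cells of $\cc P_n(\cc C)$, the same holds for $B_n$. I would then blow up at a generic point of $S$. The set $L$ together with all (slanted) sides of all cells is a countable union of sets of finite $\Ha^1$-measure, hence $\Ha^{3/2}$-negligible; so $\Ha^{3/2}$-a.e.\ $x\in S$ is a continuity point of $B$ (by the uniform convergence $B_n\to B$, as in Step~2 of the proof of Lemma~\ref{lem:B}) and, by the Besicovitch differentiation theorem, a point where the limit $M(x)$ of Definition~\ref{def:ts} exists and coincides with $\lim_{r\to0}D^cB(B_r(x))/|D^cB|(B_r(x))$, while $\tilde b(x)=b_0:=B(x)$. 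Near such $x$ the field $B$ is almost constant, so on each small ball the rank-one matrices forming $D^aB_n$ point, up to an error $o(1)$, in the fixed direction whose right kernel is $b_0$; since $D^aB_n\weakstarto D^cB$ locally, in the limit $M(x)$ is rank-one with $M(x)\,b_0=0$, hence $M(x)\cdot\tilde b(x)=0$ and $x\in T_B$. As $\div^cB\ll|D^cB|$, this gives \ref{c-ii}. The delicate step is the last limit: one must check that, locally around a continuity point of $B$, no cancellation occurs, so that $|D^cB|$ is the local weak-* limit of $|D^aB_n|$ and the ratios defining $M(x)$ really converge to the claimed direction.

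\emph{Item \ref{c-v}.} Since $\rho$ is $\{1,r\}$-valued $\Le^2$-a.e.\ on $\Omega\setminus S$, the point $x$ fails to be an $L^1$ approximate continuity point of $\rho$ precisely when both $\{\rho=1\}$ and $\{\rho=r\}$ have strictly positive upper $\Le^2$-density at $x$, and I would prove this for \emph{every} $x\in S$ (a fortiori $|\div^cB|$-a.e.). Along a suitable sequence $r_k\to0$, the ball $B_{r_k}(x)$ contains entire patched sub-cells of $\cc C$; by self-similarity the \emph{lower} incompressible cells inside them occupy a fixed fraction $\gtrsim|AB|/(|AB|+|CD|)$ of their area (where $\rho=1$) and the \emph{upper} ones a fixed fraction $\gtrsim|CD|/(|AB|+|CD|)$ (where $\rho=r$). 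The main obstacle — and the one I expect to need the most work in the whole statement — is that a single cell is a thin trapezium, hence $\Le^2$-negligible inside the smallest ball containing it, so one cannot argue from one cell and must instead exhibit near $x$ a whole family of cells of each type whose union has density bounded below in $B_{r_k}(x)$; carrying out this density estimate, propagating the scale-invariant lower bounds through the iteration, is the technical core of \ref{c-v}.
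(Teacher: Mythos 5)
Items \ref{c-i}, \ref{c-iii} and \ref{c-iv} of your proposal match the paper's proof. The problems are in \ref{c-ii} and \ref{c-v}.

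For \ref{c-ii} the paper does not blow up at a point. It tests the measure $\ab{B, D^c B}$ against an arbitrary $\pfi\in C_0$, writes $\int \pfi \ab{B, D^a B_n} = \int \pfi \ab{B-B_n, D^a B_n} + \int \pfi \ab{B_n, D^a B_n}$, kills the second term \emph{exactly} by \eqref{cell-ts}, and the first by uniform convergence together with the global bound $|D^a B_n|(\Omega)\le C$ of Lemma~\ref{lem:B_n-ests}; arbitrariness of $\pfi$ then gives $M(x)\cdot B(x)=0$ for $|D^cB|$-a.e.\ $x$. This global duality argument entirely avoids the ``no cancellation'' issue you flag, and that issue is a genuine gap in your route: weak-* convergence $D^aB_n\weakstarto D^cB$ gives no upper bound for $\limsup_n|D^aB_n|(B_r(x))$ in terms of $|D^cB|$ near $x$, so your error term $\sup_{B_r(x)}|B_n-b_0|\cdot|D^aB_n|(B_r(x))$ cannot be shown to be $o\bigl(|D^cB|(B_r(x))\bigr)$ as $r\to 0$. (It could be repaired by noting $|D^aV_{\cc C'}|\le(1+c)|\partial_1(V_{\cc C'})_1|=(1+c)|{\div}^a V_{\cc C'}|$ together with the fact that $\div^a B_n$ has constant sign, hence no cancellation occurs for the divergences; but you do not supply this, and the paper's argument is both different and simpler.)

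For \ref{c-v} your proposal claims the two-sided density bound at \emph{every} $x\in S$, and this is more than the paper proves (the statement itself is only for $|\div^cB|$-a.e.\ $x$). The obstruction is precisely the anisotropy you mention but do not resolve: a ball of radius $\sim 4^{-n}$ around $x$ is a thin horizontal slice of the level-$n$ compressible cell $A'B'C'D'$ containing $x$ (whose height is $2^{-n}\gg 4^{-n}$), so which incompressible sub-cells it meets — lower ones with $\rho=1$ or upper ones with $\rho=r$ — depends on the vertical position of $x$ inside $A'B'C'D'$. The paper's estimates \eqref{Q-} and \eqref{Q+} are obtained only under the respective conditions $\dist(x,C'D')\ge\tfrac14\dist(A'B',C'D')$ and $\dist(x,A'B')\ge\tfrac14\dist(A'B',C'D')$; for points violating one of them at every scale (those with $x_2\in\bigcap_n 2^{-n}(\ZZ+(-1/4,1/4))$, which do belong to $S$) the self-similar two-sided bound is not available. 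The paper handles this exceptional set by showing it projects into $\QQ$ on the $x_2$-axis and that $|\div^cB|$ charges no horizontal line, by \eqref{ac-est-stripe-upper}. Your sketch neither identifies this exceptional set nor provides the horizontal-strip counting argument inside $\cc P_2(A'B'C'D')$ that yields \eqref{Q-}--\eqref{Q+}; this is the missing core of \ref{c-v}.
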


\begin{proof}
Consider the sequence of vector fields \eqref{B_n}. By Lemma~\ref{lem:B} there exists a bounded vector field $B\colon \Omega \to \RR^2$ which belongs to $BV(\Omega)$ such that $B_n \to B$ uniformly in $\Omega$ as $n\to \infty$. Then \ref{c-i} is a direct consequence of Lemma~\ref{lem:B}.

Let $\pfi \colon\Omega \to \RR$ be a bounded Borel function with compact support in $\Omega$.

\begin{equation*}
\int \pfi \ab{B, D^c B} = \lim_{n \to \infty} \int \pfi \ab{B, D^a B_n}
\end{equation*}
since $D^a B_n \weakstarto D^c B$ locally in $\Omega$ by Lemma~\ref{lem:B}.
On the other hand
\begin{equation*}
\int \pfi \ab{B, D^a B_n} = \int \pfi \ab{B-B_n, D^a B_n} + \int \pfi \ab{B_n, D^a B_n}.
\end{equation*}
Since $B_n$ converges to $B$ uniformly in $\Omega$ and total variation of $B_n$ is uniformly bounded (see \ref{BV-upper} of Lemma~\ref{lem:B}), we have
\begin{equation*}
\lim_{n\to \infty} \int \pfi \ab{B-B_n, D^a B_n} = 0.
\end{equation*}
By construction of $B_n$ we have $\rb{B_n, D^a B_n} = 0$ (see \eqref{cell-ts}). Therefore we conclude that
\begin{equation}\label{testfun}
\int \pfi \ab{B, D^c B} = \int \pfi(x) M(x)\cdot B(x) \, d|D^c B|(x) = 0,
\end{equation}
where $D^c B = M |D^c B|$ is the polar decomposition of $D^c B$.
Since \eqref{testfun} holds for arbitrary $\pfi$, we deduce that 
\begin{itemize}
\item $M(x)\cdot B(x) = 0$ for $|D^c B|$-a.e. $x\in \Omega$;
\item $|D^c B|$ is concentrated on the set $\setof{x\in \Omega}{M(x) \cdot B(x) = 0}$.
\end{itemize}

But $|D^c B|$-a.e. point is a Lebesgue point of $B$ (since the Cantor part $D^c B$ is concentrated on the set of Lebesgue points of $B$, see e.g. \cite{AFP}, p. 184). Thus $|D^c B|$-a.e. point belongs to $T_B$.
Since $|\div^c B| \ll |D^c B|$ this implies \ref{c-ii}.

Using the terminology we introduced in the proof of Lemma~\ref{lem:uniform-NI} let us define  $\rho=\rho(x)$ by
\begin{equation}\label{lim-rho}
\rho(x):=
\begin{cases}
1 & \text{if $x$ is inside some \emph{lower} incompressible cell $\cc I\in \cc P_n(\cc C)$ for some $n\in \NN$}; \\
r & \text{if $x$ is inside some \emph{upper} incompressible cell $\cc I\in \cc P_n(\cc C)$ for some $n \in \NN$.}
\end{cases}
\end{equation}
As we noted in the proof of Lemma~\ref{lem:B},
a.e. point $x\in \Omega$ belongs to the interior of some incompressible cell $\cc I \in \cc P_n(\cc C)$ starting from some $n=n_x\in \NN$. Therefore the function $\rho$ is well-defined a.e. in $\Omega$ and clearly \ref{c-iii} is satisfied. (See Figure~\ref{fig:singularSet}.)

Let $\rho_n$ denote the sequence constructed in the proof of Lemma~\ref{lem:uniform-NI}.
Since a.e. $x\in \Omega$ is inside some incompressible cell in $\cc P_n(\cc C)$ when $n$ is large enough, we immediately obtain that $\rho_n \to \rho $ a.e. in $\Omega$ as $n\to \infty$.
Since $B_n\to B$ uniformly in $\Omega$ as $n\to \infty$, now we can pass to the limit as $n\to \infty$ in the distributional formulation of $\div(\rho_n B_n)=0$ and deduce \ref{c-iv}.

\begin{figure}[!h]
\centering
\includegraphics{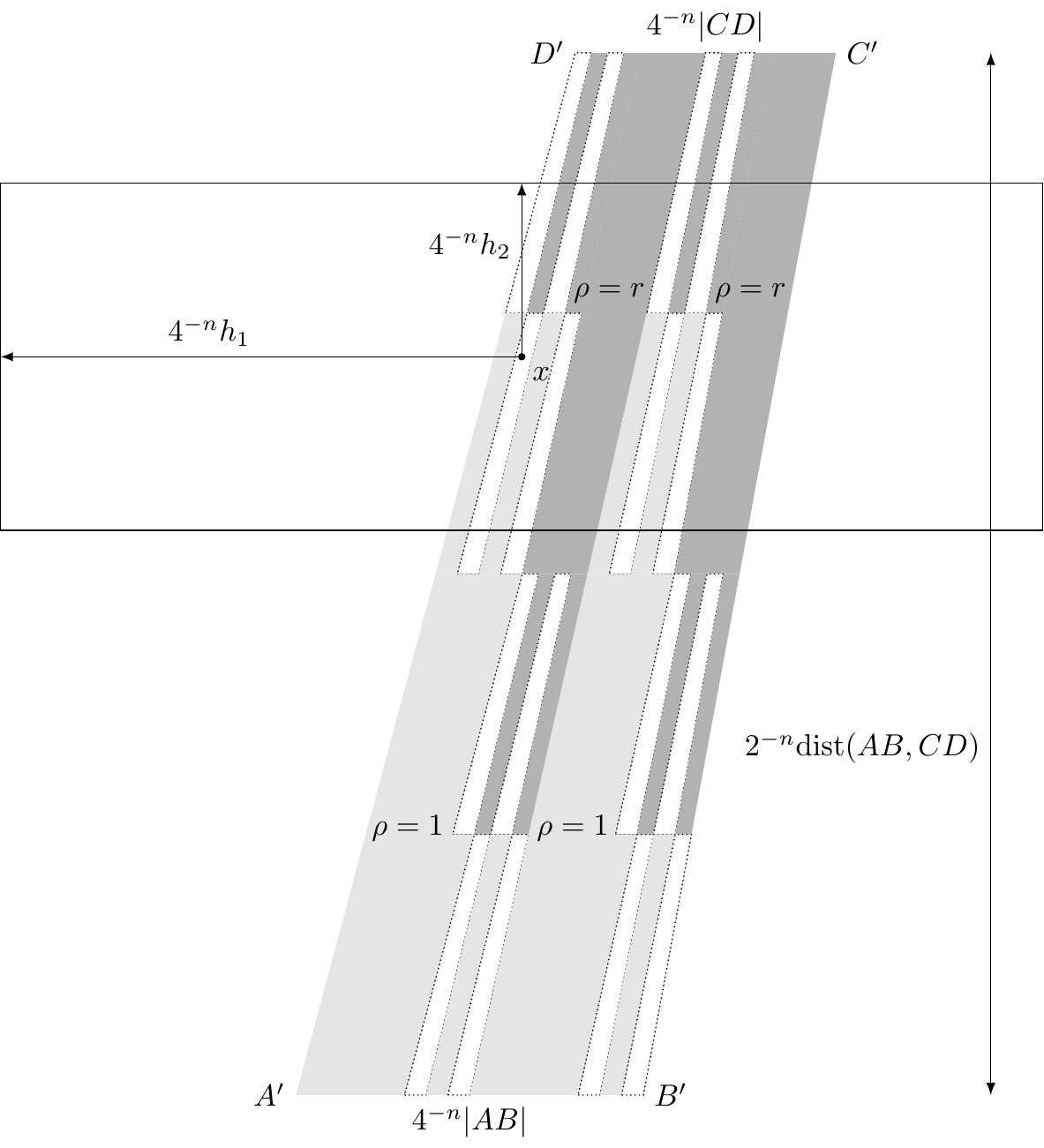}
\caption{Blow-up of the tangential set.}\label{fig:singularSet}
\end{figure}

Before proving the last claim \ref{c-v} let us introduce some notation.
For any $L^1$-approximate continuity point of $\rho$ let $\tilde \rho(x)$ denote the corresponding approximate $L^1$ limit of $\rho$ at $x$, i.e. 
\begin{equation*}
\lim_{\eps\to 0}\frac{1}{|B_\eps(x)|} \int_{B_\eps(x)} |\rho(y) - \tilde \rho(x)| \, dy = 0.
\end{equation*}
For simplicity let us assume that $A$ has coordinates $(0,0)$.
Let $c:= \max(\cot \angle DAB, \cot \angle ABC)$.
For any $x\in \Omega$ and $n\in \NN$ let $Q_n(x)$ denote the rectangle
with center at $x$ and sides $4^{-n} \cdot h_1$ and $4^{-n} \cdot h_2$ (see Figure~\ref{fig:singularSet}), where $h_1=(1+c)|AB|$ and $h_2=|AB|$.
We also denote
\begin{equation*}
Q^+_n(x):=Q_n(x)\cap\rho^{-1}(r),
\quad
Q^-_n(x):=Q_n(x)\cap \rho^{-1}(1)
\end{equation*}
and
\begin{equation*}
R_n(x):=\frac{\min(|Q^+_n(x)|, |Q^-_n(x)|)}{|Q_n(x)|}.
\end{equation*}

One can prove that the following properties hold for the function $\rho$ which we consider:
\begin{itemize}\label{app-cont-points}
\item if $x$ is $L^1$-approximate limit of $\rho$ then either $\tilde \rho(x)=1$ or $\tilde \rho(x)=r$ (this follows directly from definition, since $\rho$ takes only values $1$ and $r$ a.e.);
\item consequently, if $x$ is a point of $L^1$-approximate continuity of $\rho$ then
$\lim_{n\to \infty} R_n(x) = 0$.
\end{itemize}
Suppose that $x\in S$ has coordinates $(x_1,x_2)$. We are going to estimate from below the area $|Q^-_n(x)|$ of the part of $\rho^{-1}(1)$ which lies inside the rectangle $Q_n(x)$.

By construction of $S$ for any $n\in \NN$ we have $x\in \conv A'B'C'D'$ for some compressible cell $A'B'C'D'\in \cc P_n(ABCD)$ (see Figure~\ref{fig:singularSet}).
When $n$ is sufficiently large we also have $Q_n(x) \subset \Omega$.
Since $|\cot \angle D'A'B'| \le c$ (see proof of Lemma~\ref{lem:B_n-ests})
in view of our choice of $h_1$ we have
\begin{equation*}
4^{-n}|AB| + 4^{-n} h_2 |\cot\angle B'A'D'| \le 4^{-n} \cdot (1+c) \cdot |AB| = 4^{-n} h_1
\end{equation*}
hence $Q_n(x)$ contains \emph{the whole} intersection of $\inte A'B'C'D'$ with the horizontal stripe
\begin{equation*}
H_n(x):=\setof{(\xi_1,\xi_2)}{x_2-4^{-n}h_2 \le \xi_2 \le x_2+4^{-n}h_2}.
\end{equation*}
Therefore it is sufficient to estimate from below the area of $H_n(x)\cap \inte A'B'C'D' \cap \rho^{-1}(1)$.

In order to compute $|H_n(x)\cap \inte A'B'C'D' \cap \rho^{-1}(1)|$ let us find out how many incompressible cells in $\cc P_m(A'B'C'D')$ with $\rho=1$ have nonempty intersection with $H_n(x)$, where $m \in \NN$.
Clearly the number of such cells depends on the position of $x$ in $A'B'C'D'$ and on the value of $m$.
However if $m=2$ and $\dist(x,C'D')\ge \frac{1}{4} \dist(A'B',C'D')$ then $H_n(x)\cap \inte A'B'C'D'$ intersects at least $4$ incompressible cells in $\cc P_m(A'B'C'D')$ with $\rho = 1$
(see Figure~\ref{fig:singularSet}) and moreover we can estimate
\begin{align}
|Q^-_n(x)| \ge&~ \Bigl|H_n(x)\cap \inte A'B'C'D' \cap \rho^{-1}(1)\Bigr|\notag\\
\ge&~ 4 \cdot \frac{4^{-n} |CD|}{16} \cdot 4^{-n} h_2 = \frac{|CD|}{16 h_1} |Q_n(x)|
=\frac{|CD|}{32|AB|} |Q_n(x)|.
\label{Q-}
\end{align}

The same way one can prove that
\begin{equation}\label{Q+}
|Q^+_n(x)| \ge \frac{|CD|}{32 |AB|} |Q_n(x)|
\end{equation}
if $\dist(x,A'B')\ge \frac{1}{4} \dist(A'B',C'D')$.

Equations \eqref{Q-} and \eqref{Q+} imply that if
\begin{equation*}
R_n(x) < \frac{|CD|}{32 |AB|}
\end{equation*}
then either $\dist(x, A'B')< \frac{1}{4}\dist(A'B',C'D')$ or $\dist(x, C'D')< \frac{1}{4}\dist(A'B',C'D')$, i.e.  $x_2 \in 2^{-n} (\ZZ+(-1/4,1/4))$.
Therefore if $R_n(x)\to 0$ as $n\to \infty$ then $x_2 \in I:=\cap_{n=N}^\infty 2^{-n} (\ZZ+(-1/4,1/4))$ for some $N\in \NN$. Observe that 
\begin{align*}
I =&~ \cap_{n=N}^\infty 2^{-n} (\ZZ+(-1/4,1/4)) \cap 2^{-n-1}(\ZZ+(-1/4,1/4)) =
\cap_{n=N}^\infty 2^{-n}(\ZZ + (-1/8,1/8)) \\
=&~ \dots = \cap_{n=N}^\infty 2^{-n} (\ZZ + (-2^{-k},2^{k})) \subset 2^{-N}(\ZZ+(-2^{-k},2^{k}))
\end{align*}
for any natural $k\ge 4$,
hence $I\subset 2^{-N}\ZZ \subset \QQ$.
Since $|\div^c B|((\RR\times \QQ) \cap \Omega) = 0$ (this follows from \eqref{ac-est-stripe-upper}) the proof is complete.
\end{proof}

\begin{remark}
The cells in $\cc P_n(\cc C)$ have bases with lengths proportional to $4^{-n}$, while the distance between the bases is proportional to $2^{-n}$.
In other words the cells become ``stretched'' along the vertical axis as $n\to \infty$ (in comparison with the appropriately scaled initial cell $\cc C$).
Due to this, even though the rectangle $Q_n(x)$ always intersects some compressible cells in $\cc P_n(\cc C)$, it is not possible to control a priori the value of $\rho$
inside of this intersection (since the position of $x$ is not known precisely and the aspect ratio of $Q_n(x)$ is fixed).
Because of this we had to consider the structure of $\rho$ inside of the compressible cell $A'B'C'D'\in \cc P_n(\cc C)$.
In order to use this structure we involved $\cc P_m(A'B'C'D')$ with $m=2$.
\end{remark}

\begin{remark}
As we already mentioned,
our construction of $B$ is inspired by the vector field presented in \cite{AdLM}, which answers Question~\ref{q1} in negative.
Though the original construction of that vector field is completely analytic, 
it can also be presented in our geometric terms. Namely, the only difference between these two constructions is that in \cite{AdLM} the $\ff{Patch}$ operation is given by
\begin{equation*}
\ff{Patch}(ABCD):= \{ UE'ED, \; F'VCF , \; AGG'U , \; HBVH'
, \; E'F'FE, \; GHH'G' \},
\end{equation*}
where $U$ and $V$ are midpoints of $AD$ and $BC$ respectively, $|DE|=|EF|=|FC|$, $|AG|=|GH|=|HB|$, $E',F',G',H' \in UV$ and $|E'F'|=|EF|$, $|G'H'|=|GH|$, $|UG'|=|H'V|$, $|UE'|=|F'V|$
(see Figure~\ref{fig:AdLMpatch}).
\begin{figure}[!h]
\centering
\includegraphics{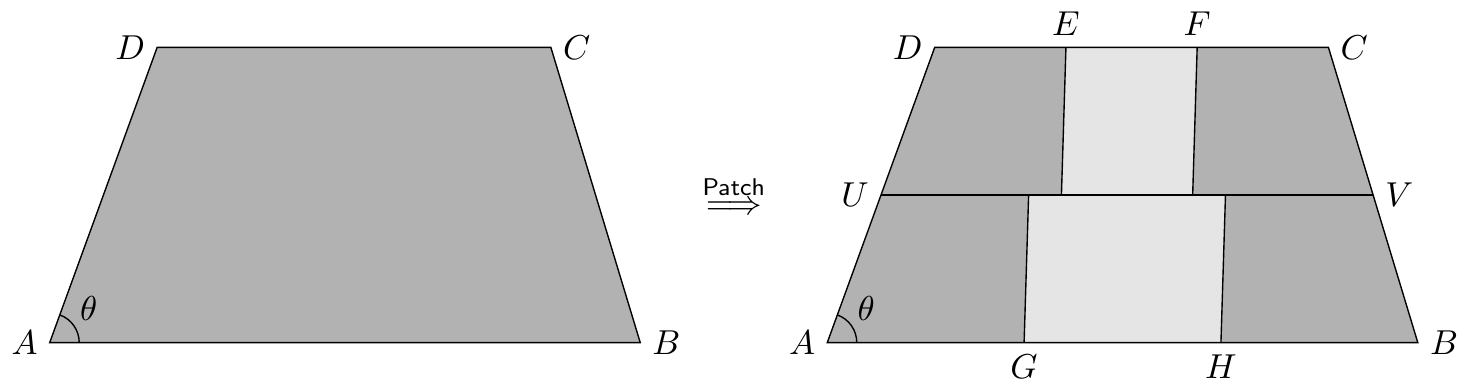}
\caption{The $\ff{Patch}$ operation used in \cite{AdLM}.}\label{fig:AdLMpatch}
\end{figure}

The main property of $\ff{Patch}$ used in the present paper (see Figure~\ref{fig:patch}) is that the resulting approximate vector fields are uniformly nearly incompressible (see Lemma~\ref{lem:uniform-NI}).
\end{remark}

\subsection{Analysis of the discontinuity set} \label{sec:Analysis-of-S}
In this section we would like to study in more details the properties of the set $S$.
Our analysis will be based on the following observation concerning the trajectories of $B$:
\begin{lemma}[Flow of $B$]\label{lem:flow}
Let $B$ denote the limit of $\{B_n\}_{n\in\NN}$ given by Lemma~\ref{lem:B_n-ests}. Then there exists a disjoint family $\gg F$ of trajectories of $B$ such that $\cup_{C\in \gg F} C = \Omega$. Moreover, for $(|\div^c B| + \Le^2)$-a.e. $x\in \Omega$ the corresponding trajectory $C_x\in \gg F$ contains exactly one point $s_x\in S$ of the set $S$ defined in \eqref{def:S}.
\end{lemma}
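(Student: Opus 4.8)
The plan is to exploit the explicit self-similar structure of the approximating vector fields $B_n$ together with the fact (Lemma~\ref{lem:B}) that $B$ is constant inside every incompressible cell. First I would build the family $\gg F$ directly from the geometry. Inside each incompressible cell $\cc I \in \cc P_n(\cc C)$ the vector field $B$ equals a constant $v_{\cc I}$, so the trajectories there are just the straight segments parallel to $v_{\cc I}$ joining the side $AD$ of $\cc I$ to the side $BC$; in a compressible cell $\cc C' = A'B'C'D'$ the field is $V_{\cc C'}(x) = (x-M)/\langle x-M, e_2\rangle$, whose integral curves are the straight rays emanating from the common vertex $M = A'D' \cap B'C'$. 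The key observation is that the $\ff{Patch}$ operation is designed so that these curves match across every interface: a ray through $M$ inside a compressible cell hits the midline $UV$ (resp. a base) and continues as the constant-direction segment of the adjacent incompressible (resp. compressible) cell, because the auxiliary points $X,Y,Z$ on $UV$ were chosen precisely so that $V_{\ff{Patch}(\cc C)}$ takes the same values as $V_{\cc C}$ on $AD$ and $BC$ and interpolates monotonically in between (this is what \eqref{changes} and \eqref{angle-changes} encode). So I would show by induction on $n$ that the partition-into-trajectories of $B_{n+1}$ refines that of $B_n$ in a compatible way, and define the elements of $\gg F$ as the intersections over $n$ of these nested curve systems — each such intersection is a simple Lipschitz curve joining $\d\Omega$ to $\d\Omega$ (the left and right sides $AD$, $BC$ of $\cc C_0$), because at every scale its length and its horizontal/vertical extent are controlled by the cell dimensions. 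Disjointness and $\cup_{C\in\gg F} C = \Omega$ follow since through $\Le^2$-a.e.\ point there is at scale $n$ a unique incompressible cell, and the single curve through that point is forced.

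Next I would deal with the second assertion. For $\Le^2$-a.e.\ $x\in\Omega$, $x$ lies in the interior of some incompressible cell $\cc I\in\cc P_{n_x}(\cc C)$, hence the whole trajectory $C_x$ passes through that cell and therefore meets $S$ in at most a set disjoint from $\inte\cc I$; in fact, tracking $C_x$ backward and forward through the nested cell structure, $C_x$ threads a nested sequence of cells, and the compressible cells among them shrink to a single point of $S$ — so $C_x\cap S$ is a single point $s_x$. More care is needed for $|\div^c B|$-a.e.\ $x$: here $x$ itself lies in $S$. By Lemma~\ref{lem:B}, $|\div^c B| = \alpha\,\Ha^{3/2}\rest S$ and $S = \cap_n S_n$ with $S_n$ the union of the $4^n$ compressible trapezia at level $n$. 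For such $x$, at every level $n$ there is a unique compressible cell $A'_nB'_nC'_nD'_n \in \cc P_n(\cc C)$ containing $x$, these cells are nested, and $\cap_n \conv(A'_nB'_nC'_nD'_n)$ is exactly $\{x\}$ once $x$ avoids the (Lebesgue-, hence $\Ha^{3/2}$- ) negligible set of cell boundaries and horizontal lines $L$; since the trajectory through $x$ stays inside $\conv(A'_nB'_nC'_nD'_n)$ for every $n$ (because trajectories cannot cross the lateral sides $A'_nD'_n$, $B'_nC'_n$, which are themselves trajectory arcs), $C_x\cap S \subset \cap_n \conv(A'_nB'_nC'_nD'_n) = \{x\}$, so $s_x = x$ is the unique point.

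The main obstacle I anticipate is making rigorous the claim that the nested curve systems actually converge to genuine trajectories of the \emph{limit} field $B$, i.e.\ that the curve $C_x$ obtained as an intersection over $n$ solves $\dot\gamma = B(\gamma)$ a.e.\ in the sense of \eqref{ode}, not merely $\dot\gamma = B_n(\gamma)$ at each finite stage. This requires quantitative control: on the portion of $C_x$ inside a level-$n$ cell, the tangent direction is pinned to within $2^{-n}\ff{Ran}(\cc C)$ of $B_n$ by \eqref{deltaB} and \eqref{changes}, and $B_n\to B$ uniformly, so one can pass to the limit in the integral form $\gamma(t) = \gamma(t_0) + \int_{t_0}^t B(\gamma(s))\,ds$ using uniform convergence plus equi-Lipschitz bounds on the parametrizations (Arzel\`a--Ascoli). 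A secondary subtlety is the "exactly one point" count: I must rule out trajectories that run \emph{along} a boundary line in $L$ or that are asymptotic to $S$ without meeting it — these form an $\Ha^{3/2}\rest S$-null and $\Le^2$-null exceptional set, handled by the estimate \eqref{ac-est-stripe-upper} which says $|\div^c B|$ charges no set contained in countably many horizontal lines. Once these points are settled, the statement follows by assembling the pieces; I would organize the write-up as (1) construction of $\gg F$ from the nested cell decomposition, (2) verification that its elements are trajectories of $B$ via the uniform estimates, (3) the a.e.\ uniqueness of $s_x$ split into the $\Le^2$-a.e.\ and $|\div^c B|$-a.e.\ cases.
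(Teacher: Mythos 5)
Your proposal follows essentially the same route as the paper's proof: construct the trajectories of $B_n$ by gluing the explicit cell trajectories, use the invariance of incompressible cells under $\ff{Patch}$ together with the $2^{-n}$ pinning of the tangent (so that outside a set of times of measure $\to 0$ the curve and the field are both frozen, which is what makes the passage to the limit in \eqref{ode} legitimate despite $B$ being discontinuous), and then locate $C_x\cap S$ via the nested compressible cells after discarding the $(|\div^c B|+\Le^2)$-negligible set of trajectories running along cell edges or horizontal lines of $L$. The only cosmetic difference is that for uniqueness of $s_x$ the paper projects onto the vertical axis and uses $(B)_2=1$, while you intersect the nested convex hulls directly; both arguments are equivalent.
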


Let $\gg F$ denote the disjoint family of the trajectories of $B$ given by Lemma~\ref{lem:flow}.
Let $\gamma_x\colon (0,1)\to \Omega$ denote the parametrization of the trajectory $C_x\in \gg F$ passing through $x\in \Omega$.
For $(|\div^c B|+\Le^2)$-a.e. $x\in \Omega$ let $t_x\in (0,1)$ be such that $\gamma_x(t_x)=x$ and let $t^*_x\in(0,1)$ be such that $\gamma_x(t^*_x)\in S$. Since $B_2=1$ the value $t_x$ is unique; in view of Lemma~\ref{lem:flow} the value $t^*_x$ is also unique.

Then the function $\rho$ defined in \eqref{lim-rho} satisfies
\[
\rho(x):=\begin{cases}
1, & t_x<t^*_x;\\
\frac{|AB|}{|CD|}, & t_x>t^*_x.
\end{cases}
\]
In other words, $\rho$ is equal to $1$ before the trajectory of $B$ intersects $S$ and $\rho$ is equal to $\frac{|AB|}{|CD|}$ after this intersection. (See Figure~\ref{fig:flowOfB} for the approximate flow.)

\begin{figure}[!h]
\centering
 \includegraphics{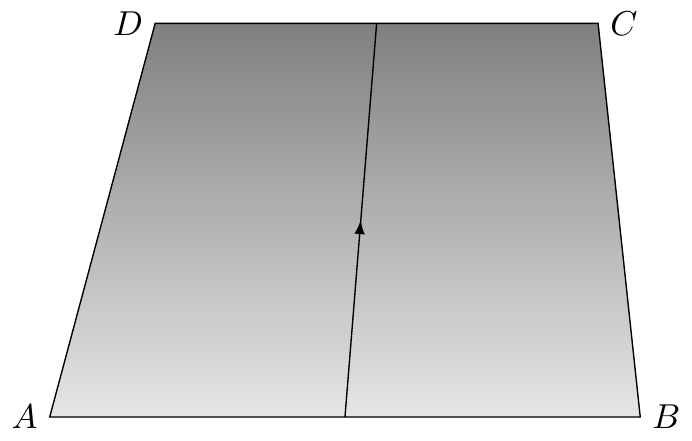}
 \quad
 \includegraphics{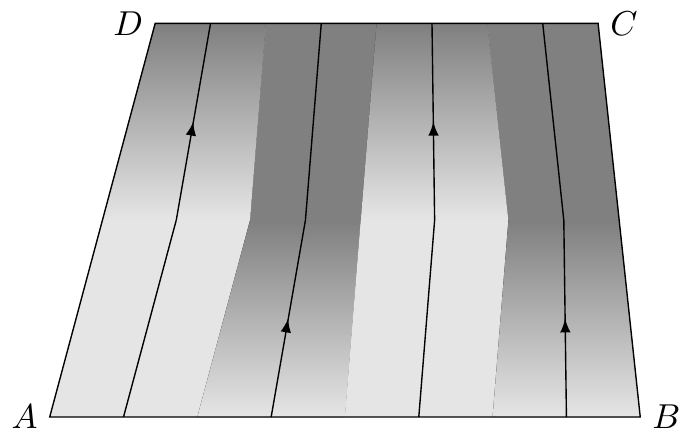}
 \\
 \includegraphics{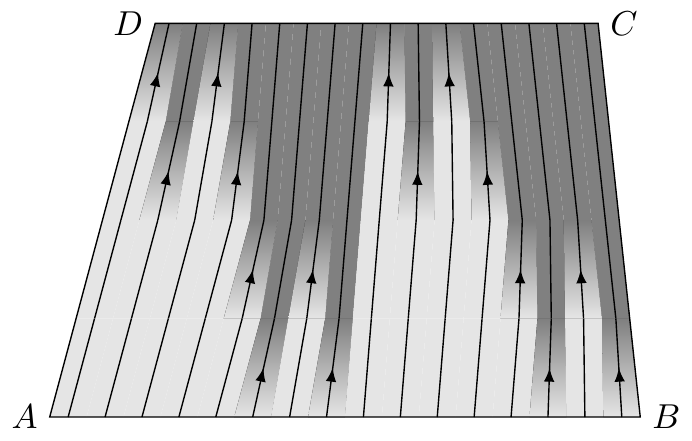}
 \quad
 \includegraphics{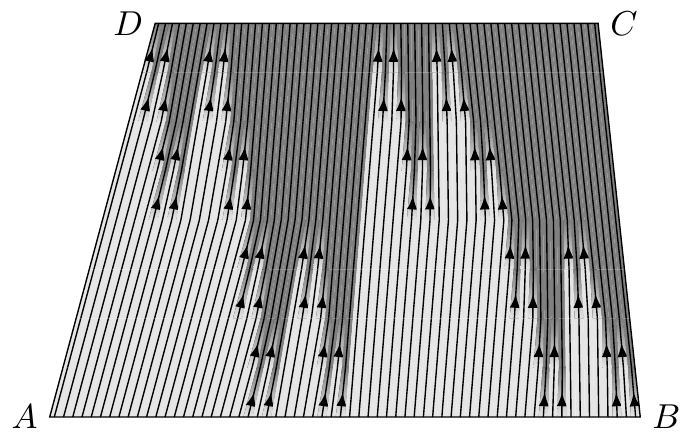}
\caption{Approximation of the flow of $B$.}\label{fig:flowOfB}
\end{figure}

Though later we will prove more general results concerning the trajectories of nearly incompressible vector fields,
we would like to conclude this section by presenting a direct proof of Lemma~\ref{lem:flow}.

\begin{proof}[Proof of Lemma~\ref{lem:flow}]
For any $n\in \NN$ the disjoint family $\gg F_n$ of the trajectories of the approximate vector field $B_n$ can be constructed directly by ``gluing'' the classical trajectories of the vector fields associated with all compressible and incompressible cells in $\cc P_n(\cc C)$. For any $x\in \Omega$ let $C_x \in \gg F$ denote the trajectory of $B_n$ such that $x\in C_x$. Let $\gamma^n_x$ denote the Lipschitz parametrization of $C_x$ which solves the ODE \eqref{ode} for $B_n$. For simplicity we can assume that $\dist(AB,CD)=1$, then for any $x\in \Omega$ the function $\gamma_x^n$ is defined on $(0,1)$.

By induction one can prove that each trajectory $C^n_x$ intersects at most two compressible cells. Let $I^n_x\subset (0,1)$ denote the preimage of this intersection under $\gamma^n_x$. Since the $\ff{Patch}$ operation does not change the incompressible cells, for any $m>n$ the value $\dot\gamma^m_x(t)$ can be different from the value $\dot\gamma^n_x(t)$ only for $t\in I^n_x$. Moreover, by construction of the patches we have $|\dot\gamma^m_x(t)-\dot\gamma^n_x(t)|\le 2^{-n} C$ for some $C>0$. Hence $\dot \gamma^m_x$ converges a.e. on $(0,1)$
as $m\to \infty$. 
Note that for any $x\in \Omega$ there exists unique $t=t_x$ such that $\gamma^n_x(t_x)=x$ for all $n\in \NN$ (in fact since $(B_n)_2 = 1$ for all $n\in \NN$ the value of $t_x$ depends only on $x_2$).
Therefore $\gamma^m_x \to \gamma_x$ uniformly on $(0,1)$ for some injective $\gamma_x \in \Lip((0,1))$ as $m\to \infty$, and $x\in \gamma_x((0,1))$.

Again using invariance of incompressible cells under $\ff{Patch}$ one can check that if $t\in (0,1) \setminus I^n_x$ then
\begin{equation*}
B_n(\gamma^n_x(t)) = B(\gamma^n_x(t)) = B(\gamma_x(t)).
\end{equation*}
The last equality is due to the following fact: if $\gamma^n_x((0,1))$ intersects some incompressible cell for some $n$ then $\gamma^m_x((0,1))$ intersects this cell for all $m\ge n$.
Finally, since $\Le^1(I^n_x) \to 0$ one can pass directly to the limit in $\gamma^n(t)-\gamma^n(t_x) = \int_{t_x}^t B_n(\gamma_n(\tau)) \, d \tau$ and conclude that $\gamma_x$ satisfies the desired ODE \eqref{ode}.

Observe that $C_x:=\gamma_x((0,1))$ can intersect two compressible cells only if $C_x$ contains some of the side edges (which are not parallel to $AB$) of some cell in $\cc P_n(\cc C)$ for some $n\in \NN$. Union of all points of such edges is countably rectifiable hence it is $|\div^c B|$-null set. Let us exclude all such points.

For the remaining points $x$ the corresponding trajectory $C_x$ intersects exactly one compressible cell in $\cc P_n(\cc C)$ for each $n\in \NN$. Hence the projection of $C_x \cap S$ on the vertical axis can be written as an intersection of a sequence of closed segments $\overline{I^n_x}$. Since these segments are nested and $\overline{I^n_x} = [a_n,b_n]$, where $b_n -a_n =2^{-n}$, the intersection of all $\overline{I^n_x}$ is exactly one point.
Hence the intersection of $C_x\cap S$ is a contained in some line parallel to $Ox_1$.
Since by construction $(B)_2=1$, the curve $C_x$ intersects such line in only one point. Hence $C_x\cap S$ is exactly one point.
\end{proof}

\section{Chain rule along curves} \label{sec:param}
In this section we study equation \eqref{divuB} in a particular case when $B$ is not a vector field but a special vector measure concentrated on a simple $\Omega$-closed Lipschitz curve. Our main result here is Theorems~\ref{th:eq-along-curve}.

Would like to start by we recalling some standard properties of parameterizations of simple Lipschitz curves in an open set $\Omega \subset \RR^2$ (see also Section~\ref{sec:Notation}).

\begin{definition}\label{def:param}
A Lipschitz parametrization $\gamma\colon [0,\ell] \to \overline{\Omega}$ of a simple curve $C\subset \Omega$ is called \emph{admissible} if $\gamma'\ne 0$ a.e. on $(0,\ell)$.
If, moreover, $|\gamma'|=1$ a.e. on $(0,\ell)$ then $\gamma$ is called \emph{natural}.
\end{definition}

Without loss of generality we can assume that any Lipschitz parametrization $\gamma$ is admissible (if~it is not then we can always reparametrize $C$ by $\tilde \gamma(s): = \gamma(\sigma^{-1}(s))$ where $\sigma(t):=\int_0^t 1_{\gamma' \ne 0}(\tau) \, d\tau$). Moreover, natural parametrization always exists for any simple Lipschitz curve.

\begin{definition}\label{def:agrees-with}
Suppose that $B\colon \Omega \to \RR^2$ is a bounded Borel vector field. We say that a Lipschitz parametrization $\gamma\colon [0,\ell] \to \overline{\Omega}$ of a simple curve $C\subset \Omega$ \emph{agrees with $B$} if
\begin{equation*}
\gamma'(t) \cdot B(\gamma(t)) > 0 \quad \text{for a.e.} \;\; t\in (0,\ell).
\end{equation*}
\end{definition}

\begin{definition}\label{def:traces}
Suppose that $B\colon \Omega \to \RR^2$ is a bounded vector field and $u \colon \Omega \to \RR$. Suppose that $C\subset \Omega$ is an $\Omega$-closed simple Lipschitz curve.
Let $C^+_B:=\gamma([0,\ell))$, where $\gamma$ is any\footnote{clearly $C^+_B$ does not depend on the choice of $\gamma$} Lipschitz parametrization of $C$ which agrees with $B$.
We say that $u$ \emph{has trace $u^+\colon C^+_B \to \RR$ along $(C,B)$} if
\begin{itemize}
\item $u^+(x) = u(x)$ for $\Ha^1$-a.e. $x\in C$;
\item for any Lipschitz parametrization $\gamma\colon [0,\ell]\to \overline{\Omega}$ of $C$ which agrees with $B$ the function $u^+\circ \gamma$ is right-continuous on $[0,\ell)$.
\end{itemize}
The trace $u^-$ is defined in a same way: the only difference is that $u^-$ is defined on $C^-_B:= \gamma((0,\ell])$ and the function $u^-\circ \gamma$ has to be left-continuous.
\end{definition}

Note that $u^+$ and $u^-$ are Borel function, since $\gamma$ is continuous and injective.
Moreover both $u^+$ and $u^-$ are defined in every point of $C$.
In the main result of this section (Theorem~\ref{th:eq-along-curve}) we will provide sufficient condition for existence of these traces.

\begin{proposition}\label{prp:param}
If $\gamma$ is a Lipschitz parametrization of a simple $\Omega$-closed curve $C\subset \Omega$ then
\begin{itemize}
\item $\gamma_\# (|\gamma'| \Le) = \Ha^1 \rest C$;
\item for any measure $\mu$ on $\RR^2$ which is concentrated on $C$ there exists a measure $\nu=: \gamma^{-1}_\# \mu$ on $I_\gamma$ such that $\gamma_\# \nu = \mu$.
\end{itemize}
(Here and further $I_\gamma$ is the domain of $\gamma$ defined by \eqref{param-dom}.)
\end{proposition}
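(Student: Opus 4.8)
The plan is to handle the two assertions in turn. For the first, $\gamma_\#(|\gamma'|\Le) = \Ha^1\rest C$, I would appeal to the area formula for injective Lipschitz maps of one variable. Identifying $I_\gamma$ with the Lebesgue-measurable subset $[0,\ell)$ of $\RR$ in the closed-curve case and with $(0,\ell)$ otherwise, the map $\gamma$ is (by the remarks in Section~\ref{sec:Notation}) an injective Lipschitz function $I_\gamma \to \RR^2$ with $\gamma(I_\gamma) = C$; Rademacher's theorem provides $\gamma'$ a.e.\ on $I_\gamma$, and since $\Le(I_\gamma)=\ell<\infty$ and $\Ha^1(C)<\infty$ (as $C$ is a Lipschitz curve) both measures in question are finite. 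The area formula (see e.g.\ \cite{AFP}) then gives $\int_E |\gamma'|\,d\Le = \Ha^1(\gamma(E))$ for every Borel $E\subset I_\gamma$; applying this to $E=\gamma^{-1}(A)$ (which is Borel since $\gamma$ is continuous) for an arbitrary Borel set $A\subset\RR^2$ and using $\gamma(\gamma^{-1}(A)) = A\cap C$ yields $(\gamma_\#(|\gamma'|\Le))(A) = \Ha^1(A\cap C)$, i.e.\ the claim.

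For the second assertion the natural candidate is $\nu := \gamma^{-1}_\#(\mu\rest C)$, where $\gamma^{-1}\colon C\to I_\gamma$ is the set-theoretic inverse of the bijection $\gamma\colon I_\gamma\to C$. Once this is set up, verifying $\gamma_\#\nu=\mu$ is a one-line computation: for Borel $E\subset\RR^2$, $(\gamma_\#\nu)(E) = \nu(\gamma^{-1}(E)) = (\mu\rest C)(\{x\in C:\gamma^{-1}(x)\in\gamma^{-1}(E)\}) = \mu(E\cap C) = \mu(E)$, the last step using that $\mu$ is concentrated on $C$.

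The step I expect to be the only real obstacle is showing that $\gamma^{-1}\colon C\to I_\gamma$ is Borel measurable, which is what makes the pushforward $\gamma^{-1}_\#$ meaningful. I would avoid invoking the Lusin--Souslin theorem and instead argue by $\sigma$-compactness: write $I_\gamma = \bigcup_{n} K_n$ with $K_n$ an increasing sequence of compact sets (take $K_n = [1/n,\ell-1/n]$ in the interval case and $K_n = I_\gamma$ in the closed-curve case). For each $n$, $\gamma|_{K_n}$ is a continuous bijection from the compact space $K_n$ onto the Hausdorff space $\gamma(K_n)$, hence a homeomorphism, so $\gamma^{-1}$ restricted to the (compact, hence Borel) set $\gamma(K_n)$ is continuous. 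Since $C = \bigcup_n \gamma(K_n)$, the map $\gamma^{-1}$ is Borel on each piece of the countable Borel partition of $C$ induced by the sets $\gamma(K_n)$, and therefore Borel on $C$. With this in hand, $\nu$ is a well-defined finite measure on $I_\gamma$ with $\nu(I_\gamma)=\mu(C)$, and the computation above completes the proof.
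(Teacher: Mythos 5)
Your proof is correct. For the first bullet you use exactly the paper's argument (the one-dimensional area formula for the injective Lipschitz map $\gamma$, applied to $E=\gamma^{-1}(A)$). For the second bullet the resulting measure is the same as the paper's ($\nu(E)=\mu(\gamma(E))$ and $\gamma^{-1}_\#(\mu\rest C)$ coincide, since $(\gamma^{-1})^{-1}(E)=\gamma(E)$), but the justification of measurability differs: the paper cites the Lusin--Souslin-type fact that a continuous injection maps Borel sets to Borel sets (Fremlin, Theorem~423I), whereas you derive the Borel measurability of $\gamma^{-1}$ by exhausting $I_\gamma$ with compacts $K_n$ and using that a continuous bijection from a compact space onto a Hausdorff space is a homeomorphism. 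Your route is self-contained and elementary, avoiding descriptive set theory at the cost of a few extra lines; it works precisely because $I_\gamma$ is $\sigma$-compact, which is guaranteed here since $I_\gamma$ is either an interval or a circle. The final verification $\gamma_\#\nu=\mu$ is the same one-line computation in both cases.
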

The first claim follows from one-dimensional area formula (see e.g. \cite{AFP}).
Since $\gamma$ is continuous and injective, the image $\gamma(B)$ of any Borel set $B\subset I_\gamma$ is also Borel (see e.g. \cite{Fremlin}, Theorem 423I). 
Therefore to prove the second claim it is sufficient to define $\nu(B):=\mu(\gamma(B))$ for any Borel set $B\subset I_\gamma$.

\begin{definition}\label{def:eq-along-curve}
Suppose that $C\subset \Omega$ is a simple $\Omega$-closed curve, $f\colon \Omega \to \RR$ is a Borel function, $\mu$ is a Radon measure on $\Omega$ and $\gamma$ is a Lipschitz parametrization of $C$. We will say that
\begin{equation*}
f'=\mu
\quad\text{holds in }\ss{D}'(C,\gamma)
\end{equation*}
if $f \circ \gamma \in L^1(I_\gamma)$ and
\begin{equation*}
(f\circ \gamma)' = \gamma^{-1}_\# \mu
\quad\text{in }\ss D'(I_\gamma),
\end{equation*}
where $I_\gamma$ was defined in \eqref{param-dom}.
\end{definition}

\begin{proposition}[Vol'pert's chain rule]\label{prp:Volpert}
Suppose that $u\colon I\to \RR^m$ belongs to $BV(I)$, where $I=(0,\ell)$ or $I=\RR/(\ell \ZZ)$, $m\in \NN$. Then $u$ has classical one-dimensional traces $u^\pm \colon I \to \RR^m$ and for any $\beta \in C^1(\RR^m)$ we have $\beta(u)\in BV(I)$ and
\begin{equation}\label{e-volp}
D(\beta(u)) = f(u^+,u^-) \cdot Du,
\end{equation}
where $Du$ is the $\RR^m$-valued Radon measure representing distributional derivative of $u$ and
\begin{equation} \label{f-def}
f(u^+,u^-) := \int_0^1 \nabla \beta (t u^+ + (1-t) u^-) \, dt
\end{equation}
is so-called Vol'pert \emph{averaged superposition} (see \cite{AFP}).
\end{proposition}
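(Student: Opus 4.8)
The plan is to rely on the classical one--dimensional structure theory of $BV$ functions and to reduce \eqref{e-volp} to two elementary computations: one on the jump set of $u$ and one on its complement. The key point is that, once $\beta(u)$ is known to lie in $BV(I)$ with $D(\beta(u))\ll|Du|$, it suffices to identify the Radon--Nikodym density separately on the (at most countable) jump set $J_u$ and on $I\setminus J_u$.

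\textbf{Step 1 (traces and $\beta(u)\in BV$).} Since $\dim I=1$, the function $u\in BV(I)$ admits a representative for which the one--sided limits $u^-(t)$, $u^+(t)$ exist at every $t\in I$, with $u^+$ right--continuous, $u^-$ left--continuous, both of bounded variation, both equal to $u$ a.e., and both equal to the approximate limit $\wave u(t)$ for $t\notin J_u$; this gives the traces $u^\pm$. Consequently $u(I)$ is contained in a compact set $K\subset\RR^m$, and since $\beta\in C^1(\RR^m)$ it is Lipschitz on $K$, whence $\beta(u)\in BV(I)$ with $|D(\beta(u))|\le \mathrm{Lip}(\beta|_K)\,|Du|$. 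In particular $D(\beta(u))\ll|Du|$.

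\textbf{Step 2 (jump part).} On $J_u$ both measures are purely atomic, $Du\rest J_u=\sum_{t\in J_u}(u^+(t)-u^-(t))\,\delta_t$ and $D(\beta(u))\rest J_u=\sum_{t\in J_u}(\beta(u^+(t))-\beta(u^-(t)))\,\delta_t$. Applying the fundamental theorem of calculus to $s\mapsto\beta\bigl(su^+(t)+(1-s)u^-(t)\bigr)$ on $[0,1]$ gives
\[
\beta(u^+(t))-\beta(u^-(t))=\Bigl(\int_0^1\nabla\beta\bigl(su^+(t)+(1-s)u^-(t)\bigr)\,ds\Bigr)\cdot\bigl(u^+(t)-u^-(t)\bigr)=f(u^+(t),u^-(t))\,\bigl(u^+(t)-u^-(t)\bigr),
\]
which is exactly \eqref{e-volp} restricted to $J_u$.

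\textbf{Step 3 (diffuse part) and main difficulty.} On $I\setminus J_u$ one has $u^+=u^-=\wave u$, so $f(u^+,u^-)=\nabla\beta(\wave u)$ by \eqref{f-def}, and the claim reduces to $D(\beta(u))\rest(I\setminus J_u)=\nabla\beta(\wave u)\,Du\rest(I\setminus J_u)$, i.e. the chain rule for the absolutely continuous and Cantor parts. For $D^a u$ this is the classical pointwise identity $(\beta(u))'=\nabla\beta(u)\,u'$, valid $\Le$-a.e. For $D^c u$ I argue by approximation: mollify $u_\eps:=u*\rho_\eps$, use the smooth chain rule $D(\beta(u_\eps))=\nabla\beta(u_\eps)\,u_\eps'\,\Le$, and pass to the limit testing against $\varphi\in\ss D(I)$ supported away from a neighbourhood of the finitely many ``large'' jumps of $u$; there $Du_\eps\weakstarto Du$ while $\nabla\beta(u_\eps)\to\nabla\beta(\wave u)$ outside a set of arbitrarily small $|Du|$-measure, using continuity of $\nabla\beta$ on $K$. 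A covering/diagonal argument then yields $D^c(\beta(u))=\nabla\beta(\wave u)\,D^c u$, and adding the three contributions gives \eqref{e-volp}. The only genuinely delicate point is this Cantor step: near jumps of $u$ the quantity $\nabla\beta(u_\eps)$ does \emph{not} converge to $f(u^+,u^-)$ — it is precisely there that the averaged superposition is produced — so the localization away from $J_u$ must be done carefully, exploiting that $D^c u$ charges neither $J_u$ nor the $\Le$-null set where the pointwise chain rule could fail. This is the one--dimensional instance of the $BV$ chain rule of \cite{AFP}, to which we refer for the remaining routine verifications.
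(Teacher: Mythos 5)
The paper does not prove this proposition at all: it is stated as a known result and delegated entirely to the reference \cite{AFP} (it is the one--dimensional, possibly periodic, instance of the classical Vol'pert chain rule). So there is no in-paper argument to compare against; what matters is whether your sketch is sound, and it is. Your decomposition is the standard one: traces and $\beta(u)\in BV$ via Lipschitz continuity of $\beta$ on the compact range of the good representative; the jump part by the fundamental theorem of calculus along the segment from $u^-(t)$ to $u^+(t)$, which is exactly where the averaged superposition $f(u^+,u^-)$ arises; and the diffuse part reduced to $D(\beta(u))=\nabla\beta(\wave u)\,Du$ off $J_u$. You also correctly identify the only genuinely delicate point, the Cantor part. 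One remark there: your mollification argument needs more than pointwise convergence of $\nabla\beta(u_\eps)$ outside a small set, because you must pass to the limit in the pairing of $\nabla\beta(u_\eps)$ with the weakly-$*$ converging measures $Du_\eps$; the cleanest one--dimensional route, which avoids this entirely, is a telescoping Riemann--sum argument on the right--continuous representative, writing
\begin{equation*}
\beta(u^+(t_{i+1}))-\beta(u^+(t_i))=\Bigl(\int_0^1\nabla\beta\bigl(s\,u^+(t_{i+1})+(1-s)\,u^+(t_i)\bigr)\,ds\Bigr)\cdot\bigl(u^+(t_{i+1})-u^+(t_i)\bigr)
\end{equation*}
over a partition and refining, which yields the formula on all of $I$ at once (jumps and diffuse part together) using only left/right continuity of the traces and non-atomicity of the diffuse part. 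Either way the argument closes; your version would need the extra localization estimate spelled out, but the ideas are all present and correct.
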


\begin{remark}
In the scalar case when $m=1$ we have
\begin{equation*}
f(u^+,u^-)=
\begin{cases}
\displaystyle
\frac{\beta(u^+)-\beta(u^-)}{u^+ - u^-} & u^+ \ne u^-, \\
\beta'(\tilde u) & u^+ = u^- =: \tilde u.
\end{cases}
\end{equation*}
\end{remark}

\begin{remark} \label{rmrk:traces-classic}
By classical trace $u^+$ of a function $u\in BV(I)$ when $I=(0,\ell)$ we mean the function $u^+(x):=c^+ + Du((0,x])$ where the constant $c^+$ is such that $u=u^+$ a.e. on $(0,\ell)$. Similarly, $u^-(x):=c^- + Du((x,\ell))$, where the constant $c^-$ is such that $u=u^-$ a.e. on $(0,\ell)$.
Note that $u^+$ is right-continuous function from $[0,\ell)$ to $\RR$ and $u^-$ is left-continuous function from $(0,\ell]$ to $\RR$.
When $I=\RR /(\ell \ZZ)$ the traces $u^\pm$ are defined analogously.
\end{remark}

\begin{theorem}\label{th:eq-along-curve}
Suppose that $C \subset \Omega$ is a simple $\Omega$-closed Lipschitz curve, $B\colon \Omega \to \RR^2$ is Borel vector field such that $|B(x)|=1$ and $B(x)$ is tangent to $C$ for $\Ha^1$-a.e. $x\in C$. Suppose that
\begin{equation} \label{divB-on-C}
\div (B \Ha^1\rest C) = 0 \qquad \text{in } \quad \ss D'(\Omega).
\end{equation}
Then the following statements hold:
\begin{enumerate}
\labitem{(i)}{th:eq-along-curve-orient} for any Lipschitz parametrization $\gamma$ of $C$ there exists a constant $\sigma_\gamma \in \{\pm 1\}$ such that
\begin{equation} \label{gamma-dot-vs-B}
\frac{\gamma'(t)}{|\gamma'(t)|} = \sigma_\gamma B(\gamma(t))
\end{equation}
for a.e. $t \in I_\gamma$.
\labitem{(ii)}{th:eq-along-curve-B-param} there exists a Lipschitz parametrization of $C$ which agrees with $B$;
\labitem{(iii)}{th:eq-along-curve-changevar} a Borel function $u\colon \Omega \to \RR$ such that $u \in L^1(\Ha^1\rest C)$ and a Radon measure $\mu$ on $C$ satisfy
\begin{equation} \label{divuB-on-C}
\div ( uB \Ha^1\rest C ) = \mu \qquad \text{in } \quad \ss D'(\Omega)
\end{equation}
if and only if $u\in L^\infty (\Ha^1 \rest C)$ and for any Lipschitz parametrization $\gamma$ of the curve $C$ which agrees with $B$
\begin{equation} \label{u-along-C}
u'=\mu
\quad\text{in }\ss D'(C,\gamma).
\end{equation}
\labitem{(iv)}{th:eq-along-curve-renorm} Suppose that $u\in L^1(\Ha^1\rest C)$ and a Radon measure $\mu$ on $C$ satisfy \eqref{divuB-on-C}. Then
$u$ has traces $u^\pm$ along $(C,B)$
and moreover
for any $\beta \in C^1(\RR)$ we have
\begin{equation} \label{divbetauB-on-C}
\div ( \beta(u)B \Ha^1\rest C) = f(u^+,u^-) \cdot \mu \qquad \text{in } \quad \ss D'(\Omega).
\end{equation}
where the function $f$ is given by \eqref{f-def}.
\end{enumerate}
\end{theorem}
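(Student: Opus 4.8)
The plan is to reduce everything to the one-dimensional theory by pulling the whole problem back along a Lipschitz parametrization $\gamma$ of $C$. First I would establish \ref{th:eq-along-curve-orient}: since $|B|=1$ and $B$ is tangent to $C$ at $\Ha^1$-a.e.\ point, and since $\gamma'/|\gamma'|$ is also a unit tangent vector at a.e.\ point (by Definition~\ref{def:param} we may assume $\gamma$ admissible), the two unit vectors must coincide up to sign at a.e.\ $t$. The only nontrivial point is that the sign is \emph{globally} constant: this follows because $t\mapsto \gamma'(t)/|\gamma'(t)|$ and $t\mapsto B(\gamma(t))$ agree up to sign a.e., and the function $t\mapsto \gamma'(t)\cdot B(\gamma(t))$ is (the a.e.\ derivative of) something continuous — more robustly, one integrates: $\gamma(t)-\gamma(s)=\int_s^t\gamma'$, and if the sign flipped on two sets of positive measure one could contradict injectivity of $\gamma$ on $I_\gamma$ by a connectedness argument on the level sets. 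Then \ref{th:eq-along-curve-B-param} is immediate: take any Lipschitz parametrization; if $\sigma_\gamma=+1$ it already agrees with $B$, if $\sigma_\gamma=-1$ reparametrize by $t\mapsto\gamma(\ell-t)$.

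**The change-of-variables identity.** The heart of the matter is \ref{th:eq-along-curve-changevar}. The plan is to show that for $\varphi\in\ss D(\Omega)$,
\begin{equation*}
\int_\Omega \nabla\varphi\cdot\bigl(uB\bigr)\,d(\Ha^1\rest C)
= \int_{I_\gamma} (u\circ\gamma)(t)\,\frac{d}{dt}\bigl(\varphi\circ\gamma\bigr)(t)\,dt,
\end{equation*}
for any $\gamma$ agreeing with $B$. This is just the one-dimensional area formula of Proposition~\ref{prp:param}: $\Ha^1\rest C=\gamma_\#(|\gamma'|\Le)$, and on $C$ one has $B(\gamma(t))=\gamma'(t)/|\gamma'(t)|$, so $\nabla\varphi(\gamma(t))\cdot B(\gamma(t))\,|\gamma'(t)| = \nabla\varphi(\gamma(t))\cdot\gamma'(t) = (\varphi\circ\gamma)'(t)$ a.e.\ by the chain rule for Lipschitz functions. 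Hence \eqref{divuB-on-C} holds iff $\int (u\circ\gamma)(\varphi\circ\gamma)'\,dt = -\langle \mu,\varphi\rangle = -\int (\varphi\circ\gamma)\,d(\gamma^{-1}_\#\mu)$ for all $\varphi$. Since every $\psi\in\ss D(I_\gamma)$ is of the form $\varphi\circ\gamma$ for a suitable test function on $\Omega$ (using a tubular neighbourhood of $C$ — here the $\Omega$-closedness of $C$ and that $C$ is a compact Lipschitz curve are used to make the extension and cutoff), this is exactly $(u\circ\gamma)'=\gamma^{-1}_\#\mu$ in $\ss D'(I_\gamma)$, i.e.\ \eqref{u-along-C}. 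The boundedness equivalence ($u\in L^1\Leftrightarrow u\in L^\infty$ along $C$) follows from the general principle that a scalar function with measure derivative on an interval is in $BV$, hence bounded; one also needs $u\in L^1(\Ha^1\rest C)\Leftrightarrow u\circ\gamma\in L^1(I_\gamma)$, again by the area formula. The main obstacle I anticipate is precisely this surjectivity of $\varphi\mapsto\varphi\circ\gamma$ from $\ss D(\Omega)$ onto $\ss D(I_\gamma)$ — when $C$ touches $\partial\Omega$ at its endpoints and when $C$ is merely Lipschitz (so no smooth tubular neighbourhood), a careful but routine construction using the $1$-Lipschitz nearest-point projection near $C$ is required.

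**The renormalization step.** Finally, for \ref{th:eq-along-curve-renorm}, apply \ref{th:eq-along-curve-changevar}: \eqref{divuB-on-C} gives $(u\circ\gamma)'=\gamma^{-1}_\#\mu$ in $\ss D'(I_\gamma)$, so $u\circ\gamma\in BV(I_\gamma)$ and thus has classical one-sided traces $(u\circ\gamma)^\pm$ (Remark~\ref{rmrk:traces-classic}); define $u^\pm$ along $(C,B)$ by $u^\pm(\gamma(t)):=(u\circ\gamma)^\pm(t)$ — one checks via \ref{th:eq-along-curve-orient} that this is independent of the choice of $\gamma$ agreeing with $B$ (two such parametrizations differ by an increasing reparametrization, which preserves right/left continuity), and that $u^\pm=u$ $\Ha^1$-a.e.\ on $C$ since $u\circ\gamma=(u\circ\gamma)^\pm$ a.e. Then Vol'pert's chain rule (Proposition~\ref{prp:Volpert}) gives $D(\beta(u)\circ\gamma)=D(\beta(u\circ\gamma))=f\bigl((u\circ\gamma)^+,(u\circ\gamma)^-\bigr)\,D(u\circ\gamma)=f(u^+\circ\gamma,u^-\circ\gamma)\,\gamma^{-1}_\#\mu$. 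Since $f(u^+,u^-)$ is a bounded Borel function on $C$ (composition of the Borel functions $u^\pm$ with the continuous $f$), the measure $f(u^+,u^-)\cdot\mu$ on $C$ has pullback $\gamma^{-1}_\#\bigl(f(u^+,u^-)\mu\bigr)=f(u^+\circ\gamma,u^-\circ\gamma)\,\gamma^{-1}_\#\mu$, so $(\beta(u)\circ\gamma)'=\gamma^{-1}_\#\bigl(f(u^+,u^-)\mu\bigr)$ in $\ss D'(I_\gamma)$. Applying the equivalence \ref{th:eq-along-curve-changevar} once more, now with $\beta(u)$ in place of $u$ (note $\beta(u)$ is bounded since $u$ is), yields \eqref{divbetauB-on-C}. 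Apart from the test-function extension issue above, everything here is a bookkeeping translation between the curve and the interval.
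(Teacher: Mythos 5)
Your overall architecture (pull everything back to $I_\gamma$ via the area formula, reduce to a one-dimensional ODE for $u\circ\gamma$, then apply Vol'pert's chain rule for part \ref{th:eq-along-curve-renorm}) is the same as the paper's, and your treatment of \ref{th:eq-along-curve-renorm} is correct. However, there are two genuine gaps at the points where the real work happens.

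First, your argument for \ref{th:eq-along-curve-orient} tries to deduce the global constancy of the sign $\sigma_\gamma$ from the injectivity of $\gamma$ alone. This cannot work: $B$ is merely a Borel unit tangent field, and nothing prevents it from reversing orientation along $C$ while $\gamma$ remains injective (take $C$ the unit circle and $B=\gamma'$ on half of it, $B=-\gamma'$ on the other half). The constancy of $\sigma_\gamma$ is precisely where the hypothesis \eqref{divB-on-C} enters: testing \eqref{divB-on-C} and changing variables gives $\int_{I_\gamma}(\sigma_\gamma\circ\gamma)\,(\pfi\circ\gamma)'\,d\Le=0$ for all $\pfi\in C_0^1(\Omega)$, and one must then upgrade this to $(\sigma_\gamma\circ\gamma)'=0$ in $\ss D'(I_\gamma)$. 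Your sketch never uses \eqref{divB-on-C} for this step, so the claim is unproved (and, without that hypothesis, false).

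Second, the upgrade just mentioned — and the analogous step in \ref{th:eq-along-curve-changevar} — rests in your proposal on the assertion that every $\psi\in\ss D(I_\gamma)$ is of the form $\pfi\circ\gamma$ with $\pfi\in C_0^1(\Omega)$, to be produced by a ``routine'' nearest-point projection onto $C$. This is the crux of the whole theorem and the proposed fix does not work: a simple Lipschitz curve has no tubular neighbourhood, the nearest-point projection onto it is in general multivalued and discontinuous, and $\gamma^{-1}$ need not be Lipschitz (a natural parametrization satisfies $|\gamma(t)-\gamma(s)|\le|t-s|$ but the reverse bound fails for, e.g., spiralling curves), so $\psi\circ\gamma^{-1}$ cannot be extended to a test function with controlled gradient. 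Surjectivity of $\pfi\mapsto\pfi\circ\gamma$ is simply false in general. The paper's substitute is the density Lemma~\ref{lem:density}: one does not realize every $\psi$ as a composition, but shows that the functional $\Lambda(\phi)=\int\phi'a\,d\Le+\int\phi\,d\mu$ vanishing on all compositions $\pfi\circ\gamma$ forces it to vanish on all of $C_0^1(I_\gamma)$; its proof is a genuinely nontrivial argument using approximate continuity points of $a+u$ (with $u'=-\mu$), the injectivity of $\gamma$, and a quantitative lower bound $\dist(\Gamma_1,\Gamma_2)\ge c\delta$ obtained from differentiability of $\gamma$ at the chosen points. Without this lemma (or an equivalent), both \ref{th:eq-along-curve-orient} and \ref{th:eq-along-curve-changevar} remain open in your write-up. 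A further minor point: the equivalence $u\in L^1(\Ha^1\rest C)\Leftrightarrow u\circ\gamma\in L^1(I_\gamma)$ requires $|\gamma'|$ bounded below, so one should first run the argument for the natural parametrization (obtaining $u\circ\gamma\in BV$, hence $u\in L^\infty(\Ha^1\rest C)$) and only then pass to arbitrary Lipschitz parametrizations, as the paper does.
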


\begin{remark}
From \ref{th:eq-along-curve-renorm} it follows that $\beta(u)$ has traces $\beta(u)^\pm$ along $(C,B)$. Since $\beta$ is continuous these traces are given by $\beta(u)^\pm=\beta(u^\pm)$.
\end{remark}

\begin{remark} \label{rmrk:vector-case}
One can easily generalize Theorem~\ref{th:eq-along-curve} to the case of vector-valued $u$ and $\mu$.
Namely if $u=(u_1, ..., u_m) \in L^1(\Ha^1\rest C)$ and the vector measure $\mu = (\mu_1, ..., \mu_m)$ satisfy \eqref{th:div-disint} \emph{componetent-wise} (i.e. $\div(u_i B \Ha^1 \rest C) = \mu_i$, $i=1, ..., m$) then for any
$\beta\in C^1(\RR^m)$ the function $\beta(u)$ solves \eqref{divbetauB-on-C}.
\end{remark}

\begin{remark}
Note that in Theorem~\ref{th:eq-along-curve} $\nabla \beta$ does not have to be uniformly bounded because, as it follows from \ref{th:eq-along-curve-changevar}, any solution to \eqref{divuB-on-C} has to belong to $\Linf(\Ha^1\rest C)$ and therefore the traces $u^\pm$ are automatically bounded.
\end{remark}

The proof of Theorem~\ref{th:eq-along-curve} is based on the so-called \emph{density Lemma}:
\begin{lemma}[see \cite{ABC1}, Lemma 4.3]\label{lem:density}
Let $a\in L^1(I)$ and let $\mu$ be a Radon measure on $I$, where $I=\RR/(\ell \ZZ)$ or $I=(0,\ell)$, $\ell>0$.
Suppose that $\gamma\colon I\to \Omega$ is an injective Lipschitz function such that $\gamma(0,\ell) \subset \Omega$ and $\gamma'\ne 0$ a.e. on $I$.
Consider the functional
\begin{equation*}
\Lambda(\phi):= \int_I \phi' a \, d\Le + \int_I \phi \, d\mu
\end{equation*}
defined on the space of compactly supported Lipschitz functions $\phi\colon I \to \RR$.

Suppose that $\Lambda(\pfi \circ \gamma)=0$ for any $\pfi\in C_0^1(\Omega)$.
Then $\Lambda(\phi)=0$ for any $\phi\in C_0^1(I)$.
\end{lemma}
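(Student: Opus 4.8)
The plan is first to rewrite $\Lambda$ and the hypothesis in a more tractable form. Since $a\in L^1(I)$ and $\mu$ is a finite measure, $\Lambda$ is a distribution of order at most one on $I$: setting $F(t):=\mu\big((0,t]\big)-a(t)$ (with the obvious modification when $I=\RR/(\ell\ZZ)$) one gets $F\in L^1(I)$ and $\Lambda=F'$ in $\ss D'(I)$, i.e.\ $\Lambda(\phi)=-\int_I F\,\phi'\,d\Le$ for every compactly supported Lipschitz $\phi$. Hence $\Lambda=0$ iff $F$ is $\Le$-a.e.\ constant, and the hypothesis $\Lambda(\pfi\circ\gamma)=0$ becomes $\int_I F(t)\,\nabla\pfi(\gamma(t))\cdot\gamma'(t)\,dt=0$ for all $\pfi\in C_0^1(\Omega)$; equivalently, the $\RR^2$-valued Radon measure $V:=\gamma_\#\big(F\gamma'\Le\big)$ on $\Omega$, which is concentrated on the Lipschitz curve $C:=\gamma(I)$ and tangent to it with density $F\circ\gamma^{-1}$, is divergence free. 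It then suffices to show that such a $V$ forces $F$ to be constant.

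\textbf{The approximation scheme.}
I would deduce $\Lambda(\phi)=0$ for a fixed $\phi\in C_0^1(I)$ by producing $\pfi_n\in C_0^1(\Omega)$ with (1)~$\pfi_n\circ\gamma\to\phi$ uniformly on $I$ and (2)~$\sup_n\|(\pfi_n\circ\gamma)'\|_{L^\infty(I)}<\infty$. Indeed (1) forces $(\pfi_n\circ\gamma)'\to\phi'$ in $\ss D'(I)$, which together with (2) upgrades to $(\pfi_n\circ\gamma)'\weakstarto\phi'$ in $L^\infty(I)$; since $a\in L^1$ this gives $\int(\pfi_n\circ\gamma)'a\to\int\phi'a$, while (1) and finiteness of $|\mu|$ give $\int\pfi_n\circ\gamma\,d\mu\to\int\phi\,d\mu$, so $0=\Lambda(\pfi_n\circ\gamma)\to\Lambda(\phi)$. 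Such $\pfi_n$ exist whenever $g:=\phi\circ\gamma^{-1}$ is Lipschitz on the compact set $C$: extend $g$ to a Lipschitz function on $\RR^2$ by McShane's theorem, multiply by a fixed cut-off equal to $1$ on a neighbourhood of $\gamma(\rr{supp}\,\phi)$ and compactly supported in $\Omega$, and mollify; mollification does not increase the Lipschitz constant, so the derivatives of the pullbacks are bounded uniformly. And $g$ is Lipschitz on $C$ exactly when $\gamma$ is co-Lipschitz on $\rr{supp}\,\phi$, i.e.\ $|s-t|\le C\,|\gamma(s)-\gamma(t)|$ there for some constant $C$.

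\textbf{Geometric input and localisation.}
Because $\gamma$ is injective and Lipschitz with $\gamma'\ne0$ a.e., it is differentiable with nonzero derivative at $\Le$-a.e.\ point $t$, and at each such point there are $\delta(t),c(t)>0$ with $|\gamma(s)-\gamma(t)|\ge c(t)|s-t|$ whenever $|s-t|<\delta(t)$; quantising $\delta$ and $c$ and intersecting with a sufficiently fine grid of intervals produces a decomposition $I=N\sqcup\bigsqcup_{k\in\NN}E_k$ with $\Le(N)=0$ and $\gamma|_{E_k}$ bi-Lipschitz onto its image for every $k$. One then transfers the scheme above through this decomposition: the ``derivative part'' $\int\phi'a$ of $\Lambda$ sees only the absolutely continuous measure $a\Le$, which ignores the $\Le$-null exceptional set $N$ and is controlled on each bi-Lipschitz piece $E_k$, where $\phi\circ(\gamma|_{E_k})^{-1}$ is genuinely Lipschitz; the ``measure part'' $\int\phi\,d\mu$ is controlled for free, by mere uniform convergence of the (always available, mollified) approximants.

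\textbf{The main obstacle.}
The delicate point — the technical core of \cite{ABC1}, Lemma~4.3 — is that $\gamma$ need not be co-Lipschitz on any interval: the curve $C$ may approach itself at arbitrarily small scales, a single ambient test function $\pfi$ then cannot separate the two nearby strands, and so the family $\{\pfi\circ\gamma\}$ is strictly thinner than $C_0^1(I)$ and no naive $C^1$-density argument can work. What rescues the statement is that near a self-approach the two matching parameters $t_1,t_2$ carry almost antiparallel tangents, so $(\pfi\circ\gamma)'(t_1)\approx-(\pfi\circ\gamma)'(t_2)$, and passing to the limit in $\int_I F(\pfi\circ\gamma)'=0$ forces $F(t_1)=F(t_2)$; thus the hypothesis does pin $F$ down across the self-approach set. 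Turning this mechanism into a proof — matching up the bi-Lipschitz pieces $E_k$ through a compactness/limiting argument along the self-approach set, and treating separately the two cases $I=(0,\ell)$ and $I=\RR/(\ell\ZZ)$ — is the step I expect to be the main obstacle.
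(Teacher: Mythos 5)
Your opening reduction---rewriting $\Lambda$ as the distributional derivative of a primitive, so that the claim becomes ``the primitive is a.e.\ constant'' and the hypothesis says that the tangential measure $\gamma_\#(F\gamma'\Le)$ is divergence free---is exactly the paper's first step (there one takes $\alpha=a+u$ with $u'=-\mu$ and writes $\Lambda(\phi)=\int\phi'\alpha\,d\Le$). From that point on, however, the proposal has a genuine gap: the approximation scheme you set up works only where $\phi\circ\gamma^{-1}$ is Lipschitz, i.e.\ where $\gamma$ is co-Lipschitz; you correctly observe that this can fail, and the ``mechanism'' you invoke for the remaining set is left entirely unproved---you yourself flag it as the main obstacle. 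So the argument stops exactly where the real proof has to begin. Moreover the obstacle is misdiagnosed. Since $\gamma$ is injective and continuous on a compact parameter domain, for every $\delta>0$ there is $r>0$ such that $|\gamma(t)-\gamma(\tau)|\ge r$ whenever $|t-\tau|\ge\delta$; hence there are no ``self-approaching strands'' carrying far-apart parameters and almost antiparallel tangents to exploit. The only obstruction to co-Lipschitzness is local degeneracy of $\gamma$ at parameters where it is not differentiable with nonzero derivative, and your sketch does not say how to pass from the bi-Lipschitz pieces $E_k$ (which are measurable sets, not intervals) to the global statement: a pullback agreeing with $\phi$ on $E_k$ is uncontrolled off $E_k$, while $\Lambda$ integrates over all of $I$ and $\mu$ may charge the exceptional set. (There is also a small unaddressed point in the interval case: for $\pfi\in C_0^1(\Omega)$ the composition $\pfi\circ\gamma$ need not vanish at the endpoints, so your identity $\Lambda(\phi)=-\int F\phi'\,d\Le$ acquires boundary terms when applied to pullbacks.)

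For comparison, the paper does not try to approximate all test functions at all; it argues by contradiction with a single well-chosen cutoff. If $\alpha$ is not a.e.\ constant, pick $t_1<t_2$ which are $L^1$-approximate continuity points of $\alpha$ with $\tilde\alpha(t_1)\ne\tilde\alpha(t_2)$ and at which $\gamma$ is differentiable with $\gamma'\ne0$ (a.e.\ point qualifies). For small $\delta$ one builds $\psi\in C_0^\infty(\Omega)$ equal to $1$ on $\Gamma_1:=\gamma([t_1+\delta,t_2-\delta])$, equal to $0$ on $\Gamma_2:=\gamma([0,t_1-\delta]\cup[t_2+\delta,\ell])$, with $\|\nabla\psi\|_\infty\le C/\dist(\Gamma_1,\Gamma_2)$; then $\Lambda(\psi\circ\gamma)=\tilde\alpha(t_1)-\tilde\alpha(t_2)+R$, where $(\psi\circ\gamma)'$ is supported in the two $\delta$-windows around $t_1,t_2$. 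The crux is the quantitative bound $\dist(\Gamma_1,\Gamma_2)\ge c\delta$: if the minimizing parameters are far apart one uses the uniform separation coming from injectivity and compactness, and if they are close they must both lie in a small window around $t_1$ or $t_2$, where differentiability with nonzero derivative gives a local co-Lipschitz estimate. This makes $|R|$ arbitrarily small compared with $|\tilde\alpha(t_1)-\tilde\alpha(t_2)|$, contradicting $\Lambda(\psi\circ\gamma)=0$. In short, only local information at two points is needed, not a global bi-Lipschitz decomposition; to complete your route you would have to supply precisely this kind of separation estimate, at which point you would essentially be reproducing the paper's argument.
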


In case when $\Omega=\RR^2$ the proof of this result can be found in the preprint \cite{ABC2_pre}.
For convenience of the reader and completeness of the paper we will give the proof of Lemma~\ref{lem:density}
in the end of this section.

\begin{proof}[Proof of Theorem~\ref{th:eq-along-curve}.]
By definition \eqref{divuB-on-C} holds if and only if for any $\pfi \in C_0^\infty(\Omega)$
\begin{equation} \label{divB-on-C-weak}
\int_C u B \cdot \nabla \pfi \, d\Ha^1 + \int_C \pfi \, d\mu = 0.
\end{equation}

Suppose that $\gamma$ is a Lipschitz parametrization of $C$ with domain $I_\gamma$. Then using Proposition~\ref{prp:param} we can rewrite \eqref{divB-on-C-weak} as
\begin{equation*}
0 = \int_C u B \cdot \nabla \pfi \, d\gamma_\# (|\gamma'| \Le)
+ \int_C \pfi \,d \gamma_\#\mu_\gamma 
= \int_{I_\gamma} (uB\cdot \nabla \pfi) \circ \gamma \cdot |\gamma'| \,d\Le
+ \int_{I_\gamma} \pfi \circ \gamma \, \,d\mu_\gamma
\end{equation*}
where $\mu_\gamma:=\gamma^{-1}_\# (\mu)$ (see Section~\ref{sec:param}).

Since $B(x)$ is unit tangent to $C$ for $\Ha^1$-a.e. $x\in C$, there exists a function $\sigma_\gamma\colon C \to \{\pm 1\}$ such that \eqref{gamma-dot-vs-B} holds for a.e. $t\in I_\gamma$. Hence we can further rewrite \eqref{divB-on-C-weak} as
\begin{equation} \label{changevar+}
0= \int_{I_\gamma} (u\sigma_\gamma \cdot \nabla \pfi) \circ \gamma \cdot \gamma' \,d\Le
+ \int_{I_\gamma} \pfi \circ \gamma \, \,d \mu_\gamma.
\end{equation}

In view of \eqref{divB-on-C} we can substitute into \eqref{changevar+} $u\equiv 1$ and $\mu \equiv 0$ and obtain thus
\begin{equation*}
\int_{I_\gamma} (\sigma_\gamma \circ \gamma) (\pfi \circ \gamma)' \, d\Le = 0.
\end{equation*}
In view of Lemma~\ref{lem:density} this equation holds for any $\pfi \in C_0^1(\Omega)$ if and only if
\begin{equation*}
\int_{I_\gamma} (\sigma_\gamma \circ \gamma) \phi' \, d\Le = 0
\end{equation*}
for any $\phi\in C_0^1(I_\gamma)$. But this is equivalent to $(\sigma_\gamma\circ \gamma)'=0$ in $\ss D'(I_\gamma)$. Therefore either $\sigma_\gamma \equiv +1$ or $\sigma_\gamma \equiv -1$. Hence \ref{th:eq-along-curve-orient} is proved.

By changing, if necessary, the orientation of $I_\gamma$ we can always achieve that $\sigma_\gamma \equiv +1$. Hence \ref{th:eq-along-curve-B-param} is proved.

Now we can turn back to \eqref{changevar+} with $u$ and $\mu_\gamma$. Assuming $\sigma \equiv 1$ we can rewrite \eqref{changevar+} as
\begin{equation} \label{changevar++}
\int_{I_\gamma} (u \circ \gamma) (\pfi \circ \gamma)' \, d \Le + \int_{I_\gamma} \pfi \circ \gamma \, \, d\mu_\gamma = 0.
\end{equation}

We would like to use again Lemma~\ref{lem:density}, but at this point we do not know if $u\circ \gamma \in L^1(I_\gamma)$; since $u \in \Ha^1 \rest C$ and $\gamma_\# (|\gamma'|\Le) = \Ha^1 \rest C$ we only know that $u\circ \gamma \in L^1(|\gamma'| \Le)$. Therefore let us assume for a moment that the parametrization $\gamma$ is natural (clearly such $\gamma$ exists). Then immediately we obtain that $u \circ \gamma \in L^1(I_\gamma)$. 

Now we can use Lemma~\ref{lem:density} in view of which \eqref{changevar++} holds for any $\pfi \in C_0^1(\Omega)$ if and only if
\begin{equation*}
\int_{I_\gamma} (u \circ \gamma) \phi' \, d \Le 
+ \int_{I_\gamma} \pfi \circ \gamma \, \, d\mu_\gamma = 0
\end{equation*}
for any $\phi\in C_0^1(I)$. But this is equivalent to $(u\circ \gamma)' = \mu_\gamma$ in $\ss D'(I_\gamma)$ (which, by definition, is equivalent to \eqref{u-along-C}).
Hence $u\circ \gamma \in BV(I_\gamma)$ and consequently there exists a constant $c>0$ such that $|u\circ \gamma|< c$ a.e. on $I_\gamma$. Since $\gamma$ is natural parametrization this means that $|u(x)|<c$ for $\Ha^1$-a.e. $x\in C$. Hence \ref{th:eq-along-curve-changevar} is proved when $\gamma$ is natural parametrization of $C$.

But if $u\in \Linf(\Ha^1 \rest C)$ then $u\circ \gamma \in L^1(I_\gamma)$ for \emph{any} Lipschitz parametrization of $C$, and hence we can repeat our argument and apply Lemma~\ref{lem:density} to \eqref{changevar++}.
This completes the proof of \ref{th:eq-along-curve-changevar}.

It remain to prove \ref{th:eq-along-curve-renorm}. Since $u\circ \gamma \in BV(I_\gamma)$ there exist classical one-dimensional traces $(u_\gamma)^\pm$ of $u\circ \gamma$ (in a sense we mentioned in Remark~\ref{rmrk:traces-classic}). Then one can easily check that $u^\pm$ defined by $u^\pm\circ \gamma = (u\circ \gamma)^\pm$ are traces of $u$ along $(C,B)$.

Finally applying Vol'pert's chain rule to $u\circ \gamma$ (see Proposition~\ref{prp:Volpert}) we obtain that
\[
\beta(u\circ \gamma)' = f((u\circ \gamma)^+, (u\circ \gamma)^- ) \cdot \mu_\gamma \quad \text{in} \; \ss D'(I_\gamma)
\]
(where $f$ is defined by \eqref{f-def}) which implies \eqref{divbetauB-on-C} in view of \ref{th:eq-along-curve-changevar}.
\end{proof}

\begin{proof}[Proof of Lemma~\ref{lem:density}]
Let $u\colon I \to \RR$ be a $BV$ function such that $u' = -\mu$ in $\ss D'(I)$. Then the functional $\Lambda$ can be written as
\[
\Lambda(\phi) = \int_0^\ell \phi' \alpha \, d \Le
\]
where the function $\alpha\in L^1(I)$ is given by $\alpha = a + u$.

If $\Lambda(\phi)\ne 0$ for some $\phi\in C_0^1(I)$ then $\alpha'\ne 0$ in $\ss D'(I)$.
In view of Rademacher's theorem there exists a negligible set $N\subset I$ such that for any $t\in I \setminus N$ there exists $\gamma'(t) \ne 0$. Since a.e. $t\in I\setminus N$ is a point of $L^1$ approximate continuity of $\alpha$ (with approximate limit $\tilde \alpha(t)$) and $\alpha ' \ne 0$ we can choose $t_1, t_2 \in I \setminus N$ in such way that $\alpha_1:=\tilde \alpha(t_1) \ne \alpha(t_2)=:\alpha_2$ and $0<t_1<t_2<\ell$. 

For given $\delta>0$ let $\Gamma_1:= \gamma([t_1+\delta, t_2-\delta])$ and $\Gamma_2:=\gamma([0,t_1 - \delta] \cup [t_2 + \delta,\ell])$ (in case when $I=(0,\ell)$ we assume that the Lipschitz function $\gamma$ is defined by continuity at $0$ and $\ell$). 
Since $\gamma$ is injective the sets $\Gamma_1$ and $\Gamma_2$ are disjoint compacts in $\RR^2$, there exists a function $\psi_1\in C_0^\infty(\RR^2)$ such that
$\psi_1$ is $1$ on $\Gamma_1$, $\psi_1$ is $0$ on $\Gamma_2$ and $\|\nabla \psi_1\|_\infty \le C/\dist(\Gamma_1, \Gamma_2)$. 
We can also find $\psi_2 \in C_0^\infty (\Omega)$ such that $\psi_2$ is $1$ on $\Gamma_1$ and $\|\nabla \psi_2\|_\infty \le \dist(\Gamma_1, \d \Omega) \le \dist (\gamma([t_1, t_2]), \d \Omega)$.
Then $\psi:= \psi_1 \psi_2 \in C_0^\infty(\Omega)$ satisfies
\begin{itemize}
\item $\psi$ is $1$ on $\Gamma_1$
\item $\psi$ is $0$ on $\Gamma_2$
\item $\|\nabla \psi\|_\infty \le C /\dist(\Gamma_1, \Gamma_2)$
\end{itemize}
provided that $\dist(\Gamma_1, \Gamma_2) < \dist(\gamma([t_1, t_2], \d \Omega)$ which clearly is satisfied when $\delta<\delta_1$ for some sufficiently small $\delta_1>0$, since $\gamma$ is Lipschitz.

The derivative of $\psi\circ \gamma$ is nonzero only on $(t_i-\delta, t_i+\delta)$, $i=1,2$, therefore by construction of $\psi$
\begin{equation*}
\Lambda(\psi \circ \gamma) = \int_{t_1-\delta}^{t_1+\delta} \alpha \cdot (\psi \circ \gamma)' \, dt + \int_{t_2-\delta}^{t_2+\delta} \alpha \cdot (\psi \circ \gamma)' \, dt = 
\alpha_1 - \alpha_2 + R
\end{equation*}
where the ``error term'' $R$ is given by
\begin{equation*}
R:= \int_{t_1-\delta}^{t_1+\delta} (\alpha(t)-\alpha_1) (\psi \circ \gamma)' \, dt  
+ \int_{t_2-\delta}^{t_2+\delta} (\alpha(t)-\alpha_2) (\psi \circ \gamma)' \, dt  
\end{equation*}

Since $\gamma$ is Lipschitz and $t_{1,2}$ are points of $L^1$-approximate continuity of $\alpha$ for any $\eps>0$ there exists $\delta_\eps>0$ such that if $\delta<\delta_\eps$ then
\begin{equation}\label{RMainEst}
|R| \le \eps \|\nabla \psi\|_\infty \|\gamma'\|_\infty \delta
\le C \eps \frac{\delta}{\dist(\Gamma_1, \Gamma_2)}
\end{equation}
in view of the definition of $\psi$. We claim that there exists $c>0$ such that
\begin{equation}\label{dist-est}
 \dist(\Gamma_1, \Gamma_2) \ge c \delta
\end{equation}
when $\delta$ is sufficiently small. 
From \eqref{RMainEst} and \eqref{dist-est} we immediately conclude that for any $\xi>0$ there exists $\delta>0$ such that the function $\psi$ constructed as described above satisfies
\begin{equation*}
|\Lambda(\psi \circ \gamma)| \ge |\alpha_1 - \alpha_2| - \xi
\end{equation*}
which contradicts the assumptions about the functional $\Lambda$ when $\xi < |\alpha_1-\alpha_2|$.

It remains to prove the estimate \eqref{dist-est}.

Since $\Gamma_1$ and $\Gamma_2$ are compacts there exist $t'\in \gamma^{-1}(\Gamma_1)$ and $t''\in \gamma^{-1}(\Gamma_2)$ such that $\dist(\Gamma_1,\Gamma_2) = |\gamma(t')-\gamma(t'')|$.
(Note that $t'$ and $t''$ depend on $\delta$.)

By definition of $t_{1,2}$ the number $s:= \min(|\gamma'(t_1)|, |\gamma'(t_2)|)$ is strictly positive and
there exists $\delta_2>0$ such that
\begin{equation}\label{gammadiff}
|\gamma(t_i+ \tau) - \gamma(t_i) - \gamma'(t_i)\tau|  < (s/2) \tau
\end{equation}
for any $\tau\in \RR$ with $|\tau|<\delta_2$, where $i=1,2$.

Since $\gamma$ is injective there exists $r>0$ such that for any $t, \tau \in I$ with $|t-\tau|\ge \delta_2$ we have $|\gamma(t) - \gamma(\tau)|\ge r$. Therefore if $|t'-t''|\ge \delta_2$ then we have $|\gamma(t')-\gamma(t'')| \ge r$, hence \eqref{dist-est} is satisfied with $c=1$ provided that $\delta<\delta_3:=r$.

Otherwise, if $|t'-t''|< \delta_2$ then \emph{both} $t'$ and $t''$ have to belong to \emph{the same} interval $(t_i-\delta_2, t_i+\delta_2)$, where $i=1,2$. Then in fact $t'\in (t_i + \delta, t_i + \delta_2)$, $t''\in (t_i - \delta_2, t_i-\delta)$ and so $2 \delta \le|t'-t''| = |t'-t_i| + |t''-t_i|$. Using \eqref{gammadiff} we can estimate
\begin{equation}
|\gamma(t') - \gamma(t'')|\ge s |t'-t''| - (s/2) |t' -t_i| - (s/2)|t''-t_i| = (s/2) |t'-t''| \ge s \delta
\end{equation}
hence \eqref{dist-est} is satisfied with $c=s$.
\end{proof}

\section{Disintegrations of measures and structure of level sets of Lipschitz functions}
In this section we present the main tools which we use later for analysis of steady nearly incompressible vector fields. Using these tools in the next section we will reduce \eqref{divuB} to \eqref{divuB-on-C}.

\subsection{Disintegration theorems}\label{ss:MeasureTheory}
Let $\Omega\subset \RR^2$ be an open set.
Recall that a family $\{\mu_h\}_{h\in \RR}$ of Radon measures on $\Omega$ is called \emph{Borel} if for any continuous and compactly supported test function $\pfi\colon \Omega \to \RR$ the map $h\mapsto \int \pfi \,d \mu_h$ is Borel measurable. The following proposition is a corollary of the well-known Disintegration Theorem (see e.g. \cite{AFP}, \S 2.5):

\begin{proposition}\label{prp:disint}
Suppose that $H\colon \Omega \to \RR$ is a Borel function, $\mu$ is a Radon measure on $\Omega$ and $\nu$ is a non-negative Radon measure on $\RR$ such that $H_\# |\mu| \ll \nu$.
Then there exists a Borel family $\{\mu_h\}_{h\in \RR}$ of Radon measures on $\Omega$ such that
\begin{itemize}
\item $\mu_h$ is concentrated on the level set $H^{-1}(h)$ for every $h\in\RR$
\item $\mu$ can be decomposed as
\begin{equation*}
\mu = \int_{\RR} \mu_h \,d \nu(h)
\end{equation*}
which means that for any Borel set $E\subset \Omega$ we have $\mu(E)=\int_\RR \mu_h(E) \,d \nu(h)$.
\end{itemize}
\end{proposition}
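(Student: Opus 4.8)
The plan is to derive the statement from the classical Disintegration Theorem applied to the positive measure $|\mu|$, together with the Radon--Nikodym theorem and the polar decomposition of $\mu$. Let $g\colon\Omega\to\RR^m$ be a bounded Borel representative of the polar density $d\mu/d|\mu|$, chosen so that $|g|\le 1$ everywhere and $\mu=g\,|\mu|$ (here $m=1$ in the real-valued case; the vector-valued case is identical). Applying the Disintegration Theorem (see \cite{AFP}, \S 2.5) to the finite positive Radon measure $|\mu|$ and the Borel map $H$, and writing $\rho:=H_\#|\mu|$, we obtain a Borel family $\{\sigma_h\}_{h\in\RR}$ of probability measures on $\Omega$ such that $\sigma_h$ is concentrated on $H^{-1}(h)$ for $\rho$-a.e.\ $h$ and $|\mu|=\int_\RR\sigma_h\,d\rho(h)$. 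By the standard monotone-class extension, for every bounded Borel $f\colon\Omega\to\RR$ the map $h\mapsto\int f\,d\sigma_h$ is Borel and $\int_\Omega f\,d|\mu|=\int_\RR\bigl(\int_\Omega f\,d\sigma_h\bigr)\,d\rho(h)$.

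Since $\rho=H_\#|\mu|\ll\nu$, the Radon--Nikodym theorem gives a Borel function $\phi\colon\RR\to[0,+\infty)$ with $\rho=\phi\,\nu$; moreover $\phi\in L^1(\nu)$ because $\int\phi\,d\nu=\rho(\RR)=|\mu|(\Omega)<\infty$. I would then define
\[
\mu_h:=\phi(h)\,(g\,\sigma_h),\qquad h\in\RR,
\]
where $g\,\sigma_h$ denotes the measure $E\mapsto\int_E g\,d\sigma_h$. Since $|g|\le 1$ and $\sigma_h$ is a probability measure, $|\mu_h|\le\phi(h)\,\sigma_h$, so each $\mu_h$ is a finite Radon measure, concentrated on $H^{-1}(h)$ for $\rho$-a.e.\ $h$; as $\phi=0$ $\nu$-a.e.\ on any $\rho$-null set, this holds for $\nu$-a.e.\ $h$, and after setting $\mu_h:=0$ on the exceptional $\nu$-null Borel set we may assume $\mu_h$ is concentrated on $H^{-1}(h)$ for \emph{every} $h\in\RR$. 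The family $\{\mu_h\}$ is Borel, since $h\mapsto\int\pfi\,d\mu_h=\phi(h)\int\pfi\,g\,d\sigma_h$ is a product of Borel functions of $h$ (using that $\pfi\,g$ is bounded Borel).

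Finally I would verify the decomposition: for every Borel $E\subset\Omega$,
\begin{align*}
\int_\RR\mu_h(E)\,d\nu(h)
&=\int_\RR\phi(h)\Bigl(\int_E g\,d\sigma_h\Bigr)\,d\nu(h)
=\int_\RR\Bigl(\int_E g\,d\sigma_h\Bigr)\,d\rho(h)\\
&=\int_E g\,d|\mu|=\mu(E),
\end{align*}
where the second equality uses $\rho=\phi\,\nu$ and the third is the extended disintegration identity applied to $f=\1_E\,g$ (componentwise in the vector-valued case). Hence $\mu=\int_\RR\mu_h\,d\nu(h)$, which completes the argument.

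The only genuinely non-elementary input is the classical Disintegration Theorem itself; everything else --- transferring the reference measure from $\rho=H_\#|\mu|$ to the prescribed $\nu$ via Radon--Nikodym, inserting the polar density $g$, and the measurability bookkeeping --- is routine. I expect the only points requiring mild care to be the extension of the disintegration identity from Borel sets to the bounded Borel integrand $\1_E\,g$, and reconciling the $\rho$-null set on which $\sigma_h$ may fail to be supported in $H^{-1}(h)$ with the requirement that the concentration property hold for every $h$.
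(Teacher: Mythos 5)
Your argument is correct and is exactly the derivation the paper has in mind: the paper offers no written proof, simply declaring the proposition a corollary of the classical Disintegration Theorem of \cite{AFP}, \S 2.5, and your combination of that theorem for $|\mu|$ with the polar decomposition $\mu=g\,|\mu|$ and the Radon--Nikodym density of $H_\#|\mu|$ with respect to $\nu$ is the standard way to make that corollary explicit. The technical points you flag (monotone-class extension to the integrand $\1_E\,g$, and redefining $\mu_h:=0$ on the exceptional $\nu$-null set) are handled correctly.
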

We will call the family $\{\mu_h\}_{h\in \RR}$ from Proposition~\ref{prp:disint} a \emph{disintegration} of $\mu$ with respect to $(H, \nu)$. Such disintegration is unique in the following sense: if $\{\bar \mu_h\}_{h\in \RR}$ is another disintegration then $\mu_h=\bar \mu_h$ for $\nu$-a.e. $h\in \RR$.

When the function $H$ is a Lipschitz function and $\mu = |\nabla H|\Le^2$, we can characterize the disintegration $\{\mu_h\}_{h\in \RR}$ more precisely using the coarea formula (see e.g. \cite{AFP}, Theorem~2.93):

\begin{proposition}\label{prp:coarea}
Suppose that $H\colon \Omega \to \RR$ is a Lipschitz function with bounded support.
Then
\begin{equation*}
|\nabla H|\Le^2 = \int_{\RR} \Ha^1 \rest E_h \, dh
\end{equation*}
where $E_h$ denotes the level set $H^{-1}(h)$.
\end{proposition}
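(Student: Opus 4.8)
The plan is to deduce the statement directly from the classical coarea formula for Lipschitz functions (\cite{AFP}, Theorem~2.93), the only real work being to match its conclusion with the notation for integrals of Borel families of measures introduced before Proposition~\ref{prp:disint}.

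First I would extend $H$ to a Lipschitz function on all of $\RR^2$ with the same Lipschitz constant (by the McShane extension). This changes nothing, since every measure appearing in the statement is concentrated on $\Omega$ and all identities below are tested only against Borel subsets of $\Omega$. For a real-valued Lipschitz function the $1$-dimensional coarea Jacobian is simply $|\nabla H|$, so Theorem~2.93 of \cite{AFP} gives, for every Borel set $E\subset \Omega$,
\begin{equation*}
\int_E |\nabla H|\, d\Le^2 = \int_{\RR} \Ha^1\bigl(E\cap H^{-1}(h)\bigr)\, dh = \int_{\RR} (\Ha^1 \rest E_h)(E)\, dh .
\end{equation*}
Since this holds for every Borel $E\subset\Omega$, it is precisely the asserted identity $|\nabla H|\Le^2 = \int_{\RR} \Ha^1\rest E_h\, dh$, understood in the sense of Proposition~\ref{prp:disint}.

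It then remains to check that the right-hand side is a legitimate integral of a Borel family of Radon measures. The measurability in $h$ of $\Ha^1(E\cap E_h)$ for Borel $E$ is part of the coarea formula, so $\{\Ha^1\rest E_h\}_{h\in\RR}$ is a Borel family. Because $H$ has bounded support, $\int_\Omega |\nabla H|\, d\Le^2$ is bounded by the Lipschitz constant of $H$ times the finite Lebesgue measure of the support of $H$; hence $|\nabla H|\Le^2$ is a finite measure, and taking $E=\Omega$ above shows $\int_\RR \Ha^1(E_h)\, dh<\infty$, so $\Ha^1\rest E_h$ is a finite Radon measure for $\Le^1$-a.e.\ $h$. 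In particular $H_\#(|\nabla H|\Le^2) = \Ha^1(E_h)\, d\Le^1(h) \ll \Le^1$, so this is exactly the disintegration of $|\nabla H|\Le^2$ with respect to $(H,\Le^1)$ provided by Proposition~\ref{prp:disint}. There is no genuine obstacle in this argument: the entire content is the classical coarea formula, and the only points requiring care are the identification of the coarea Jacobian of a scalar Lipschitz function with $|\nabla H|$ and the Borel-measurability and finiteness properties just mentioned.
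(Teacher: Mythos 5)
Your argument is correct and coincides with the paper's treatment: the proposition is stated there as an immediate consequence of the classical coarea formula (\cite{AFP}, Theorem~2.93), with no further proof given. Your extra care about the extension of $H$, the identification of the coarea Jacobian with $|\nabla H|$, and the Borel-family/finiteness checks is sound but adds nothing beyond what the cited theorem already provides.
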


\subsection{Connected components}

Later we will use a technical lemma, according to which any connected component of a compact set can be in some sense separated from its complement by an appropriate sequence of test functions:
\begin{lemma}[cf. \cite{ABC2}, Section~2.8]\label{lem:sep-by-test}
If $E\subset \RR^d$ is compact then for any connected component $C$ of $E$ there exists a sequence $(\pfi_n)_{n\in\NN} \subset C_0^\infty(\RR^d)$ such that 
\begin{enumerate}
  \item $0\le \pfi_n \le 1$ on $\RR^d$ and $\pfi_n\in \{0,1\}$ on $E$ for all $n \in \NN$;
  \item for any $x\in C$ we have $\pfi_n(x) = 1$ for all $n\in \NN$;
  \item for any $x\in E \setminus C$ we have $\pfi_n(x) \to 0$ as $n\to \infty$;
  \item for any $n \in \NN$ we have $\supp \nabla \pfi_n \cap E = \emptyset$.
\end{enumerate}
\end{lemma}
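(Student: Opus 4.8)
The plan is to separate the connected component $C$ from the rest of $E$ at the level of compact sets first, and only then produce the smooth functions. The key classical fact I would invoke is the following topological lemma (a standard consequence of the fact that connected components of a compact Hausdorff space coincide with quasi-components): \emph{if $E$ is compact and $C$ is a connected component of $E$, then for every open set $U\supset C$ there is a relatively clopen subset $K$ of $E$ with $C\subset K\subset U$}. Equivalently, $C=\bigcap K$ over all relatively clopen $K\ni C$, and by compactness any neighbourhood of $C$ contains one such $K$. I would either cite this (it is exactly the ingredient used in \cite{ABC2}, Section~2.8) or sketch it: the intersection of all relatively clopen sets containing $C$ is connected, hence equals $C$; then compactness of $E\setminus U$ lets one extract a finite subfamily whose intersection already misses $E\setminus U$.

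First I would fix a decreasing sequence of open neighbourhoods $U_n\supset C$ in $\RR^d$ with $\bigcap_n U_n = C$ and moreover $\overline{U_{n+1}}\subset U_n$; this is possible since $C$ is compact and $\RR^d$ is metric. Applying the topological lemma, for each $n$ pick a relatively clopen $K_n\subset E$ with $C\subset K_n\subset U_n$. Being clopen in $E$, the set $K_n$ and its complement $E\setminus K_n$ are disjoint compact sets in $\RR^d$, hence at positive distance. By standard mollification (take the indicator of a suitable open set lying strictly between $K_n$ and $E\setminus K_n$, staying inside $U_n$, and convolve with a smooth kernel of small enough radius) I obtain $\pfi_n\in C_0^\infty(\RR^d)$ with $0\le\pfi_n\le 1$ everywhere, $\pfi_n\equiv 1$ on a neighbourhood of $K_n$, $\pfi_n\equiv 0$ on a neighbourhood of $E\setminus K_n$, and $\supp\pfi_n\subset U_n$. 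In particular $\pfi_n\in\{0,1\}$ on all of $E$ (property~1), and $\supp\nabla\pfi_n$ is contained in the ``transition'' open set which is disjoint from $K_n\cup(E\setminus K_n)=E$, giving property~4.

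It remains to check properties 2 and 3. Property 2 is immediate since $C\subset K_n$ and $\pfi_n\equiv 1$ there. For property 3, take $x\in E\setminus C$; since $\bigcap_n U_n=C$ there is $N$ with $x\notin U_N$, hence $x\notin\supp\pfi_N$ and, because the $U_n$ are decreasing, $x\notin\supp\pfi_n$ for all $n\ge N$, so $\pfi_n(x)=0$ for all $n\ge N$; thus $\pfi_n(x)\to 0$. The main obstacle is really just the topological separation lemma in the first paragraph — once one has the clopen sets $K_n$, everything else is a routine mollification argument; so I would make sure to state that lemma cleanly and either cite \cite{ABC2} or include its short proof via quasi-components and compactness.
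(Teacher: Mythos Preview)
Your proof is correct and uses essentially the same approach as the paper: both hinge on the fact that in a compact Hausdorff space connected components coincide with quasi-components, and then produce smooth cutoffs separating the resulting relatively clopen pieces of $E$. The only difference is bookkeeping---the paper extracts a countable family of separating clopen sets via Lindel\"of's lemma and takes finite intersections $G_n$, whereas you obtain the sequence by fixing a shrinking neighbourhood basis $(U_n)$ of $C$ and invoking the clopen-separation lemma once for each $U_n$; your packaging is marginally cleaner and in fact shows that $\pfi_n(x)$ is eventually zero (not merely $\to 0$) for $x\in E\setminus C$.
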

Though essentially this lemma is a corollary of the results from Section~2.8 of \cite{ABC2}, we include its elementary proof for convenience of the reader.
\begin{proof}
First we claim that for any $x\in E\setminus C$ there exists a closed and open in $E$ set $F_x$ such that
\begin{itemize}
\item $C \subset F_x$;
\item $x \in E \setminus F_x$.
\end{itemize}
Indeed, let $C_x$ denote the connected component of $E$ such that $x\in C_x$. Since $E$ is compact, $C$ can be written as an intersection of all closed and open in $E$ sets $F\subset E$ such that $C \subset F$ (see \cite{Eng}, Theorems 6.1.22 and 6.1.23). For any such set $F$ either $C_x \subset F$ or $C_x \subset E\setminus F$, because otherwise we obtain contradiction with the fact that $C_x$ is connected. If $C_x \subset F$ for all such sets $F$ then $C_x \subset C$, which contradicts the assumption that $x\notin C$ and $x\in C_x$. Therefore $x\in E \setminus F$ for at least one closed and open in $E$ set $F=F_x$ such that $C \subset F$.

Now we can write $C$ as the intersection
\begin{equation*}
C = \bigcap_{x\in E \setminus C} F_x.
\end{equation*}
Since $E$ is closed, every set closed in $E$ is closed.
By Lindel\"of's Lemma there exists a countable family $\{F_k\}_{k\in \NN} \subset \setof{F_x}{x\in E \setminus C}$ such that
$\cup_{x\in E \setminus C} E \setminus F_x = \cup_{k\in \NN} E \setminus F_k$
or in other words
\begin{equation*}
C = \cap_{k\in \NN} F_k.
\end{equation*}

Let us define $G_n := \cup_{k=1}^n F_k$, $n\in \NN$. By construction these sets are closed and open in $E$, hence for any $n$ we can write $E$ as a union of two disjoint closed sets, one of which contains $C$:
$E=G_n \cup (E\setminus G_n)$. Since $G_n$ and $E\setminus G_n$ are closed and bounded for any $n\in \NN$ we can construct a function $\pfi_n\in C_0^\infty(\RR^d)$ such that
\begin{itemize}
\item $0\le \pfi_n \le 1$ on $\RR^d$;
\item $\pfi_n(x) = 1$ for any $x\in G_n$;
\item $\pfi_n(x) = 0$ for any $x\in E \setminus G_n$;
\item $\supp \nabla \pfi_n \cap E = \emptyset$.
\end{itemize}
By construction of the sets $G_n$ we have $1_{G_n}(x) \to 1_C(x)$ for any $x\in E$ as $n\to \infty$, so the sequence $(\pfi_n)_{n\in \NN}$ we constructed has the desired properties.
\end{proof}

\subsection{Structure of level sets of Lipschitz functions}

Let $\Omega\subset \RR^2$ be a bounded open set.
In this section, following \cite{ABC2}, we characterize the structure of the level sets of a Lipschitz function $H\colon \Omega\to \RR$.
According to \cite{ABC2} such level sets essentially consist of closed simple curves which can be parametrized by injective Lipschitz functions (see Definition~\ref{def:param}):

\begin{theorem}[see \cite{ABC2}, Theorem 2.5]\label{th:ABC2}
Let $\Omega \subset \RR^2$ be an open set and suppose that $H\colon \Omega \to \RR$ is a Lipschitz function with bounded support.
Then the following statements hold for a.e. $h\in H(\Omega)$:
\begin{enumerate}
\labitem{1}{th:ABC2-1} the level set $E_h:=H^{-1}(h)$ is $1$-rectifiable and $\Ha^1(E_h)<+\infty$;
\labitem{2}{th:ABC2-2} for $\Ha^1$-a.e. $x\in E_h$ the function $H$ is differentiable at $x$ and $\nabla H(x) \ne 0$;
\labitem{3}{th:ABC2-3} $\ff{Conn^*}(E_h)$ is countable and $\Ha^1(E_h \setminus E_h^*)=0$;
\labitem{4}{th:ABC2-4} every $C\in \ff{Conn^*}(E_h)$ is an $\Omega$-closed simple Lipschitz curve.
\end{enumerate}
\end{theorem}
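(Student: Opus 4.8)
The statement is quoted from \cite{ABC2}; I outline the strategy I would follow. The backbone is the coarea formula, which already yields the measure-theoretic assertions, while the delicate part is an approximation argument transferring the smooth structure of the level sets of mollifications of $H$ to $H$ itself.

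\emph{Step 1 (measure-theoretic properties, i.e. \ref{th:ABC2-1}, \ref{th:ABC2-2} and half of \ref{th:ABC2-3}).} Since $H$ is Lipschitz with bounded support, Rademacher's theorem gives differentiability $\Le^2$-a.e., and the coarea formula gives
\[
\int_\RR \Ha^1(E_h)\,dh = \int_\Omega |\nabla H|\,d\Le^2 < \infty ,
\]
so $\Ha^1(E_h)<\infty$ for a.e.\ $h$. Applying coarea over the $\Le^2$-null sets $\{\nabla H=0\}$ and $\{H\text{ not differentiable}\}$ shows that for a.e.\ $h$ the function $H$ is differentiable $\Ha^1\rest E_h$-a.e.\ with $\nabla H\ne 0$ there, which is \ref{th:ABC2-2}. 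For rectifiability I would use $H\in BV_{\mathrm{loc}}$: by the coarea formula for sets of finite perimeter, $\{H>h\}$ has finite perimeter for a.e.\ $h$, its reduced boundary $\partial^*\{H>h\}$ is $1$-rectifiable, and by continuity of $H$ one has $\partial^*\{H>h\}\subset\partial\{H>h\}\subset E_h$; comparing the two coarea identities (both integrate to $|\nabla H|\Le^2$) forces $\Ha^1(E_h\setminus\partial^*\{H>h\})=0$ for a.e.\ $h$, so $E_h$ is $1$-rectifiable, completing \ref{th:ABC2-1}. Finally, the elements of $\ff{Conn^*}(E_h)$ are pairwise disjoint, each of positive $\Ha^1$-measure, and the total measure is finite, hence $\ff{Conn^*}(E_h)$ is countable.

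\emph{Step 2 (the dust is $\Ha^1$-negligible and the large components are simple curves, i.e.\ the rest of \ref{th:ABC2-3} and all of \ref{th:ABC2-4}).} These do not follow from rectifiability alone, since a closed $1$-rectifiable set of finite $\Ha^1$-measure can be totally disconnected. Here I would mollify, $H_\eps:=H*\chi_\eps$, and use Sard's theorem: for a.e.\ $h$ the set $\{H_\eps=h\}$ is a smooth embedded $1$-manifold, i.e.\ a finite disjoint union of circles and arcs with endpoints on $\partial\Omega$, in particular without branch points. Uniform perimeter bounds $\sup_\eps\operatorname{Per}(\{H_\eps>h\})<\infty$ (again from coarea, after a Fubini argument selecting a full-measure set of good parameters $h$) give Hausdorff-compactness, so along a subsequence $\{H_\eps=h\}$ converges in the Hausdorff metric, and one identifies the limit with $E_h$ up to $\Ha^1$-null sets. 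Combining this with the classical fact \cite{Falconer} that a compact connected set of finite $\Ha^1$-measure is arcwise connected and a countable union of rectifiable arcs, and with the absence of triple points inherited from the approximants, one concludes that each $C\in\ff{Conn^*}(E_h)$ is a simple Lipschitz curve which is either closed or has both endpoints on $\partial\Omega$ (hence $\Omega$-closed), giving \ref{th:ABC2-4}; the leftover $E_h\setminus E_h^*$ then carries no $\Ha^1$-mass, completing \ref{th:ABC2-3}.

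\emph{Main obstacle.} The hard part is Step 2: ensuring that, in the Hausdorff limit, connected components neither split nor merge nor develop branch points, and that the identification of the limit with $E_h$ holds for a \emph{full-measure} set of $h$. This forces a careful selection of the good parameters $h$ (via a Fubini argument over the pairs $(\eps,h)$ where Sard fails or perimeters blow up) together with a quantitative estimate excluding the formation of thin "necks" in the limit, and it is exactly here that the restriction to dimension two is essential.
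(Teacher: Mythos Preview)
Your proposal and the paper's proof differ fundamentally in scope. The paper does \emph{not} reprove the structure theorem from scratch: it takes the case $\Omega=\RR^2$ as a black box from \cite{ABC2}, extends $H$ to a compactly supported Lipschitz function $G$ on all of $\RR^2$ (Lipschitz extension followed by multiplication by a cutoff), applies the $\RR^2$ result to $G$, and then transfers the conclusions to $E_h=G^{-1}(h)\cap\Omega$. The only genuine work is the localization step, handled by the elementary Lemma~\ref{lem:conn-comp-cap} (connected components of $E\cap\Omega$ are connected components of $C\cap\Omega$ for some component $C$ of $E$) together with the observation that the preimage under a Lipschitz parametrization $\gamma$ of $C\cap\Omega$ is open in the circle, hence a countable union of arcs, each giving an $\Omega$-closed simple curve. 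This is a few lines.

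You instead attempt a direct proof of the full statement. Your Step~1 is fine and standard. Your Step~2, however, carries a real gap that you yourself flag: Hausdorff limits of smooth level sets of $H_\eps$ need not coincide with $E_h$, and even when they do, branch points and mergers are \emph{not} automatically excluded in the limit (an embedded figure-eight is a Hausdorff limit of circles). The ``quantitative estimate excluding thin necks'' you invoke is exactly the missing ingredient, and it is not supplied. The actual argument in \cite{ABC2} does not proceed via mollification and Hausdorff convergence; it uses a more intrinsic analysis of the level sets. So while your Step~1 is a valid reconstruction of the easy half, your Step~2 is at best a heuristic and would not constitute a proof without substantial additional work. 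If you want to follow the paper, the correct move is simply to reduce to $\Omega=\RR^2$ and quote \cite{ABC2}; the only thing to check carefully is the $\Omega$-closedness of the components, via the parametrization argument above.
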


\begin{definition} \label{def:reg-val}
We will say that $h\in \RR$ is a \emph{regular value} of $H$ if the corresponding level set $E_h$ either is empty or satisfies the properties \ref{th:ABC2-1}--\ref{th:ABC2-4} of Theorem~\ref{th:ABC2}. In this case we will also call the level set $E_h$ \emph{regular}.
\end{definition}
Note that we consider empty level sets as regular only for brevity. 
This convention allows us to summarize Theorem~\ref{th:ABC2} as follows: for a.e. $h\in \RR$ the corresponding level set $H^{-1}(h)$ is regular.

When $\Omega=\RR^2$ Theorem~\ref{th:ABC2} was proved in \cite{ABC2}.
For a generic open set $\Omega$ we will prove Theorem~\ref{th:ABC2} by reduction to this case using the following simple lemma:

\begin{lemma}\label{lem:conn-comp-cap}
If $E \subset \RR^d$ and $\Omega \subset \RR^d$ then $I$ is a connected component of $E\cap \Omega$ if and only if $I$ is a connected component of $C\cap \Omega$ for some connected component $C$ of $E$.
\end{lemma}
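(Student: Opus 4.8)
The plan is to reduce everything to one elementary observation about connected components: if $A \subset T \subset S$ and $A$ is a connected component of $S$, then $A$ is also a connected component of $T$. Indeed, fixing a point $x \in A$, the connected component of $x$ in $T$ is a connected subset of $T \subset S$ containing $x$, hence is contained in the connected component of $x$ in $S$, which is $A$; conversely $A$ is a connected subset of $T$ containing $x$, so it is contained in the connected component of $x$ in $T$. Thus the two coincide. I will also use the standard fact that the connected components of a set partition it, so that a (nonempty) connected subset of $E$ lies in exactly one connected component of $E$.

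For the forward implication, suppose $I$ is a connected component of $E \cap \Omega$. Since $I$ is a nonempty connected subset of $E$, it lies in a unique connected component $C$ of $E$, and then $I \subset C \cap \Omega \subset E \cap \Omega$, using $I \subset \Omega$ and $C \subset E$. Applying the observation with $A = I$, $T = C \cap \Omega$ and $S = E \cap \Omega$ shows that $I$ is a connected component of $C \cap \Omega$, as required.

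For the reverse implication, suppose $I$ is a connected component of $C \cap \Omega$ for some connected component $C$ of $E$; fix $x \in I$ and let $J$ be the connected component of $x$ in $E \cap \Omega$. Since $I$ is connected, contained in $C \cap \Omega \subset E \cap \Omega$, and contains $x$, we have $I \subset J$. Now $J$ is a connected subset of $E$, so it lies in a single connected component of $E$; as $x \in J \cap C$, the partition property forces that component to be $C$, whence $J \subset C$, and also $J \subset \Omega$, so $J \subset C \cap \Omega$. But then $J$ is a connected subset of $C \cap \Omega$ containing $x$, hence contained in the connected component of $x$ in $C \cap \Omega$, which is $I$; combined with $I \subset J$ this gives $J = I$, so $I$ is a connected component of $E \cap \Omega$.

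The argument is purely point-set topological and presents no real obstacle; the only points requiring care are keeping track of which ambient set each ``connected component'' refers to, and invoking the partition property of the components of $E$ at the right moment (to identify the component of $E$ containing $J$ with $C$). One should also recall that connected components are nonempty by definition, which is what makes the partition argument applicable.
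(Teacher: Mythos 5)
Your proof is correct and follows essentially the same route as the paper's: both arguments rest on the maximality of connected components (the component of $x$ in a set contains every connected subset containing $x$) and establish the identification by a double inclusion between the component of $x$ in $E\cap\Omega$ and the component of $x$ in $C\cap\Omega$. Your packaging of the sandwich observation $A\subset T\subset S$ and the explicit treatment of the reverse implication are minor presentational differences, not a different method.
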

\begin{proof}
Suppose that $I$ is a connected component of $x$ in $E\cap \Omega$ for some $x\in E\cap \Omega$.
Let $C$ denote the connected component of $x$ in $E$. 
Then $I\subset C$, since $I$ is a connected subset of $E$ (and $I\subset \Omega$).

Let $C'$ denote the connected component of $x$ in $C\cap \Omega$.
Then $I \subset C'$, since $I$ is a connected subset of $C$ (and $I\subset \Omega$).

On the other hand $C'$ is a connected subset of $E$ and $C' \subset \Omega$, so $C' \subset I$.
Therefore $I=C'$.
\end{proof}

\begin{proof}[Proof of Theorem~\ref{th:ABC2}]

Let $\Theta\colon \RR^2 \to \RR$ be a Lipshitz extension of $H$ to $\RR^2$ (see \cite{AFP}, Proposition 2.12) and let $\chi\colon \RR^2 \to \RR$ be a smooth function with compact support such that $\chi(x) = 1$ for any $x\in \supp H$. Then the function $G(x):=\chi(x) \Theta(x)$ is Lipschitz and has compact support, therefore we can apply the original Theorem 2.5 from \cite{ABC2} for a.e. $h\in \RR$.

For any $h\in \RR$ let $F_h:=G^{-1}(h)$. Then $E_h = F_h \cap \Omega$. By Theorem 2.5 from \cite{ABC2} the level set $F_h$ has the properties stated in Theorem~\ref{th:ABC2}.

Observe that the following implications hold:
\begin{enumerate}
\item if $F_h$ is $1$-rectifiable and $\Ha^1(F_h)<+\infty$ then $E_h$ is $1$-rectifiable and $\Ha^1(E_h)<+\infty$;
\item if $G$ is differentiable $\Ha^1$-a.e. on $F_h$ then $H$ is differentiable $\Ha^1$-a.e. on $E_h$;
\item if $\ff{Conn^*}(F_h)$ is countable and every $C\in \ff{Conn^*}(F_h)$ is closed simple Lipschitz curve then any $C\in \ff{Conn^*}(E_h)$ is an $\Omega$-closed simple Lipschitz curve and $\ff{Conn^*}(E_h)$ is countable.
\end{enumerate}
Only the last implication is not trivial, so let us prove it. Suppose that $C\in \ff{Conn^*}(F_h)$ is a closed simple Lipschitz curve on $\RR^2$. Suppose that $\gamma \colon I\to \RR^2$ is a Lipschitz parametrization of $C$, where $I=\RR/(L\ZZ)$ and $L>0$. Since $C\cap \Omega$ is open in the induced topology on $C$, $\gamma^{-1}(C \cap \Omega)$ is open in $I$. Since $I_\gamma$ is one-dimensional, $\gamma^{-1}(C \cap \Omega)$ is a countable union of disjoint open subsets of $I_\gamma$.
Image of any of these subsets under $\gamma$ clearly is a simple $\Omega$-closed Lipschitz curve.

By Lemma~\ref{lem:conn-comp-cap} any $C'\in \ff{Conn^*}(F_h\cap \Omega)$ is a connected component of $C\cap \Omega$ for some $C\in \ff{Conn^*}(F_h)$. Therefore any $C'\in \ff{Conn^*}(E_h)$ is a simple $\Omega$-closed Lipschitz curve and moreover $\ff{Conn^*}(E_h)$ is countable, since $\ff{Conn^*}(F_h)$ is countable.
\end{proof}

\subsection{Monotone Lipschitz functions and regular level sets}

In the proofs of some technical statements (related to measurability of the functions which we construct)
it will be convenient to decompose a Lipschitz function $H$ into a sum of so-called \emph{monotone} ones.
In this section we recall (without proof) a well-known result which generalizes the classical one-dimensional Jordan's decomposition.

We also prove in this section some auxiliary results which will allow us later to reduce the case of general Lipschitz function $H$ to the case of monotone one.

\begin{definition} \label{def:monotone}
A Lipschitz function $H\colon \RR^2 \to \RR$ is called \emph{monotone} if the level sets $H^{-1}(h)$ are connected for any $h\in \RR$.
\end{definition}

\begin{theorem}[see \cite{BT}] \label{th:decomposition}
Suppose that $H\colon \RR^2\to \RR$ is compactly supported Lipschitz function.
Then there exists a countable family $(H_i)_{i\in \NN}$ of compactly supported monotone Lipschitz functions on $\RR^2$
such that $H = \sum_{i=1}^\infty H_i$ and for $i\ne j$ the set $\{\nabla H_i \ne 0\} \cap \{\nabla H_j \ne 0\}$ is $\Le^2$-negligible.
\end{theorem}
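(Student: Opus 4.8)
The plan is to build the summands $H_i$ from the \emph{Reeb-type structure} of $H$ --- the family of connected components of the level sets $H^{-1}(h)$ --- identifying each monotone piece with one ``edge'' of this structure. First I would split $H=\max(H,0)-\max(-H,0)$; since $\{\nabla\max(H,0)\ne 0\}\subset\{H>0\}$ and $\{\nabla\max(-H,0)\ne 0\}\subset\{H<0\}$ up to $\Le^2$-null sets, the two halves already have essentially disjoint gradient supports, so it suffices to decompose a nonnegative compactly supported Lipschitz function (for the negative part one negates every piece afterwards). So assume $H\ge 0$. By Theorem~\ref{th:ABC2}, for a.e.\ $h>0$ the level set $H^{-1}(h)$ is a countable disjoint union of closed simple Lipschitz curves, each of which is a Jordan curve in $\RR^2$ and so separates the plane into a bounded and an unbounded region; as $h$ decreases these components are born at local maxima, bound growing regions, and merge, and an \emph{edge} $e$ is a maximal one-parameter family $\{C_e(h)\}_{h\in(a_e,b_e]}$ of them, where $b_e$ is the height at which $e$ meets the edges above it and $a_e$ the height at which the region enclosed by the $C_e(h)$ merges into a larger one.

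Given this organisation I would attach to each edge $e$ the \emph{strip} $\Sigma_e:=\bigcup_{a_e<h<b_e}C_e(h)$ it sweeps out, and define $H_e$ to be the unique continuous function which equals $\min(H,b_e)-a_e$ on a suitable connected neighbourhood of $\Sigma_e$, equals $0$ on the unbounded (``outer'') component of the complement of that neighbourhood, and equals $b_e-a_e$ on each of its bounded (``inner'') components. One then has to check: (a) $H_e$ is Lipschitz with $\Lip(H_e)\le\Lip(H)$, which reduces to $H\le a_e$ along the outer boundary of the strip and $H\ge b_e$ along its inner boundary; (b) $\sum_e H_e=H$, a telescoping identity read off along the path from the ``root'' of the tree up to the local maximum lying above a given point; (c) each $H_e$ is monotone, since its level sets are by construction either the connected curves $C_e(h)$ (for $a_e<h<b_e$), or the closed region $\overline{\bigcup_h C_e(h)}$ (at the top level $h=b_e$), or the set $\{H_e=0\}$, which is connected once the inside/outside labels have been chosen consistently throughout the tree; (d) the gradient supports $\{\nabla H_e\ne 0\}$ coincide up to $\Le^2$-null sets with the $\Sigma_e$ and are pairwise essentially disjoint, because a point $x$ with $\nabla H(x)\ne 0$ lies on the strip of exactly one edge --- the one carrying the component of $x$ in $H^{-1}(H(x))$ --- while every other $H_e$ is locally constant near $x$. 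Countability of the family of edges is then forced by (d): each non-constant $H_e$ has $\Le^2(\Sigma_e)>0$ by the coarea formula (Proposition~\ref{prp:coarea}), the $\Sigma_e$ are essentially disjoint, and all lie in the bounded set $\supp H$.

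The genuinely hard part --- and the reason the statement is quoted from \cite{BT} --- is the combinatorics at the \emph{branch points}, i.e.\ the heights at which several families $C_{e'}(h)$ collide or at which an enclosed region pinches off: there the strips $\Sigma_{e'}$ share boundary, and the inside/outside labelling of the curves $C_{e'}(h)$ together with the plateau values $b_{e'}-a_{e'}$ of the neighbouring pieces must be chosen simultaneously so that every $H_e$ is \emph{globally} continuous and the sum telescopes correctly. In addition, Theorem~\ref{th:ABC2} only describes $H^{-1}(h)$ for \emph{a.e.}\ $h$, so one must verify that the $\Le^1$-null set of exceptional heights neither obstructs the definition of the edges nor spoils the identity $\sum_e H_e=H$. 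Once this bookkeeping is in place, the verifications (a)--(d), the Lipschitz estimates and the countability are routine.
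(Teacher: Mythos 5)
The paper does not prove Theorem~\ref{th:decomposition} at all: it is imported verbatim from \cite{BT}, and the surrounding text says explicitly that the result is recalled ``without proof''. So there is no internal argument to compare yours against; your proposal has to stand on its own, and as written it does not. What you have produced is a correct identification of the strategy used in \cite{BT} (decompose along the edges of the Reeb-type tree of connected components of level sets, assign to each edge a truncated-and-shifted copy of $H$ extended by constants inside and outside its strip), but every step that carries the actual difficulty is deferred: you say yourself that the combinatorics at branch points, the consistent global choice of plateau values, and the treatment of the exceptional null set of heights are ``the genuinely hard part'', and you do not carry them out. For a merely Lipschitz $H$ there is no Sard theorem, so the set of branching values can be dense or even of positive measure, the ``maximal one-parameter families'' $\{C_e(h)\}$ may degenerate to single heights, and the quotient structure on which your edges live has to be \emph{constructed}, not just invoked. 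Until that is done, the objects $\Sigma_e$ and $H_e$ are not defined, and claims (a)--(d) cannot be checked.

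Two of the steps you label ``routine'' also conceal real content. First, the Lipschitz bound: the pointwise domination $|H_e(x)-H_e(y)|\le|H(x)-H(y)|$ is \emph{false} in general (take $x$ in the strip with $H(x)=h\in(a_e,b_e)$ and $y$ outside the strip on a different branch with $H(y)=h$; then $H_e(x)-H_e(y)=h-a_e>0$ while $H(x)=H(y)$), so $\Lip(H_e)\le\Lip(H)$ must come from a separation argument — every path from $x$ to $y$ crosses the outer boundary of the strip, where $H\le a_e$ — and this in turn requires knowing that the boundary curves actually separate, i.e. the Jordan-curve structure of Theorem~\ref{th:ABC2}, which is only available for a.e.\ level. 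Second, monotonicity in the sense of Definition~\ref{def:monotone} requires \emph{every} level set of $H_e$ to be connected, including the levels corresponding to non-regular values of $H$ and including $\{H_e=c\}$ for $c$ at which the annular region between two curves of the same edge contains entire sub-trees of the level-set structure (local maxima or minima of $H$ sitting inside the strip); ruling these out, or showing that they force a branch point that terminates the edge, is again exactly the bookkeeping you postpone. In short: the outline points at the right proof, but the proof is not there.
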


\section{Disintegration of the divergence equation with divergence-free vector field in $\RR^2$}\label{sec:divub}
Let $\Omega \subset \RR^2$ be an open set. 
In this section we derive a new necessary and sufficient condition for a bounded function $u$ and a Radon measure $\mu$ to satisfy \eqref{divuB} in $\Omega$
with a divergence-free vector field $B$. 
The core result of this section is Theorem~\ref{th:div-disint},
which links \eqref{divuB} with a family of equations \eqref{divuB-on-C} along the level sets of the Lipschitz function $H\colon \Omega \to \RR$ such that $B = \nabla^\perp H$.

Using this criteria we prove existence of a disjoint set of trajectories of $B$ which cover the set $\Omega$,
up to a $(|B|\Le^2+|\mu|)$-negligible set. 
Our construction relies on the observation that the regular level sets of the function $H$ in fact are the integral curves of $B$ (this result also extends to nearly incompressible vector fields).

Applying Theorem~\ref{th:eq-along-curve} we prove that any solution $u$ of \eqref{divuB} has bounded variation along these trajectories,
and therefore there exist classical traces $u^\pm$ of $u$ along the integral curves of $B$.
Finally, we use the functions $u^\pm$ to solve the chain rule problem for divergence-free vector fields $B$.
Namely, we prove that for any measure Radon $\mu$ and $\beta\in C^1(\RR)$ there exists a Radon measure $\nu$ for which \eqref{divbeta} holds,
$\nu$ is absolutely continuous with respect to $\mu$ and the density of $\nu$ with respect to $\mu$
can be characterized using the functions $u^\pm$.

\subsection{Reduction to equation on connected components of level sets} \label{sec:disintegration}
Instead of assuming that $B$ is divergence-free here we will suppose directly that $B=\nabla^\perp H$ for an appropriate Lipschitz function.
In case of simply connected domain clearly there is no difference between these assumptions.

\begin{theorem} \label{th:div-disint}
Suppose that $B\colon \Omega\to \RR^2$ is a bounded vector field such that
\begin{equation} \label{B=nabla}
B = \nabla^\perp H \quad \text{a.e. in } \; \Omega
\end{equation}
for some Lipschitz function $H\colon \Omega \to \RR$ with bounded support.
Suppose that $\mu$ is a Radon measure on $\Omega$. Let $\sigma_\mu$ denote a Radon measure on $\RR$ such that $\sigma_\mu \perp \Le^1$ and $H_\# |\mu| \ll \Le^1 + \sigma_\mu$.
Let $\{\mu_h\}_{h\in\RR}$ be a disintegration of $\mu$ with respect to $(H, \Le^1+\sigma_\mu)$. 
Then $u\in L^1(|B|\Le^2)$ solves \eqref{divuB} if~and~only~if
\begin{enumerate}
\labitem{(1)}{th:div-disint-1} for a.e. $h\in \RR$
\begin{enumerate}
\labitem{(1a)}{th:div-disint-1a} $|\mu_h| (E_h \setminus E_h^*) = 0$, where $E_h:= H^{-1}(h)$;
\labitem{(1b)}{th:div-disint-1b} for any $C\in \ff{Conn^*}(E_h)$ which is a closed simple curve we have
\begin{equation} \label{divub-on-curve}
\div \rb{u \frac{B}{|B|} \Ha^1 \rest C} = \mu_h \rest C
\quad \text{in } \; \ss D'(\Omega)
\end{equation}
\end{enumerate}
\labitem{(2)}{th:div-disint-2} for $\sigma_\mu$-a.e. $h$ we have $\mu_h = 0$.
\end{enumerate}
\end{theorem}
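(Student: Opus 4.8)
The plan is to test the distributional identity $\div(uB)=\mu$ against arbitrary $\pfi\in C_0^\infty(\Omega)$ and disintegrate everything with respect to $(H,\Le^1+\sigma_\mu)$. Writing $B=\nabla^\perp H$ and using $\nabla^\perp H\cdot\nabla\pfi = \nabla^\perp\pfi\cdot\nabla H$ together with the coarea formula (Proposition~\ref{prp:coarea}) applied to $H$, one has $uB\,\Le^2 = u\,\nabla^\perp H\,\Le^2$, and since $|\nabla H|\Le^2 = \int_\RR \Ha^1\rest E_h\,dh$, the term $\int u B\cdot\nabla\pfi\,d\Le^2$ becomes $\int_\RR\bigl(\int_{E_h} u\,\tau_h\cdot\nabla\pfi\,d\Ha^1\bigr)dh$, where $\tau_h = \nabla^\perp H/|\nabla H|$ is the unit tangent to $E_h$ (well-defined $\Ha^1$-a.e. on $E_h$ for a.e.\ $h$ by Theorem~\ref{th:ABC2}, item~\ref{th:ABC2-2}). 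On the other side, $\int\pfi\,d\mu = \int_\RR\bigl(\int\pfi\,d\mu_h\bigr)d(\Le^1+\sigma_\mu)(h)$ by the disintegration. Thus the original equation is equivalent to
\begin{equation}\label{plan-disint-eq}
\int_\RR\Bigl[\int_{E_h} u\,\tau_h\cdot\nabla\pfi\,d\Ha^1 + \int_\Omega\pfi\,d\mu_h\Bigr]dh + \int_\RR\Bigl[\int_\Omega\pfi\,d\mu_h\Bigr]d\sigma_\mu(h) = 0.
\end{equation}
Since $\sigma_\mu\perp\Le^1$, the two integrals in \eqref{plan-disint-eq} must vanish separately. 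The $\sigma_\mu$-part forces $\int_\Omega\pfi\,d\mu_h = 0$ for $\sigma_\mu$-a.e.\ $h$ and all $\pfi$; a countable dense family of test functions then gives $\mu_h=0$ for $\sigma_\mu$-a.e.\ $h$, which is \ref{th:div-disint-2}.

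For the $\Le^1$-part I would first decompose the level set: by Theorem~\ref{th:ABC2}, item~\ref{th:ABC2-3}, for a.e.\ $h$ we have $\Ha^1(E_h\setminus E_h^*)=0$, so the integral over $E_h$ reduces to the integral over $E_h^* = \bigcup_{C\in\ff{Conn^*}(E_h)}C$, a countable disjoint union of $\Omega$-closed simple Lipschitz curves (item~\ref{th:ABC2-4}). Hence for a.e.\ $h$,
\begin{equation}\label{plan-sum-over-comps}
\int_{E_h} u\,\tau_h\cdot\nabla\pfi\,d\Ha^1 + \int_\Omega\pfi\,d\mu_h = \sum_{C\in\ff{Conn^*}(E_h)}\Bigl[\int_C u\,\tau_h\cdot\nabla\pfi\,d\Ha^1 + \int_C\pfi\,d\mu_h\Bigr] + \mu_h(\Omega\setminus E_h^*).
\end{equation}
The crux is to go from the equality ``sum over components $+\ \mu_h(\Omega\setminus E_h^*)=0$ for every $\pfi$'' to ``each component equation \eqref{divub-on-curve} holds and $|\mu_h|(E_h\setminus E_h^*)=0$''. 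This is where Lemma~\ref{lem:sep-by-test} enters: given a connected component $C$ of the compact set $\overline{E_h}$ (one has to be a little careful about whether to work with $E_h$ or its closure in $\overline\Omega$; since $H$ has bounded support the relevant sets are compact after the extension trick used in the proof of Theorem~\ref{th:ABC2}), choose the sequence $\pfi_n^C$ separating $C$ from the rest of $E_h$, multiply a given test function $\pfi$ by $\pfi_n^C$, and pass to the limit. Because $\supp\nabla\pfi_n^C\cap E_h=\emptyset$, the products $\pfi_n^C\pfi$ localize the integral \eqref{plan-sum-over-comps} onto the single component $C$ in the limit, yielding \eqref{divub-on-curve} for that $C$, and taking $\pfi\equiv$ const (locally) away from $E_h^*$ isolates the term $\mu_h(\Omega\setminus E_h^*)$ and forces \ref{th:div-disint-1a}. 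Conversely, if \ref{th:div-disint-1}–\ref{th:div-disint-2} hold, summing \eqref{divub-on-curve} over the countably many components (the sum converges because $\sum_C\Ha^1(C) = \Ha^1(E_h)<\infty$ and $u\in L^\infty$, the latter being guaranteed by Theorem~\ref{th:eq-along-curve}\ref{th:eq-along-curve-changevar}) and integrating in $h$ reconstructs \eqref{plan-disint-eq}, giving \eqref{divuB}.

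A few subsidiary points need attention along the way. One must justify the Fubini-type manipulations: the map $h\mapsto\int_{E_h}u\,\tau_h\cdot\nabla\pfi\,d\Ha^1$ should be measurable, which follows from the coarea formula and an approximation of $u$ by continuous functions together with the Borel regularity of the disintegration $\{\mu_h\}$. One also has to check that for a.e.\ $h$ the measure $\mu_h$ is concentrated on $E_h$ (part of the definition of disintegration) so that splitting it as $\sum_C \mu_h\rest C + \mu_h\rest(E_h\setminus E_h^*)$ is legitimate, and that only the \emph{closed} components contribute a nontrivial constraint: for an $\Omega$-closed but non-closed component $C$ (endpoints on $\partial\Omega$), the test functions in $C_0^\infty(\Omega)$ do not "see" the endpoints, so the correct statement is precisely \eqref{divub-on-curve} in $\ss D'(\Omega)$ without boundary terms — this matches the formulation in \ref{th:div-disint-1b}, which only restricts the closed ones because for the non-closed ones \eqref{divub-on-curve} is automatically implied by the interior equation (no boundary contribution). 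I expect the main obstacle to be the rigorous localization argument via Lemma~\ref{lem:sep-by-test}: one has to handle the fact that the separating functions $\pfi_n^C$ are not supported away from $\partial\Omega$ and that $\nabla\pfi_n^C$ may blow up, controlling the cross terms $\int_{E_h}u\,\pfi\,\nabla\pfi_n^C\cdot\tau_h\,d\Ha^1$ — but these vanish identically because $\supp\nabla\pfi_n^C\cap E_h=\emptyset$, which is exactly property~(4) of Lemma~\ref{lem:sep-by-test}, so the difficulty is really just bookkeeping rather than a genuine estimate.
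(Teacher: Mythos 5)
There is a genuine gap at the very first deduction, and it is the central idea of the proof that is missing. After disintegrating, you obtain, for each fixed $\pfi\in C_0^\infty(\Omega)$, a single scalar identity of the form $\int_\RR F_\pfi(h)\,dh+\int_\RR G_\pfi(h)\,d\sigma_\mu(h)=0$. Your claim that ``since $\sigma_\mu\perp\Le^1$ the two integrals must vanish separately'' is unjustified: mutual singularity is a statement about measures on $\RR$, not about two real numbers summing to zero. Worse, even if the first integral did vanish for every $\pfi$, the conclusion $F_\pfi(h)=0$ for a.e.\ $h$ would not follow, because $F_\pfi$ may change sign in $h$ and you have no test functions localized in $h$. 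The device that makes the fibering work --- and the reason the structural hypothesis $B=\nabla^\perp H$ is used here beyond mere divergence-freeness --- is to test \eqref{divuB} against products $\phi(x)=\pfi(x)\,\psi(H(x))$ with $\psi\in C_0^\infty(\RR)$ arbitrary. Since $B\cdot\nabla H=\nabla^\perp H\cdot\nabla H=0$ a.e., the term $\pfi\,\psi'(H)\,uB\cdot\nabla H$ drops out, and one obtains your identity with the factor $\psi(h)$ inserted in both $h$-integrals. Only then does the arbitrariness of $\psi$, combined with $\sigma_\mu\perp\Le^1$, force the signed measures $F_\pfi\,\Le^1$ and $G_\pfi\,\sigma_\mu$ to vanish separately, i.e.\ the slice-wise equations for $\Le^1$-a.e.\ $h$ and for $\sigma_\mu$-a.e.\ $h$ respectively; a countable dense family of $\pfi$'s then makes the exceptional set of $h$'s uniform in $\pfi$. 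Note also that $\pfi\cdot(\psi\circ H)$ is only Lipschitz, so one must first extend the distributional identity \eqref{divuB} to compactly supported Lipschitz test functions by mollification.

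The remainder of your plan (localization onto single connected components via Lemma~\ref{lem:sep-by-test}, deduction of \ref{th:div-disint-1a}, and reversal of the steps for sufficiency) agrees in outline with the paper's argument. One further point you flag but leave unresolved is not mere bookkeeping: Lemma~\ref{lem:sep-by-test} separates a connected component $D$ of the \emph{compact} level set of the Lipschitz extension $G$ of $H$ to $\RR^2$, whereas a component $C\in\ff{Conn^*}(E_h)$ is a component of $D\cap\Omega$ and may be a proper piece of $D$. Passing from the identity on $D$ to the identity on $C$ requires an additional cutoff, built from a Lipschitz parametrization $\gamma$ of $D$ and the open arc $J\subset I_\gamma$ with $C=\gamma(J)$, which separates the compact $C\cap\supp\pfi$ from the closed set $\gamma(I_\gamma\setminus J)$. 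Without this step the localization argument is incomplete.
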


Before proving this theorem we would like to note the following corollary:

\begin{theorem} \label{th:flow}
Suppose that $B$ and $H$ satisfy the assumptions of Theorem~\ref{th:div-disint}.
Then there exists a negligible set $N \subset \RR$ such that
\begin{itemize}
\item $\gg F:= \setof{C}{C\in \ff{Conn^*}(E_h), \; h\in \RR\setminus N}$ is a disjoint family of trajectories of $B$
\item $F:=\cup_{C \in \gg F} C$ is Borel and $\Le^2(\{B\ne 0\} \setminus F)=0$
\end{itemize}
Moreover, if $u\in L^1(|B| \Le^2)$ and Radon measure $\mu$ satisfy \eqref{divuB} then $(|B|\Le^2+ |\mu|)(\Omega \setminus F) = 0$.
\end{theorem}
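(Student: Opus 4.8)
The plan is to deduce this theorem from Theorem~\ref{th:div-disint} together with the structure of level sets given by Theorem~\ref{th:ABC2} and the coarea formula (Proposition~\ref{prp:coarea}). First I would choose the negligible set $N\subset\RR$ to be the union of the set of non-regular values of $H$ (negligible by Theorem~\ref{th:ABC2}) and the set of $h$ for which \ref{th:div-disint-1a} and \ref{th:div-disint-1b} fail; by Theorem~\ref{th:div-disint} (applied with $u\equiv 1$, $\mu = 0$, which is legitimate since $\div B = 0$) the latter is also negligible. For $h\notin N$ every $C\in\ff{Conn^*}(E_h)$ is an $\Omega$-closed simple Lipschitz curve on which, by \ref{th:div-disint-1b} with $u\equiv 1$, $\mu = 0$, we have $\div(\tfrac{B}{|B|}\Ha^1\rest C)=0$; then Theorem~\ref{th:eq-along-curve}\ref{th:eq-along-curve-orient}--\ref{th:eq-along-curve-B-param} shows that $C$, suitably parametrized, is a trajectory of $B/|B|$, hence (after the standard time-reparametrization removing the zero set of $|B|$ along $C$) a trajectory of $B$ in the sense of \eqref{ode}. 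Disjointness of $\gg F$ is immediate: two connected components of the \emph{same} level set are disjoint by definition, and components of \emph{different} level sets are disjoint because $H$ is single-valued.

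Next I would address the covering property. Set $F:=\bigcup_{C\in\gg F}C$. Borel measurability of $F$ follows from the countability of $\ff{Conn^*}(E_h)$ for a.e. $h$ (Theorem~\ref{th:ABC2}\ref{th:ABC2-3}) together with a measurable-selection/disintegration argument as in Theorem~\ref{th:div-disint}; more concretely, $F$ differs by an $\Le^2$-negligible set from $\{B\ne 0\}$, which I show as follows. By Proposition~\ref{prp:coarea},
\begin{equation*}
|\nabla H|\,\Le^2 = \int_\RR \Ha^1\rest E_h \, dh,
\end{equation*}
and since $B=\nabla^\perp H$ a.e., $\{B\ne 0\}$ coincides a.e. with $\{\nabla H\ne 0\}$, which carries the measure $|\nabla H|\Le^2$. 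For a.e. $h$ we have $\Ha^1(E_h\setminus E_h^*)=0$ by Theorem~\ref{th:ABC2}\ref{th:ABC2-3}, and $E_h^*=\bigcup_{C\in\ff{Conn^*}(E_h)}C\subset F$ whenever $h\notin N$. Hence $\Ha^1(E_h\setminus F)=0$ for a.e. $h$, and integrating in $h$ gives $(|\nabla H|\Le^2)(\{B\ne 0\}\setminus F)=0$, i.e. $\Le^2(\{B\ne 0\}\setminus F)=0$.

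Finally, for the last assertion suppose $u\in L^1(|B|\Le^2)$ and a Radon measure $\mu$ satisfy \eqref{divuB}. The part $(|B|\Le^2)(\Omega\setminus F)=0$ is already established since $|B|\Le^2$ is concentrated on $\{B\ne 0\}$. For $|\mu|$, I would disintegrate $\mu$ with respect to $(H,\Le^1+\sigma_\mu)$ as in Theorem~\ref{th:div-disint}. Condition \ref{th:div-disint-2} gives $\mu_h=0$ for $\sigma_\mu$-a.e.\ $h$, so the $\sigma_\mu$-component of $\mu$ contributes nothing. For the $\Le^1$-component, condition \ref{th:div-disint-1a} gives $|\mu_h|(E_h\setminus E_h^*)=0$ for a.e.\ $h$, and since $E_h^*\subset F$ for $h\notin N$ we get $|\mu_h|(\Omega\setminus F)=|\mu_h|(E_h\setminus F)\le |\mu_h|(E_h\setminus E_h^*)=0$ for a.e.\ $h$; integrating against $\Le^1+\sigma_\mu$ yields $|\mu|(\Omega\setminus F)=0$. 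I expect the only delicate point to be the Borel measurability of $F$ and the attendant verification that the family $\{E_h^*\}$, and the selection of parametrizations making each $C\in\gg F$ a genuine trajectory, can be carried out measurably in $h$; this is handled exactly by the disintegration machinery already invoked in the proof of Theorem~\ref{th:div-disint}, so no new idea is required.
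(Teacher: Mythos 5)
Your proposal follows the same route as the paper's proof: apply Theorem~\ref{th:div-disint} with $u\equiv 1$, $\mu=0$ to obtain a negligible set $N$ outside of which every $C\in\ff{Conn^*}(E_h)$ is an $\Omega$-closed simple Lipschitz curve with $\div(\frac{B}{|B|}\Ha^1\rest C)=0$; use Theorem~\ref{th:eq-along-curve} to get a parametrization agreeing with $B$; reparametrize in time to obtain a genuine solution of \eqref{ode}; and derive the negligibility statements from the coarea formula, \ref{th:div-disint-1a} and \ref{th:div-disint-2}. The coarea computation for $\Le^2(\{B\ne0\}\setminus F)=0$ and the treatment of $|\mu|$ coincide with the paper's argument.

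There is, however, one genuine gap: the Borel measurability of $F$. You assert that it follows from countability of $\ff{Conn^*}(E_h)$ together with ``the disintegration machinery already invoked in the proof of Theorem~\ref{th:div-disint}, so no new idea is required.'' That is not the case: the disintegration never addresses measurability of the set $E:=\bigcup_h E_h^*$, and countability of $\ff{Conn^*}(E_h)$ for each \emph{fixed} $h$ says nothing about the union over the uncountable set of regular values. The paper invokes Proposition~6.1 of \cite{ABC2}, a nontrivial result proved by compactness in the Hausdorff metric (of the type recalled in the Appendix, Lemma~\ref{l:sigma-closed-family}), to conclude that $E$ is Borel; then $F=E\setminus H^{-1}(N)$ is Borel once $N$ is chosen Borel. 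Note also that your observation that $F$ agrees with $\{B\ne0\}$ up to an $\Le^2$-null set only yields Lebesgue, not Borel, measurability. A more minor omission: the ``standard time-reparametrization'' turning a trajectory of $B/|B|$ into a trajectory of $B$ is a change of speed (not a removal of the zero set), and it produces a \emph{Lipschitz} parametrization only because $1/|B|\in L^1(\Ha^1\rest C)$, which holds for a.e.\ $h$ by coarea since $1/|B|\in L^1(|B|\Le^2)$; one must enlarge $N$ accordingly. The paper spells this out, following Lemma~2.11 of \cite{ABC1}.
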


\begin{proof}
By definition of $B$ we have $\div B = 0$ hence $u\equiv 1$ solves \eqref{divuB}.
Therefore by Theorem~\ref{th:div-disint} there exists a negligible set $N\subset \RR$ such that any $h\in \RR\setminus N$ the level set $E_h:= H^{-1}(h)$ is regular
and any nontrivial connected component $C\in \ff{Conn^*}(E_h)$ is an $\Omega$-closed simple Lipschitz curve which satisfies
\begin{equation*}
\div\rb{\frac{B}{|B|} \Ha^1\rest C} = 0.
\end{equation*}
By Theorem~\ref{th:eq-along-curve} for any such $C$ there exists a Lipschitz parametrization $\gamma \colon[0,\ell] \to \RR^2$ of $C$ such that
\eqref{gamma-dot-vs-B} holds with $\sigma_\gamma \equiv 1$.
Then it is possible to redefine the function $\gamma$ in such a way that it will satisfy \eqref{ode}.
This was proved in \cite{ABC1} (see Lemma 2.11), but for convenience of the reader let us recall the argument.

Since $1/|B| \in L^1(|B| \Le^2)$, by Proposition~\ref{prp:coarea} for a.e. $h\in \RR$ we have $|B(x)|\ne 0$ for $\Ha^1$-a.e. $x\in E_h$ and $1/|B| \in L^1(\Ha^1 \rest E_h)$.
Hence without loss of generality we can assume that $1/|B| \in L^1(\Ha^1 \rest C)$.
On the other hand by Proposition~\ref{prp:param} we have $\gamma_\# (|\gamma'| \Le) = \Ha^1 \rest C$
and by \eqref{gamma-dot-vs-B} $|\gamma'|=1$ a.e. on $(0,\ell)$. Therefore $1/|B\circ \gamma| \in L^1(0, \ell)$.

Since the function $f:=|B\circ \gamma|$ is strictly positive a.e. on $(0,\ell)$ and $1/f$ is integrable,
the function $F(t):=\int_0^t \frac{d\xi}{f(\xi)}$ is strictly increasing and it maps $[0,\ell]$ to $[0,L]$, where $L:=F(\ell)$.
Hence there exists a strictly increasing inverse function $\tau:=F^{-1}$ which maps $[0,L]$ to $[0,\ell]$.

Let $M\subset [0,\ell]$ denote a negligible set such that any $t\in [0,\ell] \setminus M$ is a Lebesgue point of $f$ and $f(t)\ne 0$.
Since $F_\# (F'\Le \rest [0,\ell]) = \Le \rest [0,L]$, the set $F(M) \subset [0,L]$ is also negligible.
Therefore for a.e. $t\in [0,L]$ there exists
\begin{equation*}
\tau '(t) = f(\tau(t)) > 0.
\end{equation*}
Since $f$ is bounded, the function $\tau$ is Lipschitz.
Finally we remark that $\tau$ satisfies the classical Barrow's formula $t = \int_0^{\tau(t)} \frac{d\xi}{f(\xi)}$
for any $t\in [0,L]$.

Now when the function $\tau$ is constructed we can compute
\begin{equation*}
\wave \gamma '(t) = \gamma ' (\tau(t)) \cdot \tau'(t) = B(\gamma(\tau(t))) = B(\wave \gamma(t))
\end{equation*}
for a.e. $t\in(0,\tau^{-1}(\ell))$.

Let $\gg F:= \setof{C}{C\in \ff{Conn^*}(E_h), \; h\in \RR\setminus N}$. Since connected components of level sets are pairwise disjoint, the family $\gg F$ is disjoint. Since elements of $\gg F$ are integral curves of $B$, we conclude that $\gg F$ is a disjoint family of trajectories of $B$.

Let $E$ denote the union of \emph{all} connected the components $C$ of the level sets of $H$ such that $\Ha^1(C)>0$. By Proposition 6.1 from \cite{ABC2} the set $E$ is Borel.
Therefore the set $F = E \setminus H^{-1}(N)$ is Borel, since $N$ can always be chosen to be Borel.

Finally by Theorem~\ref{th:div-disint} we have
\begin{itemize}
\item $\Ha^1(E_h \setminus E_h^*) = 0$ for a.e. $h\in \RR$ hence by Coarea formula $(|B|\Le^2)(\Omega \setminus F) =0$;
\item $|\mu_h|(E_h \setminus E_h^*) = 0$ for a.e. $h\in \RR$;
\item in fact \ref{th:div-disint-2} implies $H_\# |\mu| \ll \Le^1$, hence $|\mu|\rest H^{-1}(N) = 0$.
\end{itemize}
Therefore $|\mu|(\Omega \setminus F) = 0$.
\end{proof}

The theorem above implies existence of disjoint family of trajectories of nearly incompressible vector fields
in view of the following remark:
\begin{remark} \label{r:flow}
The elements of $\gg F$ also are the trajectories of $r B$ for any function $r\colon \Omega \to \RR$
such that $0<C_1 \le r \le C_2$ a.e. in $\Omega$ for some constants $C_1$ and $C_2$.
\end{remark}
Indeed, one only has to appropriately reparametrize each connected component $C\in \ff{Conn^*}(E_h)$.
To do this it is sufficient to construct a function $\tau$ as in the proof above, setting $f:=r(\gamma)|B(\gamma)|$
instead of $f=|B(\gamma)|$.



\begin{proof}[Proof of Theorem~\ref{th:div-disint}]
We first prove that ``if'' part of the theorem.

\emph{Step 1.}
The distributional formulation of \eqref{divuB} reads as
\begin{equation}\label{divuB-weak}
\int_{\Omega} u B \cdot \nabla \phi \,d \Le^2 + \int_{\Omega} \phi \,d \mu = 0
\end{equation}
for any $\phi \in C_0^\infty(\Omega)$. Using mollifiers and passing to the limit in \eqref{divuB-weak} we can prove that \eqref{divuB-weak} holds also for any compactly supported $\phi \in \Lip(\Omega)$. Therefore we can consider the test functions $\phi$ of the form $\phi(x) = \pfi(x) \psi(H(x))$, where $\pfi\in C_0^1(\Omega)$ and $\psi\in C_0^\infty(\RR)$. Since $B \cdot \nabla H = 0$ a.e., for such test functions \eqref{divuB-weak} takes the form
\begin{equation}\label{divuB-weak1}
\int_{\Omega} u \psi(H) B \cdot \nabla \pfi \, d\Le^2 + \int_{\Omega} \psi(H) \pfi \,d \mu = 0.
\end{equation}
Now we disintegrate the measure $|B|\Le^2$ using the coarea formula (see Proposition~\ref{prp:coarea})
and the measure $\mu$ using Proposition~\ref{prp:disint}:
\begin{gather*}
|B|\Le^2 = \int_{\RR} \Ha^1\rest E_h \, dh , \\
\mu = \int_{\RR} \mu_h \, dh + \int_{\RR} \mu_h \, d\sigma(h),
\end{gather*}
where $E_h$ denotes the level set $H^{-1}(h)$. Then we can rewrite \eqref{divuB-weak1} as
\begin{equation}\label{eq:total-disint}
\int_\RR \rb{\int_{E_h} u \frac{B}{|B|} \cdot \nabla \pfi \, d\Ha^1 + \int_{E_h} \pfi \, d\mu_h} \psi(h) \,dh +
\int_\RR \rb{\int_{E_h} \pfi \, d\mu_h } \psi(h) \,d\sigma(h) = 0.
\end{equation}

\emph{Step 2.}
Since $\sigma \perp \Le^1$ and \eqref{eq:total-disint} holds for any $\psi\in C_0^\infty(\RR)$
we deduce that for $\Le^1$-a.e. $h\in \RR$
\begin{equation}\label{eq:level-set}
\int_{E_h} u \frac{B}{|B|} \cdot \nabla \pfi \,d \Ha^1
+ \int_{E_h} \pfi \,d \mu_h =0
\end{equation}
and for $\sigma$-a.e. $h\in \RR$
\begin{equation}\label{eq:level-set-sing}
\int_{E_h} \pfi \,d \mu_h  = 0.
\end{equation}
Let $N_\pfi\subset \RR$ denote the negligible set such that \eqref{eq:level-set} holds for all $h\in \RR\setminus N_\pfi$. Note that in general $N_\pfi$ depends on the choice of $\pfi\in C_0^1(\Omega)$. However we can repeat the above computations for a countable set of functions of the form $\pfi \in \{\chi_k \pfi_m\}_{k,m\in \NN}$ where $\{\pfi_m\}_{m\in \NN}$ is dense in $C^1(\overline{\Omega})$ and $\{\chi_k\}_{k\in \NN}\subset C_0^\infty(\Omega)$ are cutoff functions such that for any compact $K\subset \Omega$ there exists $k_K$ such that $\chi_k=1$ in a neighborhood of $K$ for all $k\ge k_K$. Then the set $N:= \cup_{m\in \NN} N_{\pfi_m}$ is still negligible, and \eqref{eq:level-set} holds for any $h\in \RR\setminus N$ for all $\pfi \in \cup_{m\in \NN} \pfi_m$. Passing to the limit in \eqref{eq:level-set} we conclude that for any $h\in \RR\setminus N$ the equation \eqref{eq:level-set} holds for all $\pfi\in C_0^1(\Omega)$.
The same argument applies to \eqref{eq:level-set-sing}.

\emph{Step 3.}
Now are going to argue as in \cite{ABC1} (see Lemma 3.8) and deduce from \eqref{eq:level-set} that for every nontrivial connected component $C\in \ff{Conn^*}(E_h)$ of the level set $E_h$ we have
\begin{equation}\label{eq:connected-component}
\int_C u \frac{B}{|B|} \cdot \nabla \pfi \, d\Ha^1
+ \int_C \pfi \, d\mu_h =0.
\end{equation}

Indeed, fix $\pfi \in C_0^1(\Omega)$ and let us prove that \eqref{eq:level-set} implies \eqref{eq:connected-component}. For any (Borel) subset $A\subset E_h$ let us denote
\begin{equation}\label{PhiDef}
\Phi_A(\psi):=\int_A (\psi\pfi) \, d\mu_h + \int_A u b \cdot \nabla (\psi\pfi) \frac{1}{|\rho b|} \, d\Ha^1 + \int_A u b \cdot \nabla (\psi\pfi) \, d\eta_h.
\end{equation}
We know that $\Phi_{E_h}(\psi)=0$ for any $\psi \in C^1(\RR^2)$, and we need to prove that $\Phi_C(1)=0$.

Let $G$ denote a compactly supported Lipschitz extension of $H$ to $\RR^2$, constructed as in the proof of Theorem~\ref{th:ABC2}. Let $D$ denote the connected component of the level set $F_h:=\setof{x\in \RR^2}{G(x)=h}$ such that $C$ is a connected component of $D \cap \Omega$ (see Lemma~\ref{lem:conn-comp-cap}). Since $G$ is Lipschitz and compactly supported, $F_h$ is compact (without loss of generality we assume that $h\ne 0$). 
Therefore we can use Lemma~\ref{lem:sep-by-test} to construct a sequence $(\pfi_n)_{n\in \NN} \subset C_0^\infty(\RR^2)$ such that $\pfi_n=1$ on $D$, $\pfi_n(x)\to 0$ as $n\to \infty$ for any $x\in F_h \setminus D$ and $F_h \cap \supp \nabla \pfi_n = \emptyset$ for any $n\in \NN$. By \eqref{eq:level-set} we have $\Phi_{F_h}(\pfi_n \psi) = 0$ for any $n \in \NN$, hence passing to the limit as $n \to \infty$ we obtain
\begin{equation*}
\Phi_D(\psi)=0.
\end{equation*}

\begin{figure}[h]
\label{fig:LocalLevelSets}
\centering
\includegraphics{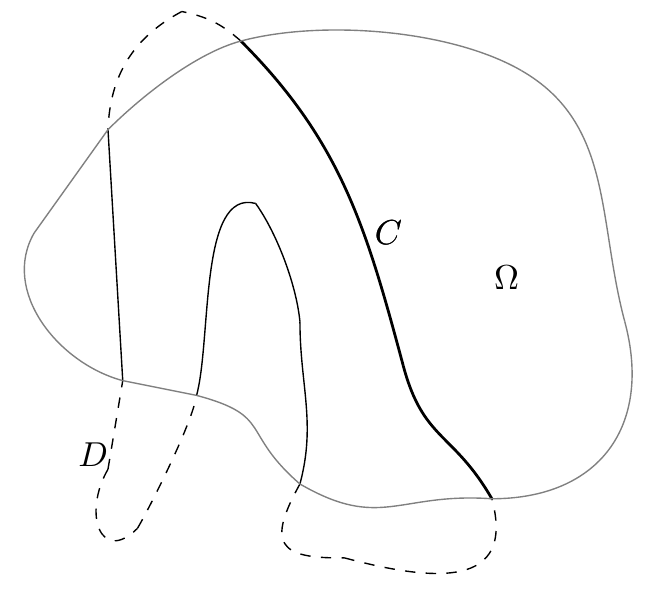}
\caption{Level sets of $H$ and $G$.}
\end{figure}

If $D = C$ then \eqref{eq:connected-component} holds (one can simply take $\psi \equiv 1$). Suppose that $D \ne C$. In this case repeating the argument from the proof of Theorem \ref{th:ABC2} one can show that
there exist a Lipschitz parametrization $\gamma \colon I_\gamma \to \RR^2$ of $D$
and an open interval $J \subset I_\gamma$ such that $J\ne I_\gamma$ and $C = \gamma(J)$. (Here $I_\gamma = \RR/(L\ZZ)$ for some $L>0$.)

Let $K:=\supp \pfi$.
Since $C$ is closed in the induced topology of $\Omega$ and $K\subset \Omega$ is compact, $C\cap K$ is compact. Since $\gamma$ is continuous and injective $\gamma(I_\gamma \setminus J)$ is closed.
Therefore, since $C\cap K$ and $\gamma(I_\gamma \setminus J)$ are disjoint compacts, we can find $\psi \in C_0^\infty(\RR^2)$ such that $\psi=1$ on $C \cap K$ and $\psi = 0$ on $\gamma(I_\gamma \setminus J)$.
Since $D\cap K = (\gamma(I_\gamma\setminus J) \cap K) \cup (C \cap K)$ we have
\begin{equation*}
\Phi_C(1) = \Phi_{C\cap K}(1) = \Phi_{D \cap K}(\psi) = \Phi_D(\psi) = 0
\end{equation*}
which concludes the proof of \eqref{eq:connected-component}.

\emph{Step 4.}
By Theorem~\ref{th:ABC2} for a.e. $h\in \RR$ we have $\Ha^1(E_h \setminus E_h^*) =0$, hence from \eqref{eq:level-set} and \eqref{eq:connected-component} we can deduce that $\int_{E_h\setminus E_h^*} \pfi \mu_h =0$. Since $\pfi$ is arbitrary this implies that $\mu_h \rest (E_h \setminus E_h^*) = 0$.

It remains to prove the ``only if'' part of the theorem.

Let us compute the left-hand side $F(\phi)$ of \eqref{divuB-weak} for a given test function $\phi\in C_0^\infty(\Omega)$. Repeating the computations from Step~1 we can see that $F(\phi)$ equals to the left-hand side of \eqref{eq:total-disint} with $\pfi\equiv \phi$ and $\psi \equiv 1$. Hence to prove that $F(\phi)=0$ it suffices to prove that the equality \eqref{eq:total-disint} holds.

In order to prove \eqref{eq:total-disint} it suffices to prove that \eqref{eq:level-set} holds for a.e. $h\in \RR$. (We can assume that $\sigma=0$ in view of our assumptions.)

By Theorem~\ref{th:ABC2} for a.e. $h\in \RR$ the set $\ff{Conn^*}(E_h)$ is countable and $\Ha^1(E_h \setminus E_h^*) =0$. By our assumptions moreover $\mu_h \rest (E_h \setminus E_h^*) = 0$ for a.e. $h\in \RR$. Therefore \eqref{eq:connected-component} implies \eqref{eq:level-set}.
\end{proof}

\subsection{Traces and chain rule for incompressible vector fields}

In this section we combine Theorems~\ref{th:div-disint} and \ref{th:eq-along-curve} and prove that
for a.e. $h\in \RR$ for any nontrivial connected component $C\in \ff{Conn^*}(E_h)$ of the level set $E_h:=H^{-1}(h)$
the solution $u$ of \eqref{divuB} has traces $u^\pm$ along $C$. 
Then we use these functions $u^\pm$ to solve the chain rule problem for the divergence operator.
Namely, we prove that $\div(\beta(u)B)$ is (represented by) a Radon measure $\nu$ such that $\nu \ll \mu$
and the density of $\nu$ with respect to $\mu$ is a function of $u^+$ and $u^-$.

Existence of such functions $u^\pm$ for every fixed $C$ is a rather straightforward implication of Theorems~\ref{th:div-disint} and \ref{th:eq-along-curve}.
However the technical difficulty here is to show that these traces can be seen as restrictions to $C$ of some Borel functions
defined in $\Omega$ (which, of course, are independent of $C$). These functions formally are defined as follows:

\begin{definition} \label{def:traces-global}
Suppose that $B\colon \Omega \to \RR$ is a bounded vector field which satisfies \eqref{B=nabla} for some Lipschitz function $H\colon \Omega \to \RR$ with bounded support. Suppose that $u\colon \Omega \to \RR$ is a Borel function.

We say that the Borel function $u^+ \colon \Omega \to \RR$ is the \emph{(positive) trace of $u$ along $B$} in $\Omega$ if for a.e. $h\in \RR$ for any $C\in \ff{Conn^*}(H^{-1}(h))$ there exists a trace $u^+_{C,B}$ of $u$ along $(C,B)$ and $u^+(x)=u^+_{C,B}(x)$ for any $x\in C$. The trace $u^-\colon \Omega \to \RR$ is defined analogously.
\end{definition}

\begin{remark}
In view of Theorem~\ref{th:ABC2} if $u^+$ and $\hat u^+$ are traces of $u$ along $B$ then $u^+ = \hat u^+$ for $|B|\Le^2$-a.e. $x\in \Omega$.
In other words, the traces $u^\pm$ (if they exist) are defined up to $|B|\Le^2$-negligible subset of $\Omega$. We will show that the traces exist when $\div(uB)=\mu$ for some Radon measure $\mu$, and in this case the traces will be defined up to $(|B|\Le^2+ |\mu|)$-negligible sets.
\end{remark}

\begin{remark}
It would be interesting to extend definition of traces to $\d \Omega$, and develop the theory of initial-boundary value problem, but this goes beyond the scope of the present paper.
\end{remark}

The following theorem is the main result of this section:

\begin{theorem} \label{th:traces-chain-rule} 
Suppose that $B\colon \Omega\to \RR^2$ is a bounded vector field which satisfies \eqref{B=nabla}
for some Lipschitz function $H\colon \Omega \to \RR$ with bounded support.
Suppose that Radon measures $\mu_i$ on $\Omega$ and functions $u_i\in L^1(|B|\Le^2)$ satisfy
\begin{equation} \label{divuB-sys}
\div(u_i B) = \mu_i \quad \text{in } \; \ss D'(\Omega)
\end{equation}
for $i=1,2,...,m$.
Then $u_i$ has traces $u_i^\pm \colon \Omega \to \RR$ along $B$ (see Definition~\ref{def:traces-global}).

Moreover, let $u=(u_1, \dots, u_m)$ and $\mu = (\mu_1, \dots, \mu_m)$.
Then for any bounded $\beta \in C^1(\RR^m)$ with bounded $\nabla \beta$
the function $\beta(u)$ has traces along $B$ which are given by
\begin{equation} \label{beta-traces-m}
\beta(u)^\pm = \beta(u^\pm)
\end{equation}
and $\beta(u)$ satisfies \eqref{divbeta} with
\begin{equation} \label{nu-char-div-free-m}
\nu = f(u^+,u^-) \cdot \mu
\end{equation}
where $f$ is given by \eqref{f-def}.
\end{theorem}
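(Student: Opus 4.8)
The plan is to disintegrate each of the equations \eqref{divuB-sys} along the level sets of $H$, to reduce on almost every level set to the one-dimensional situation handled by Theorem~\ref{th:eq-along-curve}, then to patch the resulting curve-wise traces into globally defined Borel functions, and finally to reassemble the chain rule by the ``only if'' direction of Theorem~\ref{th:div-disint}. Concretely, I would first apply Theorem~\ref{th:div-disint} to each of the $m$ equations $\div(u_i B)=\mu_i$, using a common disintegration variable and collecting the countably many exceptional null sets into one $\Le^1$-negligible set $N\subset\RR$. This gives $H_\#|\mu_i|\ll\Le^1$ for every $i$ (by \ref{th:div-disint-2}), so one may take $\sigma_{\mu_i}=0$, and for every $h\in\RR\setminus N$ the level set $E_h:=H^{-1}(h)$ is regular, $|\mu_{i,h}|(E_h\setminus E_h^*)=0$, and $\div\bigl(u_i\frac{B}{|B|}\Ha^1\rest C\bigr)=\mu_{i,h}\rest C$ in $\ss D'(\Omega)$ for every $C\in\ff{Conn^*}(E_h)$ and every $i$; here $\{\mu_{i,h}\}_h$ is the disintegration of $\mu_i$ with respect to $(H,\Le^1)$, and I write $\mu_h:=(\mu_{1,h},\dots,\mu_{m,h})$.

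Next, fix $h\in\RR\setminus N$ and $C\in\ff{Conn^*}(E_h)$. By Theorem~\ref{th:ABC2}, $C$ is an $\Omega$-closed simple Lipschitz curve, $|B|=|\nabla H|\ne0$ on $C$ up to $\Ha^1$-null sets, and $B/|B|$ is a unit tangent to $C$ at $\Ha^1$-a.e.\ point; moreover applying Theorem~\ref{th:div-disint} to $u\equiv1$, $\mu\equiv0$ (legitimate since $\div B=0$) yields $\div\bigl(\frac{B}{|B|}\Ha^1\rest C\bigr)=0$. Hence Theorem~\ref{th:eq-along-curve}, in its vector-valued form (Remark~\ref{rmrk:vector-case}), applies to $u=(u_1,\dots,u_m)$ and $\mu_h\rest C$ (understood component-wise): each $u_i$ lies in $L^\infty(\Ha^1\rest C)$, there is a Lipschitz parametrization $\gamma$ of $C$ agreeing with $B/|B|$ with $u_i\circ\gamma\in BV(I_\gamma)$, the curve-wise traces $u^\pm_{i,C,B}$ exist, and for any bounded $\beta\in C^1(\RR^m)$ with bounded gradient
\[
\div\Bigl(\beta(u)\frac{B}{|B|}\Ha^1\rest C\Bigr)=f(u^+,u^-)\cdot(\mu_h\rest C),\qquad \beta(u)^\pm_{C,B}=\beta\bigl(u^\pm_{C,B}\bigr),
\]
with $f$ the Vol'pert superposition \eqref{f-def}.

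The technical heart of the argument, and the step I expect to require the most care, is to patch the curve-wise traces into Borel functions $u_i^\pm\colon\Omega\to\RR$ satisfying Definition~\ref{def:traces-global}. This needs a family $\{\gamma_h\}$ of natural, $B$-agreeing parametrizations of the nontrivial connected components of $E_h$ depending in a Borel way on $h$, together with a Borel choice of component inside each $E_h$. I would obtain it by first invoking Theorem~\ref{th:decomposition} to write $H=\sum_i H_i$ with each $H_i$ monotone and compactly supported and with $\{\nabla H_i\ne0\}$ pairwise $\Le^2$-negligibly overlapping, so that $B=\nabla^\perp H_i$ a.e.\ on $\{\nabla H_i\ne0\}$; for a monotone $H_i$ every regular level set has at most one nontrivial component, which makes a Borel selection of parametrizations available from the Borel structure on connected components and on $\Omega$-closed curves of \cite{ABC2}. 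Setting $u_i^+(x):=(u_i\circ\gamma_{H(x)})^+(t_x)$, where $t_x$ is the unique parameter with $\gamma_{H(x)}(t_x)=x$ (and symmetrically $u_i^-$), one then checks that $u_i^\pm$ are Borel, that $u_i^\pm=u_i$ holds $|B|\Le^2$-a.e.\ by the coarea formula (Proposition~\ref{prp:coarea}) together with $\Ha^1(E_h\setminus E_h^*)=0$, and that $u_i^\pm$ restricts on each $C$ to $u^\pm_{i,C,B}$; hence $u_i^\pm$ is the trace of $u_i$ along $B$.

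Finally I would reassemble the chain rule. Put $\nu:=f(u^+,u^-)\cdot\mu$. Since $\nabla\beta$ is bounded, $f(u^+,u^-)$ is bounded and Borel, so $\nu$ is a Radon measure with $|\nu|\le\|\nabla\beta\|_\infty\sum_i|\mu_i|$, whence $H_\#|\nu|\ll\Le^1$. The family $\{f(u^+,u^-)\mu_h\}_h$ is Borel and each of its members is concentrated on $E_h$, so by uniqueness of the disintegration it is the disintegration $\{\nu_h\}_h$ of $\nu$ with respect to $(H,\Le^1)$. For the pair $(\beta(u),\nu)$ the hypotheses of the ``only if'' part of Theorem~\ref{th:div-disint} then hold: $\beta(u)\in L^1(|B|\Le^2)$ because $\beta$ is bounded and $|B|\Le^2$ is a finite measure ($H$ has bounded support); $|\nu_h|(E_h\setminus E_h^*)\le\|\nabla\beta\|_\infty\sum_i|\mu_{i,h}|(E_h\setminus E_h^*)=0$; and for every closed simple $C\in\ff{Conn^*}(E_h)$ the identity $\div\bigl(\beta(u)\frac{B}{|B|}\Ha^1\rest C\bigr)=\nu_h\rest C$ is exactly the curve-wise identity established above. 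Therefore $\div(\beta(u)B)=\nu$, which is \eqref{divbeta} with $\nu$ given by \eqref{nu-char-div-free-m}; and $\beta(u)^\pm=\beta(u^\pm)$ globally, since $\beta\circ u^\pm$ is Borel and restricts on each $C$ to $\beta(u^\pm_{C,B})=\beta(u)^\pm_{C,B}$.
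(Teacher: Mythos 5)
Your proposal is correct and follows essentially the same route as the paper: Theorem~\ref{th:div-disint} as a necessary condition to localize onto the nontrivial connected components of a.e.\ level set, Theorem~\ref{th:eq-along-curve} (in its vector-valued form) to produce the curve-wise traces and the curve-wise chain rule, a Borel patching of these traces via the monotone decomposition of $H$, and Theorem~\ref{th:div-disint} again as a sufficient condition to reassemble $\div(\beta(u)B)=\nu$. The only step you sketch rather than prove --- the Borel measurability of the patched traces $u_i^\pm$ --- is exactly the step the paper itself isolates as a separate claim and relegates to the Appendix, where it is handled with the same tools you name (Theorem~\ref{th:decomposition}, the $\sigma$-compact families of level sets, and a rigidity lemma for converging arcs).
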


\begin{remark}
The functions $u^\pm$ can be interpreted as the traces of $u$ along the trajectories of $B$
(see  Remark~\ref{r:flow}).
\end{remark}

\begin{remark} \label{rem:traces-divuB}
In view of Theorem~\ref{th:flow} if $u$ solves \eqref{divuB} and $u^+$ and $\hat u^+$ are traces of $u$ along $B$ then
$u^+(x) = \hat u^+(x)$ for $(|B|\Le^2+ |\mu|)$-a.e. $x\in \Omega$.
In other words, the traces of solutions of \eqref{divuB} are defined up to $(|B|\Le^2+ |\mu|)$-negligible sets. Moreover, in case of system \eqref{divuB-sys} \emph{for each $i$} the traces $u_i^\pm$ are defined up to $(|B|\Le^2 + |\mu_1| + ... + |\mu_m|)$-negligible sets.
\end{remark}

\begin{proof}[Proof of Theorem~\ref{th:traces-chain-rule}]
\emph{Step 1.}
Using Theorem~\ref{th:div-disint} as a \emph{necessary condition} we can find a negligible set $N\subset \RR$ such that for \emph{any} $h\in \RR \setminus N$ the statements \ref{th:div-disint-1a} and \ref{th:div-disint-1b} of Theorem~\ref{th:div-disint} hold.

By Theorem~\ref{th:eq-along-curve} for any $h\in \RR \setminus N$ for any $C \in \ff{Conn^*}(E_h)$ there exist traces $u^\pm_C$ of $u$ along $(C,B)$ (see Definition~\ref{def:traces})
such that \eqref{divbetauB-on-C} holds.

\emph{Step 2.}
Since the connected components of $E_h^*$ are pairwise disjoint and $E_h \cap E_{h'} = \emptyset$ for $h\ne h'$ for any $x\in E:=\cup_{h\in \RR \setminus N} E_h^*$ there exist unique $h=h(x)$ and $C_x \in \ff{Conn^*}(E_h)$ such that $x\in C_x$. Therefore we can define
\begin{equation} \label{global-traces-def}
u^\pm(x) := u^\pm_{C_x}(x).
\end{equation}

We claim that there exists a $\Le^1$-negligible set $\hat N\subset \RR$ such that
on $E \setminus H^{-1}(\hat N)$ the functions $u^\pm$ defined by \eqref{global-traces-def} agree with some Borel function $\hat u^\pm$. For convenience of the reader we present the proof of this technical fact in the Appendix.

In view of the point \ref{th:div-disint-2} of Theorem~\ref{th:div-disint} we have
$|\mu|(H^{-1}(\hat N)) = 0$. On the other hand by coarea formula $|B| \Le^2(H^{-1}(\hat N)) = 0$. Hence the equality $u^\pm = \hat u^\pm$ holds $(|B|\Le^2 + |\mu|)$-a.e. and without loss of generality we can assume that $u^\pm$ are Borel.

\emph{Step 3.}  Since $\nabla \beta$ is continuous and bounded and the functions $u^\pm$ are Borel we have $f(u^+, u^-) \in \Linf (|\mu|)$ and hence $\nu$ defined by \eqref{nu-char-div-free-m} is finite Radon measure. Then we can apply Theorems~\ref{th:div-disint} and \ref{th:eq-along-curve} as \emph{sufficient conditions} to conclude that $\div (\beta(u) B) = \nu$.
\end{proof}

\section{Chain rule for steady nearly incompressible vector fields}\label{sec:chain-rule}

In this section we solve the chain rule problem for nearly incompressible vector fields
by reducing it to the case of incompressible ones, which we considered in the previous section.
Our main result is the following:

\begin{theorem} \label{th:chain-rule-ni}
Let $\Omega \subset \RR^2$ be a simply connected open set.
Suppose that $B \in \Linf(\Omega; \RR^2)$ has bounded support
and $u\in \Linf(\Omega)$.
Suppose that there exists $\rho \in \Linf(\Omega)$ such that $\rho \ne 0$ a.e. in $\Omega$ and $\rho$ satisfies \eqref{divrhob}.
Suppose that $\lambda$ and $\mu$ are Radon measures on $\Omega$ and \eqref{divB} and \eqref{divuB} hold.
Then there exist bounded functions $u^\pm \colon \Omega \to \RR$ such that for any $\beta\in C^1(\RR)$
the Radon measure $\nu$ given by \eqref{nu-char} satisfies \eqref{divbeta}.
\end{theorem}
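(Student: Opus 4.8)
The plan is to reduce the statement to the incompressible case settled in Theorem~\ref{th:traces-chain-rule}. Since $\Omega$ is simply connected, $\rho B$ is bounded, has bounded support, and is divergence free by \eqref{divrhob}, there is a Lipschitz function $H\colon\Omega\to\RR$ with bounded support such that $\tilde B:=\rho B=\nabla^\perp H$ a.e.\ in $\Omega$ (cf.\ \eqref{rhob=nablaperpH}). Writing $B=u_1\tilde B$ and $uB=u_2\tilde B$ with $u_1:=1/\rho$, $u_2:=u/\rho$, hypotheses \eqref{divB} and \eqref{divuB} turn into $\div(u_1\tilde B)=\lambda$ and $\div(u_2\tilde B)=\mu$ in $\ss D'(\Omega)$, and $u_1,u_2\in L^1(|\tilde B|\Le^2)$ because $u_1|\tilde B|=|B|$ and $u_2|\tilde B|=|u|\,|B|$ are integrable. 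Applying Theorem~\ref{th:traces-chain-rule} with $m=2$ to $\tilde B$ and $(u_1,u_2)$ yields Borel traces $u_1^\pm,u_2^\pm\colon\Omega\to\RR$ along $\tilde B$. Since $\rho$ is bounded away from $0$ and $\infty$, $u_1^\pm\in[1/C_2,1/C_1]$, so I set $u^\pm:=u_2^\pm/u_1^\pm$; as $u=u_2/u_1$ with $u_1$ bounded below, the $u^\pm$ are precisely the traces of $u$ along $\tilde B$, hence (Remark~\ref{r:flow}) along $B$, and $|u^\pm|\le\|u\|_\infty$. These are the functions $u^\pm$ appearing in \eqref{nu-char}.

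Next I would use the identity $\beta(u)\,B=g(u_1,u_2)\,\tilde B$ with $g(a_1,a_2):=a_1\beta(a_2/a_1)$. On the box $Q:=[1/C_2,1/C_1]\times[-M,M]$, $M:=\|u\|_\infty/C_1$, the function $g$ is $C^1$, with $\partial_1 g(a_1,a_2)=\beta(w)-w\beta'(w)$ and $\partial_2 g(a_1,a_2)=\beta'(w)$, $w:=a_2/a_1$; the arguments $(u_1,u_2)$, the traces $(u_1^\pm,u_2^\pm)$, and the segments joining the latter all lie in $Q$. Picking $\chi\in C_0^\infty(\RR^2)$ equal to $1$ on $Q$ and supported in a neighbourhood of $Q$ contained in $\{a_1>0\}$, the function $\tilde g:=\chi g\in C^1(\RR^2)$ is bounded with bounded gradient and agrees with $g$ on $Q$, so $\tilde g(u_1,u_2)\,\tilde B=\beta(u)\,B$ a.e. Thus it suffices to prove $\div(\tilde g(u_1,u_2)\tilde B)=\nu$, where $\nu$ is the measure in \eqref{nu-char}; note $\nu$ is a finite Radon measure because $u^\pm$ are bounded and $\lambda,\mu$ are finite.

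To prove $\div(\tilde g(u_1,u_2)\tilde B)=\nu$ I would argue exactly as in Step~3 of the proof of Theorem~\ref{th:traces-chain-rule}, i.e.\ apply Theorems~\ref{th:div-disint} and~\ref{th:eq-along-curve} as a \emph{sufficient} condition. Disintegrating $\lambda,\mu$ (hence $\nu$) over the level sets of $H$ and passing to a natural parametrisation $\gamma$ of a connected component $C\in\ff{Conn^*}(H^{-1}(h))$, the equality reduces, for a.e.\ $h$, to the one-dimensional identity
\begin{equation*}
D\bigl(g(u_1,u_2)\circ\gamma\bigr)=f_0(u^+,u^-)\,D(u_1\circ\gamma)+f_1(u^+,u^-)\,D(u_2\circ\gamma)\qquad\text{on }I_\gamma,
\end{equation*}
where $u_1\circ\gamma,u_2\circ\gamma\in BV(I_\gamma)$ and $D(u_i\circ\gamma)$ is the disintegration of $\mu_i$ (applying Theorems~\ref{th:div-disint}, \ref{th:eq-along-curve} to $u_1$ and $u_2$). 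Since $g(u_1,u_2)\circ\gamma=g(u_1\circ\gamma,u_2\circ\gamma)\in BV(I_\gamma)$, its derivative is a well-defined measure (Vol'pert's chain rule, Proposition~\ref{prp:Volpert}), and it is enough to match the two sides on the atomic and on the diffuse (non-atomic) part of $(u_1\circ\gamma,u_2\circ\gamma)$. On the diffuse part the identity is immediate from the diagonal case $\nabla g(a_1,a_2)=(f_0(w,w),f_1(w,w))$, $w=a_2/a_1$, of \eqref{f0}--\eqref{f1}. On a jump, writing $a^\pm:=u_1^\pm$ so that $u_2^\pm=u^\pm a^\pm$, the atom of the right-hand side is $f_0(u^+,u^-)(a^+-a^-)+f_1(u^+,u^-)(u^+a^+-u^-a^-)$, and a direct expansion of \eqref{f0}--\eqref{f1} shows this equals $a^+\beta(u^+)-a^-\beta(u^-)=g(u_1^+,u_2^+)-g(u_1^-,u_2^-)$, the atom of the left-hand side. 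This gives $\div(\beta(u)B)=\nu$, as required.

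I expect the last step to be the main obstacle: the coefficients $f_0,f_1$ are \emph{not} the Vol'pert averages $\int_0^1\partial_i g\,dt$ of $\nabla g$, so one has to see that they nonetheless determine the same measure, which rests on the elementary but slightly opaque jump computation above. The rest is routine once one notices that the correct unknowns are $1/\rho$ and $u/\rho$ and that $\beta(u)B=g(1/\rho,u/\rho)\,\rho B$ with $g(a_1,a_2)=a_1\beta(a_2/a_1)$; the only other technical point — that $g$ is neither globally $C^1$ nor bounded — is harmless since $\rho$ is bounded above and below, so the cutoff $\chi$ removes it without affecting the relevant values.
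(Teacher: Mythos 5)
Your plan follows essentially the same route as the paper's proof: pass to $A=\rho B=\nabla^\perp H$, take $r=1/\rho$ and $v=u/\rho$ as the new unknowns, obtain their traces from Theorem~\ref{th:traces-chain-rule}, set $u^\pm=v^\pm/r^\pm$, and then verify the identity curve by curve via Theorems~\ref{th:div-disint} and~\ref{th:eq-along-curve}, splitting the one-dimensional derivative into diffuse and atomic parts. Your jump algebra is exactly the paper's computation (including the degenerate case $u^+=u^-$ with $r^+\ne r^-$, which is the cancellation \eqref{strage-cancellation}), and your observation that $f_0,f_1$ are not the Vol'pert averages of $\nabla g$ is precisely the point the paper makes in the remark following the proof.

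The one genuine gap is in the cutoff step. The theorem assumes only $\rho\in\Linf$ with $\rho\ne 0$ a.e., not $0<C_1\le\rho\le C_2$; in particular $\rho$ may change sign and $1/\rho$ need not be bounded, so the compact box $Q=[1/C_2,1/C_1]\times[-M,M]$ need not contain the values of $(u_1,u_2)$, and the identity $\tilde g(u_1,u_2)\tilde B=\beta(u)B$ can fail on a set of positive measure. The paper avoids this by never introducing a global $C^1$ substitute for $g$: it works along each level curve, where $r\circ\gamma\in BV(I_\gamma)$ is automatically bounded and $|r\circ\gamma|\ge 1/\|\rho\|_\infty$ a.e., so Vol'pert's chain rule applies to $\rho=1/r$ and to $u=(ru)\cdot\rho$ on that curve, and the jump atoms only require $g$ to be evaluated at the trace values, never on the segment joining them (which could cross $\{a_1=0\}$). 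Your argument is complete in the steady nearly incompressible case $0<C_1\le\rho\le C_2$, but to cover the stated hypotheses you should replace the single global cutoff by the curve-wise use of Vol'pert's theorem, as in the paper.
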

\begin{remark}
By \eqref{f0} and \eqref{f1} the measure $\nu$ is always absolutely continuous with respect to $|\lambda|+|\mu|$.
\end{remark}
\begin{remark}
When $u$ is not bounded but belongs to $L^1(\Omega)$ the theorem still holds if $\beta$ is bounded and has compactly supported derivative.
\end{remark}
\begin{proof}
Let us denote $A:=\rho B$. Clearly we have $\div A = 0$.

Since $A^\perp$ is curl-free and $\Omega$ is simply connected there exists\footnote{This can be proved by reduction to the smooth case using mollified vector fields and passage to the limit using Arzela-Ascoli theorem} a function $H \in \Lip(\Omega)$ such that $A^\perp = - \nabla H$, or, equivalently, $A=\nabla^\perp H$. Such $H$ is unique up to an additive constant, which is uniquely determined by the condition $\int H \, dx = 0$.

Let us introduce $r:= 1/\rho$ and $v:=u/\rho$. Observe that $r,v \in L^1(|A| \Le^2)$ and moreover
\begin{gather*}
\div (r A) = \lambda, \\
\div (v A) = \mu.
\end{gather*}

Let $r^\pm$ and $v^\pm$ denote the traces of $r$ and $v$ given by Theorem~\ref{th:traces-chain-rule}.
Let $u^\pm := v^\pm / r^\pm$. Since $u$ is bounded and $v=\rho u$ the functions $u^\pm$ also are bounded (here we use Theorem~\ref{th:ABC2} because $u$ can be unbounded on some $\Le^2$-negligible set).

Since $\beta \in C^1$ and $u$ is bounded, we have
\begin{gather*}
\abs{\frac{\beta(u^+)-\beta(u^-)}{u^+ - u^-}} \le C,\\
\abs{\frac{u^+ \beta(u^-)  -  u^- \beta(u^+)}{u^+ - u^-}}=
\abs{\beta(u^-) + u^-\frac{\beta(u^-)  - \beta(u^+)}{u^+ - u^-}}
\le C
\end{gather*}
where $C>0$ depends only on $\|u\|_\infty$ and $\|\beta\|_{C^1([-\|u\|_\infty,\|u\|_\infty])}$.
Similarly, $|\beta(u^\pm) - u^\pm \beta'(u^\pm)| \le C$ and $|\beta'(u^\pm)| \le C$. Hence the functions \eqref{f0} and \eqref{f1} are bounded, therefore $\nu$ defined by \eqref{nu-char} is absolutely continuous with respect to $|\lambda|+|\mu|$.

Now when $\nu$ is constructed it remains to show that $\nu$ satisfies \eqref{divbeta},
which is equivalent to
\begin{equation*}
\div(r \beta(u) A) = \nu.
\end{equation*}
In view of Theorems~\ref{th:div-disint} and \ref{th:eq-along-curve}
it is sufficient to prove that for a.e. $h\in \RR$ for any $C \in \ff{Conn^*}(H^{-1}(h))$ we have
\begin{equation} \label{rbetau}
(r \beta(u))' = \nu
\end{equation}
in $\ss D'(C,\gamma)$
for any Lipschitz parametrization $\gamma$ of the curve $C$.

On the other hand by Theorems~\ref{th:div-disint} and \ref{th:eq-along-curve} there exists a negligible set $N \subset \RR$ such that for any $h\in \RR\setminus N$ and any $C \in \ff{Conn^*}(H^{-1}(h))$ we have
\begin{equation} \label{rlambdavmu}
r' = \lambda, \quad (ru)' = \mu
\end{equation}
in $\ss D'(C,\gamma)$. Therefore it is sufficient to prove that \eqref{rlambdavmu} implies \eqref{rbetau}
for any fixed $C \in \ff{Conn^*}(H^{-1}(h))$ for any $h \in \RR \setminus N$.

Let us fix some Lipschitz parametrization $\gamma$ of $C$.
For brevity we will
denote $u \circ \gamma$, $u^\pm \circ \gamma$, $\rho \circ \gamma$, $r\circ \gamma$, $\gamma^{-1}_\# \mu \rest C$, $\gamma^{-1}_\# \lambda \rest C$ and $\gamma^{-1}_\#\nu \rest C$ simply by $u$, $u^\pm$, $\rho$, $r$, $\mu$, $\lambda$ and $\nu$ respectively. Then \eqref{rbetau} and \eqref{rlambdavmu} hold in $\ss D'(I)$, where $I$ is the domain of $\gamma$.

In view of \eqref{rlambdavmu} the functions $r$ and $ru$ belong to $BV(I)$.
By our assumptions $|\rho|$ is bounded from above, hence $|r|$ is bounded from below (a.e. on $I$).
Then by Vol'pert's chain rule (see e.g. \cite{AFP}, Theorem~3.96) we have $\rho \in BV(I)$.
Consequently (again by Vol'pert's chain rule) $u = (ru) \cdot \rho$ also belongs to $BV(I)$.

Now we expand the left-hand sides of \eqref{rbetau} and \eqref{rlambdavmu} using Vol'pert's chain rule.

We first consider the \emph{diffuse part}:
\begin{gather*}
\lambda^d = (r')^d,
\\
\mu^d = \tilde u (r')^d + \tilde r (u')^d,
\\
[(r\beta(u))']^d = \beta(\tilde u) (r')^d + \tilde r \beta'(\tilde u) (u')^d.
\end{gather*}
(Here $\tilde u(t)$ and $\tilde r(t)$ denote the limits of $u$ and $r$ at continuity point $t\in I$.)
Subtracting from the third equation the second one multiplied by $\beta'(\tilde u)$ and the first one multiplied by $(\beta(\tilde u)-\tilde u\beta'(\tilde u))$ we obtain
that $[(r\beta(u))']^d = (\beta(\tilde u)-\tilde u\beta'(\tilde u)) \lambda^d + \beta'(\tilde u) \mu^d$.
On the other hand by definition of $\nu$ we have $\nu^d = (\beta(\tilde u)-\tilde u\beta'(\tilde u)) \lambda^d + \beta'(\tilde u) \mu^d$. Therefore 
\begin{equation} \label{nu-diffuse}
\nu^d = [(r\beta(u))']^d.
\end{equation}

Now let us consider the \emph{jump part}:
\begin{gather*}
\lambda^j = \rb{r^+ - r^-} \Ha^0,
\\
\mu^j = \rb{r^+ u^+ - r^- u^-} \Ha^0,
\\
[(r\beta(u))']^j = \rb{r^+ \beta(u^+) - r^- \beta(u^-)} \Ha^0.
\end{gather*}
Let $J_u$ denote the jump set of $u$. From the equations above we have
\begin{equation*}
(u^+ - u^-) [(r\beta(u))']^j = (u^+ \beta(u^-) - u^- \beta(u^+)) \lambda^j + (\beta(u^+)-\beta(u^-)) \mu^j.
\end{equation*}
On the other hand by definition of $\nu$
\begin{equation*}
\nu^j \rest J_u = \frac{u^+ \beta(u^-) - u^- \beta(u^+)}{u^+ - u^-} \lambda^j \rest J_u + \frac{\beta(u^+)-\beta(u^-)}{u^+ - u^-} \mu^j \rest J_u.
\end{equation*}
Therefore $\nu^j \rest J_u = [(r\beta(u))']^j \rest J_u$.
It remains to check that $\nu^j \rest (J_u)^c = [(r\beta(u))']^j \rest (J_u)^c$,
where $(J_u)^c = I \setminus J_u$.

Observe that
\begin{equation} \label{strage-cancellation}
\tilde u \lambda^j \rest (J_u)^c  = \mu^j \rest (J_u)^c 
\end{equation}
therefore
\begin{equation*}
[(r \beta(u))']^j \rest (J_u)^c =\beta(\tilde u) \lambda^j \rest (J_u)^c  =
(\beta(\tilde u) - \tilde u \beta'(\tilde u)) \lambda^j\rest (J_u)^c 
 + \beta'(\tilde u) \mu^j \rest (J_u)^c 
\end{equation*}
hence $\nu^j \rest (J_u)^c = [(r\beta(u))']^j \rest (J_u)^c$ and
we conclude that
\begin{equation} \label{nu-jump}
\nu^j = [(r\beta(u))']^j.
\end{equation}
By \eqref{nu-diffuse} and \eqref{nu-jump} we have $\nu = (r \beta(u))'$ which completes the proof.
\end{proof}

\begin{remark}
Assumptions of Theorem~\ref{th:chain-rule-ni} allow $\rho$ to take both positive and negative values.

In order to demonstrate what happens in this case
for simplicity let us consider a vector field $B\colon \RR \to \RR$ given by $B(x):=\sign(x)$
and a function $u(x):=c$, $x\in \RR$, where $c\in \RR$ is a constant.
Then $\mu:=\div(u B) = (uB)' = 2c \delta$ and $\lambda:= \div B = 2 \delta$, where $\delta$ is the Dirac delta.
Clearly the function $\rho(x):=\sign(x)$ satisfies $\div(\rho B)=0$ in $\ss D'(\RR)$.
Since evidently $u^+=u^- =c$ by \eqref{f0} and \eqref{f1} we have $f_1(u^+,u^-)= \beta(c) - c \beta'(c)$,
$f_0(u^+,u^-) = \beta'(c)$ and therefore \eqref{nu-char} reads as
\begin{align*}
\nu =&~ (\beta(c) - c \beta'(c)) \lambda + \beta'(c) \mu \\
=&~ (\beta(c) - c \beta'(c)) 2 \delta + \beta'(c) 2 c \delta = 2 \beta(c) \delta
\end{align*}
so indeed $\div(\beta(u) B) = \nu$, as Theorem~\ref{th:chain-rule-ni} would predict.
Clearly a similar phenomenon can occur in dimension 2.
\end{remark}

\begin{remark}
Suppose that $\lambda$, $\mu$ and $\nu$ satisfy \eqref{divB}, \eqref{divuB} and \eqref{divbeta}.
Since the measures $\mu$ and $\nu$ in general are not mutually singular, the functions $g_0$ and $g_1$ such that $\nu = g_0 \lambda + g_1 \mu$ are not defined in a unique way.

For instance, in the example from the previous remark we could have written both
$\nu = \beta(c) \lambda$ and $\nu = \frac{\beta(c)}{c} \mu$ provided that $c \ne 0$.
This shows that non-uniqueness of $g_0$ and $g_1$ is related to the cancellation property \eqref{strage-cancellation}.

We would like also to demonstrate that another such pair $(g_0,g_1)$ could be constructed directly using Theorem~\ref{th:traces-chain-rule}.
Let us denote $w:=(r,v)$.
By Theorem~\ref{th:traces-chain-rule} there exist functions $w^\pm =(r^\pm,v^\pm)$ such that for any bounded $\alpha\in C^1(\RR^2)$ with bounded derivatives the measure
\begin{equation*}
\hat \nu = g_0(w^\pm) \lambda + g_1(w^\pm) \mu
\end{equation*}
satisfies
\begin{equation*}
\div(\alpha(w) A) = \hat \nu,
\end{equation*}
where
\begin{gather*}
g_0(w^+,w^-) := \int_0^1 \d_r \alpha (t w^+ + (1-t) w^-) \, dt, \\
g_1(w^+,w^-) := \int_0^1 \d_v \alpha (t w^+ + (1-t) w^-) \, dt.
\end{gather*}

We would like to take $\alpha(r,v):=r \beta(v/r)$, because then $\alpha(1/\rho, u/\rho) = \beta(u)$ and hence
\begin{equation*}
\nu = \div(\beta(u) B) = \div (r \beta(u) \rho B) = \div (r \beta(u) A) = \div (r \beta(v/r) A) = \div (\alpha(r,v)A) =\hat \nu.
\end{equation*}
Note that $\alpha$ is not $C^1$, but this difficulty can be overcome using appropriate approximations of $\beta$.

However one can notice that
\begin{equation*}
g_1(w^+,w^-) = \int_0^1 \beta'\rb{\frac{ r^+ u^+ t + r^- u^- (1-t) }{ r^+t + r^- (1-t)}} \, dt
\end{equation*}
is (in general) different from
\begin{equation*}
f_1(u^+,u^-) = \frac{\beta(u^+) - \beta(u^-)}{u^+ - u^-}.
\end{equation*}
Indeed, when $u^+ = r^+ = 2$ and $u^- = r^- = 1$ we have $g_1(w^+, w^-) = \beta'(2)$
while $f_1(u^+, u^-) = \beta(2) - \beta(1)$.
\end{remark}

\subsection{Analysis of the discontinuity set revisited}

In this section we turn back to the vector field $B$ constructed in Theorem~\ref{th:counterexample} and study it from the viewpoint of Theorem~\ref{th:chain-rule-ni}. 
We discuss the chain rule problem for this $B$ and characterize the error term $\sigma$ in \eqref{chain-rule-c}.

Let $\Omega$, $\rho$ and $B$ be as in Theorem~\ref{th:counterexample}.
We define $u:=\rho$.
Clearly this function $u$ solves \eqref{divuB} with $\mu =0$.
Since $B\in BV(\Omega)$ we can define the measure $\lambda$ by \eqref{divB}.
Moreover, by Lemma~\ref{lem:B} we have $\lambda^a = \lambda^j = 0$
and $\lambda^c\ne 0$ is concentrated on the set $S$ defined by \eqref{def:S}.

In view of the results of \cite{AdLM} (see Theorems~3, 4 and 7) for any $\beta\in C^1(\RR)$ there exist Radon measure $\nu$ such that \eqref{divbeta} holds and $\nu$ satisfies \eqref{chain-rule-a}, \eqref{chain-rule-j} and \eqref{chain-rule-c}.
Since $\mu=0$, $\lambda^a=\lambda^j=0$ and $\lambda^c$ is concentrated on $S$,
the equations \eqref{chain-rule-a}, \eqref{chain-rule-j} and \eqref{chain-rule-c} take the form
\begin{equation*}
\nu^a = \nu^j = 0, \quad
\nu^c = \sigma\rest S_u + (\beta(\tilde u) - \tilde u \beta'(\tilde u)) \lambda^c \rest (S \setminus S_u).
\end{equation*}
(Recall that $\tilde u(x)$ denotes the approximate $L^1$ limit of $u$ at $x$.)
By Theorem~\ref{th:counterexample} the set $S_u$ coincides with $S$ up to a $|\lambda^c|$-negligible set, hence
\begin{equation*}
\nu = \nu^c = \sigma.
\end{equation*}

In view of Lemma~\ref{lem:flow} (see also Section~\ref{sec:Analysis-of-S}) and Theorem~\ref{th:chain-rule-ni} the functions
\begin{equation*}
\begin{gathered}
u^+(x) := 
\begin{cases}
1, & x \in \rho^{-1}(1); \\
2, & x \in S \cup \rho^{-1}(2),
\end{cases} \\
u^-(x) := 
\begin{cases}
1, & x \in S \cup \rho^{-1}(1); \\
2, & x \in \rho^{-1}(2)
\end{cases}
\end{gathered}
\end{equation*}
are the traces of $u$ along $B$ (recall Definition~\ref{def:traces-global}).
Then by \eqref{f0}
\begin{equation*}
f_0(u^+(x), u^-(x)) = 
\begin{cases}
\beta'(u(x)), & x \notin S; \\
2\beta(1) - \beta(2), & x \in S
\end{cases}
\end{equation*}
and therefore by Theorem~\ref{th:chain-rule-ni} we have
\begin{equation} \label{sigma-char}
\nu = (2\beta(1)-\beta(2)) \lambda.
\end{equation}
Since $\nu=\sigma$ the formula \eqref{sigma-char} completely characterizes the term $\sigma$.

Finally, using Theorem~\ref{th:counterexample} and results of \cite{AdLM} it is possible
to provide an alternative proof of the last claim of Theorem~\ref{th:counterexample}:
\begin{proposition}
The set $S_u$ coincides with $S$ up to a $|\lambda^c|$-negligible set.
\end{proposition}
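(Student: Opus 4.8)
The plan is to compute the Radon measure $\nu := \div(\beta(u)B)$ for the single choice $\beta(t) := t^2$ in two different ways and to compare them. As above, take $u := \rho$, so that $\mu = 0$, and put $\lambda := \div B$; by Lemma~\ref{lem:B} we have $\lambda^a = \lambda^j = 0$ and $\lambda = \lambda^c$ is concentrated on $S$. Consequently $|\lambda^c|(S_u \setminus S) = 0$ automatically, so the whole content of the proposition reduces to the estimate $|\lambda^c|(S \setminus S_u) = 0$.

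First I would apply the chain rule of \cite{AdLM} (cf. \eqref{divbeta} and \eqref{chain-rule-a}--\eqref{chain-rule-c}): there is a Radon measure $\nu$ with $\div(\beta(u)B) = \nu$, and since $\mu = 0$, $\lambda^a = \lambda^j = 0$ and $\lambda^c$ is concentrated on $S$, the three chain-rule formulas collapse to
\[
\nu = \nu^c = -\,\tilde u^2\,\lambda^c\rest(S \setminus S_u) + \sigma ,
\]
where $\tilde u$ is the approximate $L^1$ limit of $u$ (which exists on $\Omega \setminus S_u$), $\sigma$ is a Radon measure concentrated on $S_u$, and I have used $\beta(\tilde u) - \tilde u\,\beta'(\tilde u) = -\tilde u^2$. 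On the other hand, arguing exactly as in the derivation of \eqref{sigma-char} --- which uses only Lemma~\ref{lem:flow} (every trajectory of $B$ meets $S$ in exactly one point, with $\rho = 1$ before that point and $\rho = r := |AB|/|CD|$ after it), the trace description of Section~\ref{sec:Analysis-of-S}, Theorem~\ref{th:chain-rule-ni} and \eqref{f0}, and \emph{not} claim \ref{c-v} of Theorem~\ref{th:counterexample} --- one gets bounded traces $u^\pm$ of $u$ along $B$ with $u^- = 1$ and $u^+ = r$ at $|\lambda^c|$-a.e. point of $S$, so that $f_0(u^+, u^-) = \frac{r\beta(1) - \beta(r)}{r-1} = \frac{r - r^2}{r-1} = -r$ at $|\lambda^c|$-a.e. point, and hence (for the same measure $\nu$)
\[
\nu = -\,r\,\lambda^c .
\]

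Comparing the two formulas and restricting both to the Borel set $S \setminus S_u$, on which $\sigma$ vanishes, I obtain $(\tilde u^2 - r)\,\lambda^c\rest(S \setminus S_u) = 0$. Since $u$ takes a.e. only the values $1$ and $r$, at $|\lambda^c|$-a.e. point of $S \setminus S_u$ one has $\tilde u \in \{1, r\}$, so $\tilde u^2 - r$ equals either $1 - r$ or $r^2 - r = r(r-1)$, both nonzero because $r > 1$ (the initial cell $\cc C$ is compressible). Therefore $\lambda^c\rest(S \setminus S_u) = 0$, i.e. $|\lambda^c|(S \setminus S_u) = 0$, which together with $|\lambda^c|(S_u \setminus S) = 0$ shows that $S_u$ and $S$ coincide up to a $|\lambda^c|$-negligible set.

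The points requiring care are that the second expression for $\nu$ must be produced via Theorem~\ref{th:chain-rule-ni} and Lemma~\ref{lem:flow} rather than via claim \ref{c-v}, so that the argument is genuinely an \emph{alternative} proof; and the non-vanishing of the coefficient $\tilde u^2 - r$ on $S\setminus S_u$, which is the reason for choosing the explicit quadratic $\beta(t) = t^2$ --- for $\beta(t) = t$ the chain rule is trivial and this coefficient vanishes identically.
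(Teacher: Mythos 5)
Your proof is correct and follows essentially the same route as the paper's: both compare the Cantor-part chain rule \eqref{chain-rule-c} of \cite{AdLM} with the trace formula obtained from Theorem~\ref{th:chain-rule-ni} (i.e.\ \eqref{sigma-char}), for the test nonlinearity $\beta(t)=t^2$, and conclude that the coefficient mismatch forces $|\lambda^c|(S\setminus S_u)=0$. The only differences are cosmetic: you keep the general ratio $r=|AB|/|CD|$ and treat the two possible values of $\tilde u$ in a single identity $(\tilde u^2-r)\,\lambda^c\rest(S\setminus S_u)=0$, whereas the paper sets $r=2$ and splits $S\setminus S_u$ into the sets $S_1$, $S_2$.
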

\begin{proof}
Since $S_u \subset S$ we need to prove that $|\lambda|(S \setminus S_u) = 0$ (recall that $\lambda = \lambda^c$).
If $x\in S$ is a point of $L^1$ approximate continuity of $u$ then either $\tilde u(x) = 1$ or $\tilde u(x) = 2$,
because $u$ takes only values $1$ and $2$ (see page~\pageref{app-cont-points} for the details).
Let $S_i$ denote the set of points $x\in S$ such that $\exists \tilde u(x) = i$, $i=1,2$.
By \eqref{chain-rule-c} (see Theorem~7 in \cite{AdLM}) we have
\begin{equation*}
\nu \rest S_1 = (\beta(1) - \beta'(1)) \lambda \rest S_1.
\end{equation*}
On the other hand applying Theorem~\ref{th:chain-rule-ni} we obtain \eqref{sigma-char} which implies that
\begin{equation*}
\nu \rest S_1 = (2\beta(1)-\beta(2)) \lambda \rest S_1.
\end{equation*}
Taking for instance $\beta(u)=u^2$ we obtain a contradiction unless $|\lambda|(S_1)=0$.
The same way one proves that $|\lambda|(S_2)=0$.
\end{proof}

\section{Renormalization property of steady nearly incompressible vector fields}

In this section we consider transport equation \eqref{transport} with steady nearly incompressible vector fields.
Though the chain rule problem for such vector fields can be solved using Theorem~\ref{th:chain-rule-ni},
the near incompressibility assumption is still to weak to guarantee uniqueness of weak solutions of \eqref{transport}
(recall for instance the well-known DePauw's counterexample \cite{Depauw}).
Therefore one has to require some extra regularity of $B$.

We show that if $B$ has, in addition, bounded variation, then uniqueness of weak solutions of \eqref{transport} holds.
Our results rely on the framework developed in \cite{ABC1}, according to which uniqueness of weak solutions of \eqref{transport}
with vector field of the form $A= \nabla^\perp H$ holds if and only if the function $H$ has so-called weak Sard property.
In this section we extend this framework to nearly incompressible vector fields $B$, for which we introduce the function $H$
by $\rho B = \nabla^\perp H$. We prove that if $B$ is nearly incompressible and has bounded variation then $H$ has the weak Sard property and, consequently, weak solutions of \eqref{transport} with this $B$ are unique.

Let $\Omega\subset \RR^2$ be an open set.

\begin{definition}
We say that $H\in \Lip(\Omega)$ has the \emph{weak Sard property} if
\[
H_\# \Le^2 \rest \cb{\nabla H = 0} \perp \Le^1.
\]
\end{definition}

(Note that the property defined above is slightly stronger than the original weak Sard property introduced in \cite{ABC2}.)

When $A = \nabla^\perp H$ and $H$ has the weak Sard property, transport equation \eqref{transport} is
equivalent to a family of one-dimensional transport equations along the level sets of $H$:

\begin{theorem} \label{th:transport-with-wsp}
Suppose that $H \colon \Omega \to \RR$ is a Lipschitz function with compact support and $A\colon \Omega \to \RR^2$ is a bounded vector field such that $A = \nabla^\perp H$ a.e. in $\Omega$.
Suppose that $\eta \colon \Omega \to \RR$ is a bounded function and $T>0$. If $H$ has the weak Sard property then a bounded function $u\colon (0,T) \times \Omega \to \RR^2$ solves
\begin{equation} \label{transport-generalized}
\d_t (\eta u) + \div (u A) = 0
\end{equation}
in $\ss D'((0,T)\times \Omega)$ if and only for a.e. $x\in \{A=0\}$ we have $\d_t u(t,x)=0$ in $\ss D'((0,T))$ and for a.e. $h\in \RR$ for any $C\in \ff{Conn^*}(E_h)$ (which is closed simple curve) we have
\begin{equation} \label{transport-generalized-along-curve}
\d_t (\eta_\gamma u_\gamma ) + \d_s u_\gamma = 0
\end{equation}
in $\ss D'((0,T) \times I_\gamma)$,
where $\gamma \colon I_\gamma \to \Omega$ is a Lipschitz parametrization of $C$ which satisfies $\gamma'=A\circ \gamma$ a.e. on $I_\gamma$, $\eta_\gamma := \eta \circ \gamma$ and $u_\gamma(t,s):=u(t,\gamma(s))$.
\end{theorem}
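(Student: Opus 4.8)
The plan is to carry out, in the space-time cylinder $(0,T)\times\Omega$, the disintegration argument behind Theorem~\ref{th:div-disint}, with the term $\d_t(\eta u)$ playing the role that the measure $\mu$ played there. First I would write the distributional form of \eqref{transport-generalized},
\[
\int_0^T\!\!\int_\Omega \eta u\,\d_t\phi\,dx\,dt+\int_0^T\!\!\int_\Omega u\,A\cdot\nabla_x\phi\,dx\,dt=0 ,
\]
valid for all $\phi\in C_0^\infty((0,T)\times\Omega)$ and, by mollification, for all compactly supported Lipschitz $\phi$; then specialise to $\phi(t,x)=\psi(t)\,\pfi(x)\,\chi(H(x))$ with $\psi\in C_0^1((0,T))$, $\pfi\in C_0^1(\Omega)$, $\chi\in C_0^\infty(\RR)$. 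Since $A\cdot\nabla H=\nabla^\perp H\cdot\nabla H=0$ a.e., the term carrying $\chi'(H)\nabla H$ drops, leaving
\[
\int_0^T\!\!\int_\Omega\bigl(\eta u\,\pfi\,\psi'+u\,\psi\,A\cdot\nabla\pfi\bigr)\chi(H)\,dx\,dt=0 .
\]

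Then I would disintegrate $\Le^2\rest\Omega$ along $H$: splitting $\Omega=\{\nabla H=0\}\cup\{\nabla H\neq0\}$, the coarea formula (Proposition~\ref{prp:coarea}) gives $\Le^2\rest\{\nabla H\neq0\}=\int_\RR\frac{1}{|\nabla H|}\Ha^1\rest E_h\,dh$ (for a.e.\ $h$, since $\Ha^1\rest E_h$ is carried by $\{\nabla H\neq0\}$ for a.e.\ $h$ by Theorem~\ref{th:ABC2}), while the \emph{weak Sard property} makes $\sigma:=H_\#(\Le^2\rest\{\nabla H=0\})\perp\Le^1$ and Proposition~\ref{prp:disint} gives $\Le^2\rest\{\nabla H=0\}=\int_\RR\zeta_h\,d\sigma(h)$. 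As $A=0$ on $\{\nabla H=0\}$ the $A\cdot\nabla\pfi$ term does not see the singular part, and since $\sigma\perp\Le^1$ the equation splits into two independent families. Running the countable-dense-family argument in $\pfi,\psi$ as in Step~2 of the proof of Theorem~\ref{th:div-disint}, I obtain: (a) for $\sigma$-a.e.\ $h$, $\int_0^T\!\int\eta u\,\pfi\,\psi'\,d\zeta_h\,dt=0$ for all $\pfi,\psi$; letting $\pfi$ run over $C_0^1(\Omega)$ this forces $\d_t(\eta u)(\cdot,x)=0$ in $\ss D'((0,T))$ for $\zeta_h$-a.e.\ $x$, and integrating back against $\sigma$ (recall $\int\zeta_h\,d\sigma=\Le^2\rest\{\nabla H=0\}$) yields $\d_t(\eta u)(\cdot,x)=0$ for a.e.\ $x\in\{\nabla H=0\}=\{A=0\}$, i.e.\ $\d_t u=0$ there since $\eta\neq0$ a.e.; and (b) for a.e.\ $h$,
\[
\int_0^T\!\!\int_{E_h}\bigl(\eta u\,\pfi\,\psi'+u\,\psi\,A\cdot\nabla\pfi\bigr)\frac{1}{|\nabla H|}\,d\Ha^1\,dt=0 \qquad\text{for all }\pfi,\psi .
\]

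From (b) I would pass to connected components exactly as in Step~3 of the proof of Theorem~\ref{th:div-disint} (extend $H$ to a compactly supported Lipschitz $G$ on $\RR^2$, separate each component of the compact level set $G^{-1}(h)$ by the test functions of Lemma~\ref{lem:sep-by-test}, the factor $\psi(t)$ riding along unchanged), obtaining, for a.e.\ $h$ and every closed simple curve $C\in\ff{Conn^*}(E_h)$,
\[
\int_0^T\!\!\int_C\bigl(\eta u\,\pfi\,\psi'+u\,\psi\,A\cdot\nabla\pfi\bigr)\frac{1}{|\nabla H|}\,d\Ha^1\,dt=0 .
\]
Parametrising $C$ by a Lipschitz $\gamma$ with $\gamma'=A\circ\gamma$ a.e.\ (such $\gamma$ exists for a.e.\ $h$ by the reparametrisation in the proof of Theorem~\ref{th:flow}), one has $\Ha^1\rest C=\gamma_\#(|\gamma'|\Le)$ and $|\gamma'|=|A\circ\gamma|=|\nabla H\circ\gamma|$, so the Jacobian cancels the factor $1/|\nabla H|$, while $A(\gamma)\cdot\nabla\pfi(\gamma)=(\pfi\circ\gamma)'$; writing $\pfi_\gamma:=\pfi\circ\gamma$, $u_\gamma(t,s):=u(t,\gamma(s))$, $\eta_\gamma:=\eta\circ\gamma$,
\[
\int_0^T\!\!\int_{I_\gamma}\bigl(\eta_\gamma u_\gamma\,\pfi_\gamma\,\psi'+u_\gamma\,\psi\,\pfi_\gamma'\bigr)\,ds\,dt=0 .
\]
This is tested only against tensor products $\psi(t)\pfi_\gamma(s)$; to upgrade to all test functions on $(0,T)\times I_\gamma$ I would fix $\psi$, observe that the remaining $s$-functional has the form $\int\phi'\,a_\psi+\int\phi\,d\nu_\psi$ with $a_\psi,\nu_\psi\in L^1(I_\gamma)$ depending on $\psi$, apply Lemma~\ref{lem:density}, and then let $\psi$ range over a countable dense set, using density of tensor products in $C_0^1((0,T)\times I_\gamma)$. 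This gives $\d_t(\eta_\gamma u_\gamma)+\d_s u_\gamma=0$ in $\ss D'((0,T)\times I_\gamma)$, i.e.\ \eqref{transport-generalized-along-curve}. The converse is obtained by reading this chain of equivalences backwards, invoking Theorem~\ref{th:div-disint}, Theorem~\ref{th:eq-along-curve} and Lemma~\ref{lem:density} as \emph{sufficient} conditions and reassembling via the disintegration identities.

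I expect the main obstacle to be organisational rather than analytic: keeping the disintegration bookkeeping straight with the extra time direction, and in particular treating the piece of the equation supported on $\{\nabla H=0\}$ --- this is exactly where the weak Sard hypothesis is indispensable, since it produces the clean splitting $\sigma\perp\Le^1$ without which the level-set equations and the condition $\d_t u=0$ on $\{A=0\}$ could not be disentangled. The genuinely delicate inputs --- the level-set structure theorem~\ref{th:ABC2} and the density Lemma~\ref{lem:density} --- are already available in one-dimensional / steady form, and the space-time statements needed here reduce to them by slicing in $t$, so no new hard estimate is required.
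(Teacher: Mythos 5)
Your proposal is correct and follows essentially the same route as the paper: coarea formula on $\{\nabla H\neq 0\}$, the weak Sard property to make $H_\#\bigl(\Le^2\rest\{\nabla H=0\}\bigr)$ singular with respect to $\Le^1$ so the two families of conditions disentangle, Lemma~\ref{lem:sep-by-test} to pass to connected components, and Lemma~\ref{lem:density} together with the reparametrization $\gamma'=A\circ\gamma$ to reach \eqref{transport-generalized-along-curve}. The only difference is organizational: the paper first integrates out time against each $\psi$ in a countable dense subset of $C_0^1((0,T))$, setting $v=\int u\psi\,d\tau$ and $w=\int u\psi'\,d\tau$, which reduces the problem to the steady equation $\div(vA)=\eta w\,\Le^2$ so that Theorems~\ref{th:div-disint} and~\ref{th:eq-along-curve} apply verbatim as black boxes and your final step of upgrading from tensor products to general test functions on $(0,T)\times I_\gamma$ is not needed.
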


\begin{proof}
Since for $\eta \equiv 1$ and $\Omega = \RR^2$ this result was proved in \cite{ABC1} we only sketch the proof in order to show the connection with the chain rule problem.
Suppose that $\Psi \subset C_0^1((0,T))$ is a countable dense set and let us fix $\psi \in \Psi$.
Let $v\colon \Omega \to \RR$ and $w\colon \Omega \to \RR$ denote Borel functions such that
$v(x) = \int_\RR u(\tau, x) \psi(\tau) \, d \tau$
and $w(x) = \int_\RR u(\tau, x) \psi'(\tau) \, d \tau$ for a.e. $x\in \Omega$.
Clearly \eqref{transport-generalized} holds if and only if
\begin{equation} \label{transport-time-int}
- \eta w + \div (v A) = 0
\end{equation}
in $\ss D'(\Omega)$ for any $\psi \in \Psi$.

Let us decompose $\mu:=\eta w \Le^2$ as $\mu = \mu_1 + \mu_2$, where
\begin{equation*}
\mu_1:= \frac{\eta}{|A|} w |A| \Le^2 \rest \{\nabla H \ne 0\},
\quad
\mu_2:=\eta w \Le^2 \rest \{\nabla H =0\}.
\end{equation*}
We can disintegrate with respect to $H$ the first term using the coarea formula:
\begin{equation*}
\frac{\eta}{|A|} w |A| \Le^2 \rest \{\nabla H \ne 0\} = \int_\RR \frac{\eta}{|A|} w \, \Ha^1 \rest H^{-1}(h) \, dh
\end{equation*}
i.e.
\begin{equation*}
H_\# \mu_1 \ll \Le^1.
\end{equation*}
On the other hand by weak Sard property
\begin{equation*}
H_\# \mu_2 \perp \Le^1.
\end{equation*}
Therefore by Theorem~\ref{th:div-disint} equation \eqref{transport-time-int} holds if and only if $\mu_2=0$ and
\begin{equation} \label{stuff-along-curve}
\div \rb{ v \frac{A}{|A|} \Ha^1 \rest C } =  \frac{\eta}{|A|} w \Ha^1 \rest C
\end{equation}
holds for any $C\in \ff{Conn^*}(E_h)$ (which is closed simple curve) for a.e. $h\in \RR$.
(Here we use the fact that $\Psi$ is countable.)
Note that by coarea formula $A(x)\ne 0$ for $\Ha^1$-a.e. $x\in C$ and $1/|A| \in \Le^1(\Ha^1 \rest C)$ (for a.e. $h\in \RR$).
By Theorem~\ref{th:eq-along-curve} \eqref{stuff-along-curve} holds if and only if for any parametrization $\gamma \colon I_\gamma \to \Omega$ of $C$ which satisfies $\gamma'=A\circ \gamma$ a.e. on $I_\gamma$ we have
\begin{equation*}
\d_s (v\circ \gamma) = (\eta \circ \gamma) (w\circ \gamma)
\end{equation*}
 in $\ss D'(I_\gamma)$.
 Here we used that by Proposition~\ref{prp:param} we have $\Ha^1 \rest C = \gamma_\# (|\gamma'| \Le)$ and on the other hand $|\gamma'|= |A\circ \gamma|$ a.e. on $I_\gamma$.
\end{proof}

It turns out that one-dimensional equation \eqref{transport-generalized-along-curve} has the renormalization property:

\begin{lemma} \label{lem:renormalization-1D}
Suppose that $I$ is an open interval or a circle, $T>0$ and $\eta \in \Linf(I)$. If $u\colon (0,T) \times I\to \RR$ solves
\begin{equation*}
\d_t (\eta u) + \d_s u = 0
\end{equation*}
in $\ss D'((0,T)\times I)$ then for any $\beta\in C^1(\RR)$ we have
\begin{equation*}
\d_t (\eta \beta(u)) + \d_s \beta(u) = 0.
\end{equation*}
\end{lemma}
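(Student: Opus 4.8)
The plan is to reduce the two-variable equation $\d_t(\eta u) + \d_s u = 0$ to a family of one-dimensional transport problems in $s$ (with $t$ playing the role of a parameter), along which the renormalization is essentially Vol'pert's chain rule. First I would test the equation against product functions $\phi(t,s) = \psi(t)\varphi(s)$ with $\psi \in C_0^1((0,T))$ and $\varphi \in C_0^1(I)$; since such products are dense, it suffices to handle these. Introducing $v(s) := \int_0^T u(\tau,s)\psi(\tau)\,d\tau$ and $w(s) := \int_0^T u(\tau,s)\psi'(\tau)\,d\tau$, the equation becomes $\d_s v = -\eta w$ in $\ss D'(I)$ for every $\psi$, i.e.\ $v \in BV_{\mathrm{loc}}(I)$ (indeed $v$ is Lipschitz) for each fixed $\psi$. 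The hoped-for statement $\d_t(\eta\beta(u)) + \d_s\beta(u) = 0$ similarly reduces, testing against $\psi\varphi$, to showing $\d_s\bigl(\int \beta(u(\tau,\cdot))\psi(\tau)\,d\tau\bigr) = -\eta\int\beta(u(\tau,\cdot))\psi'(\tau)\,d\tau$.

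The key structural observation is the one already exploited in the paper's approach to \eqref{transport-generalized-along-curve}: since $\d_s u$ exists as a measure and $\d_t(\eta u)$ absorbs it, $u$ is, after choosing a good representative, a $BV$ function of $s$ locally uniformly in $t$, and more precisely the map $s \mapsto u(\cdot,s)$ has a representative that is ``right/left continuous into $L^1_t$''. Concretely I would argue that $u$ admits a representative $\bar u(t,s)$ such that for a.e.\ $t$ the function $s\mapsto \bar u(t,s)$ has classical one-sided traces $\bar u^\pm(t,s)$, and the characteristic structure of $\d_t + \d_s$ forces $\bar u(t,s) = u^\circ(s-t)$-type behaviour along lines $s - t = \mathrm{const}$ weighted by $\eta$; but rather than solving explicitly, the cleaner route is: mollify $u$ in $s$ only, $u_\varepsilon := u * \omega_\varepsilon$, obtaining smooth-in-$s$ functions solving $\d_t(\eta u_\varepsilon) + \d_s u_\varepsilon = r_\varepsilon$ with a commutator remainder $r_\varepsilon = \d_t\bigl((\eta u)*\omega_\varepsilon - \eta(u*\omega_\varepsilon)\bigr)$. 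Then $\d_t(\eta\beta(u_\varepsilon)) + \d_s\beta(u_\varepsilon) = \beta'(u_\varepsilon) r_\varepsilon + \bigl(\beta(u_\varepsilon) - u_\varepsilon\beta'(u_\varepsilon)\bigr)\,\d_t\eta$-type terms; since here $\eta$ does not depend on $s$, the commutator $(\eta u)*\omega_\varepsilon - \eta(u*\omega_\varepsilon) = 0$ identically, so $r_\varepsilon = 0$ and $u_\varepsilon$ solves the equation exactly. Hence $\beta(u_\varepsilon)$ solves $\d_t(\eta\beta(u_\varepsilon)) + \d_s\beta(u_\varepsilon) = 0$ in the classical (distributional) sense because $u_\varepsilon$ is $C^1$ in $s$ and the chain rule applies pointwise; then pass $\varepsilon \to 0$ using $u_\varepsilon \to u$ in $L^1_{\mathrm{loc}}$ and $\beta(u_\varepsilon) \to \beta(u)$ in $L^1_{\mathrm{loc}}$ (dominated convergence, $u$ bounded, $\beta \in C^1$ hence Lipschitz on the range of $u$), and $\eta \beta(u_\varepsilon) \weakstarto \eta\beta(u)$.

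Wait — the subtlety is that mollifying only in $s$ does not obviously make $u_\varepsilon$ differentiable in $t$, so ``$u_\varepsilon$ solves the equation classically'' still needs justification: one only knows $\d_t(\eta u_\varepsilon) = -\d_s u_\varepsilon \in L^\infty_{\mathrm{loc}}$, so $\eta u_\varepsilon$ is Lipschitz in $t$ for a.e.\ $s$, which is enough to run the one-dimensional chain rule in the $t$-variable \emph{provided} $\eta$ is also regularized or provided we invoke Vol'pert directly. The clean fix: mollify in $s$ only, so that $\d_s u_\varepsilon$ is smooth in $s$ and $\eta u_\varepsilon$ is (locally) Lipschitz in $t$ uniformly; then $\d_t\bigl(\eta\beta(u_\varepsilon)\bigr)$ — since $\beta(u_\varepsilon) = g(u_\varepsilon)$ and $\eta u_\varepsilon \in W^{1,\infty}_{t,\mathrm{loc}}$ while $\eta \in L^\infty$ — requires writing $\eta\beta(u_\varepsilon) = \eta\beta\bigl((\eta u_\varepsilon)/\eta\bigr)$, which is awkward when $\eta$ can vanish. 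The main obstacle is therefore precisely this: handling a general bounded $\eta$ (possibly vanishing or sign-changing) in the chain rule for the $t$-derivative. I expect to resolve it by noting that the statement is local and, on the region $\{\eta \neq 0\}$, dividing through; and on $\{\eta = 0\}$ the equation reads $\d_s u = 0$ in $s$ for a.e.\ $t$, giving $u(t,\cdot)$ locally constant there, whence $\beta(u)$ trivially satisfies $\d_s\beta(u)=0$ and $\d_t(\eta\beta(u)) = 0$. Alternatively, and more robustly, I would invoke the $BV$-in-$s$ structure together with Vol'pert's chain rule (Proposition~\ref{prp:Volpert}) applied slicewise: for a.e.\ $t$, $u(t,\cdot) \in BV_{\mathrm{loc}}(I)$, so $\beta(u(t,\cdot)) \in BV_{\mathrm{loc}}$ with $\d_s\beta(u) = f(u^+,u^-)\,\d_s u$ where $f$ is the averaged superposition; and the time-regularity of $\eta u$ transfers to $\eta\beta(u)$ via the same algebraic manipulations (jump and diffuse parts) carried out in the proof of Theorem~\ref{th:chain-rule-ni}. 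Matching the diffuse part uses $\d_t(\eta\tilde u) = -\d_s u$ and $\beta'(\tilde u)$; matching the jump part uses that the traces $u^\pm$ in $s$ satisfy $\eta u^+ - \eta u^- $ balances the jump of $\d_s u$, exactly as in \eqref{nu-jump}. This slicewise-$BV$ route is the one I would ultimately write down in detail, since it reuses the paper's machinery verbatim.
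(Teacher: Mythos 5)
There is a genuine error at the heart of your plan: you mollify in $s$ and claim the commutator $(\eta u)\ast\omega_\varepsilon-\eta\,(u\ast\omega_\varepsilon)$ vanishes ``since $\eta$ does not depend on $s$.'' But $\eta\in \Linf(I)$, and $I$ is the spatial domain: $\eta$ is precisely a function of $s$ (it is independent of $t$). So mollifying in $s$ produces a nontrivial commutator, and with $\eta$ merely bounded there is no commutator estimate available to kill the remainder $r_\varepsilon$. You then sense something is off (``Wait --- the subtlety\dots'') but diagnose the wrong obstruction (regularity in $t$, division by $\eta$). The correct move --- and it is the paper's entire proof --- is to mollify in $t$, the variable on which $\eta$ genuinely does not depend: then $(\eta u)^\varepsilon=\eta\,u^\varepsilon$ exactly, the equation becomes $\eta\,\d_t u^\varepsilon+\d_s u^\varepsilon=0$, and since $\d_t u^\varepsilon=\omega_\varepsilon'\ast_t u\in L^\infty$ one gets $\d_s u^\varepsilon\in L^\infty$, so $s\mapsto u^\varepsilon(t,s)$ is Lipschitz for a.e.\ $t$; the pointwise chain rule then gives $\d_s\beta(u^\varepsilon)=-\eta\,\d_t\beta(u^\varepsilon)=-\d_t(\eta\,\beta(u^\varepsilon))$ and one passes to the limit. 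No division by $\eta$, no case analysis on $\{\eta=0\}$, no Vol'pert.

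Your fallback ``slicewise $BV$'' route does not repair the gap either. The equation only yields that time averages $v(s)=\int u(\tau,s)\psi(\tau)\,d\tau$ are Lipschitz in $s$; it does not imply that $u(t,\cdot)\in BV(I)$ for a.e.\ \emph{fixed} $t$, so Proposition~\ref{prp:Volpert} cannot be applied slicewise, and since $\beta$ is nonlinear it does not commute with the time averaging that produces $v$. Likewise, the statement ``on $\{\eta=0\}$ the equation reads $\d_s u=0$'' is not legitimate: $\{\eta=0\}$ is a measurable subset of $I$ and a distributional identity cannot be restricted to it.
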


\begin{proof}
Let $u^\eps(\cdot, x) := \omega_\eps * u(\cdot ,x)$ denote the mollification of $u$ with respect to time $t$. Then
\begin{equation*}
\d_t (\eta u)^\eps + \d_s u^\eps = 0
\end{equation*}
in $\ss D'((\eps,T-\eps)\times I)$. Since $\eta$ does not depend on time we can write
\begin{equation*}
\eta \d_t u^\eps + \d_s u^\eps = 0
\end{equation*}
and therefore for a.e. $t\in (\eps, T-\eps)$ the function $s\mapsto u^\eps(t,s)$ is Lipschitz, since $\eta$ is bounded.
Therefore
\begin{equation*}
\d_s \beta(u^\eps) = \beta'(u^\eps) \d_s u^\eps = - \eta \beta'(u^\eps) \d_t u^\eps = - \eta \d_t \beta(u^\eps) = -\d_t(\eta \beta(u^\eps))
\end{equation*}
and it remains to pass to the limit as $\eps \to 0$.
\end{proof}

The core result of this section is the following:

\begin{theorem} \label{th:WSP-ni}
Suppose that $\Omega$ is simply connected and $B\colon \Omega \to \RR^2$ is a bounded BV vector field with bounded support. 
Suppose that $\rho \in \Linf(\Omega)$ satisfies \eqref{divrhob} in $\ss D'(\Omega)$ and $\rho \ne 0$ a.e. in $\Omega$.
Then there exists a Lipschitz function $H \colon \Omega \to \RR$ with bounded support such that \eqref{rhob=nablaperpH} holds a.e. in $\Omega$
and $H$ has the weak Sard property.
\end{theorem}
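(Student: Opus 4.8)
The plan is to first produce the potential $H$ and then establish the weak Sard property; only the second step really uses the $BV$ hypothesis, and it is where the difficulty lies.

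\emph{Construction of $H$.} Put $A:=\rho B$. By \eqref{divrhob} we have $\div A=0$ in $\ss D'(\Omega)$, so $A^\perp$ is curl-free; since $\Omega$ is simply connected, the usual argument (mollify $A$, integrate the resulting smooth curl-free field, and pass to the limit with the Arzel\`a--Ascoli theorem, exactly as in the footnote to Theorem~\ref{th:chain-rule-ni}) produces $H\in\Lip(\Omega)$ with $A=\nabla^\perp H$ a.e., i.e.\ \eqref{rhob=nablaperpH}. Since $A$ has bounded support, $\nabla H=0$ off a bounded set and hence $H$ is locally constant there; normalising $H$ to vanish on the unbounded complementary component gives $H$ with bounded support. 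Extending $B$ and $H$ by zero we may assume $B\in BV(\RR^2;\RR^2)$ and $H\in\Lip(\RR^2)$ compactly supported; note that $\nabla H=\pm\rho B^\perp$, so $|\nabla H|=|\rho|\,|B|$ a.e.

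\emph{Reduction of the weak Sard property.} Let $Z:=\{\nabla H=0\}$ (a Borel representative). Since $\rho\ne0$ a.e.\ and $|\nabla H|=|\rho|\,|B|$, the set $Z$ coincides with $\{B=0\}$ up to an $\Le^2$-negligible set. By the coarea formula (Proposition~\ref{prp:coarea}) the measure $H_\#(\Le^2\rest\{\nabla H\ne0\})$ has density $h\mapsto\int_{E_h}|\nabla H|^{-1}\,d\Ha^1$ and is therefore absolutely continuous with respect to $\Le^1$. Hence the weak Sard property for $H$ is equivalent to $H_\#(\Le^2\rest Z)\perp\Le^1$, and for this it suffices to exhibit a Borel set $Z'\subset Z$ with $\Le^2(Z\setminus Z')=0$ and $\Le^1(H(Z'))=0$: then $H_\#(\Le^2\rest Z)=H_\#(\Le^2\rest Z')$ is concentrated on the $\Le^1$-negligible set $H(Z')$.

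\emph{The $BV$ input: averaged flatness at a.e.\ critical point.} Here the plan is to use the fine structure of $B\in BV$: the level-set property of the approximate gradient ($\nabla B=0$ $\Le^2$-a.e.\ on $\{B=0\}$), the density identity $\lim_{r\to0}|DB|(B_r(x_0))/|B_r(x_0)|=|\nabla B(x_0)|$ for $\Le^2$-a.e.\ $x_0$ (Lebesgue points of the absolutely continuous part of $DB$ together with the Besicovitch density theorem applied to $|D^sB|\perp\Le^2$), and the Poincar\'e inequality for $BV$ (see \cite{AFP}). Combining these, for $\Le^2$-a.e.\ $x_0\in\{B=0\}$ one has $|DB|(B_r(x_0))=o(r^2)$, and then, using in addition that $x_0$ is an $L^1$-Lebesgue point of $B$ with value $0$,
\[
\frac{1}{|B_r(x_0)|}\int_{B_r(x_0)}|B(x)|\,dx=o(r)\qquad(r\to0),
\]
whence, since $|\nabla H|=|\rho|\,|B|\le\|\rho\|_\infty\,|B|$,
\[
\frac{1}{|B_r(x_0)|}\int_{B_r(x_0)}|\nabla H(x)|\,dx=o(r)\qquad(r\to0).
\]
In other words, $H$ is \emph{second-order flat, in an averaged sense}, at $\Le^2$-a.e.\ point of $Z$; call $Z'$ the (full $\Le^2$-measure) set of such points.

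\emph{From averaged flatness to $\Le^1(H(Z'))=0$, and the main obstacle.} The last step is a Sard-type covering argument: by an Egorov-type reduction, restrict to a bounded subset $Z'_{\varepsilon,r_0}\subset Z'$ on which $|B_r(x_0)|^{-1}\int_{B_r(x_0)}|\nabla H|\le\varepsilon r$ holds for all $r\le r_0$ with one and the same $\varepsilon,r_0$; cover $Z'_{\varepsilon,r_0}$ by balls $B_{r_i}(x_0^i)$ centred on it with bounded overlap (Besicovitch); control the length of $H(B_{r_i}(x_0^i)\cap Z')$ by $o(r_i^2)$; sum to get $\Le^1(H(Z'_{\varepsilon,r_0}))\le C\varepsilon$; let $\varepsilon\to0$ and exhaust $Z'$. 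The hard part is the third bullet: unlike in the classical Sard estimate, one \emph{cannot} bound $\osc_{B_r(x_0)}H$ by the $L^1$-mean of $|\nabla H|$ over $B_{2r}(x_0)$ — for a merely Lipschitz $H$ only a Riesz-potential bound is available, and it degenerates near the boundary of the ball because we do not control $B$ near the \emph{other} points of $B_r(x_0)$. Overcoming this will require exploiting density-one of $Z$ at $x_0$ more forcefully (a Lipschitz function whose gradient vanishes $\Le^2$-a.e.\ on an \emph{open} set is constant there, so $H$ is genuinely almost constant on shrinking balls around $\Le^2$-a.e.\ point of $Z$) together with a careful bookkeeping of the contribution of the $\Le^2$-small bad complement $B_{r_i}(x_0^i)\setminus Z$. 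Once $\Le^1(H(Z'))=0$ is proved, the reduction step above yields $H_\#(\Le^2\rest Z)\perp\Le^1$, i.e.\ the weak Sard property, and the theorem follows.
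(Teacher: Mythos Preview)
Your construction of $H$ and the derivation of the averaged second-order flatness
\[
\frac{1}{|B_r(x_0)|}\int_{B_r(x_0)}|\nabla H|\,dx = o(r)\qquad(r\to 0)
\]
at $\Le^2$-a.e.\ $x_0\in Z=\{\nabla H=0\}$ are correct and essentially coincide with the paper's argument (the paper phrases this as $\nabla H\in t^{1,1}(x_0)$ with $P^1_{x_0}[\nabla H]\equiv 0$, obtained from Calderon--Zygmund approximate differentiability of $B$ plus locality of the approximate gradient; your route via Poincar\'e and Besicovitch density is an equally valid way to reach the same estimate).

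The gap is precisely where you yourself flag it: you do not close the passage from averaged flatness to $\Le^1(H(Z'))=0$. The direct covering argument you sketch runs into a real obstruction --- for a merely Lipschitz $H$ there is no way to convert an $L^1$ bound on $\nabla H$ over $B_r(x_0)$ into a bound on $\osc_{B_r(x_0)\cap Z'}H$ of the right order, and the ad hoc remedies you mention (``density-one of $Z$'', ``careful bookkeeping'') do not obviously work. The paper bypasses this difficulty entirely with a different device: combine the flatness of $\nabla H$ with Rademacher to get $H\in t^{2,1}(x_0)$ with \emph{constant} second-order Taylor polynomial at a.e.\ $x_0\in Z$ (this is Lemma~\ref{lem:polynomials}); by Egorov pass to a compact $K\subset Z$ of nearly full measure on which this holds uniformly; then apply the $L^1$ Whitney extension theorem (Ziemer, Proposition~3.6.3) to produce $\tilde H\in C^2(U)$ on an open $U\supset K$ with $\tilde H=H$ on $K$. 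Since the prescribed Taylor data have vanishing first- and second-order parts, $\nabla\tilde H=0$ on $K$, so $K$ lies in the critical set of a $C^2$ function on $\RR^2$, and the classical Sard theorem gives $\Le^1(H(K))=\Le^1(\tilde H(K))=0$. Letting $\Le^2(Z\setminus K)\to 0$ finishes the proof.

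In short: you are one standard move away --- replace the attempted covering argument by Whitney $C^2$ extension plus classical Sard.
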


Before proving this theorem let us recall some auxiliary facts.
Given a point $x\in \Omega$ a number $k\in \NN \cup \{0\}$ and a function $f\colon \Omega \to \RR$ we will write $f\in t^{k,1}(x)$
if there exists a polynomial $p_x$ on $\RR^2$ of order less or equal than $k$ such that
\begin{equation*}
\frac{1}{r^2} \int_{B_r(x)} |f(y) - p_x(y)| \, dy = o(r^k)
\end{equation*}
as $r\to 0$. (If such polynomial exists, clearly it is unique.)
We also denote the polynomial $p_x$ as $P^k_x[f]$ to indicate the function $f$ which is approximated by $p_x$.

Given a subset $E\subset \Omega$ we write $f\in t^{k,1}(E)$ if for any $x \in E$ we have $f\in t^{k,1}(x)$.

\begin{lemma} \label{lem:polynomials}
Let $x\in \Omega$. Suppose that $f\in t^{1,1}(x) \cap W^{1,1}(\Omega)$ and $\nabla f \in t^{1,1}(x)$. Then $f\in t^{2,1}(x)$ and
$\nabla_y P^2_x[f](y) = P^1_x[f](y)$ for any $y\in \RR^2$.
\end{lemma}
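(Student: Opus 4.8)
\textbf{Proof strategy for Lemma~\ref{lem:polynomials}.}

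The plan is to build the candidate second-order Taylor polynomial of $f$ at $x$ out of the two pieces of first-order data we are given: the polynomial $P^1_x[f]$ (which is affine, representing the $L^1$-Taylor expansion of $f$ itself to first order) and the polynomial $P^1_x[\nabla f]$, which is an $\RR^2$-valued affine function representing the $L^1$-Taylor expansion of $\nabla f$. Write $P^1_x[\nabla f](y) = a + \ell(y)$ where $a\in\RR^2$ and $\ell$ is linear $\RR^2\to\RR^2$; the key point is that since $\ell$ arises as the approximate first-order expansion of a gradient, its Jacobian matrix must be symmetric (this follows because $D(\nabla f)$ is symmetric as a distribution, or equivalently because $\nabla f\in t^{1,1}(x)\cap W^{1,1}$ forces the approximating linear map to be a symmetric Hessian-type object — this is the standard fact that the polarization of an $L^1$-approximate derivative of a gradient is a gradient). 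Granting symmetry, $\ell = \nabla q$ for a unique quadratic form $q$ with $q(0)=0$, $\nabla q(0)=0$. I then \emph{define} the quadratic polynomial
\begin{equation*}
p(y) := P^1_x[f](y) + q(y) - q(x) - \nabla q(x)\cdot(y-x),
\end{equation*}
arranged so that $p(x) = P^1_x[f](x) = f$'s approximate value and $\nabla p(y) = P^1_x[\nabla f](y)$ on the nose (one checks $\nabla p = \nabla P^1_x[f] + \nabla q - \nabla q(x) = a' + \ell(y)$ where the constant matches $a$ by the consistency $P^1_x[\nabla f](x) = \nabla P^1_x[f]$, which itself is forced by differentiating the first-order expansion of $f$).

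The substantive step is then to show $\tfrac{1}{r^2}\int_{B_r(x)}|f(y)-p(y)|\,dy = o(r^2)$. Here I would use the Poincaré-type / fundamental-theorem-of-calculus argument: for $y\in B_r(x)$ write $f(y)-p(y) - (f(x')-p(x')) = \int_0^1 \nabla(f-p)(x'+t(y-x'))\cdot(y-x')\,dt$ after averaging over a second variable $x'\in B_r(x)$, so that
\begin{equation*}
\fint_{B_r(x)}|f-p|\,dy \le \fint_{B_r(x)}\fint_{B_r(x)} |y-x'|\,\Big|\int_0^1 \nabla(f-p)\big(x'+t(y-x')\big)\,dt\Big|\,dx'\,dy,
\end{equation*}
plus the value $f(x)-p(x)=0$. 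By Fubini and a change of variables the right side is bounded by $C r \cdot \fint_{B_{r}(x)} |\nabla f(z) - \nabla p(z)|\,dz$ (up to the usual covering/Jacobian constants in such line-segment averaging estimates, cf.\ the proof of the Poincaré inequality via Riesz potentials). Since $\nabla p = P^1_x[\nabla f]$ and $\nabla f\in t^{1,1}(x)$, the latter average is $o(r)$, giving $o(r^2)$ as required. Hence $f\in t^{2,1}(x)$ with $P^2_x[f] = p$, and by construction $\nabla_y P^2_x[f](y) = P^1_x[\nabla f](y)$, which is exactly $P^1_x[f]$ in the lemma's notation (note the lemma writes $P^1_x[f]$ for what must be the first-order expansion of the gradient — I would keep this consistent with the paper's convention).

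\textbf{Main obstacle.} The routine part is the line-segment averaging estimate; the delicate point is establishing symmetry of the linear part of $P^1_x[\nabla f]$ and, more basically, the compatibility relation $\nabla P^1_x[f] = P^1_x[\nabla f](x)$ — i.e.\ that the zeroth-order term of the approximate gradient expansion equals the (constant) gradient of the approximate affine expansion of $f$. Both follow from testing the defining $L^1$-smallness conditions against derivatives of test functions and using $f\in W^{1,1}$ to integrate by parts, but one has to be careful that $t^{1,1}(x)$ is a pointwise (not a.e.) hypothesis, so the argument should extract these identities purely from the two displayed $o(r)$ estimates at the single point $x$ rather than from any a.e.\ differentiability statement. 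I expect this bookkeeping — matching constants and verifying symmetry from the pointwise $L^1$ data — to be where the real content of the lemma lies, everything else being a quantitative Poincaré estimate.
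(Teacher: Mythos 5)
Your candidate polynomial is the correct one and in fact coincides with the paper's: the paper defines $q_x(y):=\tilde f(x)+\int_0^1\langle P^1_x[\nabla f](ty+(1-t)x),\,y-x\rangle\,dt$, which is exactly your $p$ once the linear part of $P^1_x[\nabla f]$ is symmetric. You are also right that the $P^1_x[f]$ in the lemma's conclusion must be read as $P^1_x[\nabla f]$, and your remarks on symmetry and on the compatibility $\nabla P^1_x[f]=P^1_x[\nabla f](x)$ are correct; the paper does not spell these out, and indeed they are not where the work lies — they are only needed for the exact identity $\nabla_y P^2_x[f]=P^1_x[\nabla f]$ (and in the paper's application $P^1_x[\nabla f]\equiv 0$, so the issue evaporates). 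The real content is the $o(r^2)$ estimate, and that is where your argument has a gap.

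Averaging the fundamental-theorem identity over a second base point $x'\in B_r(x)$ yields a Poincar\'e inequality: it controls $\frac{1}{r^2}\int_{B_r(x)}|(f-p)(y)-m(r)|\,dy$ by $C\,r\cdot\frac{1}{r^2}\int_{B_r(x)}|\nabla f-P^1_x[\nabla f]|\,dy=o(r^2)$, where $m(r)$ is the mean of $f-p$ over $B_r(x)$. This does not bound $\frac{1}{r^2}\int_{B_r(x)}|f-p|$ unless you also show $|m(r)|=o(r^2)$, and the phrase ``plus the value $f(x)-p(x)=0$'' does not deliver this: the hypothesis $f\in t^{1,1}(x)$ only gives $\frac{1}{r^2}\int_{B_r(x)}(f-P^1_x[f])\,dy=o(r)$, whence $m(r)=o(1)\cdot r+O(r^2)$ after subtracting the quadratic part of $p$ — one order short of what is needed. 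The mean can be recovered, e.g.\ by telescoping over dyadic radii (from $|m(r/2)-m(r)|\le \frac{4}{|B_r|}\int_{B_r(x)}|f-p-m(r)|=o(r^2)$ and $m(\rho)\to\tilde f(x)-p(x)=0$ as $\rho \to 0$ one sums a geometric series), but this step is missing from your write-up. The paper avoids the problem entirely by integrating along segments emanating from the single center $x$: it first justifies, via mollification and the fact that $x$ is a Lebesgue point of $f$, the identity $f(y)-\tilde f(x)=\int_0^1\langle\nabla f(ty+(1-t)x),y-x\rangle\,dt$ for a.e.\ $y$, and then a change of variables gives directly $\frac{1}{r^2}\int_{B_r(x)}|f-q_x|\,dy\le r\int_0^1\alpha(rt)\,dt=o(r^2)$ with $\alpha(\rho):=\frac{1}{\rho^2}\int_{B_\rho(x)}|\nabla f-P^1_x[\nabla f]|\,dy$ — there is then no mean to control. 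Either fix (telescoping, or switching to the fixed base point $x$) closes your argument.
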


\begin{proof}
Since $f\in t^{1,1}(x)$, the point $x$ is a point of $L^1$ approximate continuity of $f$.
Hence there exists $L^1$ approximate limit $\tilde f(x)$ of $f$ at $x$.
Since $\nabla f \in t^{1,1}(x)$ there exists a first order polynomial $p_x=p_x(y)$ such that
\begin{equation*}
\alpha(r):= \frac{1}{r^2} \int_{B_r(x)} |\nabla f(y) - p_x(y)| \, dy = o(r).
\end{equation*}
Let us define the second order polynomial $q_x=q_x(y)$ by
\begin{equation*}
q_x(y):= \tilde f(x) + \int_0^1\ab{p_x(t y + (1-t)x), y-x} \, dt.
\end{equation*}
For a.e. $y\in B_r(x)$ we have
\begin{equation*}
f(y) - \tilde f(x) = \int_0^1 \ab{(\nabla f)(t y + (1-t) x), y-x} \, dt
\end{equation*}
(this can be proved using standard mollifiers and multiplication by test functions). Therefore
\begin{align*}
\beta(r):=&~
\frac{1}{r^2} \int_{B_r(x)} |f(y) - q_x(y)| \, dy\\
\le&~ \frac{1}{r^2} \int_{B_r(x)} \int_0^1 |(\nabla f(t y + (1-t)x)) - p_x(t y + (1-t) x)| \cdot |y-x| \, dt \, dy \\
\le&~ r \int_0^1 \frac{1}{r^2} \int_{B_{rt}(x)} |\nabla f(z) - p_x(z)| \frac{1}{t^2} \, dz \, dt = r \int_0^1 \alpha(rt) \, dt.
\end{align*}
Since for any $\eps>0$ there exists $\delta>0$ such that $0\le \alpha(r)<\eps r$ for all $r\in (0,\delta)$, we also have
\begin{equation*}
\int_0^1 \alpha (rt) \, dt < \eps \int_0^1 rt \, dt = \frac{\eps}{2} r
\end{equation*}
so we conclude that $\beta(r) = o(r^2)$.
\end{proof}

\begin{proof}[Proof of Theorem~\ref{th:WSP-ni}]
In the proof of Theorem~\ref{th:chain-rule-ni} we have already demonstrated existence of a Lipschitz function for which \eqref{rhob=nablaperpH} holds. Therefore we only need to prove that $H$ has the weak Sard property.

Since $B\in BV$, by Calderon-Zygmund theorem (see \cite{AFP}, Theorem 3.83) the function $B$ is approximately differentiable in a.e. $x\in \Omega$:
\begin{equation}\label{eq:app-diff}
\frac{1}{r^2} \int_{B_r(x)} \abs{B(y) - \wave{B}(x) - M(x)\cdot(y-x)} \, dy = o(r).
\end{equation}
Let $N:= \setof{x\in \Omega}{\nabla H(x) = 0}$. 
Since $\rho \ne 0$ a.e. the set $N$ coincides, up to a $\Le^2$-negligible set, with the subset $\setof{x\in \Omega}{B(x)=0}$.
Hence $\tilde B(x)=0$ for a.e. $x\in N$.
Then due to locality of approximate derivative (see \cite{AFP}, Proposition 3.73) we have $M(x)=0$ for a.e. $x\in N$.
Consequently \eqref{eq:app-diff} for a.e. $x\in N$ takes the form
\begin{equation*}
\frac{1}{r^2} \int_{B_r(x)} |B(y)| \, dy = o(r).
\end{equation*}
Since $|\rho| \le C$ for some constant $C>0$ a.e. in $\Omega$, the equation above implies that
\begin{equation*}
\frac{1}{r^2} \int_{B_r(x)} |\rho(y) B(y)| \, dy = o(r)
\end{equation*}
hence $\rho B \in t^{1,1}(x)$ for a.e. $x\in N$ and $P^1_x[\rho B] \equiv 0$.

But from Rademacher's theorem $H\in t^{1,1}(x)$ for a.e. $x\in N$. Then by Lemma~\ref{lem:polynomials} 
\begin{equation} \label{eq:2nd-oe}
H\in t^{2,1}(x) \quad
\text{and}
\quad
P_x^2[H](y) = P_x^2[H](0) \quad
\forall y\in \RR^2
\end{equation}
for a.e. $x\in N$.

For any $\eps>0$ there exists a compact $K\subset N$ such that $\Le^2(N \setminus K) < \eps$ and \eqref{eq:2nd-oe} holds for every $x\in K$.
Then by $L^1$ version of Whitney's extension theorem (see \cite{Zmr}, Proposition 3.6.3)
there exist an open set $U \subset \Omega$ and a function $\tilde H\in C^2(U)$
such that $K \subset U$ and $H=\tilde H$ on $K$. Hence $H_\# \Le^2 \rest K = \tilde H_\# \Le^2 \perp \Le^1$ by classical Sard's theorem.
Since $\eps>0$ is arbitrary, the proof is complete.
\end{proof}

The following result immediately follows from Theorem~\ref{th:WSP-ni}, Theorem~\ref{th:transport-with-wsp} and Lemma~\ref{lem:renormalization-1D}:
\begin{theorem}
Suppose that $\Omega\subset \RR^2$ is a simply connected open set and $B\colon \Omega \to \RR^2$ is a steady nearly incompressible $BV$ vector field with bounded support.
Then $B$ has renormalization property.
\end{theorem}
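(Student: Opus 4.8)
The plan is to deduce the statement by chaining the three preceding results: Theorem~\ref{th:WSP-ni} (weak Sard property), Theorem~\ref{th:transport-with-wsp} (reduction of the transport equation to a family of one-dimensional equations along the level sets of $H$), and Lemma~\ref{lem:renormalization-1D} (renormalization in one dimension).

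First I would fix a steady density $\rho$ of $B$, so $\rho \in \Linf(\Omega)$, $\rho \ge C_1 > 0$ a.e., and $\div(\rho B) = 0$ in $\ss D'(\Omega)$. Regard $B$ as the time-independent vector field $b(t,x):=B(x)$ and $\rho$ as the time-independent density $\eta(t,x):=\rho(x)$ on $(0,T)\times\Omega$; then $\d_t\eta + \div(\eta b) = \div(\rho B) = 0$, so $b$ is nearly incompressible, and a bounded weak solution $u$ of \eqref{transport} in the sense of Definition~\ref{def:transport-ni} is precisely a bounded $u$ with $\d_t(\rho u) + \div(\rho u B)=0$, that is, $\d_t(\eta u) + \div(u A) = 0$ with $A:=\rho B$. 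Since $\Omega$ is simply connected and $B\in BV$ has bounded support, Theorem~\ref{th:WSP-ni} yields a Lipschitz $H\colon\Omega\to\RR$ with bounded support such that $A=\nabla^\perp H$ a.e.\ and $H$ has the weak Sard property; in particular $\div A = 0$, so the hypotheses of Theorem~\ref{th:transport-with-wsp} hold with this $\eta$, $A$ and $H$.

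Next I would apply Theorem~\ref{th:transport-with-wsp} in the ``only if'' direction to a bounded weak solution $u$: it gives $\d_t u(t,x)=0$ in $\ss D'((0,T))$ for a.e.\ $x\in\{A=0\}$, and, for a.e.\ $h\in\RR$ and every $C\in\ff{Conn^*}(E_h)$ with the Lipschitz parametrization $\gamma$ satisfying $\gamma'=A\circ\gamma$ a.e., the one-dimensional equation $\d_t(\eta_\gamma u_\gamma)+\d_s u_\gamma = 0$ in $\ss D'((0,T)\times I_\gamma)$. Fix now $\beta\in C^1(\RR)$; since $u$ is bounded so is $\beta(u)$. On each such curve, Lemma~\ref{lem:renormalization-1D} (applicable as $\eta_\gamma$ and $u_\gamma$ are bounded) gives $\d_t(\eta_\gamma \beta(u_\gamma)) + \d_s\beta(u_\gamma) = 0$, and $\beta(u_\gamma)=(\beta(u))_\gamma$ because composition with $\beta$ commutes with restriction to $C$; on $\{A=0\}$ the function $t\mapsto u(t,x)$ is a.e.\ constant, hence so is $t\mapsto\beta(u(t,x))$, so $\d_t\beta(u)(t,x)=0$ there. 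Thus $\beta(u)$ satisfies the characterization of the ``if'' direction of Theorem~\ref{th:transport-with-wsp} (with the same exceptional negligible set of $h$'s), which forces $\d_t(\eta\beta(u)) + \div(\beta(u)A) = 0$, i.e.\ $\d_t(\rho\beta(u)) + \div(\rho\beta(u)B) = 0$. This is exactly \eqref{renorm} understood as in Definition~\ref{def:transport-ni}, so $\beta(u)$ is a renormalized solution; since $u$ and $\beta$ were arbitrary, $B$ has the renormalization property.

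Essentially all the difficulty of the argument is concentrated in the weak Sard property of $H$, which is already supplied by Theorem~\ref{th:WSP-ni}; granted that, Theorem~\ref{th:transport-with-wsp} and Lemma~\ref{lem:renormalization-1D} do the rest, so at this stage there is no serious obstacle. The only point deserving care is the consistency between the two applications of Theorem~\ref{th:transport-with-wsp} — once forward to $u$, once backward to $\beta(u)$ — which must use the \emph{same} parametrizations $\gamma$ of the (fixed) connected components of the level sets of $H$ and the \emph{same} negligible set of exceptional $h$'s; this is unproblematic precisely because the level-set structure of $H$ does not depend on the transported function.
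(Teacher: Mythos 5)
Your proposal is correct and follows exactly the route the paper intends: the paper states that the theorem ``immediately follows'' from Theorem~\ref{th:WSP-ni}, Theorem~\ref{th:transport-with-wsp} and Lemma~\ref{lem:renormalization-1D}, and your argument is precisely the chaining of these three results, with the back-and-forth application of Theorem~\ref{th:transport-with-wsp} carried out as the paper implicitly requires. The details you supply (identifying $\eta=\rho$, $A=\rho B$, handling the set $\{A=0\}$, and keeping the same exceptional set of level values) are the right ones.
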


\begin{remark}
In fact from the proofs presented above it follows that $B$ has the renormalization property under the following weaker assumptions:
$B\in \Linf(\Omega)$ is approximately differentiable a.e. in $\{B=0\}$
and there exists $\rho \in \Linf(\Omega)$ (not necessarily non-negative)
such that $\rho\ne 0$ a.e. in $\Omega$ and $\div(\rho B)=0$.
The approximate differentiability of $B$ holds, for instance, when $\div B$ and $\curl B$ are Radon measures \cite{ABC3}.
\end{remark}
\section{Acknowledgements}
The work was supported by ERC Starting Grant ConsLaw 2009-2013 and PRIN projects ``Systems of Conservation Laws and Fluid Dynamics: Methods and Applications'' (2011-2013) and ``Nonlinear Hyperbolic Partial Differential Equations, Dispersive And Transport Equations: Theoretical And Applicative Aspects'' (2013-2016).
The first author is also grateful to ENS Paris.
The second author was partially supported by grant RFBR 13-01-12460 OFI-m and the grant of the Dynasty Foundation.
\appendix
\section{Appendix}

In this Appendix we prove the following claim which was stated in the proof of Theorem~\ref{th:traces-chain-rule}:

\begin{claim}\label{measurablity-of-traces}
There exist a $\Le^1$-negligible set $\hat N\subset \RR$ and Borel functions $\hat u^\pm$ such that $u^\pm=\hat u^\pm$ on~$E \setminus H^{-1}(\hat N)$.
\end{claim}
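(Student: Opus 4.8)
The plan is to construct the Borel functions $\hat u^\pm$ by an explicit approximation-by-convolution along level sets, in a way that makes the Borel measurability with respect to the joint variable $(h,x)$ manifest, and then to identify the limit with the trace $u^\pm_{C,B}(x)$ for all but $\Le^1$-negligibly many $h$. The starting point is the decomposition of $H$ into countably many monotone Lipschitz functions $H=\sum_i H_i$ given by Theorem~\ref{th:decomposition}, together with the observation that on $\{\nabla H_i \ne 0\}$ the level sets of $H$ are (locally) the level sets of a monotone function, hence connected; this lets us reduce the measurability question to the monotone case, where ``the connected component through $x$'' is just the whole level set $H_i^{-1}(H_i(x))$ intersected with the relevant piece, and so depends on $x$ in a Borel way.

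First I would fix a monotone piece and, using the natural parametrization, reformulate everything through Theorem~\ref{th:eq-along-curve}\ref{th:eq-along-curve-changevar}--\ref{th:eq-along-curve-renorm}: for a.e.\ $h$, on the unique curve $C\in\ff{Conn^*}(E_h)$ through $x$ the function $u\circ\gamma$ is $BV$, and $u^+_{C,B}$ is its right-continuous representative. I would then write this representative as a pointwise limit of averages: for $x$ on the curve with arclength coordinate $t$,
\begin{equation*}
u^+_{C,B}(\gamma(t)) = \lim_{\delta\to 0^+} \frac{1}{\delta}\int_t^{t+\delta} (u\circ\gamma)(s)\,ds ,
\end{equation*}
valid at every point (this is the standard characterization of the right-continuous representative of a $BV$ function of one variable). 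The key point is that the arclength average along the curve can be rewritten, via Proposition~\ref{prp:param} ($\gamma_\#(|\gamma'|\Le)=\Ha^1\rest C$) and the coarea formula (Proposition~\ref{prp:coarea}), as a quantity built from integrals of $u\,|B|^{-1}$ against $\Ha^1\rest E_h$ over ``one-sided'' subarcs, and such quantities depend jointly measurably on $(h,x)$. Concretely, one replaces the one-sided arclength neighbourhood by its image under $\gamma$, expresses its $\Ha^1$-measure and the integral of $u$ over it using the coarea disintegration of $|B|\Le^2$, and checks Borel dependence of the resulting ``directed flow box'' on the base point using the orientation from Theorem~\ref{th:eq-along-curve}\ref{th:eq-along-curve-orient}.

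Then I would assemble the global function: define $\hat u^+(x)$ on $E\setminus H^{-1}(\hat N)$ as this limit of measurable averages, where $\hat N$ is the union of the $\Le^1$-null sets of $h$ excluded by Theorem~\ref{th:div-disint} (regularity of $E_h$, validity of \ref{th:div-disint-1a}, \ref{th:div-disint-1b}) and the null sets of $h$ for which $1/|B|\notin L^1(\Ha^1\rest E_h)$ or for which the above pointwise limit fails to represent the right-continuous version on some curve. A countable-dense-family argument (as already used in Step~2 of the proof of Theorem~\ref{th:div-disint}) reduces the verification to a countable set of test integrands, so $\hat N$ is genuinely $\Le^1$-null. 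Being a pointwise limit of Borel functions, $\hat u^+$ is Borel; and by construction $\hat u^+(x)=u^+_{C_x,B}(x)=u^+(x)$ for every $x\in E\setminus H^{-1}(\hat N)$. The function $\hat u^-$ is obtained identically using left averages $\frac1\delta\int_{t-\delta}^t$.

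The main obstacle is the joint-measurability bookkeeping in the middle step: one must genuinely verify that the map sending a base point $x$ (equivalently, the pair $(h,x)$ with $h=H(x)$) to the one-sided subarc of $C_x$ of prescribed $\Ha^1$-length $\delta$, and then to the average of $u$ over it, is Borel — this requires care because the parametrization $\gamma$ of $C_x$ and its orientation vary with $h$, and $\ff{Conn^*}(E_h)$ itself varies. Here I would lean on Proposition~6.1 of \cite{ABC2} (Borel measurability of the union of nontrivial connected components, already invoked in the proof of Theorem~\ref{th:flow}) and on the monotone decomposition to trivialize the component-selection, and on the coarea formula to convert arclength integrals over level sets into integrals of explicit Borel integrands against $\Le^2$, which are jointly measurable by Fubini. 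Everything else is routine.
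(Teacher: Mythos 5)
Your overall strategy coincides with the paper's: decompose $H$ into monotone pieces, define $\hat u^+$ as the $\tau\to 0$ limit of averages of $u$ over the forward subarc of arclength $\tau$ starting at $x$, and conclude Borel measurability because a pointwise limit of Borel functions is Borel. You also correctly locate the crux: one must show that the assignment $x\mapsto{}$``forward subarc of $C_x$ of length $\tau$'' is Borel. The problem is that you do not actually prove this, and the tools you propose for it cannot do the job. The coarea formula and Fubini let you convert integrals of a \emph{fixed} Borel integrand over the level sets $E_h$ into integrals against $\Le^2$, but the indicator of the relation ``$y$ lies within forward arclength $\tau$ of $x$ along $C_x$'' is not such an integrand: it encodes the global parametrization and orientation of the curve through $x$, which varies with $h$, and Theorem~\ref{th:eq-along-curve}\ref{th:eq-along-curve-orient} only orients each curve individually — it gives no stability of the orientation or of the subarc as the curve varies. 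This is precisely the step the paper spends its technical effort on: it shows (Lemma~\ref{l:sigma-closed-family}) that after a Lusin-type selection the family of level curves of each monotone piece is $\sigma$-compact in the Hausdorff metric with $C\mapsto\Ha^1(C)$ \emph{continuous} on each compact subfamily (Golab's theorem only gives lower semicontinuity, so the selection is essential), refines the monotone decomposition so each piece has constant sign and hence all natural parametrizations are consistently oriented, and then proves a rigidity lemma (Lemma~\ref{lem:rigidity}): if $C_n\to C$ in $d_{\cc H}$, $\Ha^1(C_n)\to\Ha^1(C)$ and the base points converge, then the subarcs $\gamma_n([0,\tau])$ converge to $\gamma([0,\tau])$. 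Only with these three ingredients does the directed-subarc set $S_\tau^{ij}$ become closed, hence $S_\tau$ Borel. None of this is supplied or replaceable by ``coarea plus Fubini''.

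A secondary, smaller gap: you assert that the monotone decomposition ``trivializes'' the component selection, i.e.\ that for a.e.\ $h$ each $C\in\ff{Conn^*}(E_h)$ is a full level set of some $H_i$. This is true but not routine; the paper proves it by applying the disintegration machinery (Theorems~\ref{th:div-disint} and~\ref{th:eq-along-curve}) to $\div(H\nabla^\perp H_i)=0$ to show $H$ is constant on a.e.\ level set of $H_i$, and by a separate coarea argument that $H_j$ is constant on a.e.\ level set of $H_i$ for $j\ne i$, so that the sets $E_i$ are pairwise disjoint and exhaust $E_h$ up to $\Ha^1$-null sets. You would need to include an argument of this kind as well.
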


The proof will be based on several auxiliary statements and some standard facts, which we would like to recall for completeness.

For any $X\subset \RR^2$
let $\ss F(X)$ denote the set of nonempty compact subsets of $X$
and let us endow $\ss F(X)$ with Hausdorff metric $d_\cc{H}$.
It is well-known that if $X$ is compact then $\ss F(X)$ is also compact.
Moreover, the subclass $\ss F_c(X)$ of \emph{connected} elements of $\ss F(X)$ is closed (see e.g. \cite{Falconer}, Theorems 3.16 and 3.18).
Finally, according to the well-known Golab's theorem the map $C \mapsto \Ha^1(C)$ is lower semicontinuous on $\ss F_c(X)$.

\begin{lemma}[See \cite{ABC2}, Lemma 6.2]\label{l:family-projection}
Suppose that $X\subset \RR^2$ is a closed set and a family $\ss C \subset \ss F(X)$ is closed in $\ss F(X)$.
Then the set $S:=\cup_{C\in \ss C} C$ is closed.
\end{lemma}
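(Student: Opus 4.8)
\textbf{Proof proposal for Lemma~\ref{l:family-projection}.}

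The plan is to show directly that the complement of $S$ is open. Take a point $x\in\RR^2\setminus S$; I want to produce a radius $\varrho>0$ with $B_\varrho(x)\cap S=\emptyset$. Since $X$ is closed, if $x\notin X$ we may simply take $\varrho=\dist(x,X)>0$, because $S\subset X$. So the interesting case is $x\in X\setminus S$.

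The main tool will be the compactness of $\ss F(X)$ together with the closedness of $\ss C$ in $\ss F(X)$: a closed subset of a compact metric space is compact, so $\ss C$ is a \emph{compact} subspace of $(\ss F(X),d_{\cc H})$. Now argue by contradiction: suppose no such $\varrho$ exists, i.e. there is a sequence $y_k\to x$ with $y_k\in S$, say $y_k\in C_k$ for some $C_k\in\ss C$. By compactness of $\ss C$ we may pass to a subsequence (not relabelled) such that $C_k\to C_\infty$ in $d_{\cc H}$ for some $C_\infty\in\ss C$. The key elementary fact about Hausdorff convergence of compact sets is that if $z_k\in C_k$ and $z_k\to z$ then $z\in C_\infty$; applying this with $z_k=y_k$ and $z=x$ gives $x\in C_\infty\subset S$, contradicting $x\notin S$. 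Hence some ball around $x$ misses $S$, and $S$ is closed.

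I expect the only step requiring any care is the ``Hausdorff limit captures limits of points'' fact: if $d_{\cc H}(C_k,C_\infty)\to 0$ and $z_k\in C_k$, $z_k\to z$, then $z\in C_\infty$. This is immediate from the definition of $d_{\cc H}$ — for each $\epsilon>0$, eventually $C_k$ lies in the $\epsilon$-neighbourhood of $C_\infty$, so $z_k$ is within $\epsilon$ of $C_\infty$; letting $k\to\infty$ and then $\epsilon\to 0$, and using that $C_\infty$ is closed, gives $z\in C_\infty$. Note this argument uses neither connectedness of the members of $\ss C$ nor Golab's theorem; those are mentioned in the surrounding text for other purposes, but the projection statement holds for any closed family of compact sets. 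The hypothesis that $X$ is closed is used only to dispose of the case $x\notin X$ (equivalently, one could note $S\subset X=\overline X$, so $\overline S\subset X$, and the argument above shows $\overline S\subset S$).
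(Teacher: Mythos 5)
Your argument is exactly the one the paper has in mind: the text following the lemma says only ``this lemma follows directly from the compactness of $\ss F(X)$'' and defers the details to \cite{ABC2}, and your write-up is precisely that argument carried out in full (extract a Hausdorff-convergent subsequence $C_k\to C_\infty\in\ss C$ and use that Hausdorff limits capture limits of points). One caveat, though: the compactness of $\ss F(X)$ holds when $X$ is \emph{compact}, not merely closed, and your closing parenthetical that closedness of $X$ is ``used only to dispose of the case $x\notin X$'' obscures the fact that boundedness is essential in the compactness step. Indeed, for unbounded closed $X$ the statement as literally written fails: with $X=\RR^2$ and $C_n=\{(1/n,0),(0,n)\}$ the family $\{C_n\}$ is uniformly separated in $d_{\cc H}$, hence closed in $\ss F(X)$, yet $\bigcup_n C_n$ accumulates at the origin without containing it. The paper's sketch glosses over the same point, and in the only application $X=\supp H_i$ is compact, so nothing is lost — but you should state that you are using compactness of $X$ rather than attributing the step to closedness alone.
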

This lemma follows directly from the compactness of $\ss F(X)$. We refer to \cite{ABC2} for the details.

\begin{lemma}\label{lem:rigidity}
Suppose that $\gamma_n \colon [0,\ell_n]\to \RR^2$ and $\gamma\colon [0,\ell]\to \RR^2$ are positively oriented natural parametrizations of closed simple Lipschitz curves $C_n \subset \RR^2$ and $C \subset \RR^2$ respectively. Suppose that $C_n \to C$ in $d_\cc{H}$ and $\Ha^1(C_n) \to \Ha^1(C)$ as $n\to \infty$. If $\gamma_n(0) \to \gamma(0)$ as $n\to \infty$ then for any $\tau\in (0,\ell)$ we have 
$\gamma_n([0,\tau]) \to \gamma([0,\tau])$ in $d_\cc{H}$ as $n\to \infty$.
\end{lemma}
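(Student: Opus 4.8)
The plan is to show that the curves $C_n$, once reparametrized by arc length starting from $\gamma_n(0)$, converge uniformly to $\gamma$ on every compact subinterval, and then to deduce the Hausdorff convergence of the sub-arcs $\gamma_n([0,\tau])$ as an immediate corollary. First I would pass to the \emph{constant-speed} parametrizations $\tilde\gamma_n\colon[0,1]\to\RR^2$, $\tilde\gamma_n(s):=\gamma_n(\ell_n s)$, and similarly $\tilde\gamma\colon[0,1]\to\RR^2$; since $|\gamma_n'|=1$ a.e., each $\tilde\gamma_n$ is Lipschitz with constant $\ell_n=\Ha^1(C_n)$, and by hypothesis $\ell_n\to\ell=\Ha^1(C)$, so the family $\{\tilde\gamma_n\}$ is uniformly bounded and uniformly Lipschitz. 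By Arzel\`a--Ascoli, every subsequence has a further subsequence converging uniformly to some Lipschitz curve $\delta\colon[0,1]\to\RR^2$ with $\delta(0)=\lim\tilde\gamma_n(0)=\gamma(0)$. The key point will be to identify $\delta$ with $\tilde\gamma$ (up to the trivial ambiguity of orientation, which is already fixed by the positive-orientation hypothesis), whence the full sequence $\tilde\gamma_n$ converges uniformly to $\tilde\gamma$.

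To identify the limit $\delta$, I would argue as follows. The image $\delta([0,1])$ is contained in $C$: indeed $\tilde\gamma_n([0,1])=C_n\to C$ in $d_{\cc H}$, and uniform convergence of parametrizations forces the image of $\delta$ to lie in the Hausdorff limit of the images, which is $C$. Conversely $\delta([0,1])\supset C$: if some $x\in C$ were omitted, then $\dist(x,\delta([0,1]))>0$, but $\dist(x,C_n)\to 0$ and $C_n=\tilde\gamma_n([0,1])$, so by uniform convergence $\dist(x,\delta([0,1]))=0$, a contradiction. Hence $\delta$ is a (possibly non-injective, a priori) Lipschitz parametrization of the simple closed curve $C$. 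Now I use the speed bound: $\delta$ is $\ell$-Lipschitz (as a uniform limit of $\ell_n$-Lipschitz maps with $\ell_n\to\ell$), so $\Ha^1(\delta([0,1]))\le\int_0^1|\delta'|\,ds\le\ell=\Ha^1(C)$; since $\delta([0,1])=C$, the length formula must be saturated, forcing $|\delta'|=\ell$ a.e. and, crucially, forcing $\delta$ to traverse $C$ injectively (a constant-speed Lipschitz curve whose length integral equals the $\Ha^1$-measure of its image cannot backtrack). Therefore $\delta$ is a natural-type constant-speed parametrization of $C$ with $\delta(0)=\gamma(0)$ and the prescribed orientation; such a parametrization is unique, so $\delta=\tilde\gamma$. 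This identification is the main obstacle, and the delicate ingredient in it is the rigidity statement ``length integral $=$ $\Ha^1$ of the image $\Rightarrow$ no backtracking'', which is where the hypothesis $\Ha^1(C_n)\to\Ha^1(C)$ (rather than merely $\liminf$, which Golab already gives) is essential; without the matching of lengths the limit curve could be a slower, non-injective parametrization covering $C$ and the conclusion would fail.

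Finally I would translate back. Uniform convergence $\tilde\gamma_n\to\tilde\gamma$ on $[0,1]$ together with $\ell_n\to\ell$ gives, for any fixed $\tau\in(0,\ell)$, that $\gamma_n([0,\tau])=\tilde\gamma_n([0,\tau/\ell_n])$ and $\gamma([0,\tau])=\tilde\gamma([0,\tau/\ell])$; since $\tau/\ell_n\to\tau/\ell$ and $\tilde\gamma_n\to\tilde\gamma$ uniformly, the images $\tilde\gamma_n([0,\tau/\ell_n])$ converge to $\tilde\gamma([0,\tau/\ell])$ in $d_{\cc H}$ (uniform convergence of maps plus convergence of the domain intervals yields Hausdorff convergence of images, using the Lipschitz bound to absorb the discrepancy between the intervals $[0,\tau/\ell_n]$ and $[0,\tau/\ell]$). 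Hence $\gamma_n([0,\tau])\to\gamma([0,\tau])$ in $d_{\cc H}$, which is the claim. The remaining routine points — that $\tilde\gamma_n([0,1])=C_n$ as sets, that Hausdorff limits of images of uniformly convergent maps behave as asserted, and the elementary backtracking rigidity — I would dispatch quickly, citing Golab's theorem and the one-dimensional area formula from \cite{AFP}.
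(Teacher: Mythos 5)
Your proposal is correct and follows the same overall skeleton as the paper's proof: pass to constant-speed reparametrizations, extract a uniform subsequential limit by Arzel\`a--Ascoli, show the limit has image exactly $C$, and identify it with $\gamma$ as the unique positively oriented constant-speed parametrization starting at $\gamma(0)$. The one place where you genuinely diverge is the injectivity of the limit curve: the paper argues by contradiction from the orientation, observing that if $\tilde\gamma(a)=\tilde\gamma(b)$ then one of the two resulting sub-loops must cover all of $C$ (by a degree argument) while having length at most $\max(|b-a|,\ell-|b-a|)<\ell=\Ha^1(C)$; you instead invoke the area-formula rigidity that a constant-speed loop whose total length equals the $\Ha^1$-measure of its image has multiplicity one $\Ha^1$-a.e.\ and hence cannot backtrack. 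Both mechanisms work; the paper's is slightly more self-contained (one line with the Lipschitz bound), while yours isolates the length-matching hypothesis $\Ha^1(C_n)\to\Ha^1(C)$ as the precise source of rigidity, which is a useful clarification. If you write it up, do spell out the backtracking step: ``multiplicity one a.e.'' from the area formula plus the observation that a nonconstant sub-loop confined to a proper arc of $C$ covers a set of positive $\Ha^1$-measure at least twice, with the degenerate case $\delta(0)=\delta(1)$ being the admissible closing-up of the curve.
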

\begin{proof}
We define $\wave\gamma_n(t):=\gamma_n(t \cdot \ell_n/\ell)$. Since $\ell_n/\ell \to 1$ as $n \to \infty$, using Arzela-Ascoli theorem we conclude (without renumbering) that $\wave \gamma_n \to \wave \gamma$ in $C([0,\ell])$ for some $1$-Lipschitz function $\wave\gamma$. Since homotopy preserves orientation $\wave \gamma$ is positively oriented.
Since $\wave\gamma([0,\ell]) =\lim_{n\to \infty}\wave\gamma_n([0, \ell]) =\lim_{n\to \infty}\gamma_n([0, \ell_n]) = \gamma([0,\ell])=C$ (here limit is taken w.r.t. $d_\cc{H}$), it remains to prove that $\wave \gamma$ is injective.
Suppose that $\wave\gamma(a) = \wave \gamma(b)$ for some $a,b \in [0, \ell)$ with $a<b$. Then $(\RR/(\ell \ZZ)) \setminus (\{a\}\cup\{b\})$ has two connected components: $I_1$ and $I_2$. Since $\gamma$ is positively oriented, at least for some $i\in 1,2$ we have $\gamma(\overline{I_i}) = C$. But then, since $\gamma$ is $1$-Lipschitz, we have
$\Ha^1(C) \le \int_{I_i} |\gamma'| \, d\tau \le \max(|b-a|,\ell - |b-a|) < \ell$ which contradicts the assumptions.
\end{proof}

The following lemma is a variant of Proposition 6.1 from \cite{ABC2}:

\begin{lemma} \label{l:sigma-closed-family}
Suppose that $H\colon \RR^2 \to \RR$ is a compactly supported monotone Lipschitz function.
Then for any negligible set $N'\subset \RR$ there exists a negligible Borel set $N \supset N'$ such that
\begin{itemize}
\item any $h\in \RR \setminus N$ is a regular value of $H$;
\item the family 
\begin{equation*}
\ss C := \setof{H^{-1}(h)}{h \in \RR \setminus N}.
\end{equation*}
is $\sigma$-compact in $\ss F(\supp H)$;
\item there exist compact families $\ss C_j \subset \ss C$ such that $\ss C = \cup_{j\in \NN} \ss C_j$
 and the map $C \mapsto \Ha^1(C)$ is continuous on $\ss C_j$ (with respect to convergence in $d_{\cc H}$) for each $j\in \NN$;
\item the set $E:= \cup_{C \in \ss C} C$ is Borel.
\end{itemize}
\end{lemma}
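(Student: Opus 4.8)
The plan is to follow the structure of Proposition~6.1 in \cite{ABC2}, adapting it to the present setting, and to extract the continuity-of-length statement from the $\sigma$-compact decomposition. First I would fix a negligible Borel set $N_0 \supset N'$ such that every $h \in \RR \setminus N_0$ is a regular value of $H$; this is possible by Theorem~\ref{th:ABC2} (after enlarging $N'$ to a Borel negligible set, which one can always do). Since $H$ is monotone with compact support, for $h \in \RR\setminus N_0$ the level set $H^{-1}(h)$ is either empty or a single closed simple Lipschitz curve (connectedness of level sets combined with Theorem~\ref{th:ABC2}\ref{th:ABC2-4}); discard also the (at most two) values where $H^{-1}(h)$ is a single point, which form a negligible — indeed finite — set, and absorb them into $N_0$. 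Thus $C_h := H^{-1}(h)$ for $h \in \RR \setminus N_0$ is a genuine closed simple Lipschitz curve contained in the fixed compact set $X := \supp H$, so $\ss C \subset \ss F_c(X)$.

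Next I would establish the measurability and $\sigma$-compactness. The assignment $h \mapsto C_h$, from $\RR \setminus N_0$ into the compact metric space $\ss F(X)$, is Borel measurable: this follows because the map $h \mapsto H^{-1}((-\infty,h])$ is monotone in $h$ and one controls $C_h = \partial\{H \le h\}$ in Hausdorff distance off a negligible set, exactly as in \cite{ABC2}. Moreover $h \mapsto \Ha^1(C_h)$ is Borel (it equals $\frac{d}{dh}\Le^2(\{H\le h\})$ a.e. by the coarea formula, and $\Le^2(\{H\le h\})$ is monotone hence Borel). Now apply Lusin's theorem to the Borel map $h\mapsto (C_h,\Ha^1(C_h))$ on the $\sigma$-finite space $\RR\setminus N_0$: there is an increasing sequence of compact sets $K_j \subset \RR\setminus N_0$ on which this map is continuous, with $\Le^1\big((\RR\setminus N_0)\setminus \bigcup_j K_j\big)=0$. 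Set $N := N_0 \cup \big((\RR\setminus N_0)\setminus\bigcup_j K_j\big)$, which is negligible Borel, put $\ss C := \{C_h : h \in \RR\setminus N\}$ and $\ss C_j := \{C_h : h \in K_j\}$. Then each $\ss C_j$ is the continuous image of the compact set $K_j$, hence compact in $\ss F(X)$, and $\ss C = \bigcup_j \ss C_j$ is $\sigma$-compact; continuity of $C\mapsto\Ha^1(C)$ on $\ss C_j$ is built into the construction (note this is consistent with Golab's lower semicontinuity). Finally, $E = \bigcup_{C\in\ss C} C = \bigcup_j \bigcup_{C\in\ss C_j} C$, and by Lemma~\ref{l:family-projection} each $\bigcup_{C\in\ss C_j} C$ is closed, so $E$ is a countable union of closed sets, hence Borel.

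The main obstacle is the Borel measurability of $h \mapsto C_h \in \ss F(X)$ off a negligible set — everything else is then a routine application of Lusin's theorem, Golab's theorem, and Lemma~\ref{l:family-projection}. Here I would lean on monotonicity of $H$: for a regular value $h$ the curve $C_h$ bounds the open set $\{H<h\}$, the sets $\{H<h\}$ are monotone increasing in $h$, and one shows $C_{h'} \to C_h$ in $d_{\cc H}$ as $h'\downarrow h$ and as $h'\uparrow h$ for all but countably many $h$ (the exceptional set being where $\Le^2(\{H\le h\})$ jumps, which is countable). This one-sided continuity off a countable set yields Borel measurability of $h \mapsto C_h$; Lemma~\ref{lem:rigidity} is the technical device that upgrades Hausdorff convergence of the curves (together with convergence of their lengths) to convergence of their natural parametrizations, which is what is really needed to run the argument uniformly and to obtain a genuinely continuous — not merely measurable — selection on each $\ss C_j$. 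Once this is in place, reduction of the general monotone $H$ to the present statement is immediate, and Theorem~\ref{th:decomposition} will later allow passage to arbitrary compactly supported Lipschitz $H$.
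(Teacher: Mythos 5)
Your overall architecture (regularize via Theorem~\ref{th:ABC2}, use monotonicity to see each nonempty regular level set as a single closed simple curve, apply Lusin to get compact sets $K_j$, conclude with Lemma~\ref{l:family-projection}) matches the paper's, but you and the paper diverge at the decisive step. The paper applies Lusin \emph{only} to the scalar function $f(h)=\Ha^1(E_h)$ (Borel by the coarea formula) and then proves compactness of $\ss C_j=\setof{E_h}{h\in K_j}$ directly: given $C_n=E_{h_n}\to C$ in $d_{\cc H}$ with $h_n\to h$ in $K_j$, continuity of $H$ gives $C\subset E_h$; regularity ($\nabla H\ne 0$ at $\Ha^1$-a.e.\ point of $E_h$) forces $\Ha^1$-a.e.\ point of $E_h$ to be an accumulation point of the $E_{h_n}$, so $\Ha^1(E_h\setminus C)=0$; and since $C$ is a connected closed subset of the simple closed curve $E_h$ of full length, $C=E_h$. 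Continuity of $C\mapsto\Ha^1(C)$ on $\ss C_j$ then comes for free from continuity of $f$ on $K_j$. This entirely avoids the question of Borel measurability of the set-valued map $h\mapsto C_h\in\ss F(\supp H)$.

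That measurability question is exactly where your proposal has a gap. You correctly identify it as the main obstacle, but the justification you offer does not hold up as stated: the exceptional set for one-sided $d_{\cc H}$-continuity of $h\mapsto\partial\{H\le h\}$ is \emph{not} characterized by the jumps of $h\mapsto\Le^2(\{H\le h\})$ — the area function can be continuous while the boundaries fail to converge in Hausdorff distance (Hausdorff convergence is sensitive to individual points, area is not). Likewise the identification $C_h=\partial\{H\le h\}$ for regular $h$, while true, needs the argument that $E_h\setminus\partial\{H\le h\}$ is a relatively open $\Ha^1$-null subset of a simple closed curve and hence empty; you assert it without proof. So as written, the Lusin step applied to the pair $(C_h,\Ha^1(C_h))$ rests on an unproved Borel-measurability claim. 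The fix is either to supply a genuine proof of that measurability (which is doable but is essentially the content of Proposition~6.1 of \cite{ABC2}) or, more economically, to adopt the paper's order of operations: run Lusin on the length function alone and obtain compactness of $\ss C_j$ — and hence, a posteriori, continuity of $h\mapsto E_h$ on $K_j$ — from the sequential argument above. Note also that Lemma~\ref{lem:rigidity} is not needed here (it is used later, for the sets $S^{ij}_\tau$ in the proof of Claim~\ref{measurablity-of-traces}), so invoking it in this lemma is a red herring.
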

\begin{proof}
By Theorem~\ref{th:ABC2} there exists a negligible set $N_0 \subset \RR^2$ such that for any $h\notin N_0$
the corresponding level set $E_h:=H^{-1}(h)$ is regular.
From coarea formula we know that the map $f\colon h \mapsto \Ha^1(E_h)$ is Borel. Hence there exists a sequence of compacts $K_j\subset \RR \setminus N_0$ such that for any $j \in \NN$ we have $K_j \subset K_{j+1}$, the restriction of $f$ on $K_j$ is continuous and $\Le^1(\RR\setminus \cup_{j\in \NN} K_j) = 0$.

Let us define $\ss C_j := \setof{E_h}{h \in K_j}$.
We claim that $\ss C_j$ is compact in $\ss F(\supp H)$ and $\Ha^1$ is continuous on $\ss C_j$.

Indeed, for any sequence $C_n \in \ss C_j$, $n\in \NN$, there exists a subsequence (we omit renumbering) such that $C_n \to C$ in $d_\cc{H}$ as $n\to \infty$, where $C\subset \supp H$ is some connected compact. 

Since $h_n := H(C_n) \in K_j$ and $H$ is continuous there exists $h\in K_j$ such that $h_n \to h$ as $n \to \infty$ and $H=h$ on $C$. Hence $C\subset E_h$.

By definition of regular value $\nabla H \ne 0$ $\Ha^1$-a.e. on $E_h$, hence $\Ha^1$-a.e. point of $E_h$ is an accumulation point of $E_{h_n} \equiv C_n$. Hence $\Ha^1(E_h \setminus C)=0$. Given that $E_h$ is a closed simple curve and $C\subset E_h$ is connected we conclude that $C=E_h$.

Since $f$ is continuous on $K_j$ we also have $\Ha^1(C) = \lim_{n\to \infty} \Ha^1(C_n)$.

Finally, since $H$ is continuous the set $H^{-1}(\cup_{j \in \NN} K_j)$ is $\sigma$-compact and in particular Borel.
\end{proof}


Suppose that a compactly supported Lipschitz function $H\colon \RR^2 \to \RR$ is decomposed as $H=\sum_{i\in \NN} H_i$
where $H_i \colon \RR^2 \to \RR$ are compactly supported and $|\nabla H_i| \Le^2 \perp |\nabla H_j| \Le^2$ when $i \ne j$.

Let $E^i_h := H_i^{-1}(h)$ and $E_h := H^{-1}(h)$ denote the level sets of $H_i$ and $H$ respectively.

\begin{lemma} \label{l:E_i and E_j are disjoint}
Suppose $i\ne j$.
Then for a.e. $h \in \RR$ the function $H_j$ is constant on $E^i_h$
and moreover $h_j := H_j(E^i_h)$ is not a regular value of $H_j$.
\end{lemma}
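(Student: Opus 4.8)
The plan is to reduce the statement to the one–dimensional behaviour of $H_j$ along the level sets $E^i_h:=H_i^{-1}(h)$, which for a.e.\ $h$ are single closed simple Lipschitz curves by Theorem~\ref{th:ABC2} applied to $H_i$ together with the monotonicity of $H_i$.

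First I would record the consequence of the $\Le^2$–negligibility of $\{\nabla H_i\neq 0\}\cap\{\nabla H_j\neq 0\}$ furnished by Theorem~\ref{th:decomposition}: combined with Rademacher's theorem it gives that, up to an $\Le^2$–null set, $\{\nabla H_i\neq 0\}$ is contained in the set $\{x:\ H_j\text{ is differentiable at }x,\ \nabla H_j(x)=0\}$. Applying the coarea formula for $H_i$ (Proposition~\ref{prp:coarea}) to the complementary $\Le^2$–null set, and a second time to the (also $\Le^2$–null) non–differentiability set of $H_j$, and using property~\ref{th:ABC2-2} of Theorem~\ref{th:ABC2} for $H_i$, I obtain the statement ($\star$): for a.e.\ $h$, at $\Ha^1$–a.e.\ point $x\in E^i_h$ the function $H_j$ is differentiable with $\nabla H_j(x)=0$. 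In particular $\nabla H_j=0$ $\Ha^1$–a.e.\ on $E^i_h$ for a.e.\ $h$.

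Next I would exploit that $H_i$ is monotone and compactly supported, so that $H_i(\RR^2)$ is a closed interval and, for every $h$ in its interior, the sets $\{H_i<h\}$ and $\{H_i>h\}$ are nonempty and open; hence $E^i_h$ separates $\RR^2$, so it is not a single point and therefore has positive $\Ha^1$ measure (a connected set with more than one point has $\Ha^1$ measure at least its diameter). Together with Theorem~\ref{th:ABC2} (which for a.e.\ $h$ makes $E^i_h$ $1$–rectifiable, of finite length, with every nontrivial connected component a closed simple Lipschitz curve) and the connectedness of $E^i_h$ coming from monotonicity, this forces $E^i_h$ itself to be a single closed simple Lipschitz curve for a.e.\ $h$. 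Fixing a natural parametrization $\gamma\colon I_\gamma\to\RR^2$, the function $H_j\circ\gamma$ is Lipschitz, hence absolutely continuous; since $\gamma_\#(\Le\rest I_\gamma)=\Ha^1\rest E^i_h$, property ($\star$) transfers to $\Le^1$–a.e.\ $t\in I_\gamma$, and the chain rule $(H_j\circ\gamma)'(t)=\nabla H_j(\gamma(t))\cdot\gamma'(t)=0$ then shows $H_j\circ\gamma$ is constant. This proves the first assertion, with $h_j$ the common value.

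For the second assertion I would argue by contradiction. If $h_j$ were a regular value of $H_j$ in the sense of Definition~\ref{def:reg-val}, then, since $\emptyset\neq E^i_h\subseteq H_j^{-1}(h_j)$, property~\ref{th:ABC2-2} of Theorem~\ref{th:ABC2} applied to the level set $H_j^{-1}(h_j)$ would give $\nabla H_j\neq 0$ at $\Ha^1$–a.e.\ point of $E^i_h$; comparing with the by–product of the previous step this forces $\Ha^1(E^i_h)=0$, contradicting $\Ha^1(E^i_h)>0$. Hence $h_j$ is not a regular value of $H_j$. (For $h$ outside the closed range of $H_i$ the level set $E^i_h$ is empty and there is nothing to prove, while $h=0$ and the two endpoints of $H_i(\RR^2)$ form a negligible exceptional set; so the conclusions hold for a.e.\ $h$.) I expect the main technical point to be the careful bookkeeping behind ($\star$) — passing from the $\Le^2$–null ``overlap'' set and the non–differentiability set of $H_j$ to an $\Ha^1$–null subset of $E^i_h$ for a.e.\ $h$ via the coarea formula, and keeping track of the finitely many exceptional null sets of values $h$.
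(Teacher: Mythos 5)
Your argument is correct and follows the paper's own route: mutual singularity of $|\nabla H_i|\Le^2$ and $|\nabla H_j|\Le^2$ plus the coarea formula for $H_i$ gives $\nabla H_j=0$ at $\Ha^1$-a.e.\ point of $E^i_h$ for a.e.\ $h$, the one-dimensional chain rule along a natural parametrization of the (connected, by monotonicity) level set yields constancy of $H_j$, and the vanishing of $\nabla H_j$ on a set of positive $\Ha^1$-measure contradicts property~\ref{th:ABC2-2} of a regular value. Your extra bookkeeping (differentiability points of $H_j$ via a second application of coarea, positivity of $\Ha^1(E^i_h)$, and the exceptional values $h=0$ and the endpoints of the range) only makes explicit steps the paper leaves implicit.
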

\begin{proof}
Since $|\nabla H_i| \Le^2 \perp |\nabla H_j| \Le^2$ by coarea formula we have
\begin{equation*}
0 = \int_{\RR^2} |\nabla H_i| \cdot |\nabla H_j| \, dx
= \int_\RR \int_{E^i_h} |\nabla H_j| \, d\Ha^1 \, dh
\end{equation*}
Therefore for a.e. $h\in \RR$ for $\Ha^1$-a.e. $x\in E^i_h$ there exists $\nabla H_j(x) = 0$. Let $\gamma \colon I \to \RR^2$ denote natural parametrization of $E^i_h$ (without loss of generality we can assume that $h$ is a regular value of $H_i$). Then
for $\Le$-a.e. $t\in I$ the function $t \mapsto H_j(\gamma(t))$ is differentiable (recall that $\Ha^1 \rest E^i_h = \gamma_\# \Le$) and
\begin{equation*}
(H_j(\gamma(t)))' = \nabla H_j(\gamma(t)) \cdot \gamma'(t) = 0
\end{equation*}
which means that $H_j$ is equal to some constant $h_j$ on $E^i_h$. Moreover, since $\nabla H_j = 0$ $\Ha^1$-a.e. on $E^i_h$ this constant $h_j$ cannot be a regular value of $H_j$.
\end{proof}

\begin{lemma} \label{l:H is is constant on E^i_h}
For any $i \in \NN$ and a.e. $h \in \RR$ the function $H$ is constant on $E^i_h$.
\end{lemma}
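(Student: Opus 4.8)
The plan is to deduce the statement directly from Lemma~\ref{l:E_i and E_j are disjoint} together with the pointwise identity $H=\sum_{j\in\NN}H_j$ coming from Theorem~\ref{th:decomposition}. Fix $i\in\NN$. For each index $j\ne i$, Lemma~\ref{l:E_i and E_j are disjoint} furnishes a Lebesgue-negligible set $N_j\subset\RR$ such that for every $h\in\RR\setminus N_j$ the function $H_j$ is constant on $E^i_h:=H_i^{-1}(h)$; write $h_j=h_j(h)$ for this constant. I would also let $N_i\subset\RR$ be the negligible set of non-regular values of $H_i$ (which exists by Theorem~\ref{th:ABC2}), though this is not essential for the argument. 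Then $N:=\bigcup_{j\in\NN}N_j$ is negligible, being a countable union of negligible sets.

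Next I would fix $h\in\RR\setminus N$ and argue pointwise on the level set $E^i_h$. By definition $H_i\equiv h$ on $E^i_h$, and for each $j\ne i$ we have $H_j\equiv h_j$ on $E^i_h$ by the choice of $N$. Since $H(x)=\sum_{j\in\NN}H_j(x)$ for every $x\in\RR^2$, for any $x\in E^i_h$ this gives
\[
H(x)=h+\sum_{j\ne i}h_j,
\]
where the series on the right converges (it equals the finite number $H(x)$) and, crucially, does not depend on the choice of $x\in E^i_h$. Hence $H$ is constant on $E^i_h$, which is the assertion; if $E^i_h=\emptyset$ the statement is vacuous.

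I do not expect any genuine obstacle here: the only point deserving a line of justification is the exchange of the ``for a.e.\ $h$'' quantifier with the countable index set $\{j\ne i\}$, which is harmless since a countable union of null sets is null. All of the analytic content — the orthogonality $|\nabla H_i|\Le^2\perp|\nabla H_j|\Le^2$ and the coarea formula — has already been expended in the proof of Lemma~\ref{l:E_i and E_j are disjoint}, so the present proof is essentially bookkeeping.
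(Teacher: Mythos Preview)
Your argument is correct. It is, however, a genuinely different route from the paper's. The paper does \emph{not} sum the constancy of the individual $H_j$; instead it observes that
\[
\div\bigl(H\,\nabla^\perp H_i\bigr)=\nabla H\cdot\nabla^\perp H_i=\nabla H_i\cdot\nabla^\perp H_i=0
\quad\text{a.e.\ in }\RR^2,
\]
(the middle equality uses that $\{\nabla H_i\ne 0\}\cap\{\nabla H_j\ne 0\}$ is negligible for $j\ne i$), and then feeds the divergence-free field $H\,\nabla^\perp H_i$ into the disintegration machinery of Theorems~\ref{th:div-disint} and~\ref{th:eq-along-curve} to conclude that $(H\circ\gamma)'=0$ in $\ss D'(I_\gamma)$ along a natural parametrization $\gamma$ of a regular level set $E^i_h$, whence $H$ is constant on $E^i_h$.

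Your approach is more elementary: it recycles the analytic work already done in Lemma~\ref{l:E_i and E_j are disjoint} and reduces everything to a countable union of null sets and a termwise sum. The paper's approach treats $H$ as a single object and avoids invoking the pointwise identity $H=\sum_j H_j$ on the (Lebesgue-null) level set $E^i_h$; this sidesteps any worry about the mode of convergence in Theorem~\ref{th:decomposition}. In practice that decomposition does give pointwise equality, so your argument goes through, but the paper's version is slightly more robust in that it only uses the gradient orthogonality and the already-developed curve-reduction theorems.
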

\begin{proof}
Observe that 
\[
\div(H \nabla^\perp H_i) = \nabla H \cdot \nabla^\perp H_i = \nabla H_i \cdot \nabla^\perp H_i=0
\]
a.e. in $\RR^2$. Hence by Theorems~\ref{th:div-disint} and~\ref{th:eq-along-curve} we obtain that for a.e. $h\in \RR$
\[
H' = 0 \quad \text{in} \quad \ss D'(E^i_h, \gamma)
\]
where $\gamma \colon I \to \RR^2$ denotes natural parametrization of a (regular) level set $E^i_h$.
\end{proof}


\noindent\textbf{Construction of $\ss C_i$ and $E_i$.} \label{Construction of E_i}
In view of lemmas~\ref{l:E_i and E_j are disjoint} and~\ref{l:H is is constant on E^i_h}
we can find negligible sets $N_i'\subset \RR$ such that applying 
lemma~\ref{l:sigma-closed-family} for $H_i$ and $N_i'$ we obtain $\sigma$-closed families $\ss C_i \subset \ss F(\supp H_i)$ and negligible sets $N_i$ with the following properties:
\begin{itemize}
\item for any $h \notin N_i$ the function $H$ is constant on $E^i_h$;
\item the sets $E_i=\cup_{C \in \ss C_i} C$ are Borel and pairwise disjoint;
\item $|\nabla H_i| \Le^2 = |\nabla H_i| \Le^2 \rest E_i$.
\end{itemize}
Then the connected components of almost all regular level sets of $H$ are the level sets of $H_i$:
\begin{lemma}\label{l:level sets of H are the level sets of H_i}
There exists a negligible set $N\subset \RR$ such that
\begin{itemize}
\item any $h\notin N$ is a regular value of $H$;
\item for any $C \in \ff{Conn^*}(E_h)$ there exists unique $i\in \NN$ such that $C \in \ss C_i$.
\end{itemize}
\end{lemma}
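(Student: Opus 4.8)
The plan is to obtain $N$ by discarding a countable family of $\Le^1$-negligible sets of levels and then, for $h\notin N$, to force every $C\in\ff{Conn^*}(E_h)$ to coincide with a regular level set of one of the $H_i$. First I would record the facts needed at the level of $\Le^2$. From the decomposition (Theorem~\ref{th:decomposition}) we have $\nabla H=\sum_i\nabla H_i$ $\Le^2$-a.e., with at most one summand nonzero at any given point since the sets $\{\nabla H_i\ne 0\}$ have pairwise $\Le^2$-negligible intersections; together with $|\nabla H_i|\Le^2=|\nabla H_i|\Le^2\rest E_i$ this makes $\{\nabla H_i\ne 0\}\setminus E_i$ $\Le^2$-negligible, hence also $\{\nabla H\ne 0\}\setminus\bigcup_iE_i$. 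For $i\ne j$ the set $\{\nabla H_j\ne 0\}\cap E_i$ is $\Le^2$-negligible (the $E_i$ are disjoint and $\nabla H_j$ lives on $E_j$), so $\nabla H=\nabla H_i$ $\Le^2$-a.e.\ on $E_i$. The non-differentiability set of each $H_i$ is $\Le^2$-negligible, and --- the delicate point --- $\{\nabla H_i\ne 0\}\cap H_i^{-1}(N_i)$ is $\Le^2$-negligible, because by the coarea formula $\int_{H_i^{-1}(N_i)}|\nabla H_i|\,dx=\int_{N_i}\Ha^1(E^i_{h'})\,dh'=0$ since $\Le^1(N_i)=0$.

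Applying the coarea formula for $H$ (Proposition~\ref{prp:coarea}) to each of these $\Le^2$-negligible subsets of $\Omega$, and adjoining the $\Le^1$-negligible set of non-regular values of $H$ (Theorem~\ref{th:ABC2}), I obtain a single $\Le^1$-negligible Borel set $N$ such that for every $h\notin N$: (a) $h$ is a regular value of $H$ and each $H_i$ is differentiable $\Ha^1$-a.e.\ on $E_h$; (b) $\Ha^1(E_h\setminus\bigcup_iE_i)=0$; (c) $\nabla H=\nabla H_i$ $\Ha^1$-a.e.\ on $E_h\cap E_i$ for each $i$; (d) $\nabla H_i=0$ $\Ha^1$-a.e.\ on $E_h\setminus E_i$ for each $i$; (e) $\Ha^1\bigl(E_h\cap H_i^{-1}(N_i)\cap\{\nabla H_i\ne 0\}\bigr)=0$ for each $i$. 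These are countably many conditions, one per index (and per pair), so $N$ stays $\Le^1$-negligible; by choosing $N_i$ as in the construction of $\ss C_i$, each $h'\notin N_i$ is a regular value of $H_i$, $E^i_{h'}\in\ss C_i$, and $H$ is constant on $E^i_{h'}$.

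Now fix $h\notin N$ and $C\in\ff{Conn^*}(E_h)$, with natural parametrization $\gamma\colon I_\gamma\to\Omega$. Since $\Ha^1(C)>0$, (b) supplies an index $i$ with $\Ha^1(C\cap E_i)>0$. By the chain rule for the composition of a Lipschitz function with a rectifiable curve --- legitimate $\Ha^1$-a.e.\ on $E_h$ by (a), exactly as in the proofs of Lemmas~\ref{l:E_i and E_j are disjoint} and \ref{l:H is is constant on E^i_h} --- one has $(H_i\circ\gamma)'=\nabla H_i(\gamma)\cdot\gamma'$ a.e.\ on $I_\gamma$. On $\gamma^{-1}(C\setminus E_i)$ this vanishes by (d); on $\gamma^{-1}(C\cap E_i)$ it equals $\nabla H(\gamma)\cdot\gamma'=(H\circ\gamma)'=0$ by (c) and the fact that $H$ is constant on $C\subseteq E_h$. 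Since $I_\gamma$ is connected, $H_i\circ\gamma$ is constant, say $\equiv h'$, so $C\subseteq E^i_{h'}$.

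To finish, pick $x\in C\cap E_i$ with $\nabla H_i(x)\ne 0$, which is possible $\Ha^1$-a.e.\ on $C\cap E_i$ because there $\nabla H_i=\nabla H\ne 0$ by (c) and regularity of $h$; then (e) forces $x\notin H_i^{-1}(N_i)$, i.e.\ $h'=H_i(x)\notin N_i$. Hence $E^i_{h'}\in\ss C_i$, so $E^i_{h'}$ is an $\Omega$-closed simple Lipschitz curve --- in particular connected --- and $H$ is constant on it with value $H(x)=h$, so $E^i_{h'}\subseteq E_h$. As $C$ is a connected component of $E_h$ and $E^i_{h'}$ is a connected subset of $E_h$ containing $C$, we get $C=E^i_{h'}\in\ss C_i$; uniqueness of $i$ is immediate, since $C\in\ss C_j$ would force $C\subseteq E_j$ while the $E_i$ are pairwise disjoint and $\Ha^1(C)>0$. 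The step I expect to be the main obstacle is the bookkeeping that ties every exceptional set needed on $E_h$ to an $\Le^2$-negligible set on $\Omega$ (so that coarea for $H$ turns it into an $\Le^1$-negligible set of levels), and among these the genuinely delicate one is $|\nabla H_i|=0$ $\Le^2$-a.e.\ on $H_i^{-1}(N_i)$, which is precisely what prevents a nontrivial component of $E_h$ from being swallowed by a non-regular level set of some $H_i$.
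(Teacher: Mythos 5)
Your proof is correct, but it reaches the key equality $C=E^i_{h'}$ by a genuinely different route than the paper. The paper goes in the opposite direction: it picks $x\in C\cap E_i$ (nonempty since $\Ha^1(E_h\setminus\cup_iE_i)=0$), lets $C_i\in\ss C_i$ be the $H_i$-level curve through $x$ (so $h'\notin N_i$ is immediate from $x\in E_i=\cup_{C'\in\ss C_i}C'$, with no need for your condition (e)), deduces $C_i\subset C$ from the constancy of $H$ on $C_i$ and connectedness, and then upgrades the inclusion to an equality by a topological argument: if $C\supsetneq C_i$ then the simple curve $C$ would properly contain the closed loop $C_i$ and hence contain a triod, contradicting regularity of $h$. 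You instead prove the inclusion $C\subseteq E^i_{h'}$ by differentiating $H_i\circ\gamma$ along the natural parametrization of $C$ (using $\nabla H=\nabla H_i$ a.e.\ on $E_i$ and $\nabla H_i=0$ a.e.\ off $E_i$, both pushed from $\Le^2$-statements to $\Ha^1\rest E_h$-statements via coarea), and then close with $E^i_{h'}\subseteq E_h$ and maximality of $C$ as a connected component. What your route buys is the avoidance of the triod/plane-topology step, replaced by a purely measure-theoretic chain-rule computation plus the elementary maximality of connected components; the price is the heavier bookkeeping of exceptional level sets (your conditions (a)--(e)), of which (e) is the one the paper's shortcut makes unnecessary. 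Both arguments rest on the same preliminary facts (the construction of $\ss C_i$, $E_i$, Lemma~\ref{l:H is is constant on E^i_h}, and the coarea formula), and your uniqueness argument coincides with the disjointness of the $E_i$ implicitly used in the paper.
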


\begin{proof}
Consider the set $J$ of regular values $h$ of $H$ for which $\Ha^1(E_h \setminus \cup_i E_i)>0$. Since $\nabla H = 0$ a.e. on $\RR^2 \setminus \cup_i E_i$ by coarea formula we immediately obtain $\Le^1(J)=0$.

Let $h$ be a regular value of $H$ and fix $C\in \ff{Conn^*}(E_h)$. If $h \notin J$ then $C \cap E_i \ne \emptyset$ for some $i\in \NN$. Let $C_i$ denote the level set (w.r.t. $H_i$) of some $x \in C \cap E_i$. By lemma~\ref{l:H is is constant on E^i_h} $H$ is constant on $C_i$, hence $C_i \subset C$. If $C$ contains a point which does not belong to $C_i$ then $C$ has a triod, which is not possible since $h$ is a regular value. Hence $C=C_i$.
\end{proof}

\begin{proof}[Proof of Claim~\ref{measurablity-of-traces}]
\emph{Step 0.} If $\Omega \ne \RR^2$ then one can extend $H$ outside of $\Omega$ in such a way that the resulting function is Lipschitz and compactly supported (see proof of Theorem~\ref{th:ABC2}). Therefore in the rest of the proof we will assume that $\Omega = \RR^2$, omitting minor modifications needed in the general case.

\emph{Step 1.} By Theorem~\ref{th:decomposition} there exists a countable family $\{\hat H_i\}_{i\in \NN}$ of compactly supported Lipschitz functions $\hat H_i \colon \RR^2\to \RR$ such that $H = \sum_{i=1}^\infty \hat H_i$ and $|\nabla \hat H_i| \Le^2 \perp |\nabla \hat H_j| \Le^2$ whenever $i\ne j$. Let $\hat H_i^+(x) := \max(\hat H_i(x), 0)$ and $\hat H_i^-(x) := \min(\hat H_i(x),0)$.
Clearly $|\nabla \hat H_i^+| \Le^2 \perp |\nabla \hat H_i^-| \Le^2$. Thus we have decomposed the function $H$ into a sum $H=\sum_{i\in \NN} H_i$ of monotone Lipschitz functions with constant sign and satisfying $|\nabla H_i| \Le^2 \perp |\nabla H_j| \Le^2$, $i\ne j$.

\emph{Step 2.} Using the decomposition of $H$ from the previous step we construct the families $\ss C_i$ and the sets $E_i$ as above (see p.~\pageref{Construction of E_i}).
Let $N$ be given by lemma~\ref{l:level sets of H are the level sets of H_i}.

\emph{Step 3.} Let us fix $i\in \NN$. We are going to define the functions $\hat u^\pm$ on $E_i$.
Let us assume that $H_i \ge 0$ on $\RR^2$ (the argument will be the same when $H_i \le 0$ on $\RR^2$).

For any regular value $h$ of $H$ and any nontrivial connected component $C\in \ff{Conn^*}(E_h)$ let $\gamma_C\colon I_C \to \RR^2$ denote a natural parametrization of $C$ which agrees with $B$.

In view of \eqref{divub-on-curve} and Theorem~\ref{th:eq-along-curve} the function $u$ has traces $u_{C,B}^\pm$
along $(C,B)$. By definition
\begin{equation*}
u^+_{C,B}(x) = \lim_{\tau \to +0} \frac{1}{\tau} \int_{t_x}^{t_x+\tau} u(\gamma_C(\xi)) \, d\xi
\end{equation*}
for any $x\in C$, where $t_x$ is such that $\gamma_C(t_x)=x$.

For any $x\in E_i$ let us define
\begin{equation*}
\hat u^+_\tau(x) := \1_{\RR \setminus N}(H(x)) \frac{1}{\tau} \int_{E_i} \chi_\tau(x,y) u(y) \,d \Ha^1(y)
\end{equation*}
where $\chi_\tau$ is the characteristic function of the set
\begin{equation*}
S_\tau^i := \setof{(x,y)\in E_i \times E_i}{x = \gamma_C(t), \;\; y\in \gamma_C([t,t+\tau]) \;\; \text{for some } h\in \RR \setminus N}.
\end{equation*}

Since $(\gamma_C)_\# \Le = \Ha^1 \rest C$ we have $u^+_{C, B}(x) = \lim_{\tau \to +0} \hat u^+_\tau(x)$
provided that $x \in E_i \setminus H^{-1}(N)$.
On the other hand if $x \in E_i \cap H^{-1}(N)$ then $\hat u_\tau^+(x) = 0$.
Therefore for any $x\in E_i$ there exists
\begin{equation*}
\hat u^+(x) := \lim_{\tau \to 0} \hat u_\tau^+(x).
\end{equation*}
We are going to prove that this function is Borel. To do this it is sufficient to show that $S_\tau$ is Borel for any $\tau>0$. Our argument will be similar to the proof of Lemma~\ref{l:sigma-closed-family} in which we proved that $E_i$ is Borel.

By Lemma~\ref{l:sigma-closed-family} the family $\ss C_i$ (recall that $E_i = \cup_{C\in \ss C_i} C$) can be written as $\ss C_i = \cup_{j\in \NN} \ss C_{ij}$, where $\ss C_{ij}$ is closed in $\ss F(\supp H_i)$ family of closed simple curves such that the map $C \mapsto \Ha^1(C)$ is continuous on $\ss C_i$.

For any $j\in \NN$ let 
\begin{equation*}
S_\tau^{ij} := \setof{(x,y)\in E_i \times E_i}{x = \gamma_C(t), \; y\in \gamma_C([t,t+\tau]) \; \text{ for some } C \in \ss C_i}.
\end{equation*}
Since $H_i\ge 0$ on $\RR^2$ and $H_i$ is monotone, one can show that for any $C \in \ss C_i$
the parametrization $\gamma_C$ is \emph{negatively oriented}.
Then by Lemma~\ref{lem:rigidity} the set $S_\tau^{ij}$ is closed.
Therefore $S_\tau$ is a countable union of closed sets and hence Borel.
The construction of $\hat u^-$ is analogous to the construction of $\hat u^+$ above.
The functions $\hat u ^\pm$ are defined on $\cup_{i\in \NN} E_i$ and agree with the traces of $u$ along $(C,B)$ for any $C\in \ff{Conn^*}(E_h)$ for any $h\in \RR \setminus N$.
\end{proof}

\bibliographystyle{unsrt}
\bibliography{bibliography}
\end{document}